\newtheorem{thm}{Theorem}[subsection]
\newtheorem{cor}[thm]{Corollary}
\newtheorem{lem}[thm]{Lemma}
\newtheorem{prop}[thm]{Proposition}
\theoremstyle{definition}
\newtheorem{defn}[thm]{Definition}
\theoremstyle{remark}
\newtheorem{rem}[thm]{Remark}
\numberwithin{equation}{subsection}
\numberwithin{figure}{section}
\newcommand{\diff}{\mathrm{d}}
\newcommand{\C}{{\mathbb C}}
\newcommand{\R}{{\mathbb R}}
\newcommand{\D}{{\mathbb D}}
\newcommand{\Te}{{\mathbb T}}
\newcommand{\imag}{\mathrm{i}}
\newcommand{\e}{\mathrm{e}}
\newcommand{\Z}{{\mathbb Z}}
\newcommand{\supp}{\operatorname{supp}}
\newcommand{\Ordo}{\mathrm{O}}
\newcommand{\Qop}{{\mathbf Q}}
\newcommand{\Lop}{{\mathbf L}}
\newcommand{\Tope}{{\mathbf T}}
\newcommand{\Topep}{\pmb{\mathcal{T}}}
\newcommand{\Hop}{{\mathbf H}}
\newcommand{\Jop}{{\mathbf J}}
\newcommand{\Perop}{\boldsymbol\Pi}
\newcommand{\Superop}{\boldsymbol\Sigma}
\newcommand{\Superoprem}{\Superop_2^{\text{\ding{172}}}}
\newcommand{\SuperopremN}{\Superop_{2,N}^{\text{\ding{172}}}}
\newcommand{\bTheta}{\boldsymbol\Theta}
\newcommand{\Rop}{\mathbf R}
\newcommand{\Vop}{\mathbf V}
\newcommand{\Lspaceo}{{\mathfrak{L}}(\R)}
\newcommand{\LspaceI}{{\mathfrak{L}}(I)}
\newcommand{\LspaceIg}{{\mathfrak{L}}(I_\gamma)}
\newcommand{\LspaceIbg}{{\mathfrak{L}}(I_{\beta/\gamma})}
\newcommand{\LspaceIone}{{\mathfrak{L}}(I_{1})}
\newcommand{\LspaceIoneodd}{{\mathfrak{L}}_{\mathrm{odd}}(I_{1})}
\newcommand{\LspaceIgcompl}{{\mathfrak{L}}(\R\setminus\bar I_\gamma)}
\newcommand{\LspaceIbgcompl}{{\mathfrak{L}}(\R\setminus\bar I_{\beta/\gamma})}
\newcommand{\LspaceIonecompl}{{\mathfrak{L}}(\R\setminus\bar I_{1})}
\newcommand{\Lspaces}{{\mathfrak{L}}_0(\R)}
\newcommand{\Lspaceoper}{{\mathfrak{L}}(\R/2\Z)}
\newcommand{\Lspacesper}{{\mathfrak{L}}_0(\R/2\Z)}
\newcommand{\stars}{\circledast}
\newcommand{\kfun}{q}
\newcommand{\Kfun}{Q}
\newcommand{\Powerseriesclass}{\mathfrak{P}(\gamma)}
\newcommand{\PowerseriesclassR}{\mathfrak{P}_\R(\gamma)}
\newcommand{\PowerseriesclassRdown}{\mathfrak{P}_\R^\downarrow(\gamma)}
\newcommand{\PowerseriesclassII}{\mathfrak{P}(2-t)}
\newcommand{\PowerseriesclassIIRdown}{\mathfrak{P}_\R^\downarrow(2-t)}
\newcommand{\PowerseriesclassIII}{\mathfrak{P}(2-\tfrac{1}{\gamma})}
\newcommand{\PowerseriesclassIIIRdown}
{\mathfrak{P}_\R^\downarrow(2-\tfrac{1}{\gamma})}
\newcommand{\PowerseriesclassIVRdown}
{\mathfrak{P}_\R^\downarrow(2-\tfrac{1}{2-t})}
\newcommand{\PowerseriesclassVRdownj}
{\mathfrak{P}_\R^\downarrow(\gamma_j(t))}
\newcommand{\proj}{{\mathbf P}}
\newcommand{\pev}{\mathrm{vp}}
\newcommand{\almostev}{\mathrm{a.e.}}
\newcommand{\pv}{\operatorname{pv}}
\newcommand{\id}{\operatorname{\text{\bf I}}}
\newcommand{\calE}{{\mathcal E}}
\DeclareMathOperator{\re}{Re}
\DeclareMathOperator{\im}{Im}
\begin{document}

%
\title{An extension of ergodic theory for Gauss-type maps}

\author{Haakan Hedenmalm}
\address{Hedenmalm: Department of Mathematics\\
KTH Royal Institute of Technology\\
S--10044 Stockholm\\
Sweden}

\email{haakanh@kth.se}

\author{Alfonso Montes-Rodr\'\i{}guez}
\address{Montes-Rodr\'\i{}guez: Department of Mathematical Analysis\\
University of Sevilla\\
Sevilla\\
Spain}

\email{amontes@us.es}

\subjclass[2000]{Primary 42B10, 42B20, 35L10, 42B37, 42A64. Secondary
37A45, 43A15.}
\keywords{Transfer operator, Hilbert transform, completeness,
Klein-Gordon equation}

\thanks{The research of Hedenmalm was supported by Vetenskapsr\aa{}det (VR).
The research of both authors was supported
by Plan Nacional ref. MTM2012-35107 and by Junta Andaluc\'\i{}a P12-FQM-633,
FQM-260.}

$\quad$

\begin{abstract}
The impetus to this work is the need to show that for positive reals $\alpha$ 
and $\beta$, the functions
\[
\e^{\imag\pi\alpha m t},\quad \e^{-\imag\pi\beta n/t},\qquad m,n\in\Z_+\cup\{0\},
\]
span a weak-star dense subspace of $H^\infty_+(\R)$ if and only if
$0<\alpha\beta\le1$. Here, $H^\infty_+(\R)$ is the subspace of $L^\infty(\R)$
which consists of those functions whose Poisson extensions to the upper
half-plane are holomorphic. In earlier work in the $L^\infty(\R)$ context,
we showed the relevance of the analysis of the dynamics of the Gauss-type 
mapping $x\mapsto -\beta/x$ mod $2\Z$ for this problem (if $\alpha=1$, 
which can be assumed by a scaling argument). For $\beta=1$, the ergodic 
properties of the absolutely continuous invariant measure $(1-x^2)^{-1}\diff x$ 
on the interval $I_1=]\!-\!1,1[$ turned out to be crucial. In the present
setting, although the norm in $H^\infty_+(\R)$ is the same as in $L^\infty(\R)$,
in the real sense, it is much finer. The corresponding real space is   
$H^\infty_\stars(\R)$, which consists of all the functions in $L^\infty(\R)$ whose 
modified Hilbert transform is also in $L^\infty(\R)$. From the real perspective,
our task is clearer: We need to show that the functions
\[
\e^{\imag\pi m t},\quad \e^{-\imag\pi\beta n/t},\qquad m,n\in\Z,
\]
span a weak-star dense subspace in $H^\infty_\stars(\R)$ precisely when 
$0<\beta\le1$. The predual of $H^\infty_\stars(\R)$ is identified with a space
$\Lspaceo$ of distributions on $\R$, obtained as the sum of $L^1(\R)$ and 
$\Hop L^1_0(\R)$, where $L^1_0(\R)$ is the codimension $1$ subspace of 
$L^1(\R)$ of functions with integral $0$. While the space $\Lspaceo$ consists
of distributions, it also can be said to consist of weak-$L^1$ functions, and
a theorem of Kolmogorov guarantees that the viewpoints are equivalent.
It is in a sense the extension of $L^1(\R)$ which is analogous to having 
BMO$(\R)$ as the extension of $L^\infty(\R)$.  
Whereas transfer (and subtransfer) operators usually act on $L^1$ of an 
interval (or, more generally, on the finite Borel measures on that interval), 
here we consider the corresponding operators acting on the restriction of 
$\Lspaceo$ to the interval $I$ in question, denoted $\LspaceI$.
In the convex body of invariant absolutely continuous probability measures
an element is ergodic if it is an extreme point. In the setting of infinite 
ergodic theory, which is more relevant here, ergodicity means that no 
element of $L^1$ on the interval can be invariant (under the transformation, 
or, which is the same, under the transfer operator). We study mainly a 
particular instance of infinite ergodic theory, and extend the concept of
ergodicity by showing that for the transformation $x\mapsto-\beta/x$ mod 
$2\Z$ on $I_1$, (i) for $0<\beta<1$, there is no nontrivial subtransfer 
operator invariant distribution in $\LspaceIone$, whereas (ii) for $\beta=1$, 
there is no nontrivial transfer operator invariant \emph{odd} distribution 
in $\LspaceIone$. The oddness helps in the proof, but we expect it to be
superfluous. The conclusion is nevertheless strong enough to supply an 
affirmative answer to our original density problem. 
To obtain the results (i)-(ii), we develop new tools,
which offer a novel amalgam of 
ideas from Ergodic Theory with ideas from Harmonic Analysis. 
We need to handle in a subtle way series of 
powers of transfer operators, a rather intractable problem where
even the recent advances by Melbourne and Terhesiu do not apply. More
specifically, our approach involves a splitting of
the Hilbert kernel induced by the transfer operator. The careful
analysis of this splitting involves detours to the Hurwitz zeta function
as well as to the theory of totally positive matrices.

\end{abstract}

\maketitle

\addtolength{\textheight}{2.2cm}







\section{Introduction}

\subsection{An elementary example: the doubling map of
an interval}
\label{subsec-doubling}

Let us consider the doubling map of the unit interval $\bar I_1^+:=[0,1]$, 
given by $\theta(t):=2t$ mod $\Z$; to be more precise, we put $\theta(t)=2t$
on $[0,\frac12]$, and $\theta(t)=2t-1$ on $]\frac12,1]$.  
For $h\in L^\infty(I_1^+)$ and $g\in L^1(I_1^+)$, we have the identity
\[
\int_0^1 h\circ\theta(t)\,g(t)\,\diff t=\int_0^1h(t)\,\bTheta g(t)\,\diff t,
\] 
where $\bTheta$ is the associated \emph{transfer operator}
\[
\bTheta g(t):=\frac12\bigg(g\bigg(\frac{t}{2}\bigg)+g\bigg(\frac{t+1}{2}
\bigg)\bigg),
\qquad t\in I_1^+.
\]
The function $g\in L^1(I_1^+)$ (and the corresponding absolutely continuous 
measure $g(t)\diff t$) is said to be \emph{invariant} with respect to the 
doubling map $\theta$ if $\bTheta g=g$. We quickly check that the constant 
function $g_0(t)\equiv1$ is invariant, and wonder if there are any other 
invariant functions beyond the scalar multiples of $g_0=1$. To analyze
the situation, Fourier analysis comes very handy. We expand the function 
$g\in L^1(I_1^+)$ in a Fourier series
\[
g(t)\sim\sum_{j=-\infty}^{+\infty}\hat g(j)\,\e^{\imag 2\pi jt},\qquad t\in I_1^+,
\]
which actually need not converge pointwise, but this does not bother us. 
The Fourier series associated with $\bTheta g$ is then
\[
\bTheta g(t)\sim\frac12\sum_{j=-\infty}^{+\infty}\hat g(j)\,
(\e^{\imag \pi jt}+\e^{\imag\pi j(t+1)})=\frac12\sum_{j=-\infty}^{+\infty}\hat g(j)\,
(1+(-1)^j)\e^{\imag \pi jt}=\sum_{k=-\infty}^{+\infty}\hat g(2k)\,
\e^{\imag 2\pi kt},
\]
and, by iteration,
\[
\bTheta^n g(t)\sim\sum_{k=-\infty}^{+\infty}\hat g(2^n k)\,
\e^{\imag 2\pi kt},\qquad n=0,1,2,\ldots.
\]
If $g$ solves the more general eigenvalue problem $\bTheta g=\lambda g$
for some complex nonzero scalar $\lambda\in\C^\times:=\C\setminus\{0\}$, 
then we see by equating Fourier coefficients that we must have
\begin{equation}
\hat g(k)=\lambda^{-n}\hat g(2^nk),\qquad k\in\Z,
\label{eq-invariant1.011}
\end{equation}
for $n=0,1,2,\ldots$. By plugging in $k=0$, we derive from the above equation
\eqref{eq-invariant1.011} that $\lambda=1$ is the only possibility, provided
that $\hat g(0)\ne0$. 
Moreover, for $k\in \Z^\times:=\Z\setminus\{0\}$, we know from the 
Riemann-Lebesgue lemma that
\[
\hat g(2^nk)\to0\quad\text{as}\quad n\to+\infty,
\]
which lets us to conclude from \eqref{eq-invariant1.011} that
\[
\hat g(k)=0,\qquad k\in\Z^\times,
\]
provided that $|\lambda|\ge1$. In this case, $g$ is of course constant, 
and if the constant is nonzero, then we also know that $\lambda=1$.
In particular, the only invariant functions in $L^1(I_1^+)$ are the constants.
This observation is an equivalent reformulation of the well-known ergodicity
of the doubling map with respect to the uniform measure on the interval $I_1^+$
(see below). 

{\sc Observation:} 
As we look back at the argument just presented, we realize that we did not
use all that much about the function $g$, just that the conclusion of the
Riemann-Lebesgue lemma holds. So in principle, we could replace $g$ by a
finite Borel measure, and obtain the same conclusion, if the Fourier 
coefficients of the measure tend to $0$ at infinity. Such measures are known
as \emph{Rajchman measures}, and have been studied in depth in harmonic 
analysis. But the point of view we want to present here goes beyond that
setting. \emph{We are in fact at liberty to replace $g$ by a distribution with
a periodic extension}, so that it has a Fourier series expansion, and so long
as its Fourier coefficients $\hat g(j)$ tend to $0$ as $|j|\to+\infty$, 
the argument works, and tells us that the constants are the unique 
$\bTheta$-invariant elements of this much wider space of distributions. 
Such periodic distributions $g$ which Fourier coefficients $\hat g(j)$ 
which tend to $0$ as $|j|\to+\infty$ deserve to be called 
\emph{Rajchman pseudomeasures} (cf. \cite{Katzbook}).  
This uniqueness within the Rajchman pseudomeasures can be understood as 
\emph{an extension of standard ergodic theory for the doubling map with 
respect to the constant density $1$}. Indeed, 
an easy argument shows that the following are equivalent, for an invariant 
probability measure $\mu$:

(i) $\mu$ is ergodic, and

(ii) whenever $\nu$ is a finite (signed) invariant measure, absolutely 
continuous with respect to $\mu$, then $\nu$ is a scalar multiple of $\mu$.

This is probably well-known. For completeness, we supply the relevant
argument. Note first that we may restrict to real measures and real scalars 
in (ii). The implication (i)$\implies$(ii) is pretty standard and runs 
as follows. 
By replacing $\nu$ by the sum of $\nu$ and an appropriate scalar multiple 
of $\mu$, we reduce to the case when $\nu$ has signed mass $0$. Then, unless 
$\nu=0$, we split $\nu$ into positive and negative parts, which are seen to 
be left invariant by the transfer operator, as otherwise the transfer 
operator applied to $\nu$ would have smaller total variation than $\nu$ 
itself. But then the support (or rather, carrier) sets for the positive and 
negative parts are necessarily invariant under the 
transformation, in violation of the ergodic assumption (i), and the only
remaining alternative is that $\nu=0$, i.e., the assertion (ii) holds.
The remaining implication (ii)$\implies$(i) is even simpler. 
We prove the contrapositive implication, and assume that (i) fails, so
that $\mu$ is not ergodic. Then $\mu$ is not an extreme point in the 
convex body of all invariant probability measures, and hence it
splits as a nontrivial convex combination of two invariant probability 
measures. Both measures are assumed different than $\mu$ itself, and each 
one is obviously absolutely continuous with respect to $\mu$, which shows 
that (ii) fails as well.

\subsection{The Gauss-type maps on the symmetric unit 
interval} 
\label{subsec-gausstype}

It was the fact that the doubling map is piecewise affine that made it amenable
to methods from Fourier analysis. This is not the case for the Gauss-type map
$\tau_\beta$ acting on the symmetric interval $I_1:=]\!-\!1,1[$, defined in 
the following fashion. 
First, we let $\{x\}_2$ denote the \emph{even-fractional part of $x$},
by which we mean the unique number in the half-open interval
$\tilde I_1:=]\!-\!1,1]$ with $x-\{x\}_2\in2\Z$. The Gauss-type map 
$\tau_\beta:\tilde I_1\to \tilde I_1$ is given by the expression
\[
\tau_\beta(x):=\bigg\{-\frac{\beta}{x}\bigg\}_2.
\]
Here and in the sequel, \emph{$\beta$ is assumed real with $0<\beta\le1$}. 
The basic properties of $\tau_\beta$ are well-known, see, e.g. \cite{HMcpam}.
We outline the basic aspects below, which are mainly based on the work of 
Thaler \cite{Thaler} and Lin \cite{Lin}. 
For $0<\beta<1$, the 
set $I_1\setminus\bar I_\beta$ acts as an \emph{attractor} for the iterates under
$\tau_\beta$, and inside the attractor $I_1\setminus\bar I_\beta$, the orbits form
$2$-cycles. Here, $\bar I_\beta$ denotes the symmetric interval 
$\bar I_\beta:=[-\beta,\beta]$. For $\beta=1$, on the other hand, we are in the
setting of infinite ergodic theory, where $(1-x^2)^{-1}\diff x$ is the ergodic
invariant measure. The reason is that the endpoint $1$ (which for all
essential purposes may be identified with the left-hand endpoint $-1$ for
the dynamics) is only weakly repelling. The \emph{tranfer operator} 
$\Topep_\beta$ linked with the map $\tau_\beta$ is the operator which can be 
understood as taking the unit point mass $\delta_x$ at a point 
$x\in\tilde I_1$ to the unit point mass $\delta_{\tau_\beta(x)}$ at the point 
$\tau_\beta(x)$. To be more definitive, for a function $f\in L^1(I_1)$, we 
write $f$ as an integral of point masses,
\begin{equation}
f(x)=\int_{I_1}f(t)\,\delta_x(t)\,\diff t=\int_{I_1}f(t)\,\delta_t(x)\,\diff t,
\label{eq-deltadecomp1.001}
\end{equation}
understood in the sense of distribution theory, and say that 
\begin{equation}
\Topep_\beta f(x):=\int_{I_1}f(t)\,\Topep_\beta\delta_t(x)\,\diff t=
\int_{I_1}f(t)\,\delta_{\tau_\beta(t)}(x)\,\diff t,\qquad x\in I_1,
\label{eq-Topepdef1}
\end{equation}
which is seen to be the same as the more explicit formula
\begin{equation}
\Topep_\beta f(x)=
\begin{cases}
\displaystyle\sum_{j\in\Z}\frac{\beta}{(x+2j)^2}\,f\bigg(-
\frac{\beta}{x+2j}\bigg),\qquad x\in I_1,
\\
\,\,0,\qquad\qquad\qquad\qquad\qquad\quad\,\, x\in\R\setminus I_1,
\end{cases}
\label{eq-Sop1.002'}
\end{equation}
which has the added advantage that the values off the interval $I_1$ are 
declared to vanish. The behavior of $\tau_\beta$ is rather uninteresting on the
attractor $I_1\setminus\bar I_\beta$, and for this reason, we introduce the
\emph{subtransfer operator} $\Tope_\beta$ which discardsthe  point masses from 
the attractor. In other words, we put 
\begin{equation}
\Tope_\beta f(x):=\Topep(1_{\bar I_\beta}f)(x)=\int_{\bar I_\beta}
f(t)\,\Topep_\beta\delta_t(x)\,\diff t=\int_{\bar I_\beta}f(t)\,
\delta_{\tau_\beta(t)}(x)\,\diff t,\qquad x\in I_1.
\label{eq-Topedef1}
\end{equation}
In more direct terms, this is the same as
\begin{equation}
\Tope_\beta f(x):=\sum_{j\in \Z^\times}\frac{\beta}{(2j+x)^2}
f\bigg(-\frac{\beta}{2j+x}\bigg),\qquad x\in I_1,
\label{eq-Tope.1}
\end{equation}
which we see from \eqref{eq-Sop1.002'}. Here, $\Z^\times=\Z\setminus\{0\}$, as
before. For $0<\beta<1$, the $\tau_\beta$-orbit of a point $x\in I_1$ falls
into the attractor $I_1\setminus \bar I_\beta$ almost surely. In terms of
the subtransfer operator $\Tope_\beta$, this means that 
\begin{equation}
\forall f\in L^1(I_1):\quad \Tope_\beta^n f\to0\quad\text{in}\quad L^1(I_1),
\quad\text{if}\quad 0<\beta<1.
\label{eq-exact001}
\end{equation}
For $\beta=1$, things are a little more subtle. Nevertheless, it can be 
shown that 
\begin{equation}
\forall f\in L^1(I_1):\quad 1_{I_\eta}\Tope_1^n f\to0\quad\text{in}\quad L^1(I_1),
\label{eq-exact002}
\end{equation}
for every fixed real $\eta$ with $0<\eta<1$. Here, as expected, $I_\eta$ is
the symmetric interval $I_\eta:=]\!-\!\eta,\eta[$. In particular, there is
no nontrivial function $f\in L^1(I_1)$ with $\Tope_\beta f=\lambda f$ for any 
$\lambda\in\C$ with $|\lambda|\ge1$ and any $\beta$ with $0<\beta\le1$.

In \cite{HMcpam}, the subtransfer operator $\Tope_\beta$ was shown to extend to
a bounded operator on the space $\LspaceIone$, whose elements are 
distributions on $I_1$. The space $\LspaceIone$ consists of the restrictions 
to the open interval $I_1$ of the distributions in the space 
\[
\Lspaceo:= L^1(\R)+\Hop L^1_0(\R),
\]
supplied with the induced quotient norm, as we mod out with respect to all 
the distributions whose support is contained in $\R\setminus I_1$. 
The quotient norm comes from the norm on the space $\Lspaceo$, which is 
given by  
\begin{equation}
\|u\|_{\Lspaceo}:=\inf\big\{\|f\|_{L^1(\R)}+\|g\|_{L^1(\R)}:\,\,
u=f+\Hop g,\,\, f\in L^1(\R),\,\,g\in L^1_0(\R)\big\},
\label{eq-normLspaceo}
\end{equation}
and we should mention that the $\Lspaceo$ is in the natural sense 
\emph{the predual of the real $H^\infty$-space on the line, 
denoted by $H^\infty_\stars(\R)$}, which consists of all the functions in 
$L^\infty(\R)$ whose modified Hilbert transform also is in $L^\infty(\R)$. 
In the definition of the space $\Lspaceo$, the letter $\Hop$ stands for the 
\emph{Hilbert transform}, given by the principal value integral
\begin{equation*}
\Hop g(x):=\frac{1}{\pi}\mathrm{pv}\int_\R g(t)\frac{\diff t}{x-t}=
\lim_{\epsilon\to0^+}\frac{1}{\pi}
\int_{\R\setminus[x-\epsilon,x+\epsilon]} g(t)\frac{\diff t}{x-t},
\end{equation*}
and $L^1_0(\R)$ is the codimension $1$ subspace
\[
L^1_0(\R):=\Bigg\{g\in L^1(\R):\,\,\int_\R g(t)\diff t=0\Bigg\}.
\]
By a theorem of Kolmogorov, the Hilbert transform of an $L^1(\R)$ is 
well-defined pointwise almost everywhere as a function in the quasi-Banach 
space $L^{1,\infty}(\R)$ of weak-$L^1$ functions. More generally, if 
$E\subset\R$ is Lebesgue measurable with positive length, the 
\emph{weak-$L^1$ space} $L^{1,\infty}(E)$ consists of all measurable functions 
$f:E\to\C$ with finite quasinorm
\begin{equation}
\|f\|_{L^{1,\infty}(E)}:=\sup\big\{\lambda|N_f(\lambda)|:\,\lambda>0\big\},
\label{eq-quasinorm1.0099}
\end{equation}
where $N_f(\lambda)$ denotes the set 
\[
N_f(\lambda):= \{t\in E:\,|f(t)|>\lambda\},
\]  
and the absolute value sign assigns the linear length to given set. 
 Kolmogorov's theorem allows us to
think of the distributions (or pseudomeasures) in $\Lspaceo$ as elements of
$L^{1,\infty}(\R)$, so that in particular, $\LspaceIone$ can be identified with
a subspace of $L^{1,\infty}(I_1)$, the corresponding weak-$L^1$ space on the 
interval $I_1$. For the pointwise interpretation, the formula \eqref{eq-Tope.1}
for the operator $\Tope_\beta$ remains valid. We will work mainly in the setting
of distribution theory. When we need to speak of the pointwise function 
rather than the distribution $u$, we write $\pev(u)$ in place of $u$, and 
call it the \emph{valeur au point}. So ``$\pev$'' maps from distributions 
to functions. 
 
On the space $L^1(I_1)$, the subtransfer operators $\Tope_\beta$ all act
contractively. This is not the case with the extension to $\LspaceIone$.

\begin{thm}
Fix $0<\beta\le1$. Then the operator $\Tope_\beta:\,\LspaceIone\to\LspaceIone$ 
is bounded, but its norm exceeds $1$.
\label{thm-nocontract}
\end{thm}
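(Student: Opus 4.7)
Since the boundedness of $\Tope_\beta$ on $\LspaceIone$ was established in earlier work \cite{HMcpam} (or developed in the preceding sections of the present paper), my task is only to demonstrate the strict inequality $\|\Tope_\beta\|_{\LspaceIone\to\LspaceIone}>1$. I plan to exhibit a specific element $u\in\LspaceIone$ with $\|\Tope_\beta u\|_{\LspaceIone}>\|u\|_{\LspaceIone}$. Any such witness must leave the Lebesgue-integrable part of the space, since $\Tope_\beta$ is contractive on $L^1(I_1)$ (because $\int|\Tope_\beta f|\le\int\mathbf{1}_{\bar I_\beta}|f|\le\|f\|_{L^1}$); the strict expansion has to exploit the gap between the $L^1(I_1)$ norm and the generally smaller $\LspaceIone$ quotient norm, a gap created by the Hilbert-transform component $\Hop L^1_0(\R)$ inside $\Lspaceo$.

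The natural test element is $u=\Hop g$ for a carefully chosen $g\in L^1_0(\R)$ supported in $\bar I_\beta$, which guarantees $\|u\|_{\LspaceIone}\le\|g\|_{L^1(\R)}$. Combining \eqref{eq-Tope.1} with the partial-fraction expansion $\tfrac{\pi}{2}\cot(\pi y/2)=\pv\sum_{k\in\Z}(y-2k)^{-1}$ should yield a splitting of the form
\[
\Tope_\beta\Hop g(x)=\tfrac12\,\pv\!\int\tilde g(t)\cot\!\Bigl(\tfrac{\pi(x-t)}{2}\Bigr)\,\diff t-\Hop\tilde g(x),\qquad x\in I_1,
\]
where $\tilde g(t):=(\beta/t^2)\,g(-\beta/t)\,\mathbf{1}_{\R\setminus I_1}(t)$ is an element of $L^1_0(\R)$ satisfying $\|\tilde g\|_{L^1(\R)}=\|g\|_{L^1(\R)}$, as one checks by the substitution $s=-\beta/t$. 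This is the prototype of the Hilbert-kernel splitting alluded to in the abstract: $\Tope_\beta\Hop g$ is written as a ``smooth'' cotangent-kernel integral plus the Hilbert transform of a new function that now lives off $I_1$.

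The main obstacle is to upgrade this representation into a matching \emph{lower} bound on $\|\Tope_\beta\Hop g\|_{\LspaceIone}$, ruling out wholesale cancellation between the two pieces. My plan is to proceed by duality: pick a test function $h\in H^\infty_\stars(\R)$ supported in $\bar I_1$ (for instance $h(x)=(1-x^2)\mathbf{1}_{I_1}(x)$, which one verifies belongs to the dual $(\LspaceIone)^*$), and compute the pairing $\langle\Tope_\beta\Hop g,h\rangle=-\langle g,\Hop\Tope_\beta^*h\rangle$ using the adjoint formula $\Tope_\beta^*h(y)=\mathbf{1}_{\bar I_\beta}(y)\,h(\tau_\beta(y))$. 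The rapid oscillation of $h\circ\tau_\beta$ near $y=0$ (one full bump per branch of $\tau_\beta^{-1}$) amplifies the relevant $H^\infty_\stars$-norm; for a concrete choice such as $g=\mathbf{1}_{[0,\beta]}-\mathbf{1}_{[-\beta,0]}$ a direct computation should then yield $|\langle\Tope_\beta\Hop g,h\rangle|>\|g\|_{L^1(\R)}\,\|h\|_{(\LspaceIone)^*}$, which delivers the strict inequality. The principal analytic subtlety is verifying that $\Tope_\beta^*h$ stays in $H^\infty_\stars(\R)$ despite the oscillation, i.e.\ that its modified Hilbert transform remains bounded; this follows from the same estimates that underpin the boundedness of $\Tope_\beta$ itself.
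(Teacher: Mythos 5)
Your overall strategy---reduce to a duality/adjoint computation and find a witness whose Hilbert-transform component is amplified---is in the same spirit as the paper's proof, which shows $\|\Tope_\beta\|=\|\Tope_\beta^*\|=\|(\Superoprem)^*\|$ (using that $\Jop_\beta^*$ and $\Rop_\beta^*$ are isometric) and then exhibits a concrete $g_\gamma(x)=x\sqrt{\gamma^2-x^2}\,1_{I_\gamma}(x)$ with $\|(\Superoprem)^*g_\gamma\|_\stars>\|g_\gamma\|_\stars$. But your proposal stops exactly where the real work begins. The decisive claim---that for $h(x)=(1-x^2)1_{I_1}(x)$ and $g=1_{[0,\beta]}-1_{[-\beta,0]}$ a ``direct computation should then yield'' $|\langle\Tope_\beta\Hop g,h\rangle|>\|g\|_{L^1(\R)}\|h\|_{(\LspaceIone)^*}$---is not a computation but a hope, and it is a very strong requirement: by duality it forces \emph{both} that $\Tope_\beta$ strictly expands the norm of $\Hop g$ \emph{and} that $h$ essentially attains the supremum in the duality pairing $\|\Tope_\beta\Hop g\|_{\LspaceIone}=\sup_h|\langle\Tope_\beta\Hop g,h\rangle|/\|h\|_\stars$. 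Generic choices of $g$ and $h$ will not achieve this near-saturation, and nothing in your argument singles these out as extremal. The paper's own witness illustrates how delicate the issue is: after an asymptotic expansion in $\gamma$ the leading terms of order $\gamma^2$ exactly cancel, and strict expansion is extracted only from a fourth-order term ($\gamma^2+\frac{\gamma^4}{32}+\Ordo(\gamma^6)>\gamma^2$). So the excess of the norm over $1$ is quantitatively tiny along that family, which makes it implausible that a crude pairing estimate with unoptimized test functions closes the gap.

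A secondary weak point: your reduction routes the estimate through $\Hop\bigl(h\circ\tau_\beta\cdot 1_{\bar I_\beta}\bigr)$, where $h\circ\tau_\beta$ consists of infinitely many bumps accumulating at the origin (one per inverse branch). You assert that its modified Hilbert transform stays bounded ``by the same estimates that underpin the boundedness of $\Tope_\beta$''; qualitatively that is defensible by duality ($\Tope_\beta^*$ is bounded on $H^\infty_\stars$ because $\Tope_\beta$ is bounded on $\LspaceIone$), but you then need a quantitative \emph{lower} bound on $\|\Hop\Tope_\beta^*h\|_{L^\infty}$ exceeding $\|h\|_\stars$, which again is the unproved heart of the matter. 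The paper avoids the composition with $\tau_\beta$ entirely by isolating the expansion in the periodization operator $(\Superoprem)^*$, where the Hilbert transform of the periodized test function can be evaluated at two explicit points and its oscillation compared with $2\|g_\gamma\|_\stars$. To repair your argument you would need to either carry out an analogous two-point oscillation estimate for $\Hop\Tope_\beta^*h$ with a well-chosen $h$, or switch to the paper's reduction; as written, the proof is incomplete at its only nontrivial step.
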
 

The proof of Theorem \ref{thm-nocontract} is supplied in Subsection 
\ref{subsec-normexp1}.

A decomposition analogous to \eqref{eq-deltadecomp1.001} holds for 
distributions $u\in\LspaceIone$ as well, only we would need two integrals, one 
with $\delta_t(x)$ and the other with $\Hop\delta_t(x)$ (and the latter 
integral should be taken over a bigger interval, e.g. $I_2=]\!-\!2,2[$ to
allow for tails). Thinking physically, we allow for two kinds of 
``particles'', focused particles $\delta_t$ as well as spread-out particles 
$\Hop\delta_t$. Then $\LspaceIone$ is a kind of state space, and $\Tope_\beta$
acts on this state space. It is then natural to ask whether there is a 
nontrivial invariant state under $\Tope_\beta$. More generally, we would ask
whether there exists a $u\in \LspaceIone$ with $\Tope_\beta u=\lambda u$
for any scalar $\lambda\in\C$ with $|\lambda|\ge1$. To appreciate the subtlety 
of this question, we note that in the slightly larger space $L^{1,\infty}(I_1)$, 
\emph{there are plenty of invariant states $u\in L^{1,\infty}(I_1)$ with 
$\Tope_\beta u=u$}, see the example provided in Remark \ref{example}.  
That example is constructed as the Hilbert transform of the difference of two 
Dirac point masses, with one point inside $I_1$ and the other point outside
$\bar I_1$. The example in fact suggests that within the space of 
Hilbert transforms of finite Borel measures, the invariant states
might possess an intricate and interesting structure. In the space $\Lspaceo$,
which contains the Hilbert transforms of the absolutely continuous measures, 
this is however not the case.

\begin{thm}
Fix $0<\beta<1$. 
For $u_0\in\LspaceIone$, we have the asymptotic decay 
$\pev(\Tope_\beta^N u_0)\to0$ in $L^{1,\infty}(I_1)$ as $N\to+\infty$.
\label{thm-basic1.001}
\end{thm}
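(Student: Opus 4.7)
The plan is to exploit the defining decomposition $\Lspaceo = L^1(\R) + \Hop L^1_0(\R)$ and treat the two constituents separately.  Writing $u_0 = f|_{I_1} + (\Hop g)|_{I_1}$ with $f\in L^1(\R)$ and $g\in L^1_0(\R)$, it suffices to establish the conclusion for $f|_{I_1}$ and for $(\Hop g)|_{I_1}$ individually.  The first case is immediate from \eqref{eq-exact001}: since $\Tope_\beta$ acts contractively on $L^1(I_1)$ and $\Tope_\beta^N f\to 0$ in $L^1(I_1)$, the continuous embedding $L^1(I_1)\hookrightarrow L^{1,\infty}(I_1)$ yields the claim.

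The bulk of the work concerns $\Tope_\beta^N(\Hop g)|_{I_1}$.  The aim is to establish an intertwining identity of the form
\[
\Tope_\beta(\Hop g)\big|_{I_1} \,=\, \Hop(\Sop_\beta g)\big|_{I_1} + R_\beta g,
\]
where $\Sop_\beta:L^1_0(\R)\to L^1_0(\R)$ is an auxiliary operator born of a controlled splitting of the Hilbert kernel (as advertised in the abstract) and $R_\beta:L^1_0(\R)\to L^1(I_1)$ is a bounded smoothing remainder.  To extract this decomposition, I would start from the pointwise expression
\[
\Tope_\beta(\Hop g)(x) = \frac{1}{\pi}\sum_{j\in\Z^\times}\frac{\beta}{(2j+x)^2}\,\pv\int_\R\frac{g(t)\,\diff t}{-\beta/(2j+x)-t},
\]
perform the change of variables $t=-\beta/s$, and apply the partial-fraction identity
\[
\frac{1}{s\bigl(s-(2j+x)\bigr)} \,=\, \frac{1}{2j+x}\biggl(\frac{1}{s-(2j+x)}-\frac{1}{s}\biggr)
\]
to peel the principal-value singularity at $s = 2j+x$ away from a lower-order piece.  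The hypothesis $\int g = 0$ is exactly what allows the $1/s$ contribution to be absorbed into $R_\beta g$; summing in $j\in\Z^\times$ then reorganizes the singular part into a genuine Hilbert transform, thereby defining the operator $\Sop_\beta$.

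Iterating the identity gives
\[
\Tope_\beta^N(\Hop g)\big|_{I_1} \,=\, \Hop(\Sop_\beta^N g)\big|_{I_1} + \sum_{k=0}^{N-1}\Tope_\beta^k R_\beta\Sop_\beta^{N-1-k}g,
\]
so the desired conclusion follows once one establishes (a) $\Sop_\beta^N g\to 0$ in $L^1_0(\R)$, so that by Kolmogorov's theorem $\pev\bigl(\Hop(\Sop_\beta^N g)\bigr)\to 0$ in $L^{1,\infty}$, and (b) the error sum tends to $0$ in $L^{1,\infty}(I_1)$.  The main obstacle is (a): the coefficients of $\Sop_\beta$ involve Hurwitz-zeta-type sums $\sum_{j\in\Z^\times}(2j+x)^{-s}$, and extracting the dissipation $\Sop_\beta^N g\to 0$ demands delicate quantitative control of these sums together with the total-positivity property of the associated transition matrices foreshadowed in the abstract.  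A secondary difficulty, stemming from Theorem \ref{thm-nocontract}, is that $\|\Tope_\beta^k\|_{\LspaceIone}$ is not uniformly bounded by $1$, so the geometric decay of $\|\Sop_\beta^{N-1-k}g\|_{L^1}$ must strictly dominate that growth when one estimates the summands in (b).
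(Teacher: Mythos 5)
Your skeleton is the right one and essentially matches the paper's: decompose $u_0=f+\Rop_1\Hop g$, dispose of the $L^1$ part by Proposition \ref{prop-ergodicity1.01}(iv), and handle $\Rop_1\Hop g$ through an intertwining identity between $\Tope_\beta$ and the Hilbert transform. In the paper that identity is Lemma \ref{lem-5.7.3} (iterated in Proposition \ref{prop-5.7.3}), and in it your auxiliary operator $\Sop_\beta$ is nothing mysterious: it is the ordinary transfer operator $\Topep_\beta$ acting on $L^1(I_1)$, while the remainder is built from $\Rop_1\Hop\Jop_\beta=\Qop_\beta$, whose kernel is dominated by $2\kappa_\beta$.

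The genuine gap is your step (a). In the natural reading, $\Sop_\beta^N g\to0$ in $L^1$ is \emph{false}: the transfer operator is an $L^1$-isometry on positive functions (Proposition \ref{prop-ergodicity1.01}(ii)), so no norm decay of $\Topep_\beta^N g$ is available, and for $0<\beta<1$ there is no mean-zero mixing statement to invoke either. The actual dissipation mechanism is different and two-fold. First, one splits $g$ into $g_2$ supported off $I_\beta$ and $g_1$ supported on $\bar I_\beta$; for $g_2$ the Hilbert transform is smooth on $I_1$ and bounded by a multiple of $\kappa_\beta$, and the subinvariance $\Tope_\beta\kappa_\beta=\kappa_1$, $\Tope_\beta^n\kappa_1\le\beta^n\kappa_1$ (Lemma \ref{prop-kappa1}) gives \emph{geometric pointwise} decay of $\Tope_\beta^N\Rop_1\Hop g_2$ — this is also what controls your error sum (b), sidestepping the fact that $\|\Tope_\beta^k\|_{\LspaceIone}>1$. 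Second, for $g_1$ one does not show $\Topep_\beta^n g_1\to0$; instead one splits $g_1$ according to the $n$-step wandering set $\calE_{\beta,n+1}$: the part supported there has small $L^1$ norm since $|\calE_{\beta,n+1}|\to0$, while the complementary part is pushed by $\Topep_\beta^n$ into $L^1(I_1\setminus\bar I_\beta)$, where the geometric estimate above applies again. A double limit ($n\to+\infty$ first, then $k\to+\infty$) closes the argument. Finally, your diagnosis that Hurwitz-zeta estimates and total positivity are needed here is off: those tools enter only in the critical case $\beta=1$ (Theorem \ref{thm-basic1.002}); for $0<\beta<1$ the proof is elementary once the commutator identity and the subinvariant function $\kappa_1$ are in hand.
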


So, although $\Tope_\beta$ has norm that exceeds $1$ on $\LspaceIone$, 
the orbit of a given $u\in\LspaceIone$ converges to $0$ in the weaker sense 
of the quasinorm in $L^{1,\infty}(I_1)$. In other words, the 
$L^{1,\infty}$-quasinorm serves as a \emph{Lyapunov energy} for the asymptotic 
stability of the $\Tope_\beta$-orbits. 
In the setting of the smaller space $L^1(I_1)$, this convergence amounts to the
statement that the basin of attraction of the attractor 
$I_1\setminus\bar I_\beta$ contains almost every point of the interval $I_1$.
Apparently, this property extends to the larger space $\LspaceIone$, but not
to e.g. $L^{1,\infty}(I_1)$ (see Remark \ref{example}).
The proof of Theorem \ref{thm-basic1.001} is supplied in Subsection 
\ref{subsec-thm-basic1.001}.

\begin{cor}
Fix $0<\beta<1$. 
If $\Tope_\beta u=\lambda u$ for some $u\in\LspaceIone$ and
some scalar $\lambda\in\C$ with $|\lambda|\ge1$, then $u=0$.
\label{eq-ergodic0.00001}
\end{cor}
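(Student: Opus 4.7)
The plan is to deduce the corollary directly from Theorem \ref{thm-basic1.001}, combined with the positive homogeneity of the weak-$L^1$ quasinorm and the injectivity of the valeur au point map on $\LspaceIone$ that is guaranteed by Kolmogorov's theorem.

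First, suppose $\Tope_\beta u = \lambda u$ for some $u \in \LspaceIone$ and some $\lambda \in \C$ with $|\lambda| \geq 1$. Iterating the eigenvalue relation yields $\Tope_\beta^N u = \lambda^N u$ in $\LspaceIone$ for every positive integer $N$, and since $\pev$ is linear, this passes to the identity
\[
\pev(\Tope_\beta^N u) = \lambda^N \pev(u)
\]
in $L^{1,\infty}(I_1)$. From the definition \eqref{eq-quasinorm1.0099} one sees that the quasinorm $\|\cdot\|_{L^{1,\infty}(I_1)}$ is positively homogeneous of degree one, so that
\[
\|\pev(\Tope_\beta^N u)\|_{L^{1,\infty}(I_1)} = |\lambda|^N \,\|\pev(u)\|_{L^{1,\infty}(I_1)} \geq \|\pev(u)\|_{L^{1,\infty}(I_1)},
\]
uniformly in $N$, by the standing assumption $|\lambda| \geq 1$.

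On the other hand, Theorem \ref{thm-basic1.001} asserts that $\pev(\Tope_\beta^N u) \to 0$ in $L^{1,\infty}(I_1)$ as $N \to +\infty$. Comparing this with the display above forces $\|\pev(u)\|_{L^{1,\infty}(I_1)} = 0$, so that $\pev(u) = 0$ in $L^{1,\infty}(I_1)$. The final step is to pass from the vanishing of the weak-$L^1$ representative to the vanishing of the distribution $u$ itself; this is exactly the content of the identification of $\Lspaceo$ (and hence of the restriction $\LspaceIone$) with a subspace of $L^{1,\infty}$ via $\pev$, a consequence of Kolmogorov's theorem recalled in the introduction. Under this identification $\pev$ is injective, so $u = 0$ in $\LspaceIone$, as required.

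There is no real obstacle beyond Theorem \ref{thm-basic1.001} itself: once the Lyapunov-type decay $\pev(\Tope_\beta^N u_0) \to 0$ in the weak-$L^1$ quasinorm is in hand, the passage from eigenfunction to zero is routine, the only subtle point being that the quasinorm is not a norm, which is why we must exploit its homogeneity directly rather than rely on a contraction argument (indeed, Theorem \ref{thm-nocontract} tells us that $\Tope_\beta$ is not a contraction on $\LspaceIone$).
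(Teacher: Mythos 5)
Your proposal is correct and is exactly the argument the paper intends: the paper simply asserts that the corollary "follows from Theorem \ref{thm-basic1.001}," and your fleshing-out via iteration of the eigenvalue relation, homogeneity of the $L^{1,\infty}$ quasinorm, and injectivity of $\pev$ (the local Kolmogorov proposition) is the standard and correct way to make that deduction precise.
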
 

In other words, for $0<\beta$, the point spectrum of the operator 
$\Tope_\beta:\LspaceIone\to\LspaceIone$ is contained in the open unit disk 
$\D$. It is clear that Corollary \ref{eq-ergodic0.00001} follows from
Theorem \ref{thm-basic1.001}. 

Our understanding is slightly less complete for $\beta=1$.
We recall that a distribution, defined on a symmetric interval about $0$,
is \emph{odd} if its action on the even test functions equals $0$.

\begin{thm}
$(\beta=1)$
For odd $u_0\in\LspaceIone$, we have the asymptotic decay
$1_{I_\eta}\pev(\Tope_1^N u_0)\to0$ in $L^{1,\infty}(I_1)$ as $N\to+\infty$
for each $\eta$ with $0<\eta<1$.
\label{thm-basic1.002}
\end{thm}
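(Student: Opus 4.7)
The plan is to mimic and extend the proof of Theorem~\ref{thm-basic1.001}, replacing the global $L^1$-decay \eqref{eq-exact001} by its localized counterpart \eqref{eq-exact002}, and using the oddness of $u_0$ to compensate for the lack of contractivity of $\Tope_1$ on $\LspaceIone$ near the weakly repelling endpoints $\pm1$.

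As a preliminary step I would pick a representative of $u_0$ of the form $u_0=f_0+\Hop g_0$ on $I_1$, with $f_0\in L^1(\R)$ and $g_0\in L^1_0(\R)$, and antisymmetrize: replace $f_0(x)$ by $\tfrac12(f_0(x)-f_0(-x))$, and similarly for $g_0$. This antisymmetrization preserves the $L^1$ classes and the fact that the sum represents $u_0$ on $I_1$, since $u_0$ is odd there; moreover, $\Hop$ preserves oddness by a direct change of variables in its principal-value kernel. The $L^1$-part is then handled immediately by \eqref{eq-exact002}: $1_{I_\eta}\Tope_1^N f_0\to0$ in $L^1(I_1)$, hence in $L^{1,\infty}(I_1)$.

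The core of the argument is to control $1_{I_\eta}\pev(\Tope_1^N\Hop g_0)$. My strategy is to split the action of the transfer operator on the Hilbert kernel, producing an intertwining identity of the shape
\[
\Tope_1\Hop g=\Hop\Sop g+\Rop g\qquad\text{on }I_1,
\]
valid for odd $g\in L^1_0(\R)$, where $\Sop$ preserves and contracts the odd subspace of $L^1_0(\R)$, and $\Rop$ maps it continuously into $L^1(\R)$. Iterating,
\[
\Tope_1^N\Hop g_0=\Hop\Sop^N g_0+\sum_{n=0}^{N-1}\Tope_1^{\,n}\Rop\Sop^{N-1-n}g_0.
\]
The first term converges to $0$ in $L^{1,\infty}(\R)$ (hence on $I_\eta$) once $\Sop^N g_0\to0$ in $L^1(\R)$, by Kolmogorov's weak-type $(1,1)$ bound for $\Hop$. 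The second term is a series of $L^1$-pieces with geometrically decreasing norms; splitting the range of $n$ into a regime where $\Sop^{N-1-n}g_0$ is already small in $L^1(\R)$ and a regime where $\Tope_1^n$ has acted often enough for the localized decay \eqref{eq-exact002} to take effect reduces the sum to a vanishing tail plus a term controlled by the $L^1$-step. The oddness hypothesis is indispensable: without it, the diagonal Hilbert singularity, weighted by $(2j+x)^{-2}$ near $j=0$, would produce a non-integrable obstruction near $\pm 1$, exactly where the invariant density $(1-x^2)^{-1}$ blows up.

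The principal obstacle is the construction of the splitting and the estimation of $\Sop$ and $\Rop$. Resumming
\[
\Tope_1\Hop g(x)=\sum_{j\in\Z^\times}\frac{1}{(2j+x)^2}\,\Hop g\!\left(-\frac{1}{2j+x}\right)
\]
into a principal-value kernel requires delicate summation formulas: the Hurwitz zeta function arises naturally from $\sum_{j\in\Z^\times}(2j+x)^{-s}$ and its derivatives, and the discretized kernel will be a totally positive matrix whose spectral theory underlies the contractivity of $\Sop$ on the odd subspace. Since $\beta=1$ is borderline, it is precisely the odd-subspace cancellation that pushes the norms back into the contractive regime and makes the program close.
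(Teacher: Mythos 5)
There is a genuine gap, and it sits exactly where the critical case $\beta=1$ differs from $0<\beta<1$. Your intertwining identity $\Tope_1\Hop g=\Hop\Sop g+\Rop g$ does exist in the paper's form (Lemma \ref{lem-5.7.3}, with $\Sop=\Topep_1$): one has $\Tope_1\Rop_1\Hop f=\Rop_1\Hop\Topep_1 f+\Tope_1\Rop_1\Hop\Jop_1\Topep_1 f-\Rop_1\Hop\Jop_1 f$. But both properties you assign to the pieces fail at $\beta=1$. First, the remainder terms $\Rop_1\Hop\Jop_1 h=\Qop_1h$ do \emph{not} land in $L^1(I_1)$: by Proposition \ref{prop-5.6.5'} they are only dominated by the invariant density $\kappa_1(x)=(1-x^2)^{-1}$, which is not integrable --- this is the manifestation of the weakly repelling fixed point at $\pm1$. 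Second, $\Topep_1^n$ on $L^1_0(I_1)$ has no geometric decay; Proposition \ref{prop-ergodicity1.01}(v) gives convergence to $0$ with no rate (this is infinite ergodic theory). Consequently your iterated sum $\sum_{n=0}^{N-1}\Tope_1^{\,n}\Rop\Sop^{N-1-n}g_0$ consists of $N$ terms, each of size comparable to $\kappa_1$ near the endpoints, and no triangle-inequality argument closes. The paper's way out is the dynamical decomposition of the \emph{odd part} of the Hilbert kernel: the sum is rewritten as $\sum_j(\Tope_1^{N-j-1}+\Tope_1^{N-j})\kfun_1^{II}(t,\cdot)$ integrated against $\Topep_1^jg_1^I(t)$, and the fundamental estimate $0<\Tope_1^j\kfun_1^{II}(t,\cdot)(x)<\Tope_1^j\kfun_1^{II}(1,\cdot)(x)$ on $I_1^+$ (Theorem \ref{th-12.1.2}) makes all summands sign-definite, so the majorizing sum telescopes back to $(\id+\Tope_1)\Kfun_1^{II}(1,\cdot)\le 2\kappa_1$, which is bounded on $I_\eta$. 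This is where the Hurwitz zeta function and total positivity actually enter: they establish the one-sign-change structure of the Taylor coefficients of $\kfun_1^{II}(t,\cdot)$ and its preservation under $\Tope_1$ via the variation-diminishing theorem --- not any ``contractivity of $\Sop$'' or spectral theory. That positivity idea is absent from your proposal and is the heart of the proof.

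A smaller but real error: the Hilbert transform \emph{reverses} parity (odd goes to even, even to odd), so antisymmetrizing $g_0$ does not produce an odd representative of $u_0$. The correct odd extension is $u^{II}=f^{II}-\Hop g^{I}$ with $f^{II}$ odd and $g^{I}$ \emph{even}. It is then the evenness of $\Topep_1^jg_1^I$, paired with the oddness in $t$ of $\Kfun_1^{I}(t,x)$, that annihilates the even part of the kernel and leaves only $\Kfun_1^{II}$, for which the positivity machinery is available. That is the precise sense in which the oddness hypothesis is used; your heuristic about the diagonal singularity at $j=0$ is not how it enters.
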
  

The proof, which is supplied in Subsection \ref{subsec-setup}, is 
\emph{much much more sophisticated} than that of Theorem 
\ref{thm-basic1.001}. It uses the full strength of the machinery developed 
around a subtle \emph{dynamical decomposition} of the odd part of the 
Hilbert kernel. We believe that a similar dynamical decomposition 
is available for the even part of the Hilbert kernel as well, which would 
remove the need for the oddness assumption. Again, the 
$L^{1,\infty}(I_\eta)$-quasinorms serve as Lyapunov energy functionals, for 
each $\eta$ with $0<\eta<1$.
In the setting of the smaller space $L^1(I_1)$, the corresponding statement 
is based on the fact that the dynamics of $\tau_1$ has $\pm1$ as a weakly 
repelling fixed point, so that the ergodic invariant measure for $L^1(I_1)$ 
get infinite mass and cannot be in $L^1(I_1)$.  It follows immediately from 
Theorem \ref{thm-basic1.002} that the point spectrum of the operator 
$\Tope_1:\LspaceIoneodd\to\LspaceIoneodd$ is contained in the open unit disk 
$\D$. In particular, there is no $\Tope_1$-invariant element of 
$\LspaceIoneodd$, the subspace of the odd distributions in $\LspaceIone$.   

\begin{cor}
$(\beta=1)$ 
If $\Tope_1 u=\lambda u$ for some odd $u\in\LspaceIone$ and
some scalar $\lambda\in\C$ with $|\lambda|\ge1$, then $u=0$.
\label{eq-ergodic0.00002}
\end{cor}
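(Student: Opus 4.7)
The plan is to derive this corollary directly from Theorem \ref{thm-basic1.002} by iterating the eigenvalue equation. First, I would note that the subtransfer operator $\Tope_1$ preserves oddness (the kernel in \eqref{eq-Tope.1} has the symmetry $(x,t)\mapsto(-x,-t)$), and that $\pev$ commutes with scalar multiplication, so iterating $\Tope_1 u=\lambda u$ gives $\Tope_1^N u=\lambda^N u$ for all $N\ge 1$, with $u$ remaining odd throughout. Fixing $\eta\in(0,1)$ and applying the valeur au point map after multiplying by $1_{I_\eta}$, I obtain
\[
1_{I_\eta}\pev(\Tope_1^N u)=\lambda^N\,1_{I_\eta}\pev(u).
\]

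Next, I would pass to the $L^{1,\infty}(I_1)$-quasinorm. By the positive homogeneity built into the definition \eqref{eq-quasinorm1.0099}, the right-hand side has quasinorm exactly $|\lambda|^N\,\|1_{I_\eta}\pev(u)\|_{L^{1,\infty}(I_1)}$, which, since $|\lambda|\ge 1$, is bounded below by $\|1_{I_\eta}\pev(u)\|_{L^{1,\infty}(I_1)}$ for every $N$. On the other hand, Theorem \ref{thm-basic1.002} says that the left-hand side tends to $0$ in $L^{1,\infty}(I_1)$ as $N\to+\infty$. Together these force $\|1_{I_\eta}\pev(u)\|_{L^{1,\infty}(I_1)}=0$, so $\pev(u)=0$ almost everywhere on $I_\eta$.

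Since $\eta\in(0,1)$ was arbitrary, $\pev(u)=0$ almost everywhere on $I_1$. By the Kolmogorov-based identification of $\Lspaceo$ with a subspace of $L^{1,\infty}(\R)$ (and the corresponding identification of $\LspaceIone$ with a subspace of $L^{1,\infty}(I_1)$) noted after the definition of $\LspaceIone$, the map $\pev$ is injective, and hence $u=0$ in $\LspaceIone$, as claimed. I do not foresee any obstacle here: the entire argument is formal once Theorem \ref{thm-basic1.002} is available, which accords with the paper's remark that the corollary follows immediately from that theorem. The genuine work lies in Theorem \ref{thm-basic1.002} itself, not in the passage to this eigenvalue statement.
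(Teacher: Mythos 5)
Your argument is correct and is precisely the routine deduction the paper has in mind when it calls the corollary an immediate consequence of Theorem \ref{thm-basic1.002}: iterate the eigenvalue equation, use positive homogeneity of the $L^{1,\infty}$-quasinorm together with $|\lambda|\ge1$ to get a lower bound that contradicts the decay, and conclude via the injectivity of $\pev$ on $\LspaceIone$. Nothing is missing.
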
 

As already mentioned, this corollary is an immediate consequence of 
Theorem \ref{thm-basic1.002}.

From a dynamical perspective, it is quite natural to introduce the odd-even 
symmetry, as the transformation $\tau_\beta$ itself is odd: 
$\tau_\beta(-x)=-\tau_\beta(x)$ (except possibly at the endpoints $\pm1$). 
E.g., in connection with the partial fraction expansions with even partial 
quotients, it is standard to keep track of only the orbit of the absolute
values on the interval $I_1^+$. 
Note that clearly, the subtransfer operators $\Tope_\beta$ preserve odd-even 
symmetry.
As for the remaining even symmetry case, we observe that 
$\Hop(\delta_{-1}-\delta_1)=\frac{2}{\pi}\pv(1-x^2)^{-1}$ which is even and
equals (a constant multiple of) the density of the ergodic invariant measure.

\begin{rem}
In view of the Observation in Subsection \ref{subsec-doubling}, 
Corollaries \ref{eq-ergodic0.00001} and \ref{eq-ergodic0.00002} go beyond
the standard notion of ergodicity. The main point is that we insert
\emph{distribution theory in place of measure theory}. We have not been able to
find any appropriate references for this in the literature, but suggest some
relevance of the works \cite{ADS1}, \cite{ADS2} for the discrete setting, and
\cite{Buf} for flows. 
\end{rem}

\subsection{Applications to the problem of completeness of a
system of unimodular functions}

As an application to Corollaries \ref{eq-ergodic0.00001} and 
\ref{eq-ergodic0.00002}, we have the following result on the completeness
of the nonnegative integer powers of two singular inner functions in the
weak-star topology of the space $H^\infty_+(\R)$ of functions which extend 
boundedly and holomorphically to the upper half-plane.    

\begin{thm}
Fix two positive reals $\alpha,\beta$.
Then the functions
\[
\e^{\imag \pi\alpha m t},\quad \e^{-\imag\pi \beta n/t},\qquad m,n=0,1,2,\ldots,
\]
which are elements of $H^\infty_+(\R)$, span together a weak-star dense
subspace of $H^\infty_+(\R)$ if and only if $\alpha\beta\le1$.
\label{thm-2.0}
\end{thm}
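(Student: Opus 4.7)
By the scaling $t\mapsto t/\alpha$, which maps $(\alpha,\beta)$ to $(1,\alpha\beta)$ while preserving weak-star density of the two families, we may assume $\alpha=1$ and must show density of $\{\e^{\imag\pi mt},\e^{-\imag\pi\beta n/t}\}_{m,n\ge 0}$ in $H^\infty_+(\R)$ if and only if $\beta\le 1$. Passing to real parts, and using that $H^\infty_+(\R)/\imag\R$ is $\R$-linearly isomorphic to the real-valued subspace of $H^\infty_\stars(\R)$ (via $g\leftrightarrow\re g$ with inverse $u\mapsto u+\imag\Hop u$), the weak-star density of the one-sided system in $H^\infty_+(\R)$ is equivalent to the weak-star density in $H^\infty_\stars(\R)$ of the enlarged symmetric system $\{\e^{\imag\pi mt},\e^{-\imag\pi\beta n/t}\}_{m,n\in\Z}$. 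Dualizing through the predual $\Lspaceo$, the problem reduces to showing that any $u\in\Lspaceo$ satisfying
\[
\text{(A)}\ \int_\R\e^{\imag\pi mt}u(t)\,\diff t=0\ \ \forall m\in\Z\qquad\text{and}\qquad\text{(B)}\ \int_\R\e^{-\imag\pi\beta n/t}u(t)\,\diff t=0\ \ \forall n\in\Z
\]
vanishes precisely when $\beta\le1$.

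For the ``if'' direction, I interpret (A) and (B) dynamically. Condition (A) means that the $2$-periodization of $u$ vanishes as a distribution, while (B), after the involution $s=-\beta/t$, amounts to the vanishing of the $2$-periodization of the pushforward $\tilde u(s):=\beta s^{-2}u(-\beta/s)$. Writing $v:=u|_{I_1}\in\LspaceIone$, a combination of the two folding identities shows that $v$ satisfies the fixed-point equation $\Tope_\beta v=v$: the appearance of the subtransfer operator $\Tope_\beta$ rather than the full transfer operator $\Topep_\beta$ in \eqref{eq-Sop1.002'} is precisely because the $j=0$ contribution, which lives on the attractor $I_1\setminus\bar I_\beta$, cancels against a term produced by (A). For $\beta<1$, Corollary \ref{eq-ergodic0.00001} then forces $v=0$. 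For $\beta=1$, since the transformation $t\mapsto -t$ preserves both (A) and (B), the odd and even parts of $u$ satisfy them individually; Corollary \ref{eq-ergodic0.00002} kills the odd part, while the even part must also vanish because the only candidate even invariant element, $\pv(1-x^2)^{-1}=\tfrac{\pi}{2}\Hop(\delta_{-1}-\delta_1)$, is the Hilbert transform of a pair of point masses, which is not in $L^1_0(\R)$ and hence falls outside $\Lspaceo$. Once $v=0$, condition (A) combined with an iterated unfolding across both periodicities propagates the vanishing to all of $\R$, yielding $u=0$.

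For the converse direction ($\alpha\beta>1$, hence $\beta>1$ after the scaling reduction), I exhibit an explicit nonzero element of $\Lspaceo$ satisfying (A) and (B). In this regime, Thaler--Lin theory supplies an absolutely continuous invariant probability density for $\tau_\beta$ on $I_1$; suitably paired with a Hilbert-transform correction so that the sum lands in $\Lspaceo$, this density is shown to satisfy both folding identities, producing the required nontrivial annihilator and ruling out weak-star density.

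The main obstacle is the derivation of $\Tope_\beta v=v$ from (A) and (B). The calculation demands careful bookkeeping of the folding/unfolding of both the $L^1$-part and the Hilbert-transform part of $u$ across the boundaries $\pm\beta$ and $\pm1$, and in particular requires one to check that $v=u|_{I_1}$ genuinely lies in $\LspaceIone$ rather than in some strictly larger class of distributions (restriction of a distribution in $L^1+\Hop L^1_0$ to an open subinterval is not a priori of the same form). A secondary difficulty, confined to $\beta=1$, is the separate handling of the even sector, since the deep machinery of Corollary \ref{eq-ergodic0.00002} covers only odd distributions.
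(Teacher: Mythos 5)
Your overall architecture for the ``if'' direction matches the paper's: scale to $\alpha=1$, dualize through the predual $\Lspaceo$ of $H^\infty_\stars(\R)$, convert the annihilation conditions into the folding identities $\Perop_2 u=\Perop_2\Jop_\beta u=0$, derive a fixed-point equation for the restriction to $I_1$, and invoke the extended ergodicity results. (Two small inaccuracies: the composition of the two folding identities yields $\Tope_\beta^2 v=v$, not $\Tope_\beta v=v$, since one must pass through the complement $\R\setminus\bar I_1$ and back --- this is harmless because Theorems \ref{thm-basic1.001} and \ref{thm-basic1.002} kill fixed points of $\Tope_\beta^{2n}$ just as well; and the worry about whether $u|_{I_1}$ lies in $\LspaceIone$ is vacuous, since $\LspaceIone$ is \emph{defined} as the space of such restrictions.) The genuine gap is your treatment of the even sector at $\beta=1$. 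You dismiss it by asserting that ``the only candidate even invariant element'' is $\pv(1-x^2)^{-1}$, which lies outside $\Lspaceo$. Nothing in the paper (or in your proposal) classifies the even fixed points of $\Tope_1^2$ in $\LspaceIone$; indeed the paper explicitly states that its dynamical decomposition of the Hilbert kernel, and hence Theorem \ref{thm-basic1.002} and Corollary \ref{eq-ergodic0.00002}, are available \emph{only} for odd distributions, and that removing the oddness hypothesis is open. The paper circumvents this by a duality swap (Step II of Subsection \ref{subsec-applasdecay=1}): writing $u^I=f^I-\Hop g^{II}$, it applies $\Hop_2$ to the periodized identities, which places the pair $(-g^{II},f^I)$ in exactly the position that the pair $(f^{II},g^I)$ occupied in the odd case, so the odd result yields $-g^{II}=\Hop f^I$ and hence $u^I=0$. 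Without this (or an even-sector analogue of Theorem \ref{thm-basic1.002}), your argument does not close at the critical parameter.

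Two further points. First, the reduction from the one-sided system in $H^\infty_+(\R)$ to the symmetric system in $H^\infty_\stars(\R)$ is not a formal equivalence of real parts: the paper needs a Liouville-type argument (Lemma \ref{lem-equiv8.1.1.002}) to pass from ``$\Perop_2 f$ and $\Perop_2\Jop_\beta f$ lie in $H^1_+(\R/2\Z)$'' to ``$\Perop_2 u=\Perop_2\Jop_\beta u=0$ for $u=\proj_-f$,'' and this implication is established in one direction only, which is all the ``if'' part requires. Second, your converse direction is both unnecessary and problematic: the paper simply cites \cite{CHM} (the weak-star closure has infinite codimension for $\alpha\beta>1$). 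Your plan produces a distribution in $\Lspaceo$ annihilating the \emph{two-sided} system, but the required counterexample is an element of $L^1(\R)/H^1_+(\R)$ annihilating the \emph{one-sided} system, i.e., an $f\in L^1_0(\R)\setminus H^1_+(\R)$ with both periodizations in $H^1_+(\R/2\Z)$; since Lemma \ref{lem-equiv8.1.1.002} only runs from the symmetric statement to the one-sided one, a nontrivial symmetric annihilator does not by itself disprove one-sided density, and the ``Hilbert-transform correction'' bridging this gap is left entirely unspecified.
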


Note that the ``only if'' part of Theorem \ref{thm-2.0}, is quite simple, 
as for instance the work in \cite{CHM} shows that in case $\alpha\beta>1$, the
weak-star closure of the linear span in question has infinite codimension in
$H^\infty_+(\R)$. Hence the main thrust of the theorem is the ``if'' part.
The proof of Theorem \ref{thm-2.0} is supplied in two installments: for
$\alpha\beta<1$ in Subsection \ref{subsec-thm-2.0-beta<1}, and 
for $\alpha\beta=1$ in Subsection \ref{subsec-applasdecay=1}. 

A standard M\"obius mapping brings the upper half-plane to the the unit disk 
$\D$, and identifies the space $H^\infty_+(\R)$ with $H^\infty(\D)$, the space
of all bounded holomorphic functions on $\D$. For this reason, Theorem
\ref{thm-2.0} is equivalent to the following assertion, which we state as
a corollary.

\begin{cor}
Fix two positive reals $\lambda_1,\lambda_2$.
Then the linear span of the functions  
\[
\phi_1(z)^m=\exp\bigg(m\lambda_1\frac{z+1}{z-1}\bigg) \quad \text { and } 
\quad \phi_2(z)^n=\exp\bigg(n\lambda_2\frac{z-1}{z+1}\bigg),
\qquad
m,n=0,1,2,\ldots,
\]
is weak-star dense in $H^\infty(\D)$ if and only if 
$\lambda_1\lambda_2\le\pi^2$.
\label{cor-2appl}
\end{cor}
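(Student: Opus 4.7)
\emph{Proof proposal.} The plan is to transplant Theorem \ref{thm-2.0} from the upper half-plane to the disk via a Cayley transform, and then read off the numerical correspondence between the two parameter pairs. First I would fix the M\"obius map $\phi:\C_+\to\D$ defined by $\phi(t)=(t-\imag)/(t+\imag)$, whose boundary extension is (up to one point) a measurable bijection $\R\to\Te$. Composition $f\mapsto f\circ\phi$ yields an isometric algebra isomorphism $H^\infty(\D)\to H^\infty_+(\R)$ at the level of boundary values. Since it arises as the adjoint of the bounded change-of-variable map $g\mapsto(g\circ\phi^{-1})\cdot|(\phi^{-1})'|$ between the corresponding $L^1$ preduals (quotiented by the appropriate Hardy subspaces), it is furthermore a weak-star homeomorphism.

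Under this identification a direct computation gives
\[
\frac{\phi(t)+1}{\phi(t)-1}=\frac{(t-\imag)+(t+\imag)}{(t-\imag)-(t+\imag)}=\imag t,\qquad \frac{\phi(t)-1}{\phi(t)+1}=-\frac{\imag}{t},
\]
so that $\phi_1^m\circ\phi(t)=\e^{\imag\lambda_1 m t}$ and $\phi_2^n\circ\phi(t)=\e^{-\imag\lambda_2 n/t}$. Setting $\alpha:=\lambda_1/\pi$ and $\beta:=\lambda_2/\pi$, these pulled-back generators are exactly $\e^{\imag\pi\alpha m t}$ and $\e^{-\imag\pi\beta n/t}$ of Theorem \ref{thm-2.0}. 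Consequently, weak-star density of the linear span of $\{\phi_1^m,\phi_2^n\}_{m,n\ge0}$ in $H^\infty(\D)$ is equivalent to weak-star density of $\{\e^{\imag\pi\alpha m t},\e^{-\imag\pi\beta n/t}\}_{m,n\ge0}$ in $H^\infty_+(\R)$, which by Theorem \ref{thm-2.0} holds precisely when $\alpha\beta\le1$, that is, $\lambda_1\lambda_2\le\pi^2$.

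The only genuinely nontrivial point is the weak-star continuity of the Cayley pullback, but this reduces to a standard Poisson-Jacobian change of variables on the $L^1$ preduals of the two $H^\infty$-spaces; after that the rest is a one-line substitution and a rescaling of the parameters.
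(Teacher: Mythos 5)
Your proposal is correct and is exactly the argument the paper has in mind: the paper explicitly says the corollary follows from Theorem \ref{thm-2.0} via "a standard M\"obius mapping" and suppresses the proof as trivial, and your Cayley-transform computation (with the check that the pullback is a weak-star homeomorphism via the predual change of variables) is that suppressed argument written out in full. The algebra $\frac{\phi(t)+1}{\phi(t)-1}=\imag t$, $\frac{\phi(t)-1}{\phi(t)+1}=-\imag/t$ and the rescaling $\alpha=\lambda_1/\pi$, $\beta=\lambda_2/\pi$ are all correct.
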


We suppress the trivial proof of the corollary.

\begin{rem}
Clearly, Theorem \ref{thm-2.0} supplies a complete and affirmative
answer to Problems 1 and 2 in \cite{MS}. We recall the question from \cite{MS}:
the issue was raised whether the algebra generated by the two inner functions
\[
\phi_1(z)=\exp\bigg(\lambda_1\frac{z+1}{z-1}\bigg) \quad \text { and } 
\quad \psi_2(z)=\exp\bigg(\lambda_2\frac{z-1}{z+1}\bigg)
\]
for $0<\lambda_1,\lambda_2<+\infty$, is weak-star dense in $H^\infty(\D)$ 
if and only if $\lambda_1\lambda_2\le\pi^2$. The ``only if'' was understood
already in \cite{MS}, while it is a consequence of Theorem  \ref{thm-2.0} 
that if $\lambda_1\lambda_2\le\pi^2$, then the linear span of the functions
\[
\phi_1(z)^m=\exp\left(m\lambda_1\frac{z+1}{z-1}\right) \quad 
\text { and } \quad 
\psi_2(z)^n=\exp\left(n\lambda_2\frac{z-1}{z+1}\right), 
\qquad m,n=0,1,2,\ldots, 
\]
is weak-star dense set in $H^\infty(\D)$, without the need to resort to 
the whole algebra.
\end{rem}

The $L^\infty(\R)$ analogue of Theorem \ref{thm-2.0} was obtained in \cite{HM}.
In the context of Theorem \ref{thm-2.0}, the $L^\infty(\R)$ result leads to 
completeness in the weak-star topology of $\mathrm{BMOA}_+(\R)$, the
BMOA space of the upper half-plane. The latter assertion is 
substantially weaker than Theorem \ref{thm-2.0}, as it is not difficult to 
exhibit a sequence of functions in $H^\infty_+(\R)$ which fails to be weak-star 
complete in $H^\infty_+(\R)$, but is weak-star complete in 
$\mathrm{BMOA}_+(\R)$.

\section{Basic properties of the dynamics of 
Gauss-type maps on intervals}
\label{sec-background.dynamics}

\subsection{Notation for intervals}
\label{subsec-notation-intervals}
For a positive real $\gamma$, let $I_\gamma:=]\!\!-\!\gamma,\gamma[$ denote
the corresponding symmetric open interval, and let $I_\gamma^+:=]0,\gamma[$
be the positive side of the interval $I_\gamma$. At times, we will need the
half-open intervals $\tilde I_\gamma:=]\!-\!\gamma,\gamma]$ and
$\tilde I_\gamma^+:=[0,\gamma[$, as well as the closed
intervals $\bar I_\gamma:=[-\gamma,\gamma]$ and $\bar I_\gamma^+:=[0,\gamma]$.

\subsection{Dual action notation}
\label{subsec-dualaction}
For a Lebesgue measurable subset $E$ of the real line $\R$,
we write
\[
\langle f,g\rangle_E:=\int_{E}f(t)g(t)\diff t,
\]
whenever $fg\in L^1(E)$. This will be of interest mainly when $E$ is an
open interval, and in this case, we use the same notation to describe the dual
action of a distribution on a test function.
For a set $E\subset\R$, $1_E$ stands for the characteristic function of $E$,
which equals $1$ on $E$ and vanishes elsewhere. So, in particular, we see that
\[
\langle f,g\rangle_E=\langle 1_E f, g\rangle_\R=\langle 1_E f, 1_E g\rangle_\R.
\]

\subsection{Gauss-type maps on intervals}
\label{subsec-Gausstype}
For background material in Ergodic Theory, we refer to e.g. the book \cite{CFS}.

For $N=2,3,4,\ldots$, the \emph{$N$-step wandering subset} is given by
\begin{equation}
\calE_{\beta,N}:=\big\{x\in\bar I_\beta:\,\,\tau_\beta^{n}(x)\in \bar I_\beta
\,\,\,\text{for}\,\,\,n=1,\ldots,N-1\big\},
\label{eq-EsetN}
\end{equation}
where $\tau_\beta^{n}:=\tau_\beta\circ\cdots\circ\tau_\beta$
($n$-fold composition). We also agree that $\calE_{\beta,1}:=\bar I_\beta$.
The sets $\calE_{\beta,N}$ get smaller as $N$ increases, and we form 
their intersections
\begin{equation}
\calE_{\beta,\infty}:=\bigcap_{N=1}^{+\infty}\calE_{\beta,N},
\label{eq-Esetinfty}
\end{equation}
The \emph{cone of positive functions} consists of all integrable functions
$f$ with $f\ge0$ a.e. on the respective interval. Similarly, we say 
that $f$ is \emph{positive} if $f\ge0$ a.e. on the given interval.

\begin{prop} Fix $0<\beta\le1$. Then we have the following assertions:

\textrm{\rm (i)} 
The operators $\Tope_\beta:L^1(I_1)\to L^1(I_1)$ and $\Topep_\beta:L^1(I_1)
\to L^1(I_1)$ are both norm contractions, which preserve the respective
cones of positive functions. 

\textrm{\rm (ii)} On the positive functions, $\Topep_\beta$ acts isometrically
with respect to the $L^1(I_1)$ norm. 

\textrm{\rm (iii)} If $\calE_{\beta,N}$ denotes the $N$-step 
wandering subset
given by \eqref{eq-EsetN} above, then 
$\Tope_\beta^Nf=\Topep_\beta^N(1_{\calE_{\beta,N}}f)$ for $f\in L^1(I_1)$ and 
$N=1,2,3,\ldots$.

\textrm{\rm (iv)} For $0<\beta<1$, and $f\in L^1(I_1)$, we have that 
$\|\Tope_\beta^N f\|_{L^1(I_1)}\to0$ as $N\to+\infty$. In particular, 
$|\calE_{\beta,N}|\to0$ as $N\to+\infty$. 
 
\textrm{\rm (v)} For $\beta=1$ and $f\in L^1(I_1)$ with mean
$\langle f,1\rangle_{I_1}=0$, we have that 
$\|\Tope_1^N f\|_{L^1(I_1)}\to0$ as $N\to+\infty$.

\textrm{\rm (vi)} For $\beta=1$ and $f\in L^1(I_1)$, we have that 
$\|1_{I_\eta}\Tope_1^N f\|_{L^1(I_1)}\to0$ as $N\to+\infty$ for each real $\eta$
with $0<\eta<1$.
\label{prop-ergodicity1.01}
\end{prop}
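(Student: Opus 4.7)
The plan is to dispatch parts (i)--(iii) by elementary manipulations of the kernel \eqref{eq-Sop1.002'}, and then derive parts (iv)--(vi) from the known ergodic theory of the Gauss-type maps $\tau_\beta$ as developed by Thaler and Lin.

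For (i) and (ii), I note that the kernels in \eqref{eq-Sop1.002'} and \eqref{eq-Tope.1} are manifestly nonnegative, giving cone preservation at once. The defining duality
\[
\langle h,\Topep_\beta g\rangle_{I_1}=\langle h\circ\tau_\beta,g\rangle_{I_1},\qquad h\in L^\infty(I_1),\ g\in L^1(I_1),
\]
applied with $h\equiv 1$, gives $\int_{I_1}\Topep_\beta g\,\diff x=\int_{I_1}g\,\diff x$ for $g\ge 0$, which is the isometry assertion (ii). For signed $g$ the pointwise bound $|\Topep_\beta g|\le\Topep_\beta|g|$ yields contractivity, and since $\Tope_\beta f=\Topep_\beta(1_{\bar I_\beta}f)$, the operator $\Tope_\beta$ inherits the same properties.

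For (iii), the key ingredient is the pointwise factorisation
\[
h(x)\,\Topep_\beta g(x)=\Topep_\beta\bigl((h\circ\tau_\beta)\,g\bigr)(x),\qquad x\in I_1,
\]
which holds because every summand of \eqref{eq-Sop1.002'} involves $y=-\beta/(x+2j)$ with $\tau_\beta(y)=\{x+2j\}_2=x$, so $h(\tau_\beta(y))=h(x)$ can be pulled outside the sum. Applying this identity with $h=1_{\bar I_\beta}$ inside an $N$-fold iteration of $\Tope_\beta f=\Topep_\beta(1_{\bar I_\beta}f)$ yields $\Tope_\beta^Nf=\Topep_\beta^N(1_{E_N}f)$, where
\[
E_N=\bar I_\beta\cap\tau_\beta^{-1}(\bar I_\beta)\cap\cdots\cap\tau_\beta^{-(N-1)}(\bar I_\beta)=\calE_{\beta,N}.
\]
For (iv), combining (ii) and (iii) gives, for $f\ge 0$,
\[
\|\Tope_\beta^N f\|_{L^1(I_1)}=\|\Topep_\beta^N(1_{\calE_{\beta,N}}f)\|_{L^1(I_1)}=\int_{\calE_{\beta,N}}f\,\diff x,
\]
with the analogous inequality in $|f|$ for signed $f$. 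It therefore suffices to prove $|\calE_{\beta,\infty}|=0$, after which dominated convergence closes the argument and the choice $f\equiv 1$ delivers $|\calE_{\beta,N}|\to 0$. The vanishing of $|\calE_{\beta,\infty}|$ is precisely the classical fact, due to Thaler \cite{Thaler} and Lin \cite{Lin}, that for $0<\beta<1$ almost every $\tau_\beta$-orbit eventually escapes $\bar I_\beta$ into the attracting two-cycle in $I_1\setminus\bar I_\beta$.

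Parts (v) and (vi) at $\beta=1$ are the main obstacle, because the ergodic invariant measure $(1-x^2)^{-1}\diff x$ has infinite total mass and the endpoints $\pm 1$ act as weakly repelling (indifferent) fixed points. My plan is to invoke Thaler's exactness of $\tau_1$ with respect to this invariant measure: in the infinite-ergodic framework for Gauss-type maps with indifferent fixed points, exactness forces $\|\Tope_1^N f\|_{L^1(I_1)}\to 0$ for every $f\in L^1(I_1)$ with $\langle f,1\rangle_{I_1}=0$, which is (v). For (vi) I would combine (iii) with the duality of Subsection \ref{subsec-dualaction} to write
\[
\|1_{I_\eta}\Tope_1^N f\|_{L^1(I_1)}\le\int_{\calE_{1,N}\cap\tau_1^{-N}(I_\eta)}|f|\,\diff x,
\]
and then note that because the invariant density $(1-x^2)^{-1}$ is bounded on $I_\eta$ while it blows up at $\pm 1$, iterates of $\Tope_1$ push mass away from any fixed $I_\eta$ toward the endpoints. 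The set $\calE_{1,N}\cap\tau_1^{-N}(I_\eta)$ therefore shrinks to a null set as $N\to+\infty$, and dominated convergence completes the proof.
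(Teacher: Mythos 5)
The paper does not actually prove this proposition; it imports it wholesale from Propositions 3.4.1, 3.10.1, 3.11.3, and 3.13.1--3.13.3 of \cite{HMcpam}, so there is no in-paper argument to compare against. Judged on its own terms, your treatment of (i)--(iii) is correct: cone preservation and contractivity from the nonnegativity of the kernel, isometry on positive functions from testing the duality against $h\equiv1$, and the standard intertwining $h\cdot\Topep_\beta g=\Topep_\beta((h\circ\tau_\beta)g)$ iterated to identify $\calE_{\beta,N}$ as the intersection $\bar I_\beta\cap\tau_\beta^{-1}(\bar I_\beta)\cap\cdots\cap\tau_\beta^{-(N-1)}(\bar I_\beta)$. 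Part (iv) then reduces cleanly to $|\calE_{\beta,\infty}|=0$, which you cite rather than prove; that is on the same footing as the paper's own appeal to the literature, and likewise for (v), where Lin's characterization of exactness (\,$\|\Topep^Nf\|_1\to0$ for all mean-zero $f$ iff the map is exact\,) together with the exactness of $\tau_1$ does give the claim.

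Part (vi), however, has a genuine gap: the argument is circular. Your bound $\|1_{I_\eta}\Tope_1^Nf\|_{L^1}\le\int_{\tau_1^{-N}(I_\eta)}|f|\,\diff x$ is correct (note that for $\beta=1$ the factor $\calE_{1,N}$ is vacuous, since every orbit stays in $\bar I_1$), but the assertion that $|\tau_1^{-N}(I_\eta)|\to0$ \emph{is} statement (vi) in the special case $f\equiv1$, because $|\tau_1^{-N}(I_\eta)|=\langle 1_{I_\eta}\circ\tau_1^N,1\rangle_{I_1}=\|1_{I_\eta}\Topep_1^N1\|_{L^1(I_1)}$. The sentence ``iterates of $\Tope_1$ push mass away from any fixed $I_\eta$ toward the endpoints'' is a restatement of the conclusion, not a proof of it. Moreover, the needed fact does not follow from exactness or conservative ergodicity alone: for a conservative ergodic infinite-measure-preserving map one only gets $\liminf_N\mu(A\cap\tau^{-N}B)=0$ in general, and the full limit $\mu(A\cap\tau^{-N}B)\to0$ (the ``zero-type''/local-mixing property) requires additional input, here the specific Thaler-type asymptotics of $\Topep_1^N$ near the indifferent fixed points $\pm1$ (compare the uniform-on-compacts decay established for $\Tope_1^n\Qop_1f$ in Proposition \ref{prop-5.6.5'}). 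You need either to invoke such a local limit theorem explicitly or to supply an argument, e.g.\ via the first-return (induced) map on $I_\eta$, showing that the Lebesgue measure of $\tau_1^{-N}(I_\eta)$ tends to zero.
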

 
This is a conglomerate of ingredients from Propositions 3.4.1, 3.10.1, 
3.11.3, 3.13.1, 3.13.2, and 3.13.3 in \cite{HMcpam}.  

\subsection{An elementary observation extending the domain of definition for $\Tope_\beta$}
\label{subsec-3.5}
We begin with the following elementary observation.
\medskip

\noindent{\sc Observation.}
The subtransfer and transfer operators $\Tope_\beta$ and $\Topep_\beta$, 
initially defined on $L^1$ functions, make sense for wider classes of 
functions. Indeed, if $f\ge0$, then the formulae \eqref{eq-Sop1.002'} and 
\eqref{eq-Tope.1} make sense pointwise, with values in the extended 
nonnegative reals $[0,+\infty]$. More generally, if
$f$ is complex-valued, we may use the triangle inequality to dominate the
convergence of $\Tope_\beta f$ by that of $\Tope_\beta|f|$. This entails that
$\Tope_\beta f$ is well-defined a.e. if $\Tope_\beta|f|<+\infty$ holds a.e.
The same goes for $\Topep_\beta$ of course.
\medskip

This means that $\Tope_\beta f$ will be well-defined for many functions $f$,
not necessarily in $L^1(I_1)$.

\subsection{Symmetry preservation of the subtransfer operator
$\Tope_\beta$}


The property that $\Tope_\beta$ preserves symmetry on $L^1(I_1)$ holds much more
generally.

\begin{prop}
Fix $0<\beta\le1$.
To the extent that $\Tope_\beta f$ is well-defined pointwise, we have
the following:

\textrm{\rm (i)} If $f$ is odd, then $\Tope_\beta f$ is odd as well.

\textrm{\rm (ii)} If $f$ is even, then $\Tope_\beta f$ is even as well.
\label{lem-symmetry1}
\end{prop}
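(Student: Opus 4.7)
The plan is to work directly from the explicit series formula \eqref{eq-Tope.1},
\[
\Tope_\beta f(x)=\sum_{j\in \Z^\times}\frac{\beta}{(2j+x)^2}
f\!\left(-\frac{\beta}{2j+x}\right),\qquad x\in I_1,
\]
and exploit the bijection $j\mapsto -j$ on $\Z^\times$. The calculation is very short; the only subtle point is the meaning of ``to the extent that $\Tope_\beta f$ is well-defined pointwise,'' which I would address first.

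First I would observe that, as noted in the Observation of Subsection~\ref{subsec-3.5}, pointwise well-definedness of $\Tope_\beta f$ at a point $x$ is governed by the absolute convergence of $\Tope_\beta|f|(x)$. Applying the same reindexing $j\mapsto -j$ to $\Tope_\beta|f|$ yields
\[
\Tope_\beta|f|(-x)=\sum_{j\in\Z^\times}\frac{\beta}{(2j+x)^2}\,
\bigabs{f\!\left(\tfrac{\beta}{2j+x}\right)},
\]
so the absolute convergence at $-x$ follows from (and is actually equivalent to) the absolute convergence at $x$ under the symmetry hypothesis on $f$. Hence both $\Tope_\beta f(x)$ and $\Tope_\beta f(-x)$ are simultaneously well-defined, and the rearrangement of the series below is legitimate.

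Next, I would compute
\[
\Tope_\beta f(-x)=\sum_{j\in\Z^\times}\frac{\beta}{(2j-x)^2}\,
f\!\left(-\frac{\beta}{2j-x}\right)
=\sum_{j\in\Z^\times}\frac{\beta}{(2j+x)^2}\,
f\!\left(\frac{\beta}{2j+x}\right),
\]
where the second equality comes from the change of summation index $j\mapsto -j$, which is a bijection of $\Z^\times$ and which preserves the squared denominator $(2j+x)^2$. If $f$ is odd, then $f\bigl(\beta/(2j+x)\bigr)=-f\bigl(-\beta/(2j+x)\bigr)$, and the last display equals $-\Tope_\beta f(x)$, yielding (i). If $f$ is even, the same substitution gives $f\bigl(\beta/(2j+x)\bigr)=f\bigl(-\beta/(2j+x)\bigr)$, so the display equals $\Tope_\beta f(x)$, yielding (ii).

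There is no real obstacle here: the proof is a one-line index change on the series defining $\Tope_\beta$, modulo the bookkeeping about domains of definition handled in the first paragraph. The symmetry ultimately reflects the oddness of the map $\tau_\beta$ itself and the evenness of the Jacobian weight $\beta/(2j+x)^2$ under the joint flip $(x,j)\mapsto(-x,-j)$.
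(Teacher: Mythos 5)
Your proof is correct: the reindexing $j\mapsto-j$ on $\Z^\times$, together with the observation that absolute convergence at $x$ and at $-x$ are equivalent under the symmetry hypothesis, gives exactly the claimed parity preservation. The paper itself only cites Proposition 3.6.1 of \cite{HMcpam} for this fact, and your one-line index change is precisely the standard argument behind that reference, so there is nothing further to add.
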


This follows from Proposition 3.6.1 in \cite{HMcpam}.

Along with the symmetry, we can add constraints like monotonicity and
convexity. Under such restraints on $f$, the pointwise values of 
$\Tope_\beta f$ are guaranteed to exist, and the constraint is preserved 
under $\Tope_\beta$. 

\begin{prop}
Fix $0<\beta\le1$.
We have the following:

\textrm{\rm (i)} If $f:I_1\to\R$ is odd and (strictly) increasing, then 
so is $\Tope_\beta f$.

\textrm{\rm (ii)} If $f:I_1\to\R$ is even and convex, and if $f\ge0$,
then so is $\Tope_\beta f$.
\label{lem-symmetry2}
\end{prop}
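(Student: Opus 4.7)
The oddness claim in (i) and the evenness claim in (ii) are immediate from Proposition~\ref{lem-symmetry1}. For any $j\in\Z^\times$ and $x\in I_1$, one has $\lvert{-\beta/(2j+x)}\rvert\le\beta<1$, so in both parts the hypotheses guarantee that each summand in \eqref{eq-Tope.1} is well-defined and the series converges absolutely. What remains is the preservation of monotonicity in (i) and of convexity (and nonnegativity) in (ii).

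For (i), the plan is to group the summands with matched indices $j=k$ and $j=-k$ for each $k\ge 1$. Using the oddness of $f$, the paired contribution takes the form
\[
\tfrac{1}{\beta}\bigl[G(t_1(x))-G(t_2(x))\bigr],\qquad G(t):=t^2 f(t),
\]
with $t_1(x):=\beta/(2k-x)$ and $t_2(x):=\beta/(2k+x)$, both taking values in $(0,\beta)\subset[0,1)$. Since $f$ is odd we have $f(0)=0$, and since $f$ is (strictly) increasing, $f\ge 0$ on $[0,1)$; hence $G=t\cdot[tf(t)]$ is a (strictly) increasing product of two nonnegative (strictly) increasing factors on $[0,1)$, with $G(0)=0$. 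Meanwhile $t_1$ is strictly increasing in $x$ while $t_2$ is strictly decreasing, so $G(t_1(x))-G(t_2(x))$ is (strictly) increasing in $x$ on $I_1$. Summation over $k\ge 1$ preserves the monotonicity.

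For (ii), I would show that every summand $\phi_j(x):=\beta(2j+x)^{-2}f\bigl(-\beta/(2j+x)\bigr)$ is convex. Writing $\phi_j(x)=G(u_j(x))/\beta$ with $G(u):=u^2 f(u)$ and $u_j(x):=-\beta/(2j+x)$, and using the elementary identities $u_j'(x)=u_j(x)^2/\beta$ and $u_j''(x)=2u_j(x)^3/\beta^2$, direct differentiation yields
\[
\phi_j''(x)\;=\;\frac{u_j(x)^4}{\beta^3}\Bigl[6f(u_j(x))+6u_j(x)f'(u_j(x))+u_j(x)^2 f''(u_j(x))\Bigr].
\]
An even convex function on $I_1$ necessarily attains its minimum at $0$ (since $f(0)\le\tfrac{1}{2}(f(x)+f(-x))=f(x)$), so the odd derivative $f'$ has the same sign as its argument, giving $uf'(u)\ge 0$; combined with the hypothesis $f\ge 0$ (needed precisely for the $6f$ term) and the convexity $f''\ge 0$, the bracket is nonnegative. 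Hence $\phi_j''\ge 0$, and $\Tope_\beta f$ is convex as a sum of convex summands. Nonnegativity of $\Tope_\beta f$ is immediate from the positive coefficients in \eqref{eq-Tope.1} and $f\ge 0$.

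The main technical obstacle is that a merely convex $f$ is only $C^1$-a.e., so the differentiation in (ii) must be read distributionally. The cleanest way around this is to approximate $f$ uniformly on compact subsets of $I_1$ by even, convex, nonnegative $C^2$ mollifications $f_\epsilon$, apply the preceding argument verbatim, and then pass to the pointwise limit—under which convexity, evenness, and nonnegativity are all preserved.
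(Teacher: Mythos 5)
Your argument is correct, but note that the paper itself supplies no proof of this proposition: it simply cites Propositions 3.7.1 and 3.7.2 of \cite{HMcpam}, so what you have written is a self-contained replacement for that citation rather than a variant of an in-paper argument. The two key ideas both check out: in (i), pairing $j=\pm k$ and using oddness reduces the $k$-th block to $\beta^{-1}[G(t_1(x))-G(t_2(x))]$ with $G(t)=t^2f(t)$, and since $f(0)=0$ and $f$ is increasing, $G$ is a product of nonnegative increasing factors on $[0,\beta)$, while $t_1$ increases and $t_2$ decreases in $x$; in (ii), the computation $\phi_j''=\beta^{-3}u_j^4\,[6f(u_j)+6u_jf'(u_j)+u_j^2f''(u_j)]$ is correct, and each bracket term is nonnegative under the stated hypotheses (this is exactly where $f\ge0$ is needed, and $uf'(u)\ge0$ follows from $f'$ being odd and nondecreasing). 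Two small points of hygiene: the displayed bound ``$|{-\beta/(2j+x)}|\le\beta<1$'' should read $|{-\beta/(2j+x)}|\le\beta/(2-|x|)<\beta\le1$ — the second inequality as you wrote it fails at $\beta=1$, and the sharper form is what actually guarantees, for each fixed $x$, that all arguments lie in a \emph{compact} subset of $I_1$, which you need anyway for absolute convergence (an increasing or convex $f$ may be unbounded near $\pm1$) and for the mollification to be applicable. Relatedly, since a mollification $f_\epsilon$ is only defined on a slightly shrunken interval, your limiting argument yields convexity of $\Tope_\beta f$ on each $I_\eta$, $\eta<1$, first; this is harmless because convexity on every compact subinterval gives convexity on $I_1$, but it deserves a sentence.
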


This follows from Propositions 3.7.1 and 3.7.2 in \cite{HMcpam}. 

\subsection{Preservation of point values of continuous 
functions under $\Tope_\beta$}

For $\gamma$ with $0<\gamma<+\infty$, let $C(\bar I_\gamma)$ denote the space
of continuous functions on the compact symmetric interval
$\bar I_\gamma=[-\gamma,\gamma]$. 

\begin{prop}
Fix $0<\beta\le1$.
If $f\in C(\bar I_\beta)$, then $\Tope_\beta f\in C(\bar I_1)$. Moreover, if
in addition, $f$ is odd, then $\Tope_\beta f(1)=\beta f(\beta)$.
\label{prop-3.8.2}
\end{prop}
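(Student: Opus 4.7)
My plan is to view the definition \eqref{eq-Tope.1} of $\Tope_\beta f$ as an infinite series of elementary continuous functions, verify uniform convergence on the closed interval $\bar I_1$ by the Weierstrass M-test, and then compute the endpoint value by a symmetry pairing.

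First, I would write
\[
\Tope_\beta f(x)=\sum_{j\in\Z^\times}h_j(x),\qquad h_j(x):=\frac{\beta}{(2j+x)^2}\,f\bigg(-\frac{\beta}{2j+x}\bigg),
\]
and observe that for $x\in\bar I_1=[-1,1]$ and $j\in\Z^\times$ we have $|2j+x|\ge 2|j|-1\ge 1$. This keeps the denominator bounded away from zero, so the rational coefficient is continuous in $x$. Moreover, $|-\beta/(2j+x)|\le\beta/(2|j|-1)\le\beta$, so the argument of $f$ lies in $\bar I_\beta$, where $f$ is continuous by hypothesis; hence each $h_j$ is continuous on $\bar I_1$.

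For uniform convergence, I would use the majorant
\[
|h_j(x)|\le \|f\|_{C(\bar I_\beta)}\,\frac{\beta}{(2|j|-1)^{2}},\qquad x\in\bar I_1,
\]
combined with the convergence of $\sum_{j\in\Z^\times}(2|j|-1)^{-2}$. The Weierstrass M-test then gives uniform convergence of the series on $\bar I_1$, hence $\Tope_\beta f\in C(\bar I_1)$.

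For the ``moreover'' claim, suppose $f$ is odd. I would specialize to $x=1$ and use the involution $\sigma(j):=-1-j$ on $\Z$. This involution preserves the squared denominator $(2j+1)^2$, flips the sign of the inner argument $-\beta/(2j+1)$, and has no integer fixed point. Its unique orbit in $\Z$ meeting $\Z\setminus\Z^\times$ is the pair $\{-1,0\}$; all other orbits lie entirely within $\Z^\times$. Hence in the sum for $\Tope_\beta f(1)$, the terms indexed by those orbits pair up and cancel by oddness of $f$, and the only surviving contribution is the $j=-1$ term, which evaluates to $\beta f(\beta)$.

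The main obstacle, though not a serious one, is the boundary behaviour at $x=\pm1$: the terms indexed by $j=\mp1$ push the inner argument of $f$ all the way to the boundary points $\pm\beta$ of $\bar I_\beta$. Continuity of $f$ on the \emph{closed} interval $\bar I_\beta$ (rather than only on the open interval) is precisely what is needed to make the extension of the continuity up to $\pm1$ and the explicit evaluation $\Tope_\beta f(1)=\beta f(\beta)$ both go through cleanly.
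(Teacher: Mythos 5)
Your proof is correct and is essentially the standard argument; the paper itself does not reprove this statement but defers to Propositions 3.8.1 and 3.8.2 of the companion paper \cite{HMcpam}, and your route (termwise continuity plus the Weierstrass M-test via the bound $|2j+x|\ge 2|j|-1$ on $\bar I_1$, followed by the pairing $j\leftrightarrow -1-j$ at $x=1$) is the natural one. The only point worth making explicit is that the regrouping of the series into the orbits of the involution is licensed by the absolute convergence you already established with the M-test, and that the lone unpaired index $j=-1$ indeed yields $\beta f(\beta)$ since $f$ is continuous up to the endpoint $\beta$ of $\bar I_\beta$ — both of which you address.
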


This result combines Propositions 3.8.1 and 3.8.2 in \cite{HMcpam}. 




\subsection{Subinvariance of certain key functions}
\label{subsec-subinvariance}

Next, we consider the $\Tope_\beta$-iterates of the function 
\begin{equation}
\kappa_\alpha(x):=\frac{\alpha}{\alpha^2-x^2},\qquad x\in I_1,
\label {eq-kappadef1}
\end{equation}
where $\alpha$ is assumed confined to the interval $0<\alpha\le1$.
This function is not in $L^1(I_1)$, although it is in $L^{1,\infty}(I_1)$.
However, by the observation made in Subsection
\ref{subsec-3.5}, we may still calculate the expression
$\Tope_\beta \kappa_\alpha$ pointwise wherever
$\Tope_\beta|\kappa_\alpha|(x)<+\infty$.
Note that $\kappa_1(x)\diff x$ is the invariant measure for the transformation
$\tau_1(x)=\{-1/x\}_2$, which in terms of the transfer operator $\Tope_1$
means that $\Tope_1\kappa_1=\kappa_1$. 

\begin{lem} Fix $0<\beta\le1$.
For the function $\kappa_\beta(x)=\beta/(\beta^2-x^2)$, we have that
\[
\Tope_\beta\kappa_\beta(x)=\Tope_\beta|\kappa_\beta|(x)=
\kappa_1(x)=\frac{1}{1-x^2},\qquad \text{a.e.}\,\,\, x\in I_1,
\]
As for the function $\kappa_1(x)=(1-x^2)^{-1}$, we have the estimate
\[
0\le\Tope_\beta^n\kappa_1(x)\le\beta^n\,\kappa_1(x)=\frac{\beta^n}{1-x^2},
\qquad  x\in I_1,\,\,\,n=1,2,3,\ldots,
\]
which for $0<\beta<1$, may be replaced by the uniform estimate
\[
\Tope_\beta^n\kappa_1(x)\le\frac{2\beta^n}{1-\beta},\qquad x\in I_1
,\,\,\,n=1,2,3,\ldots.
\]
\label{prop-kappa1}
\end{lem}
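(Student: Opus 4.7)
The plan is to prove the three parts in order: part (i) by a direct partial-fractions-and-telescoping computation, part (ii) by induction whose base case rests on a monotonicity-in-$\beta$ trick, and part (iii) by one additional step of $\Tope_\beta$-smoothing that exploits the fact that this operator samples its argument only on the inner interval $\bar I_\beta$, where $\kappa_1$ is bounded.

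For part (i), I substitute $y=-\beta/(2j+x)$ into $\kappa_\beta(y)$ and simplify. The factor $\beta/(2j+x)^2$ combines with $\kappa_\beta(y)$ to give $((2j+x)^2-1)^{-1}$, which by partial fractions equals $\tfrac12[(2j+x-1)^{-1}-(2j+x+1)^{-1}]$. The step size $2$ of the index $j$ matches the offset $\pm 1$ in the denominators, so splitting the sum in \eqref{eq-Tope.1} over $j\ge 1$ and over $j\le -1$ produces two telescoping series that collapse respectively to $\tfrac{1}{2(1+x)}$ and $\tfrac{1}{2(1-x)}$, summing to $\kappa_1(x)$. The identity $\Tope_\beta\kappa_\beta=\Tope_\beta|\kappa_\beta|$ is automatic because $\kappa_\beta\ge 0$ on $\bar I_\beta$, which is the only region the subtransfer operator reads off.

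For part (ii), I first establish the base case $\Tope_\beta\kappa_1\le\beta\kappa_1$. Writing
$$
\Tope_\beta\kappa_1(x)=\beta\sum_{j\in\Z^\times}\frac{1}{(2j+x)^2-\beta^2},
$$
and viewing the sum as a function of the parameter $\beta\in(0,1]$ with $x\in I_1$ fixed, I observe that the derivative in $\beta$ is a sum of strictly positive terms (since $|2j+x|>1\ge\beta$ for $j\in\Z^\times$ and $|x|<1$). At the endpoint $\beta=1$ the sum equals $\kappa_1(x)$ by part (i), so by monotonicity in $\beta$ it remains $\le\kappa_1(x)$ throughout $(0,1]$, yielding the base case. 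The general inequality $\Tope_\beta^n\kappa_1\le\beta^n\kappa_1$ then follows by a one-line induction using the order-preservation of $\Tope_\beta$ from Proposition \ref{prop-ergodicity1.01}(i). The nonobvious ingredient is the monotonicity-in-$\beta$ step, and this is where I would focus the exposition.

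For part (iii), the plan is to bootstrap once more from part (ii). Since $-\beta/(2j+x)\in\bar I_\beta$ for $j\in\Z^\times$ and $x\in I_1$, and $\kappa_1\le 1/(1-\beta^2)$ on $\bar I_\beta$, applying part (ii) to the inner iterate in $\Tope_\beta^n\kappa_1=\Tope_\beta(\Tope_\beta^{n-1}\kappa_1)$ gives
$$
\Tope_\beta^n\kappa_1(x)\le\frac{\beta^n}{1-\beta^2}\sum_{j\in\Z^\times}\frac{1}{(2j+x)^2}.
$$
The remaining tail sum has closed form $\pi^2/(4\sin^2(\pi x/2))-x^{-2}$, which is even in $x$, strictly increasing in $|x|$ on $I_1^+$, and bounded above by its boundary value $\pi^2/4-1<2$. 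Absorbing the factor $(1+\beta)$ via $1-\beta^2=(1-\beta)(1+\beta)\ge 1-\beta$ yields the target bound $2\beta^n/(1-\beta)$. The main bookkeeping issue is tracking the constants and confirming the numerical inequality $\pi^2/4-1<2$, both of which are harmless.
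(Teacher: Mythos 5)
Your proposal is correct: the telescoping partial-fraction computation for $\Tope_\beta\kappa_\beta=\kappa_1$, the bound $\Tope_\beta\kappa_1=\beta\sum_{j\in\Z^\times}((2j+x)^2-\beta^2)^{-1}\le\beta\kappa_1$ followed by induction, and the final uniform bound via $\kappa_1\le(1-\beta^2)^{-1}$ on $\bar I_\beta$ all check out, and this is exactly the computation the lemma is meant to rest on (the paper itself states it without proof, deferring to the companion work). The only simplification worth making is in the base case of part (ii): rather than differentiating in $\beta$, just compare term by term with the $\beta=1$ sum, since $(2j+x)^2-\beta^2\ge(2j+x)^2-1>0$ gives $\sum_{j\in\Z^\times}((2j+x)^2-\beta^2)^{-1}\le\sum_{j\in\Z^\times}((2j+x)^2-1)^{-1}=\kappa_1(x)$ directly from part (i).
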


\begin{rem}
As noted earlier, for $\beta=1$, we have the equality 
$\Tope_1\kappa_1=\kappa_1$.
\end{rem}



\section{Background material: the Hilbert transform on the line and related spaces}

\subsection{The Szeg\H{o} projections and the Hardy 
$H^1$-space}
\label{subsec-H1anreal}
For a reference on the basic facts of Hardy spaces and BMO (bounded mean
oscillation), we refer to, e.g., the monographs of Duren and Garnett
\cite{Dur}, \cite{Gar}, as well as those of Stein \cite{Steinbook1},
\cite{Steinbook2}, and Stein and Weiss \cite{SteinWeissbook}.

\emph{Let $H^1_+(\R)$ and $H^1_-(\R)$ be the subspaces of $L^1(\R)$ consisting
of those functions whose Poisson extensions to the upper half plane
\[
\C_+:=\{z\in\C:\,\,\im z>0\}
\]
are holomorphic and conjugate-holomorphic, respectively}. Here, we use
the term conjugate-holo\-mor\-phic (or anti-holomorphic) to mean that the
complex conjugate of the function in question is holomor\-phic.

It is well-known that any function $f\in H^1_+(\R)$ has vanishing integral,
\begin{equation}
\langle f,1\rangle_\R=\int_\R f(t)\diff t=0,\qquad f\in H^1_+(\R).
\label{eq-int=0}
\end{equation}
In other words, $H^1_+(\C)\subset L^1_0(\R)$, where
\begin{equation}
L^1_0(\R):=\big\{f\in L^1(\R):\,\,\langle f,1\rangle_\R=0 \big\}.
\label{eq-defL0.11}
\end{equation}
By a version of Liouville's theorem, 
\[
H^1_+(\R)\cap H^1_-(\R)=\{0\}, 
\]
which allows us to think of the space
\[
H^1_\stars(\R):=H^1_+(\R)\oplus H^1_-(\R)
\]
as a linear subspace of $L^1_0(\R)$. We will call $H^1_\stars(\R)$ the
\emph{real $H^1$-space of the line $\R$}, although it is $\C$-linear and
the elements are generally complex-valued. It is not difficult to show that
$H^1_\stars(\R)$ is norm dense as a subspace of $L^1_0(\R)$. 
The elements of $f\in H^1_\stars(\R)$ are just the functions
$f\in L^1_0(\R)$ which may be written in the form
\begin{equation}
f=f_1+f_2,\quad\text{ where}\,\,\,f_1\in H^1_+(\R),\,\,f_2\in H^1_-(\R).
\label{eq-H1decomp}
\end{equation}
As already mentioned, the  decomposition \eqref{eq-H1decomp} is unique.
As for notation, \emph{we let $\proj_+$ and $\proj_-$ denote the
projections $\proj_+f:=f_1$ and $\proj_-f:=f_2$ in the decomposition}
\eqref{eq-H1decomp}.
These \emph{Szeg\H{o} projections} $\proj_+,\proj_-$ can of course be
extended beyond this $H^1_\stars(\R)$ setting; more about this in the following
subsection.

\subsection{The Hilbert and the modified Hilbert transform}
With respect to the dual action
\[
\langle f,g\rangle_\R=\int_\R f(t)g(t)\diff t,
\]
we may identify the dual space of $H^1_\stars(\R)$ with $\mathrm{BMO}(\R)/\C$.
Here, $\mathrm{BMO}(\R)$ is the space of functions of
\emph{bounded mean oscillation}; this is the celebrated 
\emph{Fefferman duality theorem} \cite{Fef}, \cite{FeSt}. As for notation, 
we write ``$\cdot/\C$'' to express that we mod out with respect to the 
constant functions.
One of the main results in the theory is the theorem of Fefferman and Stein
\cite{FeSt} which tells us that
\begin{equation}
\mathrm{BMO}(\R)=L^\infty(\R)+\tilde\Hop L^\infty(\R).
\label{eq-split1.3}
\end{equation}
or, in words, a function $g$ is in $\mathrm{BMO}(\R)$ if and only if
it may be written in the form $g=g_1+\tilde\Hop g_2$, where
$g_1,g_2\in L^\infty(\R)$. Here, $\tilde\Hop$ denotes the
\emph{modified Hilbert transform}, defined for $f\in L^\infty(\R)$ by the formula
\begin{equation}
\tilde\Hop f(x):=\frac{1}{\pi}\mathrm{pv}\int_\R f(t)\Bigg\{\frac{1}{x-t}+
\frac{t}{1+t^2}\Bigg\}\diff t=
\lim_{\epsilon\to0^+}\int_{\R\setminus[x-\epsilon,x+\epsilon]} f(t)\Bigg\{\frac{1}{x-t}+
\frac{t}{1+t^2}\Bigg\}\diff t.
\label{eq-tildeHilbert01}
\end{equation}
The decomposition \eqref{eq-split1.3} is clearly not unique. The non-uniqueness
of the decomposition is equal to the intersection space
\begin{equation}
H^\infty_\stars(\R):=L^\infty(\R)\cap\tilde\Hop L^\infty(\R),
\label{eq-split1.3.1}
\end{equation}
which we refer to as the \emph{real $H^\infty$-space}.


We should compare the modified Hilbert transform $\tilde\Hop$ with the
standard \emph{Hilbert transform}
$\Hop$, which acts boundedly on $L^p(\R)$ for $1<p<+\infty$, and maps
$L^1(\R)$ into $L^{1,\infty}(\R)$ for $p=1$. Here, $L^{1,\infty}(\R)$ denotes the
\emph{weak-$L^1$ space}, see e.g. \eqref{eq-quasinorm1.0099}.
The Hilbert transform of a function $f$, assumed integrable on the line $\R$
with respect to the measure $(1+t^2)^{-1/2}\diff t$, is defined as
the principal value integral
\begin{equation}
\Hop f(x):=\frac{1}{\pi}\mathrm{pv}\int_\R f(t)\frac{\diff t}{x-t}=
\lim_{\epsilon\to0^+}\frac{1}{\pi}
\int_{\R\setminus[x-\epsilon,x+\epsilon]} f(t)\frac{\diff t}{x-t}.
\label{eq-Hilbert02}
\end{equation}
If $f\in L^p(\R)$, where $1\le p<+\infty$, then both $\Hop f$ and
$\tilde\Hop f$ are well-defined a.e., and it is easy to see that the difference
$\tilde\Hop f-\Hop f$ \emph{equals to a constant}.
It is often useful to think of the natural harmonic extensions of the Hilbert
transforms $\Hop f$ and $\tilde\Hop f$ to the upper half-plane $\C_+$ given by
\begin{equation}
\Hop f(z):=\frac{1}{\pi}\int_\R \frac{\re z-t}{|z-t|^2}f(t)\diff t,\quad
\tilde\Hop f(z):=\frac{1}{\pi}\int_\R \Bigg\{
\frac{\re z-t}{|z-t|^2}+\frac{t}{t^2+1}\Bigg\}f(t)\diff t.
\label{eq-hilbtrupper1.1}
\end{equation}
So, as a matter of normalization, we have that $\tilde\Hop f(\imag)=0$.
This tells us the value of the constant mentioned above:
$\tilde\Hop f-\Hop f=-\Hop f(\imag)$.


Returning to the real $H^1$-space, we note the following characterization
of the space in terms of the Hilbert transform:
for $f\in L^1(\R)$,
\begin{equation}
f\in H^1_\stars(\R) \iff f\in L^1_0(\R)\,\,\,\,\text{and}\,\,\,
\Hop f\in L^1_0(\R).
\label{eq-Katzprop1.1}
\end{equation}

The Szeg\H{o} projections $\proj_+$ and $\proj_-$ which were mentioned in 
Subsection \ref{subsec-H1anreal} are more generally defined in terms of 
the Hilbert transform:
\begin{equation}
\proj_+f:=\frac12(f+\imag \Hop f),\quad \proj_-f:=\frac12(f-\imag \Hop f).
\label{eq-projform1}
\end{equation}
In a similar manner, for $f\in L^\infty(\R)$, based on the modified Hilbert
transform $\tilde\Hop$ we may define the corresponding projections (which are
actually projections modulo the constant functions)
\begin{equation}
\tilde\proj_+f:=\frac12(f+\imag\tilde\Hop f),\quad \tilde\proj_-f
:=\frac12(f-\imag \tilde\Hop f),
\label{eq-projform2}
\end{equation}
so that, by definition, $f=\tilde\proj_+f+\tilde\proj_-f$.



%


\section{Operators on a space of distributions on the line}
\label{sec-HilbL1}

\subsection{The Hilbert transform on $L^1$}
\label{subsec-HilbtransL1}
For background material on the Hilbert transform and related topics, see,
e.g. the monographs \cite{Dur}, \cite{Gar}, \cite{Steinbook1}, 
\cite{Steinbook2}, and \cite{SteinWeissbook}.

%
It is well-known that the Hilbert transform as
given by \eqref{eq-Hilbert02} maps $\Hop: L^1(\R)\to L^{1,\infty}(\R)$. 
Since functions in $L^{1,\infty}(\R)$ have no obvious interpretation as
distributions, it is better to define $\Hop f$ right away as a distribution 
for $f\in L^1(\R)$.
%
The distributional interpretation is as follows:
\begin{equation}
\langle\varphi,\Hop f\rangle_\R:=-\langle\Hop\varphi,f\rangle_\R,
\label{eq-Hilbtrans-distr1}
\end{equation}
where $\varphi$ is a test function with compact support, and $f\in L^1(\R)$.
Note that $\Hop\varphi$, the Hilbert transform of the test function,
may be defined without the need of the principal value integral:
\[
\Hop\varphi(x)=\frac{1}{2\pi}\int_\R\frac{\varphi(x-t)-\varphi(x+t)}{t}\diff t;
\]
it is a $C^\infty$ function on $\R$ with decay $\Hop \varphi(x)=\Ordo(|x|^{-1})$
as $|x|\to+\infty$. As a consequence, it is clear from
\eqref{eq-Hilbtrans-distr1} how to extend the notion $\Hop f$ to functions
$f$ with $x\mapsto(1+x^2)^{-1/2}f(x)$ in $L^1(\R)$. Note that as a result of the
work of Kolmogorov, the equivalence \eqref{eq-Katzprop1.1} holds equally well
when $\Hop f$ is interpreted as a distribution and as a weak-$L^1$ function.


%







\subsection{The real $H^\infty$ space}
The \emph{real $H^\infty$ space} is denoted by $H^\infty_\stars(\R)$, and it
consists of all functions $f\in L^\infty(\R)$ of the form
\begin{equation}
f=f_1+f_2,\qquad f_1\in H^\infty_+(\R),\,\,\,f_2\in H^\infty_-(\R).
\label{eq-Hinftydecomp1.01}
\end{equation}
Here, $H^\infty_+(\R)$ consists of all functions in $L^\infty(\R)$ whose
Poisson extension to the upper half-plane is holomorphic, while
$H^\infty_-(\R)$ consists of all functions in $L^\infty(\R)$
whose Poisson extension to the upper half-plane is conjugate-holomorphic
(alternatively, the Poisson extension to the lower half-plane is holomorphic).
The decomposition \eqref{eq-Hinftydecomp1.01} is unique up to additive
constants. 
It is easy to obtain the following equivalence, analogous to 
\eqref{eq-Katzprop1.1}:
\begin{equation}
f\in H^\infty_\stars(\R)\,\,\,\Longleftrightarrow\,\,\,
f,\tilde\Hop f\in L^\infty(\R).
\label{eq-propHinftychar1.1}
\end{equation}


\subsection{The predual of the real $H^\infty$ space}
We shall be concerned with the following space of distributions on the line
$\R$:
\[
\Lspaceo:=L^1(\R)+\Hop L^1_0(\R),
\]
which we supply with the appropriate norm \eqref{eq-normLspaceo}, that is,
\begin{equation*}
\|u\|_{\Lspaceo}:=\inf\big\{\|f\|_{L^1(\R)}+\|g\|_{L^1(\R)}:\,\,
u=f+\Hop g,\,\, f\in L^1(\R),\,\,g\in L^1_0(\R)\big\},
\end{equation*}
which makes $\Lspaceo$ a Banach space.


We recall that $L^1_0(\R)$ is  the codimension-one subspace of $L^1(\R)$
which consists of the functions whose integral over $\R$ vanishes.
Given $f\in L^1(\R)$ and $g\in L^1_0(\R)$, the action of $u:=f+\Hop g$ on
a test function $\varphi$ is (compare with \eqref{eq-Hilbtrans-distr1})
\begin{equation}
\langle \varphi,f+\Hop g\rangle_\R=\langle \varphi,f\rangle_\R-
\langle \Hop \varphi,g\rangle_\R=\langle \varphi,f\rangle_\R-
\langle \tilde\Hop \varphi,g\rangle_\R;
\label{eq-Hilbtrans-distr2}
\end{equation}
we observe that the last identity uses that $\langle 1,g\rangle_\R=0$ and the
fact that the functions $\tilde\Hop\varphi$ and $\Hop\varphi$ differ
by a constant.
\medskip



It remains to identify the dual space of $\Lspaceo$ with $H^\infty_\stars(\R)$.

\begin{prop}
Each continuous linear functional $\Lspaceo\to\C$ corresponds to a function
$\varphi\in H^\infty_\stars(\R)$ in accordance with \eqref{eq-Hilbtrans-distr2}.
In short, the dual space of $\Lspaceo$ equals $H^\infty_\stars(\R)$.
\label{prop-predualHinfty-1.1}
\end{prop}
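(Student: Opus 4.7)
The plan is to set up a two-way correspondence. For the easy direction, I would take $\varphi\in H^\infty_\stars(\R)$ and show that the prescription \eqref{eq-Hilbtrans-distr2} extends to a well-defined bounded functional on $\Lspaceo$. For the converse, starting from an arbitrary bounded linear functional $L$ on $\Lspaceo$, I would pull back to $L^1(\R)$ and to $L^1_0(\R)$ separately via the two natural contractions to extract $\varphi\in L^\infty(\R)$ and $\psi\in L^\infty(\R)$, and then match the two pieces to force $\tilde\Hop\varphi=\psi$ modulo constants, thereby placing $\varphi$ in $H^\infty_\stars(\R)$.

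For the forward direction, given $\varphi\in H^\infty_\stars(\R)$ and any representation $u=f+\Hop g$ with $f\in L^1(\R)$ and $g\in L^1_0(\R)$, I would set
\[
L_\varphi(u):=\langle\varphi,f\rangle_\R-\langle\tilde\Hop\varphi,g\rangle_\R.
\]
Both integrals are well-defined because $\varphi,\tilde\Hop\varphi\in L^\infty(\R)$ by \eqref{eq-propHinftychar1.1}, and the crude bound $|L_\varphi(u)|\le\max(\|\varphi\|_\infty,\|\tilde\Hop\varphi\|_\infty)(\|f\|_{L^1(\R)}+\|g\|_{L^1(\R)})$ followed by infimizing over decompositions yields continuity with respect to $\|\cdot\|_{\Lspaceo}$. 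Independence of the decomposition reduces to the following assertion: if $f+\Hop g=0$ as a distribution with $f\in L^1(\R)$ and $g\in L^1_0(\R)$, then $\langle\varphi,f\rangle_\R=\langle\tilde\Hop\varphi,g\rangle_\R$. To see this, I would use $f=-\Hop g\in L^1(\R)$ together with the Fubini-style duality
\[
\int_\R\varphi\cdot\Hop g\,\diff t=-\int_\R\tilde\Hop\varphi\cdot g\,\diff t,
\]
which is valid because the correction term $t/(1+t^2)$ in the definition of $\tilde\Hop$ integrates to $0$ against $g\in L^1_0(\R)$, while the anti-self-adjointness of $\Hop$ supplies the sign.

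For the converse direction, given a bounded linear functional $L$ on $\Lspaceo$, the inclusion $L^1(\R)\hookrightarrow\Lspaceo$ and the map $\Hop:L^1_0(\R)\to\Lspaceo$ are both contractions, so $L$ pulls back to bounded functionals on $L^1(\R)$ and on $L^1_0(\R)$. The standard duality $(L^1)^*=L^\infty$ produces $\varphi\in L^\infty(\R)$ with $L(f)=\langle\varphi,f\rangle_\R$ for $f\in L^1(\R)$, while, since $L^1_0(\R)$ has codimension one in $L^1(\R)$, one obtains $\psi\in L^\infty(\R)$, unique modulo constants, with $L(\Hop g)=-\langle\psi,g\rangle_\R$ for $g\in L^1_0(\R)$. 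The crucial step is to identify $\tilde\Hop\varphi$ with $\psi$ modulo constants, which will promote $\tilde\Hop\varphi$, a priori only in $\mathrm{BMO}(\R)$, to an element of $L^\infty(\R)$ and thus place $\varphi\in H^\infty_\stars(\R)$ by \eqref{eq-propHinftychar1.1}. To carry this out, I would test against $\eta\in C_c^\infty(\R)$ with $\langle\eta,1\rangle_\R=0$; the mean-zero cancellation yields $\Hop\eta(x)=\Ordo(|x|^{-2})$ at infinity, so $\Hop\eta\in L^1(\R)\subset\Lspaceo$ while simultaneously $\Hop\eta\in\Hop L^1_0(\R)\subset\Lspaceo$. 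Evaluating $L(\Hop\eta)$ through both descriptions, and applying the Fubini identity above, one finds $-\langle\tilde\Hop\varphi,\eta\rangle_\R=\langle\varphi,\Hop\eta\rangle_\R=-\langle\psi,\eta\rangle_\R$, so $\tilde\Hop\varphi-\psi$ annihilates every mean-zero test function and is therefore almost everywhere constant. The main obstacle throughout is bookkeeping the mean-zero condition on $g$, which is exactly what renders the modified and standard Hilbert transforms interchangeable under duality; once this is tracked carefully, the rest is a direct unwinding of definitions.
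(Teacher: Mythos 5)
Your argument is correct, and it is essentially the standard proof one would expect; note that the paper itself gives no proof here but simply cites Proposition 7.3.1 of \cite{HMcpam}, so there is nothing in this text to compare against line by line. Both halves of your correspondence are sound: the forward direction reduces, as you say, to the duality identity $\langle\varphi,\Hop g\rangle_\R=-\langle\tilde\Hop\varphi,g\rangle_\R$ for $g\in L^1_0(\R)$ with $\Hop g\in L^1(\R)$ (i.e.\ $g\in H^1_\stars(\R)$), and the converse direction correctly exploits that a mean-zero test function $\eta$ yields $\Hop\eta\in L^1(\R)\cap \Hop L^1_0(\R)$, so that evaluating $L$ on it through both embeddings forces $\tilde\Hop\varphi-\psi$ to be constant. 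Two points deserve a sentence more care in a final write-up. First, the Fubini-style identity is not a termwise splitting of the kernel $\frac{1}{x-t}+\frac{t}{1+t^2}$ (the correction term alone is not integrable against $\varphi\in L^\infty(\R)$); one must interchange with the combined kernel, which decays like $\Ordo(t^{-2})$, and handle the principal value near the diagonal, or else invoke the $H^1$--$\mathrm{BMO}$ pairing directly. Second, you implicitly use that $\Hop\eta$ regarded as an $L^1$-function and $\Hop\eta$ regarded as the distribution $\Hop g$ with $g=\eta$ define the \emph{same} element of $\Lspaceo$; this follows from the anti-self-adjointness of $\Hop$ on test functions, i.e.\ the consistency of \eqref{eq-Hilbtrans-distr1} with the classical transform, and is worth stating since the whole converse hinges on it. With those justifications supplied, the proof is complete.
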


This is Proposition 7.3.1 in \cite{HMcpam}.
We will refer to $\Lspaceo$ as the (canonical) \emph{predual of the real
$H^\infty$ space}.

\begin{rem}
Since an $L^1$-function $f$ gives rise to an absolutely continuous measure
$f(t)\diff t$, it is natural to think of $\Lspaceo$ as embedded into the
space $\mathfrak{M}(\R):=M(\R)+\Hop M_0(\R)$, where $M(\R)$ denotes
the space of complex-valued finite Borel measures on $\R$, and $M_0(\R)$ is
the subspace of measures $\mu\in M(\R)$ with $\mu(\R)=0$. The Hilbert
transforms of singular measures noticeably differ from those of
absolutely continuous measures (see \cite{PSZ}).
\end{rem}

\subsection{The ``valeur au point'' function associated with 
an element
of $\Lspaceo$}
We recall that $\Lspaceo$ consists of distributions on the real line.
However, the definition
\[
\Lspaceo=L^1(\R)+\Hop L^1_0(\R)
\]
would allow us to also think of this space as a subspace of $L^{1,\infty}(\R)$,
the weak $L^1$-space. It is a natural question to wonder about the relationship
between the distribution and the $L^{1,\infty}$ function.
We stick with the distribution theory definition of $\Lspaceo$, and
associate with a given $u\in\Lspaceo$ the ``valeur au point'' function
$\pev[u]$ at almost all points of the line. The precise definition of
$\pev[u]$ is as follows.

\begin{defn} For a fixed $x\in\R$, let  $\chi=\chi_x$ is a compactly supported
$C^\infty$-smooth function on $\R$ with $\chi(t)=1$ for all $t$ in an open
neighborhood of the point $x$. Also, let
\[
P_{x+\imag\epsilon}(t):=\pi^{-1}\frac{\epsilon}{\epsilon^2+(x-t)^2}
\]
be the Poisson kernel. The \emph{valeur au point function} associated with
the distribution $u$ on $\R$ is the function $\pev[u]=\pev[u\chi]$ given by
\begin{equation}
\pev[u](x):=
\lim_{\epsilon\to0^+}\langle \chi P_{x+\imag\epsilon}, u\rangle_\R,\qquad
x\in\R,
\label{eq-pv1001}
\end{equation}
wherever the limit exists.
\end{defn}

In principle, $\pev[u](x)$ might depend on the choice of the cut-off function
$\chi$. Lemma 7.4.2 in \cite{HMcpam} guarantees that this is not the case, 
and that almost everywhere, it gives the same result as the weak-$L^{1}$ 
interpretation of the Hilbert transform on $L^1(\R)$. 
A basic result is the following.

\begin{prop}
{\rm (Kolmogorov)}
The mapping $\pev:\Lspaceo\to L^{1,\infty}(\R)$, $u\mapsto\pev[u]$, is
injective and continuous.
\label{prop-weakL1cont}
\end{prop}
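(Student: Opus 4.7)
The plan is to exploit the defining decomposition $\Lspaceo = L^1(\R) + \Hop L^1_0(\R)$ and reduce the statement to two classical facts: (a) Fatou-type a.e.\ convergence of Poisson integrals of $L^1(\R)$ functions, and (b) Kolmogorov's weak-type inequality $\|\Hop g\|_{L^{1,\infty}(\R)} \le C\|g\|_{L^1(\R)}$. The bridge between the distributional definition of $\pev[u]$ and the pointwise weak-$L^1$ interpretation of a representative $f + \Hop g$ is precisely what the cited Lemma 7.4.2 in \cite{HMcpam} provides; I would invoke it at the outset, so that for every representation $u = f + \Hop g$ with $f\in L^1(\R)$ and $g\in L^1_0(\R)$ we have $\pev[u](x) = f(x) + \Hop g(x)$ a.e., where the right-hand side is interpreted pointwise via the principal-value integral.

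For continuity, I would fix such a representation and combine the quasi-norm triangle inequality
\[
\|\phi_1 + \phi_2\|_{L^{1,\infty}(\R)} \le 2\bigl(\|\phi_1\|_{L^{1,\infty}(\R)} + \|\phi_2\|_{L^{1,\infty}(\R)}\bigr)
\]
with the trivial bound $\|f\|_{L^{1,\infty}(\R)} \le \|f\|_{L^1(\R)}$ and Kolmogorov's weak-type bound for $\Hop g$. This yields $\|\pev[u]\|_{L^{1,\infty}(\R)} \le C'\bigl(\|f\|_{L^1(\R)} + \|g\|_{L^1(\R)}\bigr)$ with $C' = 2\max(1,C)$. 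Passing to the infimum over admissible representations, in accordance with \eqref{eq-normLspaceo}, gives $\|\pev[u]\|_{L^{1,\infty}(\R)} \le C'\|u\|_{\Lspaceo}$, which is continuity.

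For injectivity, I would assume $\pev[u] = 0$ a.e.\ and fix any representation $u = f + \Hop g$, so that $f + \Hop g = 0$ almost everywhere as an element of $L^{1,\infty}(\R)$. Testing $u$ against an arbitrary compactly supported smooth $\varphi$, I would write, using \eqref{eq-Hilbtrans-distr2},
\[
\langle \varphi, u\rangle_\R \;=\; \langle \varphi, f\rangle_\R - \langle \Hop\varphi, g\rangle_\R \;=\; \int_\R \varphi(t)\bigl(f(t) + \Hop g(t)\bigr)\,\diff t \;=\; 0,
\]
where the middle equality is the Fubini-type transfer of the Hilbert kernel from $\varphi$ onto $g$; this is legitimate because $\varphi$ is smooth and compactly supported while $\Hop\varphi = \Ordo(|x|^{-1})$ and $g\in L^1(\R)$, so all integrals converge absolutely and the principal value interpretations match. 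Since $u$ annihilates every test function, $u = 0$ as a distribution on $\R$, and hence as an element of $\Lspaceo$.

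The main obstacle is not any of these individual steps but the pointwise identification I borrow from Lemma 7.4.2: one must show that $\lim_{\epsilon\to 0^+}\langle \chi P_{x+\imag\epsilon}, u\rangle_\R$ exists almost everywhere, is independent of the cutoff $\chi$, and coincides with $f + \Hop g$ in the weak-$L^1$ sense. This ultimately rests on the a.e.\ existence of the principal-value Hilbert transform for $L^1$ data (Kolmogorov) together with the a.e.\ convergence of the Poisson integral of $f$ to $f$. Once that identification is granted, the present proposition is essentially a repackaging of Kolmogorov's weak-type inequality together with the quasi-norm geometry of $L^{1,\infty}(\R)$.
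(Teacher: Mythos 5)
The paper itself offers no proof of this proposition: it is imported wholesale as ``a combination of Propositions 7.4.3 and 7.4.4 in \cite{HMcpam}'', so there is nothing internal to compare your argument against. Your reconstruction is the natural one, and the continuity half is fine: granting the identification $\pev[u]=f+\Hop g$ a.e.\ for every representation, the quasinorm triangle inequality, the embedding $L^1\hookrightarrow L^{1,\infty}$, Kolmogorov's weak-type $(1,1)$ bound, and the infimum over representations give exactly $\|\pev[u]\|_{L^{1,\infty}(\R)}\le C'\|u\|_{\Lspaceo}$.

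The one step that does not survive as written is the middle equality in your injectivity argument, $-\langle\Hop\varphi,g\rangle_\R=\int_\R\varphi\,\Hop g\,\diff t$ with $\Hop g$ the pointwise principal-value transform. Your justification --- that $\Hop\varphi=\Ordo(|x|^{-1})$ and $g\in L^1(\R)$, so ``all integrals converge absolutely'' --- establishes convergence of the left-hand side only; the double integral $\iint \varphi(t)g(x)(x-t)^{-1}\,\diff t\,\diff x$ is not absolutely convergent, and two convergent iterated principal-value integrals of a non-absolutely-integrable kernel need not agree. Indeed, for general $g\in L^1(\R)$ the function $\Hop g$ need not even be locally integrable (the classical Kolmogorov example of an integrable function with non-integrable conjugate function), so the right-hand side may not be defined; the asserted interchange \emph{is} the consistency statement one is trying to prove, not a consequence of Fubini. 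The repair is available from your own hypothesis: if $\pev[u]=0$ a.e., then $\Hop g=-f$ a.e., so $\Hop g\in L^1(\R)$ and hence $g\in H^1_\stars(\R)$ by \eqref{eq-Katzprop1.1}; on the real Hardy space the duality identity $\langle\Hop\varphi,g\rangle_\R=-\langle\varphi,\Hop g\rangle_\R$ is legitimate (both sides are $H^1$--$\mathrm{BMO}$ continuous in $g$ and agree on a dense class of smooth data), and the conclusion $u=0$ follows. Without routing through this observation --- or through the full strength of the consistency lemma you cite at the outset --- the injectivity argument has a genuine hole.
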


This is a combination of Propositions 7.4.3 and 7.4.4 in \cite{HMcpam}.

%



\subsection{The restriction of $\Lspaceo$ to an interval}
\label{subsec-4.6.01}

If $u$ is a distribution on an open interval $J$, then the \emph{restriction
of $u$ to an open subinterval $I$, denoted $u|_I$}, is the distribution defined
by
\[
\langle\varphi,u|_I\rangle_I:=\langle\varphi,u\rangle_J,
\]
where $\varphi$ is a $C^\infty$-smooth test function whose support is compact
and contained in $I$.

\begin{defn}
Let $I$ be an open interval of the real line. Then $u\in\LspaceI$ means by
definition that $u$ is a distribution on $I$ such that there exists a
distribution $v\in\Lspaceo$ such that $u=v|_I$.
\label{defn-4.6.1}
\end{defn}


Kolmogorov's theorem (Proposition \ref{prop-weakL1cont}) has a local 
version as well.

%
%
%

\begin{prop}
{\rm (Kolmogorov)}
Let $I$ be a nonempty open interval of the line $\R$. Then the ``valeur au
point'' mapping is injective and continuous $\pev:\LspaceI\to L^{1,\infty}(I)$.
\end{prop}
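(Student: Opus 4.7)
The plan is to bootstrap the global version (Proposition \ref{prop-weakL1cont}) to the local setting via the quotient description $\LspaceI=\Lspaceo/\{v\in\Lspaceo:v|_I=0\}$. First I would verify that the valeur au point mapping is compatible with lifts. Fix $x\in I$ and choose a cutoff $\chi=\chi_x$ smooth and compactly supported \emph{inside} $I$, equal to $1$ near $x$. For any lift $v\in\Lspaceo$ with $v|_I=u$, the function $\chi P_{x+\imag\epsilon}$ is a legitimate test function supported in $I$, and by the very definition of restriction,
\[
\langle\chi P_{x+\imag\epsilon},u\rangle_I=\langle\chi P_{x+\imag\epsilon},v\rangle_\R.
\]
Passing to the limit $\epsilon\to 0^+$ and invoking Lemma 7.4.2 of \cite{HMcpam}, the limit exists and equals $\pev[v](x)$ for a.e.\ $x\in I$, independently of the choice of cutoff. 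This shows both that $\pev[u]$ is well-defined on $I$ and that $\pev[u]=\pev[v]|_I$ almost everywhere. Continuity is then immediate: the global Kolmogorov estimate yields $\|\pev[v]\|_{L^{1,\infty}(\R)}\le C\|v\|_{\Lspaceo}$, hence $\|\pev[u]\|_{L^{1,\infty}(I)}\le\|\pev[v]\|_{L^{1,\infty}(\R)}\le C\|v\|_{\Lspaceo}$ for each lift, and taking the infimum over $v$ gives $\|\pev[u]\|_{L^{1,\infty}(I)}\le C\|u\|_{\LspaceI}$.

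For injectivity, suppose $\pev[u]=0$ a.e.\ on $I$. Pick any decomposition $v=f+\Hop g\in\Lspaceo$ with $f\in L^1(\R)$ and $g\in L^1_0(\R)$ such that $v|_I=u$, so that the weak-$L^1$ function $f+\Hop g$ vanishes a.e.\ on $I$. To conclude $u=0$ it suffices to prove that $v|_I=0$ as a distribution, i.e., $\langle\varphi,v\rangle_\R=0$ for every test function $\varphi$ compactly supported in $I$. By \eqref{eq-Hilbtrans-distr2},
\[
\langle\varphi,v\rangle_\R=\int_\R\varphi\,f\,\diff x-\int_\R\tilde\Hop\varphi\cdot g\,\diff x,
\]
and since $\langle 1,g\rangle_\R=0$, the additive constant that distinguishes $\tilde\Hop\varphi$ from $\Hop\varphi$ contributes nothing to the second integral. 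A standard Fubini argument, justified by the rapid decay of the test-function Hilbert transform $\Hop\varphi(x)=\Ordo(|x|^{-1})$ together with $g\in L^1(\R)$, supplies the antisymmetry identity $\int_\R\Hop\varphi\cdot g\,\diff x=-\int_\R\varphi\cdot\Hop g\,\diff x$. Since $\varphi$ is supported in $I$, the right-hand side reduces to $-\int_I\varphi\cdot\Hop g\,\diff x$, so
\[
\langle\varphi,v\rangle_\R=\int_I\varphi\cdot(f+\Hop g)\,\diff x=\int_I\varphi\cdot\pev[u]\,\diff x=0,
\]
as required.

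The main conceptual obstacle is precisely this injectivity step, where one must reconcile the distributional action of $v$ paired against a test function supported in $I$ with the weak-$L^1$ pointwise representative $\pev[u]$ on $I$. The key mechanism is the antisymmetry of $\Hop$, together with the observation that for $g\in L^1_0(\R)$ one may freely interchange $\Hop$ and $\tilde\Hop$ inside the pairing. By contrast, the continuity half is comparatively soft, resting only on the monotonicity of the weak-$L^1$ quasinorm under restriction to a subset of $\R$.
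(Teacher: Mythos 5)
The paper offers no argument for this proposition (it is quoted from Corollaries 7.6.3 and 7.6.6 of \cite{HMcpam}), so there is nothing in-text to compare against; I assess your proof on its own terms. Your reduction to the global statement via lifts is the right framework, and the well-definedness and continuity halves are sound: $\pev[u]=\pev[v]|_I$ a.e.\ for every lift $v$, the $L^{1,\infty}$ quasinorm only decreases under restriction of the underlying set, and taking the infimum over lifts converts the global bound into the quotient-norm bound.

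The injectivity half, however, has a genuine gap at its final step. From $\langle\varphi,v\rangle_\R=\langle\varphi,f\rangle_\R-\langle\Hop\varphi,g\rangle_\R$ you pass to $\int_I\varphi\cdot(f+\Hop g)\,\diff x$, i.e.\ you replace the distributional pairing $-\langle\Hop\varphi,g\rangle_\R$ by the Lebesgue integral of $\varphi$ against the pointwise (principal-value) function $\Hop g$. That identification is precisely the nontrivial content of the local statement being proved, and it does not follow from Fubini: the double integral $\iint|\varphi(t)|\,|g(x)|\,|x-t|^{-1}\diff t\,\diff x$ diverges, so Fubini applies only to the $\epsilon$-truncated kernels, and what one actually obtains is $-\langle\Hop\varphi,g\rangle_\R=\lim_{\epsilon\to0^+}\int_\R\varphi\,\Hop_\epsilon g\,\diff x$. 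Passing this limit inside the integral is not justified: $\Hop_\epsilon g\to\Hop g$ only almost everywhere, the maximal truncated transform is merely of weak type $(1,1)$, and for general $g\in L^1(\R)$ the pointwise function $\Hop g$ need not even be locally integrable (take $g(t)=t^{-1}(\log t)^{-2}1_{]0,1/2[}(t)$, for which $|\Hop g(-s)|\gtrsim s^{-1}|\log s|^{-1}$ as $s\to0^+$), so the expression $\int_I\varphi\cdot\Hop g\,\diff x$ is not a priori meaningful. In your setting you do know in addition that $f+\Hop g=0$ a.e.\ on $I$, so $\Hop g$ is integrable on $\supp\varphi$; but almost-everywhere convergence to an integrable limit still does not give convergence of the integrals without some uniform integrability, and none is supplied. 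Closing this step requires a genuine argument — for instance passing to the pieces $\proj_\pm v$, which are boundary values of Cauchy transforms of $L^1$ data, and invoking a boundary uniqueness theorem on $I$, or else a careful quantitative analysis of the truncations — which is what the cited corollaries of \cite{HMcpam} are there to provide.
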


This is a combination of Corollaries 7.6.3 and 7.6.6 in \cite{HMcpam}.

\section{Background material: function spaces on the 
circle}

\subsection{The Hardy space $H^1$ on the circle}
Let $L^1(\R/2\Z)$ denote the space of (equivalence classes of) $2$-periodic  
Borel  measurable functions $f:\R\to\C$ subject to the integrability condition
\[
\|f\|_{L^1(\R/2\Z)}:=\int_{I_1}|f(t)|\diff t<+\infty,
\]
where $I_1=]\!-\!1,1[$ as before. 
Via the exponential mapping $t\mapsto\e^{\imag\pi t}$, which is $2$-periodic and
maps the real line $\R$ onto the unit circle $\Te$, we may identify the
space $L^1(\R/2\Z)$ with the standard Lebesgue space $L^1(\Te)$ of the
unit circle. This will allow us to develop the elements of Hardy space theory
in the setting of $2$-periodic functions. We shall need the subspace
$L^1_0(\R/2\Z)$ consisting of all $f\in L^1(\R/2\Z)$ with
\[
\langle f,1\rangle_{I_1}=\int_{I_1}f(t)\diff t=0;
\]
it has codimension $1$ in $L^1(\R/2\Z)$. The Hardy space $H^1_+(\R/2\Z)$ is
defined as the subspace of $L^1(\R/2\Z)$ consisting of functions
$g\in L^1(\R/2\Z)$ whose Poisson extension to the unit disk $\D$ is holomorphic
and vanishes at the origin, and analogously, $H^1_-(\R/2\Z)$ consists of the 
functions $g$ in $L^1(\R/2Z)$ whose complex conjugate $\bar g$ is in 
$H^1_+(\R/2\Z)$. In terms of the Poisson extensions to the upper half-plane 
instead, $f\in H^1_+(\R/2\Z)$ if the extension is holomorphic and vanishes
at $+\imag\infty$, whereas  $f\in H^1_-(\R/2\Z)$ if the extension is 
conjugate-holomorphic and vanishes at $+\imag\infty$. 
We then introduce the \emph{real $H^1$-space}
\[
H^1_{\stars}(\R/2\Z):=H^1_+(\R/2\Z)\oplus H^1_-(\R/2\Z),
\]
where we think of the elements of the sum space as $2$-periodic functions on
$\R$ (as before the symbol $\oplus$ means direct sum, which is possible since 
$H^1_+(\R/2\Z)\cap H^1_-(\R/2\Z)=\{0\}$).
We note that, for instance,  $H^1_\stars(\R/2\Z)\subset L^1_0(\R/2\Z)$. 

\subsection{The Hilbert transform on $2$-periodic functions and
distributions}
For $f\in L^1(\R/2\Z)$, we let $\Hop_2$ be the convolution operator
\begin{equation}
\Hop_2 f(x):=
\frac{1}{2}\pv
\int_{I_1} f(t)\cot\frac{\pi(x-t)}{2}\diff t,
\label{eq-Hilbert04}
\end{equation}
where again $\pv$  stands for principal value, which means we take the 
limit as $\epsilon\to0^+$ of the integral where the set
\[
\{x\}+2\Z+[-\epsilon,\epsilon]
\]
is removed from the interval $I_1=]\!-\!1,1[$. It is obvious from the
periodicity of the cotangent function that $\Hop_2 f$,   if it exists
as a limit, is $2$-periodic. 
Alternatively, by a change of variables, we have that 
\begin{equation}
\Hop_2 f(x)=
\frac{1}{2}\lim_{\epsilon\to0^+}
\int_{I_1\setminus I_\epsilon} f(x-t)\cot\frac{\pi t}{2}\diff t,
\label{eq-Hilbert04.1}
\end{equation}
(here, as usual, $I_\epsilon=]\!-\!\epsilon,\epsilon[$).
It is well-known that the operator $\Hop_2$ is just the natural extension 
of the Hilbert transform $\Hop$ to the $2$-periodic functions. 
We observe the peculiarity that $\Hop_21=0$, which follows from the fact that
the cotangent function is odd. 
Like the situation for the real line $\R$, the periodic Hilbert transform 
$\Hop_2$ maps $L^1(\R/2\Z)$ into the weak $L^1$-space $L^{1,\infty}(\R/2\Z)$. 
However, as we prefer to work within the framework of distribution theory, 
we proceed as follows.

Let $C^\infty(\R/2\Z)$ denote the space of $C^\infty$-smooth $2$-periodic
functions on $\R$. It is easy to see that
\[
\varphi\in C^\infty(\R/2\Z) \implies \Hop_2\varphi\in C^\infty(\R/2\Z).
\]
To emphasize the importance of the circle $\Te\cong\R/2\Z$, we write
\begin{equation}
\langle f,g\rangle_{\R/2\Z}:=\int_{-1}^1 f(t)g(t)\diff t,
\label{eq-dualaction-per1}
\end{equation}
for the dual action when $f$ and $g$ are $2$-periodic.

\begin{defn}
For a test function $\varphi\in C^\infty(\R/2\Z)$ and a distribution $u$ on
the circle $\R/2\Z$, we put
\[
\langle \varphi,\Hop_2 u\rangle_{\R/2\Z}:=-\langle\Hop_2\varphi,u\rangle_{\R/2\Z}.
\]
\end{defn}

This defines the Hilbert transform $\Hop_2 u$ for any distribution $u$ on the
circle $\R/2\Z$.

%
%
%
%

\subsection{The real $H^\infty$-space of the circle}
The \emph{real $H^\infty$-space on the circle $\R/2\Z$} is denoted by
$H^\infty_\stars(\R/2\Z)$, and consists of all the functions in $H^\infty_\stars
(\R)$ that are $2$-periodic. It follows from \eqref{eq-propHinftychar1.1}
that 
\begin{equation}
f\in H^\infty_\stars(\R/2\Z)\,\,\,\Longleftrightarrow\,\,\,
f,\Hop_2 f\in L^\infty(\R/2\Z).
\label{eq-prop-Hinftychar1.2}
\end{equation}

\subsection{A predual of $2$-periodic real $H^\infty$}
We put
\[
\Lspaceoper:=L^1(\R/2\Z)+\Hop_2 L^1_0(\R/2\Z),
\]
understood as a space of $2$-periodic distributions on the line $\R$.
More precisely, if $u=f+\Hop_2 g$, where $f\in L^1(\R/2\Z)$ and
$g\in L^1_0(\R/2\Z)$, then the action on a test function
$\varphi\in C^\infty(\R/2\Z)$ is given by
\begin{equation}
\langle\varphi,u\rangle_{\R/2\Z}:=\langle\varphi,f\rangle_{\R/2\Z}
-\langle \Hop_2\varphi, g\rangle_{\R/2\Z}.
\label{eq-Hilbtransf-distrib-per1.1}
\end{equation}
But a $2$-periodic distribution should be possible to think of as a
distribution on the line, which means that need to understand the action
on standard test functions in $C^\infty_c(\R)$. If $\psi\in C^\infty_c(\R)$,
we simply put
\begin{equation}
\langle\psi,u\rangle_{\R/2\Z}:=\langle\Perop_2\psi,u\rangle_{\R/2\Z},
\label{eq-extper1.01}
\end{equation}
where $\Perop_2\psi\in C^\infty(\R/2\Z)$ is given by
\begin{equation}
\Perop_2\psi(x):=\sum_{j\in\Z}\psi(x+2j).
\label{eq-Peropdef1.1}
\end{equation}
We will refer to $\Perop_2$ as the \emph{periodization operator}.

As in the case of the line $\R$, we may identify  $\Lspaceoper$ with the 
predual of the real $H^\infty$-space  $H^\infty_\stars(\R/2\Z)$:
\[
\Lspaceoper^*=H^\infty_\stars(\R/2\Z)
\]
with respect to the standard dual action $\langle\cdot,\cdot\rangle_{\R/2\Z}$.
%
%

The definition of the ``valeur au point function'' $\pev[u]$ makes sense
for $u\in\Lspaceoper$ and as in the case of the line, it does not depend on
the choice of the particular cut-off function. The following assertion is the
analogue of Proposition \ref{prop-weakL1cont}; the proof is suppressed.

\begin{prop}
{\rm (Kolmogorov)}
The ``valeur au point'' mapping $\pev:\Lspaceoper\to L^{1,\infty}(\R/2\Z)$,
$u\mapsto\pev[u]$, is injective and continuous.
\label{prop-weakL1cont.per}
\end{prop}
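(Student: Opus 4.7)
The plan is to transcribe the proof of the line version (Proposition \ref{prop-weakL1cont}) to the periodic setting, replacing the line $\R$ by the circle $\R/2\Z\cong\Te$; the needed ingredients---the Lebesgue differentiation theorem against the Poisson approximate identity, the Kolmogorov weak-$(1,1)$ estimate for the Hilbert transform, and a Privalov-type boundary uniqueness theorem for holomorphic functions---all have standard periodic counterparts.

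For continuity, I would fix $u\in\Lspaceoper$ and pick an arbitrary decomposition $u=f+\Hop_2 g$ with $f\in L^1(\R/2\Z)$ and $g\in L^1_0(\R/2\Z)$, and control $\pev[f]$ and $\pev[\Hop_2 g]$ separately. The periodic analogue of Lemma 7.4.2 of \cite{HMcpam} identifies $\pev[f]$ with $f$ almost everywhere (via Lebesgue differentiation against the circular Poisson kernel) and identifies $\pev[\Hop_2 g]$ almost everywhere with the standard weak-$L^1$ representative of the pointwise periodic Hilbert transform. The bound $\|\pev[f]\|_{L^{1,\infty}(\R/2\Z)}\le\|f\|_{L^1(\R/2\Z)}$ is the trivial embedding $L^1\subset L^{1,\infty}$, while $\|\pev[\Hop_2 g]\|_{L^{1,\infty}(\R/2\Z)}\le C\|g\|_{L^1(\R/2\Z)}$ is the classical Kolmogorov weak-$(1,1)$ inequality for the circular Hilbert transform. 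Summing these two bounds and taking the infimum over all admissible decompositions gives $\|\pev[u]\|_{L^{1,\infty}(\R/2\Z)}\le C\|u\|_{\Lspaceoper}$.

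For injectivity, assume $\pev[u]=0$ almost everywhere on $\R/2\Z$. I would form the periodic Poisson extension $U(x+\imag\epsilon):=\langle P^{(2)}_{x+\imag\epsilon},u\rangle_{\R/2\Z}$, a $2$-periodic harmonic function on $\C_+$ that descends through $z\mapsto\e^{\imag\pi z}$ to a harmonic function on the punctured disk $\D\setminus\{0\}$; the periodic analogue of Lemma 7.4.2 of \cite{HMcpam} ensures that $\pev[u](x)$ equals the non-tangential boundary value of $U$ at $x$ wherever either exists, so these boundary values vanish almost everywhere. Decomposing $U=V+\overline{W}$ into its holomorphic and antiholomorphic parts via the periodic Szeg\H{o} projections applied to $u$, each of $V, W$ is holomorphic and $2$-periodic on $\C_+$, with non-tangential boundary values in $L^{1,\infty}(\R/2\Z)$ that vanish a.e. The $L^{1,\infty}$ boundary control places $V$ and $W$ in the Nevanlinna class on $\D\setminus\{0\}$, and the classical F.\ and M.\ Riesz--Privalov uniqueness theorem then forces $V\equiv W\equiv 0$, so $U\equiv 0$ on $\C_+$. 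Since $\langle\varphi,u\rangle_{\R/2\Z}$ is recovered from $U$ by pairing against the periodic Poisson representation of any $\varphi\in C^\infty(\R/2\Z)$, we conclude $u=0$ as a distribution. The single subtlety I anticipate is verifying the Nevanlinna class membership of $V$ and $W$---once the $L^{1,\infty}$ boundary values established in the continuity step are in hand, this is a standard consequence of Hardy space theory on the disk.
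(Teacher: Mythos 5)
The paper does not actually print a proof here---it states that the result is the periodic analogue of Proposition \ref{prop-weakL1cont} (Propositions 7.4.3--7.4.4 of \cite{HMcpam}) and suppresses the argument---so your plan of transcribing the line proof to the circle is exactly the intended route. Your continuity half is correct: $\pev[f]=f$ a.e.\ by differentiation against the Poisson kernel, $\pev[\Hop_2 g]$ is the pointwise conjugate function, Kolmogorov's weak-$(1,1)$ inequality gives the bound, and taking the infimum over decompositions yields the quasinorm estimate (up to the harmless constant in the $L^{1,\infty}$ quasi-triangle inequality).

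The injectivity half, however, contains a genuine gap. From $\pev[u]=0$ a.e.\ you assert that $V$ and $W$ \emph{individually} have vanishing non-tangential boundary values. This does not follow: what vanishes a.e.\ is the sum of the boundary value of $V$ and the conjugate of that of $W$, and you cannot apply the Szeg\H{o} projections to the pointwise boundary function to separate the two pieces---doing so would presuppose precisely the identification between the distribution and its ``valeur au point'' that you are trying to establish. Moreover, the strategy as written cannot be repaired without invoking the absolute continuity of $u$, because every hypothesis you actually use is satisfied by $u=\delta_0$: its Poisson extension is $\re\frac{1+z}{1-z}=V+\bar W$ with $V=W=\frac12\frac{1+z}{1-z}\in H^p$ for all $p<1$ (hence Nevanlinna, with boundary values $\frac{\imag}{2}\cot(\theta/2)\in L^{1,\infty}$), and the non-tangential boundary values of the harmonic function $U$ vanish a.e., yet $u\ne0$. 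The Luzin--Privalov theorem applies to a single meromorphic function, not to a harmonic sum $V+\bar W$. The correct argument must use that $u=f+\Hop_2 g$ with $f,g\in L^1(\R/2\Z)$: the hypothesis $\pev[u]=0$ forces the pointwise conjugate function $\tilde g$ to equal $-f$, hence to lie in $L^1(\R/2\Z)$; by the classical Riesz--Smirnov theory this places $g+\imag\tilde g$ in $H^1$ and in particular gives $\widehat{\tilde g}(n)=-\imag\,\sign(n)\,\hat g(n)$, which identifies the distributional Hilbert transform $\Hop_2 g$ with the a.e.-defined function $\tilde g$. Then $u$ coincides with the $L^1$ function $f+\tilde g=\pev[u]=0$ as a distribution. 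It is only at this step that the exclusion of singular measures from $\Lspaceoper$ enters, and it must.
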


\section{A sum of two preduals and its localization to intervals}

\subsection{The sum space $\Lspaceo\oplus\Lspaceoper$}
Suppose $u$ is distribution on the line $\R$ of the form
\begin{equation}
u=v+w,\quad\text{where}\quad v\in\Lspaceo,\,\,\, w\in\Lspaceoper.
\label{eq-sumspace001}
\end{equation}
The natural question appears as to whether the distributions $v,w$ on the
right-hand side are unique for a given $u$. This is indeed so 
(Proposition 9.1.1 in \cite{HMcpam}):
\begin{equation}
\Lspaceo\cap\Lspaceoper=\{0\}.
\label{eq-prop-intersec1.001}
\end{equation}
In view of \eqref{eq-prop-intersec1.001}, it makes sense to write
$\Lspaceo\oplus\Lspaceoper$ for the space of tempered distributions $u$
of the form \eqref{eq-sumspace001}. We endow $\Lspaceo\oplus\Lspaceoper$ with
the induced Banach space norm
\[
\|u\|_{\Lspaceo\oplus\Lspaceoper}:=\|v\|_{\Lspaceo}+\|w\|_{\Lspaceoper},
\]
provided $u,v,w$ are related via \eqref{eq-sumspace001}.

\subsection{The localization of $\Lspaceo\oplus\Lspaceoper$ to a
bounded open interval}
\label{subsec-6.2}

In the sense of Subsection \ref{subsec-4.6.01}, we may restrict a given
distribution $u\in \Lspaceo\oplus\Lspaceoper$ to a given open interval $I$.
It is natural to wonder what the space of such restrictions looks like.

\begin{prop}
The restriction of the space $\Lspaceo\oplus\Lspaceoper$ to a bounded open
interval $I$ equals the space $\LspaceI$.
\label{prop-rest1.1}
\end{prop}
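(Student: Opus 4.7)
The inclusion $\LspaceI\subseteq(\Lspaceo\oplus\Lspaceoper)|_I$ is immediate, since $\Lspaceo\subset\Lspaceo\oplus\Lspaceoper$. For the reverse inclusion, write any element of $\Lspaceo\oplus\Lspaceoper$ as $u=v+w$ with $v\in\Lspaceo$ and $w\in\Lspaceoper$; because $v|_I\in\LspaceI$ automatically, the problem reduces to showing $w|_I\in\LspaceI$ for every $w\in\Lspaceoper$. Decompose $w=f+\Hop_2 g$ with $f\in L^1(\R/2\Z)$ and $g\in L^1_0(\R/2\Z)$, fix a positive \emph{odd} integer $N$ with $\bar I\subset(-N,N)$, and set $f_1:=1_{(-N,N)}f$ and $g_1:=1_{(-N,N)}g$, both in $L^1(\R)$. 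The oddness of $N$ guarantees that $(-N,N)$ is a disjoint union of $N$ full periods of $g$, so $\int_{\R}g_1=N\int_{-1}^1 g(t)\diff t=0$; consequently $g_1\in L^1_0(\R)$ and $f_1+\Hop g_1\in\Lspaceo$. The task is thereby reduced to showing that $(\Hop_2 g-\Hop g_1)|_I$ is itself the restriction of some element of $\Lspaceo$.

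The plan is to compare the two Hilbert kernels through the Eisenstein-type partial fraction
\[
\tfrac{1}{2}\cot\tfrac{\pi s}{2}=\tfrac{1}{\pi}\lim_{M\to\infty}\sum_{n=-M}^M\frac{1}{s-2n},
\]
which, together with the $2$-periodicity of $g$ and the substitution $t\mapsto t+2n$, rewrites $\Hop_2 g(x)$ as $\lim_{M\to\infty}\Hop(g\cdot 1_{(-2M-1,2M+1)})(x)$. Subtracting $\Hop g_1$ leaves, for $x\in I$,
\[
\Hop_2 g(x)-\Hop g_1(x)=\frac{1}{\pi}\lim_{M\to\infty}\sum_{K<|n|\le M}\int_{2n-1}^{2n+1}\frac{g(t)}{x-t}\diff t,\qquad K:=\tfrac{N-1}{2},
\]
with no principal value needed on the right since the relevant periods are bounded away from $x$. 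For each $|n|>K$, substituting $t=2n+u$ and Taylor expanding $1/(x-2n-u)$ in powers of $u$, together with the zero-mean identity $\int_{-1}^1 g(u)\diff u=0$, yields
\[
\int_{2n-1}^{2n+1}\frac{g(t)}{x-t}\diff t=\sum_{k\ge 1}\frac{\int_{-1}^1 u^k g(u)\diff u}{(x-2n)^{k+1}}=\Ordo(n^{-2}),
\]
uniformly for $x\in I$. Hence the series converges absolutely and uniformly on $I$ to a function $\psi\in C^\infty(I)\cap L^\infty(I)$ with $\pev(\Hop_2 g-\Hop g_1)=\psi$ almost everywhere on $I$, and the local form of Kolmogorov's injectivity promotes this to the distributional identity $(\Hop_2 g-\Hop g_1)|_I=\psi|_I$.

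To finish, multiply $\psi$ by any cutoff $\eta\in C^\infty_c(\R)$ equal to $1$ on a neighborhood of $\bar I$; then $\eta\psi\in L^1(\R)\subset\Lspaceo$ and $(\eta\psi)|_I=\psi|_I$, so that $w|_I=(f_1+\Hop g_1+\eta\psi)|_I\in\LspaceI$, completing the proof. The main obstacle lies in the comparison step: one must justify the interchange of summation and principal-value integration in the Eisenstein expansion and, most importantly, exploit the zero-mean hypothesis on $g$ to cancel the leading $1/(x-2n)$ term in each period's Taylor expansion. Without this cancellation the defining series for $\psi$ would fail to converge, so the mean-zero property plays a dual role---placing $g_1$ in $L^1_0(\R)$ (which makes $\Hop g_1$ lie in $\Lspaceo$) and simultaneously taming the periodic Hilbert kernel at infinity.
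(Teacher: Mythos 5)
The paper itself supplies no proof of this proposition---it defers to Proposition 9.2.1 of \cite{HMcpam}---so your argument can only be judged on its own terms. The overall strategy is sound: the trivial inclusion, the reduction to $w=f+\Hop_2 g\in\Lspaceoper$, the choice of an \emph{odd} $N$ so that $g_1:=1_{(-N,N)}g$ inherits the mean-zero property (hence $\Hop g_1\in\Lspaceo$), and the comparison of the periodic and non-periodic Hilbert kernels through the Eisenstein expansion, with the tail of size $\Ordo(n^{-2})$ uniformly on $I$, are all correct and do constitute the heart of the matter. One side remark: the cancellation coming from $\int_{I_1}g=0$ is not actually what saves the convergence of the tail, since you are already using symmetric partial sums and $\frac{1}{x-2n}+\frac{1}{x+2n}=\Ordo(n^{-2})$ pairs the leading terms anyway; the truly indispensable role of the mean-zero hypothesis is to place $g_1$ in $L^1_0(\R)$.

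The one genuine gap is the final identification $(\Hop_2 g-\Hop g_1)|_I=\psi|_I$. You promote an almost-everywhere equality of ``valeur au point'' functions to a distributional identity by invoking ``the local form of Kolmogorov's injectivity,'' but the only local injectivity available is $\pev:\LspaceI\to L^{1,\infty}(I)$, and to apply it you would need to know beforehand that the difference $(\Hop_2 g-\Hop g_1-\psi)|_I$ lies in $\LspaceI$ --- which is essentially the statement being proved; the global injectivity on $\Lspaceo\oplus\Lspaceoper$ does not localize for free. The repair is to bypass $\pev$ and verify the identity directly on test functions: for $\varphi\in C^\infty_c(I)$ one has $\langle\varphi,\Hop_2 g\rangle=-\langle\Hop_2\Perop_2\varphi,g\rangle_{\R/2\Z}$ and $\langle\varphi,\Hop g_1\rangle=-\langle\Hop\varphi,g_1\rangle_\R$; unfolding $\Hop_2\Perop_2\varphi$ as the symmetric periodization of $\Hop\varphi$ and applying Fubini (legitimate because for $|n|>K$ the kernel $(x-t-2n)^{-1}$ is bounded on $\supp\varphi\times \bar I_1$) gives $\langle\varphi,\Hop_2 g-\Hop g_1\rangle=\langle\varphi,\psi\rangle$ outright. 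Equivalently, note that
\[
R(x,t):=\frac12\cot\frac{\pi(x-t)}{2}-\frac{1}{\pi}\sum_{|n|\le K}\frac{1}{x-t-2n}
\]
is bounded and smooth on a neighborhood of $\bar I\times\bar I_1$, so that $\psi(x)=\int_{I_1}R(x,t)g(t)\,\diff t$ and the distributional identity is immediate. With this step repaired, your proof is complete.
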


This is Proposition 9.2.1 in \cite{HMcpam}.

\section{An involution, its adjoint, and the
periodization operator}

\subsection{An involutive operator}
\label{subsec-7.1}

For each positive real number $\beta$, let $\Jop_\beta$ denote  the
involution given by
\begin{equation*}
\Jop_\beta f(x):=\frac{\beta}{x^2}\,f(-\beta/x),\qquad x\in\R^\times.
\end{equation*}
We use the standard notation $\R^\times:=\R\setminus\{0\}$.
If $f\in L^1(\R)$ and $\varphi\in L^\infty(\R)$, the change-of-variables formula
yields that
\begin{equation}
\langle \varphi,\Jop_\beta f\rangle_\R=\int_\R \varphi(t)\,f(-\beta/t)\,
\frac{\beta\diff t}{t^2}=\int_\R \varphi(-\beta/t)\,f(t)\,
\diff t=\langle \Jop_\beta^* \varphi,f\rangle_\R,
\label{eq-Jopdef1}
\end{equation}
where $\Jop_\beta^*$ is the involution
\[
\Jop_\beta^* \varphi(t):=\varphi(-\beta/t),\qquad t\in\R^\times.
\]
%
It is a consequence of the change-of-variables formula that $\Jop_\beta$ is an 
isometric isomorphism $L^1(\R)\to L^1(\R)$. 
%
%

Next, we \emph{extend $\Jop_\beta$ to a bounded operator 
$\Lspaceo\to\Lspaceo$}. 
The arguments in Subsection 10.1 of \cite{HMcpam} show that the correct
extension of $\Jop_\beta$ to an operator $\Lspaceo\to\Lspaceo$ reads as 
follows.  

\begin{defn}
For $u\in\Lspaceo$ of the form $u=f+\Hop g\in\Lspaceo$, where
$f\in L^1(\R)$ and $g\in L^1_0(\R)$, we define the $\Jop_\beta u$ to be
the distribution on $\R$ given by the formula
\begin{equation*}
\langle \varphi,\Jop_\beta u\rangle_\R=\langle \varphi,\Jop_\beta
(f+\Hop g)\rangle_\R:=
\langle\varphi,\Jop_\beta f\rangle_\R+\langle\varphi,\Hop\Jop_\beta g\rangle_\R
=\langle\varphi,\Jop_\beta f\rangle_\R-\langle\tilde\Hop\varphi,
\Jop_\beta g\rangle_\R,
\end{equation*}
for test functions $\varphi\in H^\infty_\stars(\R)$.
\label{defn-7.1.3}
\end{defn}


The involutive properties of $\Jop_\beta$ and its adjoint are then naturally 
preserved (Proposition 10.1.4 in \cite{HMcpam}).

%

\subsection{The periodization operator}
\label{subsec-7.2}

We recall the definition of the \emph{periodization operator} $\Perop_2$:
\[
\Perop_2f(x):=\sum_{j\in\Z}f(x+2j).
\]
In \eqref{eq-Peropdef1.1}, we defined the $\Perop_2$ on test functions.
It is however clear that it remains well-defined with much less smoothness
required of $f$. The terminology comes from the property that whenever it is
well-defined, the function $\Perop_2 f$ is $2$-periodic automatically.
It is obvious from the definition that $\Perop_2$ acts contractively 
$L^1(\R)\to L^1(\R/2\Z)$.
%

The 
basic property of the periodization operator is the following, for 
$f\in L^1(\R)$ and $F\in L^\infty(\R/2\Z)$ (see, e.g., (10.2.2) 
in \cite{HMcpam}):
\begin{equation}
\langle F,\Perop_2 f\rangle_{\R/2\Z}=\langle F,f\rangle_\R,
\qquad n\in\Z.
\label{eq-Pi2id1.1'}
\end{equation}
We need to extend $\Perop_2$ in a natural fashion to the space $\Lspaceo$.
If $\varphi\in C^\infty(\R/2\Z)$ is a test function on the circle, we glance
at \eqref{eq-Pi2id1.1'}, and
for $u\in\Lspaceo$ with $u=f+\Hop g$, where $f\in L^1(\R)$ and
$g\in L^1_0(\R)$, we set
\begin{equation}
\langle \varphi, \Perop_2 u\rangle_{\R/2\Z}:=\langle \varphi,u\rangle_\R
=\langle\varphi,f\rangle_\R-\langle\tilde\Hop\varphi,g\rangle_\R.
\label{eq-Pi2id1.2}
\end{equation}
This defines $\Perop_2 u$ as a distribution on the circle (compare with
\eqref{eq-Hilbtrans-distr2}).

\begin{prop}
For $u\in\Lspaceo$ of the form $u=f+\Hop g$, where $f\in L^1(\R)$ and
$g\in L^1_0(\R)$, we have that  $\Perop_2 u=\Perop_2 f+\Hop_2\Perop_2 g$.
In particular, $\Perop_2$ maps $\Lspaceo\to\Lspaceoper$ continuously.
\label{prop-7.2.2}
\end{prop}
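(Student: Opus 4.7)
The plan is to establish the distributional identity $\Perop_2 u=\Perop_2 f+\Hop_2\Perop_2 g$ by testing against an arbitrary $\varphi\in C^\infty(\R/2\Z)$, and then read off continuity from the explicit decomposition.

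I first check that the right-hand side is a bona fide element of $\Lspaceoper$. Since $\Perop_2$ maps $L^1(\R)$ contractively into $L^1(\R/2\Z)$, the function $\Perop_2 f$ lies in $L^1(\R/2\Z)$. Applying the identity \eqref{eq-Pi2id1.1'} with $F\equiv 1$ yields $\int_{I_1}\Perop_2 g=\int_\R g=0$, so $\Perop_2 g\in L^1_0(\R/2\Z)$, and hence $\Perop_2 f+\Hop_2\Perop_2 g$ does belong to $\Lspaceoper$. By the definition \eqref{eq-Hilbtransf-distrib-per1.1}, its action on $\varphi$ equals $\langle\varphi,\Perop_2 f\rangle_{\R/2\Z}-\langle\Hop_2\varphi,\Perop_2 g\rangle_{\R/2\Z}$. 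Two applications of \eqref{eq-Pi2id1.1'}, one with $F=\varphi$ and another with $F=\Hop_2\varphi$ (both smooth, hence in $L^\infty(\R/2\Z)$), transfer each term to the line:
\[
\langle\varphi,\Perop_2 f+\Hop_2\Perop_2 g\rangle_{\R/2\Z}=\langle\varphi,f\rangle_\R-\langle\Hop_2\varphi,g\rangle_\R.
\]
Comparing with the defining formula \eqref{eq-Pi2id1.2} for $\Perop_2 u$, the proof reduces to the single identity $\langle\Hop_2\varphi,g\rangle_\R=\langle\tilde\Hop\varphi,g\rangle_\R$.

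The main obstacle---and the essential content of the proposition---is the pointwise identity
\[
\tilde\Hop\varphi(x)-\Hop_2\varphi(x)=c(\varphi),\qquad x\in\R,
\]
for some $x$-independent constant $c(\varphi)$. I would prove this by a direct calculation: starting from the principal-value integral defining $\tilde\Hop\varphi(x)$, I split $\R$ into the translated blocks $[2k-1,2k+1]$ and use the $2$-periodicity of $\varphi$ to write
\[
\tilde\Hop\varphi(x)=\frac{1}{\pi}\int_{-1}^{1}\varphi(s)\Bigl[\operatorname{pv}\sum_{k\in\Z}\frac{1}{(x-s)-2k}+\operatorname{pv}\sum_{k\in\Z}\frac{s+2k}{1+(s+2k)^2}\Bigr]\diff s.
\]
The classical partial-fraction expansion $\operatorname{pv}\sum_{k\in\Z}(y-2k)^{-1}=\tfrac{\pi}{2}\cot\tfrac{\pi y}{2}$ identifies the first bracket with the kernel of $\Hop_2$, so the first contribution is exactly $\Hop_2\varphi(x)$; the second bracket is a conditionally convergent function of $s$ alone, independent of $x$, and produces $c(\varphi)$. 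The delicate point is justifying the interchange of the principal value at the singularity $t=x$ with the sum over periods, which rests on the fact that the combined kernel $\frac{1}{x-t}+\frac{t}{1+t^2}$ decays like $|t|^{-2}$ at infinity; alternatively, expanding $\varphi$ in Fourier series and using the per-basis identity $(\tilde\Hop-\Hop_2)\e^{\imag\pi n x}=\imag\sign(n)\e^{-\pi|n|}$ (a constant for each $n\ne 0$, and $0$ for $n=0$) sidesteps the bookkeeping altogether.

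With the constancy in hand, $\langle\tilde\Hop\varphi-\Hop_2\varphi,g\rangle_\R=c(\varphi)\langle 1,g\rangle_\R=0$ because $g\in L^1_0(\R)$, which closes the distributional identity. The continuity assertion is then immediate:
\[
\|\Perop_2 f+\Hop_2\Perop_2 g\|_{\Lspaceoper}\le\|\Perop_2 f\|_{L^1(\R/2\Z)}+\|\Perop_2 g\|_{L^1(\R/2\Z)}\le\|f\|_{L^1(\R)}+\|g\|_{L^1(\R)},
\]
and taking the infimum over all admissible decompositions $u=f+\Hop g$ yields $\|\Perop_2 u\|_{\Lspaceoper}\le\|u\|_{\Lspaceo}$.
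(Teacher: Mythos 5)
Your proof is correct. The paper itself supplies no argument here (it simply cites Proposition 10.2.2 of \cite{HMcpam}), but you have isolated exactly the right mechanism: after two applications of \eqref{eq-Pi2id1.1'} everything reduces to $\langle\tilde\Hop\varphi-\Hop_2\varphi,g\rangle_\R=0$, and this follows because $\tilde\Hop\varphi-\Hop_2\varphi$ is constant for $2$-periodic smooth $\varphi$ while $g$ has vanishing integral. Both of your justifications of the constancy are sound --- the block decomposition works because the combined kernel $\tfrac{1}{x-t}+\tfrac{t}{1+t^2}$ is $\Ordo(|t|^{-2})$ so the regrouping into periods converges absolutely away from the single principal-value singularity, and the Fourier-mode computation $(\tilde\Hop-\Hop_2)\e^{\imag\pi nx}=\imag\sign(n)\e^{-\pi|n|}$ is consistent with the normalization $\tilde\Hop f(\imag)=0$ --- and the concluding norm estimate correctly passes to the infimum over decompositions.
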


This is Proposition 10.2.2 in \cite{HMcpam}.

\section{Reformulation of the spanning problem of 
Theorem \ref{thm-2.0}}

\subsection{An equivalence}
\label{subsec-dualform}

Let us write $\Z_{+,0}:=\{0,1,2,\ldots\}$. 

\begin{lem}
Fix $0<\beta<+\infty$. Then the following conditions {\rm(a)} and {\rm(b)}
are equivalent:

\noindent{\rm{(a)}}
The linear span of the functions
\[
e_n(t):=\e^{\imag\pi nt},\,\,e_m^{\langle\beta\rangle}(t):=
\e^{-\imag\pi\beta m/t},\qquad m,n\in\Z_{+,0},
\]
is weak-star dense in $H^\infty_+(\R)$.

\noindent{\rm{(b)}}
For $f\in L^1_0(\R)$, the following implication holds:
\begin{equation*}
\Perop_2 f,\,\Perop_2\Jop_\beta f\in H^1_+(\R/2\Z)
\quad\Longrightarrow\quad f\in H^1_+(\R).
\end{equation*}
\label{lem-equiv8.1.1.001}
\end{lem}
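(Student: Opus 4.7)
The plan is to deduce (a)$\Leftrightarrow$(b) from standard $H^\infty$-duality combined with the periodization identity \eqref{eq-Pi2id1.1'} and the adjoint identity \eqref{eq-Jopdef1}. First, one identifies $H^\infty_+(\R)$ as the dual of $L^1(\R)/H^1_+(\R)$ under the pairing $\langle[f],\varphi\rangle := \langle f,\varphi\rangle_\R$. This rests on the classical fact that the pre-annihilator of $H^\infty_+(\R)$ in $L^1(\R)$ is exactly $H^1_+(\R)$, obtained by testing against the Cauchy kernels $(t-w)^{-1}$ with $w\in\C_-$ together with the Plemelj--Sokhotski jump relations, or equivalently via Paley--Wiener. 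Before applying this, one verifies that $e_n,\,e_m^{\langle\beta\rangle}\in H^\infty_+(\R)$ for $m,n\ge 0$; this is immediate for $e_n$, and follows for $e_m^{\langle\beta\rangle}$ from the elementary observation that $z\mapsto -\beta/z$ sends $\C_+$ into $\C_+$, so that $\e^{-\imag\pi\beta m/z}$ is a bounded holomorphic function on the upper half-plane. By Hahn--Banach, (a) is then equivalent to the triviality of the pre-annihilator of the span $\{e_n,\,e_m^{\langle\beta\rangle}\}_{m,n\ge 0}$ in $L^1(\R)/H^1_+(\R)$; concretely: for $f\in L^1(\R)$, if $\langle e_n,f\rangle_\R=0$ for all $n\ge 0$ and $\langle e_m^{\langle\beta\rangle},f\rangle_\R=0$ for all $m\ge 0$, then $f\in H^1_+(\R)$.

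Next, I would translate the two annihilation conditions into periodic-$H^1_+$ membership. By \eqref{eq-Pi2id1.1'},
\[
\langle e_n, f\rangle_\R = \langle e_n, \Perop_2 f\rangle_{\R/2\Z},
\]
and the simultaneous vanishing of these for all $n\ge 0$ is precisely the statement that every non-positive-frequency Fourier coefficient of the $2$-periodic function $\Perop_2 f$ vanishes, which by the Fourier-side description of $H^1_+(\R/2\Z)$ from Section~5 is equivalent to $\Perop_2 f\in H^1_+(\R/2\Z)$. For the second family, a direct computation gives $\Jop_\beta^* e_m = e_m^{\langle\beta\rangle}$, so \eqref{eq-Jopdef1} combined with \eqref{eq-Pi2id1.1'} yields
\[
\langle e_m^{\langle\beta\rangle}, f\rangle_\R = \langle e_m, \Jop_\beta f\rangle_\R = \langle e_m, \Perop_2\Jop_\beta f\rangle_{\R/2\Z},
\]
whose simultaneous vanishing for $m\ge 0$ says that $\Perop_2\Jop_\beta f\in H^1_+(\R/2\Z)$.

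Putting the pieces together, (a) becomes: for $f\in L^1(\R)$, $\Perop_2 f,\,\Perop_2\Jop_\beta f\in H^1_+(\R/2\Z)$ implies $f\in H^1_+(\R)$. The hypothesis forces $\int_\R f\,\diff t=0$ (the zeroth Fourier coefficient of $\Perop_2 f$ must vanish), so the a priori membership $f\in L^1_0(\R)$ stipulated in (b) is automatic and no information is lost. Hence (a) and (b) are equivalent. The argument is a routine duality and Fourier-side translation; no step presents a genuine obstacle, and the only delicate bookkeeping is the classical identification of the pre-annihilator of $H^\infty_+(\R)$ in $L^1(\R)$ together with the verification that the two families of test functions indeed lie in $H^\infty_+(\R)$.
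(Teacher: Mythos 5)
Your proposal is correct and follows essentially the same route as the paper: dualize via $L^1(\R)/H^1_+(\R)$ being the predual of $H^\infty_+(\R)$, translate the annihilation conditions through the periodization identity \eqref{eq-Pi2id1.1'} and the adjoint relation $\Jop_\beta^* e_m = e_m^{\langle\beta\rangle}$, and observe that the condition $f\in L^1_0(\R)$ comes for free from the $n=0$ test. The only cosmetic difference is that you spell out the membership of $e_m^{\langle\beta\rangle}$ in $H^\infty_+(\R)$ and the Fourier-side characterization of $H^1_+(\R/2\Z)$, which the paper takes as read.
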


We remark that the functions $\e^{\imag\pi nt}$ and $\e^{-\imag\pi\beta m/t}$ 
for $m,n\in\Z_{+,0}$ belong to $H^\infty_+(\R)$ (after all, 
they have bounded holomorphic extensions to $\C_+$), so that part (a) 
makes sense. 

\begin{proof}[Proof of Lemma \ref{lem-equiv8.1.1.001}]
With respect to the dual action $\langle\cdot,\cdot\rangle_\R$ on the line,
the predual of $H^\infty_+(\R)$ is the quotient space $L^1(\R)/H^1_+(\R)$.
With this in mind, the assertion of part (a) is seen to be equivalent 
to the following:
For any $f\in L^1(\R)$, the implication
\begin{equation}
\Big\{\forall m,n\in\Z_{+,0}:\,\langle e_n,f\rangle_\R=
\langle e_m^{\langle\beta\rangle},f\rangle_\R=0\Big\} 
\quad\Longrightarrow\quad f\in H^1_+(\R)
\label{eq-dual1.1}
\end{equation}
holds. By testing with e.g. $n=0$, we note that we might as well assume that
$f\in L^1_0(\R)$ in \eqref{eq-dual1.1}. By the basic property 
\eqref{eq-Pi2id1.1'} of the periodization operator $\Perop_2$, we have that 
\begin{equation}
\langle e_n,f\rangle_\R=\langle e_n,\Perop_2f\rangle_{\R/2\Z},
\label{eq-duality1.001}
\end{equation}
from which we conclude that
\[
\Big\{\forall n\in\Z_{+,0}:\,\langle e_n,f\rangle_\R=0\Big\}
\quad\Longleftrightarrow\quad\Perop_2 f\in H^1_+(\R/2\Z).
\]
Since $\Jop_\beta^* e_m=e_m^{\langle\beta\rangle}$, where $\Jop_\beta^*$ is the
involution studied in Subsection 
\ref{subsec-7.1}, a repetition of the above gives that for $f\in L^1_0(\R)$, 
we have the equivalence
\[
\Big\{\forall m\in\Z_{+,0}:\,\langle e_m^{\langle\beta\rangle},f\rangle_\R=0\Big\}
\quad\Longleftrightarrow\quad
\Perop_2\Jop_\beta f\in H^1_+(\R/2\Z).
\]
By splitting the annihilation conditions in \eqref{eq-dual1.1}, we see that
they are equivalent to having both $\Perop_2 f$ and $\Perop_2\Jop_\beta f$ in
$H^1_+(\R/2\Z)$. In other words, conditions (a) and (b) are equivalent. 
\end{proof}



\begin{rem}
By the argument involving point separation in $\C_+$ from \cite{HM}, 
the condition $\beta\le1$ is necessary for part (a) of Lemma 
\ref{lem-equiv8.1.1.001} to hold. Actually, as mentioned in the
introduction, the methods of \cite{CHM} supply infinitely many linearly
independent counterexamples for $\beta>1$.
\end{rem}

%
\begin{rem}
If we think of $\Perop_2 f$ and $\Perop_2\Jop_\beta f$ as $2$-periodic 
``shadows'' of $f$ and $\Jop_\beta f$,
the issue at hand in part (b) of Lemma \ref{lem-equiv8.1.1.001}
is whether knowing that the two shadows are in the
right space we may conclude the function comes from the space $H^1_+(\R)$.
We note here that the main result of \cite{HM} may be understood as the
assertion that \emph{$f$ is determined uniquely by the two ``shadows''
$\Perop_2 f$ and $\Perop_2\Jop_\beta f$ if and only if $\beta\le1$}.
\end{rem}

\subsection{An alternative statement in terms of the space
$\Lspaceo$}
\label{subsec-reformulation2}

Let $\Lspaces$ denote the space
\[
\Lspaces:=L^1_0(\R)+\Hop L^1_0(\R)\subset\Lspaceo,
\]
which has codimension $1$ in $\Lspaceo$.

\begin{lem}
Fix $0<\beta\le1$. Then {\rm(a)}$\implies${\rm(b)}, where {\rm(a)} and 
{\rm(b)} are the following assertions:

\noindent{\rm{(a)}}
For $u\in\Lspaces$, the following implication holds:
\begin{equation*}
\Perop_2 u=\Perop_2\Jop_\beta u=0\quad\Longrightarrow\quad u=0.
\end{equation*}

\noindent{\rm{(b)}}
For $f\in L^1_0(\R)$, the following implication holds:
\begin{equation*}
\Perop_2 f,\,\Perop_2\Jop_\beta f\in H^1_+(\R/2\Z)
\quad\Longrightarrow\quad f\in H^1_+(\R).
\end{equation*}
\label{lem-equiv8.1.1.002}
\end{lem}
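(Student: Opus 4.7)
The plan is the following: given $f\in L^1_0(\R)$ satisfying the hypotheses of (b), I form the Szeg\H{o}-type distribution $u:=\proj_- f=\tfrac12(f-\imag\Hop f)$, verify that $u$ lives in $\Lspaces$ and satisfies both vanishing conditions in (a), and read off from the ensuing conclusion $u=0$ the identity $\Hop f=-\imag f$, which will force $f\in H^1_+(\R)$.

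First I would observe that $u$ genuinely lies in $\Lspaces=L^1_0(\R)+\Hop L^1_0(\R)$, since the decomposition $u=\tfrac12 f+\Hop(-\tfrac{\imag}{2}f)$ has both summands drawn from $L^1_0(\R)$. To handle $\Perop_2 u$, I would invoke Proposition \ref{prop-7.2.2}: $\Perop_2\Hop g=\Hop_2\Perop_2 g$ for $g\in L^1_0(\R)$, yielding
\[
\Perop_2 u=\tfrac12\bigl(\Perop_2 f-\imag\Hop_2\Perop_2 f\bigr).
\]
Since $\Perop_2 f\in H^1_+(\R/2\Z)$ by hypothesis, and elements of $H^1_+(\R/2\Z)$ satisfy $\Hop_2 F=-\imag F$ (the periodic analogue of \eqref{eq-Katzprop1.1}), the right-hand side vanishes.

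For $\Perop_2\Jop_\beta u$, I would first confirm that $\Jop_\beta$ commutes with $\proj_-$ on $\Lspaces$. By Definition \ref{defn-7.1.3}, $\Jop_\beta(h+\Hop k)=\Jop_\beta h+\Hop\Jop_\beta k$ for $h\in L^1(\R)$ and $k\in L^1_0(\R)$; together with the change-of-variables fact that $\Jop_\beta$ preserves $L^1_0(\R)$, an application with $h=\tfrac12 f$ and $k=-\tfrac{\imag}{2}f$ gives $\Jop_\beta u=\proj_-\Jop_\beta f$. Repeating the previous calculation with $\Jop_\beta f$ in place of $f$ then yields $\Perop_2\Jop_\beta u=\tfrac12(\Perop_2\Jop_\beta f-\imag\Hop_2\Perop_2\Jop_\beta f)=0$, again by the hypothesis $\Perop_2\Jop_\beta f\in H^1_+(\R/2\Z)$.

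Assertion (a) now gives $u=0$, that is, $\Hop f=-\imag f$ as distributions on $\R$. Because $f\in L^1(\R)$, this promotes $\Hop f$ to an $L^1(\R)$ function with vanishing integral; by the characterization \eqref{eq-Katzprop1.1} one obtains $f\in H^1_\stars(\R)$. Decomposing $f=f_++f_-$ with $f_\pm\in H^1_\pm(\R)$, the $H^1_\pm$ eigenrelations $\Hop f_\pm=\mp\imag f_\pm$ turn $\Hop f=-\imag f$ into $f_-=0$, giving $f=f_+\in H^1_+(\R)$. The one point requiring vigilance is that $\Jop_\beta$ and $\Hop$ do not commute on all of $L^1(\R)$ but only modulo additive constants; the zero-mean constraint built into $\Lspaces$ is precisely what absorbs this discrepancy, and it is exactly this feature that makes the predual formulation (a) tailor-made for deducing (b). Beyond this bookkeeping, I do not foresee any serious obstacle.
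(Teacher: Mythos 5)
Your proposal is correct and follows essentially the same route as the paper: both form $u=\proj_-f=\tfrac12(f-\imag\Hop f)$, deduce $\Perop_2u=\Perop_2\Jop_\beta u=0$ from the hypotheses of (b), and then read $f\in H^1_+(\R)$ off from $u=0$. The only difference is that where the paper invokes a ``Liouville-type argument,'' you make it explicit via the commutation $\Perop_2\Hop=\Hop_2\Perop_2$ and the eigenrelation $\Hop_2F=-\imag F$ on $H^1_+(\R/2\Z)$, which is a perfectly valid filling-in of that step.
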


\begin{proof}
We connect $u\in\Lspaces$ with $f\in L^1_0(\R)$ via the conjugate-holomorphic
Szeg\H{o} projection
$u:=\proj_-f=\frac{1}{2}(f-\imag \Hop f)$. If $\Perop_2 f\in H^1_+(\R/2\Z)$,
then by a Liouville-type argument, $\Perop_2 u=0$ holds. Analogously, if 
$\Perop_2\Jop_\beta f\in H^1_+(\R/2\Z)$, then we obtain that
$\Perop_2\Jop_\beta u=0$. So, from the implication of part (a), we obtain
from the assumptions in (b) that $u=0$, that is, that $f\in H_+(\R)$. 
This means that the implication of (a) implies that of (b), as claimed.
\end{proof}

\begin{rem}
Condition (b) of Lemma \ref{lem-equiv8.1.1.002} has acquired the same 
general appearance as in the analysis of the $L^\infty(\R)$ problem, but 
at the cost of considering the larger space $\Lspaces$ in place of 
$L^1_0(\R)$. This is unavoidable, as the weak-star topology of the real 
Hardy space $H^\infty_\stars(\R)$ is finer than that of $L^\infty(\R)$.
Our proof of Theorem \ref{thm-2.0} passes through Lemmas 
\ref{lem-equiv8.1.1.001} and \ref{lem-equiv8.1.1.002}, and we ultimately
show that the implication (a) of Lemma \ref{lem-equiv8.1.1.002} is valid for
$0<\beta\le1$. It then follows from Lemmas \ref{lem-equiv8.1.1.001} and 
\ref{lem-equiv8.1.1.002} that assertion (a) of Lemma \ref{lem-equiv8.1.1.001}
is valid in the range $0<\beta\le1$. In its turn, the proof that the 
implication (a) of Lemma \ref{lem-equiv8.1.1.002} holds for $0<\beta\le1$ 
is based on an extension of ergodic theory for Gauss-type maps, developed in
Sections 9-14. 
\end{rem}


%

\section{A subtransfer operator on a space of 
distributions}
\label{sec-subtransferop}

\subsection{Restrictions of $\Lspaceo$ to a symmetric interval 
and to its complement}

In Subsection \ref{subsec-4.6.01}, we defined the restriction of $\Lspaceo$ to
an open interval. Here we the restriction to the complement of a closed
interval as well. For a positive real parameter $\gamma$, we consider the
symmetric interval $I_\gamma$ and its closure $\bar I_\gamma$ as in Subsection
\ref{subsec-notation-intervals},
\begin{equation*}
I_\gamma=]\!-\!\gamma,\gamma[,\quad \bar I_\gamma=[-\gamma,\gamma].
\end{equation*}
We recall that by Definition \ref{defn-4.6.1}, the space $\LspaceIg$ is
defined as
\[
\LspaceIg:=\big\{u\in {\mathcal D}'(I_\gamma):\,
\exists U\in\Lspaceo\,\,
\text{with}\,\, U|_{I_\gamma}=u\big\}
\]
and analogously we may define $\LspaceIgcompl$ for the complementary interval
$\R\setminus\bar I_\gamma$:
\[
\LspaceIgcompl:=\big\{u\in {\mathcal D}'(\R\setminus \bar I_\gamma):
\,\exists U\in\Lspaceo\,\,\text{with}\,\, U|_{\R\setminus \bar I_\gamma}=u\big\}.
\]
Here, ${\mathcal D}'$ has the standard interpretation of the space of
Schwartzian distributions on the given interval.
Of course, in the sense of distribution theory, taking the restriction to an
open subset has the interpretation of considering the linear functional
restricted to test functions supported on that given open subset.
The norm on each of the spaces $\LspaceIg$ and $\LspaceIgcompl$ is the
associated quotient norm, where we mod out with respect to the distributions
in $\Lspaceo$ whose support is contained in the complementary  closed   set
(cf. Subsection \ref{subsec-4.6.01}).

We will need to work with restrictions to $I_\gamma$ and
$\R\setminus\bar I_\gamma$ repeatedly, so it is good idea to introduce
appropriate notation.

\begin{defn}
We let $\Rop_\gamma$ denote the operation of restricting a distribution to
the interval $I_\gamma$.
Analogously, we let $\Rop_{\gamma}^\dagger$ denote the operation of
restricting a distribution to the open set $\R\setminus\bar I_\gamma$.
\end{defn}

\subsection{The involution on the local spaces}

We need to understand the action of the involution
$\Jop_\beta$ defined in Subsection \ref{subsec-7.1} on the local spaces
$\LspaceIg$ and $\LspaceIgcompl$.

\begin{prop} Fix $0<\beta,\gamma<+\infty$.
The involution $\Jop_\beta$ defines continuous maps
\[
\Jop_\beta:\LspaceIg\to\LspaceIbgcompl\quad\text{and}\quad
\Jop_\beta:\LspaceIgcompl\to\LspaceIbg.
\]
\label{prop-mappinJop1.1}
\end{prop}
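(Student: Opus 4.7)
The plan is to reduce both assertions to a single \emph{support-preservation} statement for $\Jop_\beta$ acting on $\Lspaceo$, namely:
\begin{itemize}
\item[(i)] if $V\in\Lspaceo$ has distributional support in $\R\setminus I_\gamma$, then $\Jop_\beta V$ has support in $\bar I_{\beta/\gamma}$;
\item[(ii)] if $V\in\Lspaceo$ has distributional support in $\bar I_\gamma$, then $\Jop_\beta V$ has support in $\R\setminus I_{\beta/\gamma}$.
\end{itemize}
The restriction maps $\Rop_\gamma,\Rop_\gamma^\dagger$ are quotient maps onto $\LspaceIg,\LspaceIgcompl$ respectively, so (i) is exactly the statement that $\Jop_\beta$ descends to a map $\LspaceIg\to\LspaceIbgcompl$, and (ii) that it descends to $\LspaceIgcompl\to\LspaceIbg$. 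Continuity of each descended map is automatic: since $\Jop_\beta:\Lspaceo\to\Lspaceo$ is bounded and the target norms are quotient norms, optimizing over the choice of extension immediately yields the operator-norm bound $\|\Jop_\beta\|_{\Lspaceo\to\Lspaceo}$.

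The main tool for both (i) and (ii) is the adjoint identity
\[
\langle\varphi,\Jop_\beta V\rangle_\R=\langle\Jop_\beta^*\varphi,V\rangle_\R,
\]
valid for every $V\in\Lspaceo$ and every $\varphi\in H^\infty_\stars(\R)$. This is the Banach-space adjoint relation for $\Jop_\beta:\Lspaceo\to\Lspaceo$, with adjoint operator $\Jop_\beta^*:H^\infty_\stars(\R)\to H^\infty_\stars(\R)$ coinciding with the pointwise rule $\varphi\mapsto\varphi(-\beta/\,\cdot\,)$; this is the content of Proposition 10.1.4 in \cite{HMcpam} (the nontrivial point being that although $\tilde\Hop\Jop_\beta^*\varphi$ and $\Jop_\beta^*\tilde\Hop\varphi$ differ by an additive constant, the defect is killed by the $L^1_0$-condition on $g$ in any representation $V=f+\Hop g$).

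For assertion (i), take $\varphi\in C_c^\infty(\R\setminus\bar I_{\beta/\gamma})$. Since $\supp\varphi$ is bounded away from $\pm\beta/\gamma$, the function $\Jop_\beta^*\varphi(t)=\varphi(-\beta/t)$ is smooth on $\R^\times$ and vanishes in a neighborhood of $t=0$ (because $-\beta/t$ is then large and outside $\supp\varphi$), so it extends smoothly across the origin as the zero function. The condition $-\beta/t\in\supp\varphi$ forces $|t|<\gamma$, hence $\Jop_\beta^*\varphi\in C_c^\infty(I_\gamma^\times)\subset C_c^\infty(I_\gamma)$. Pairing with $V$ supported in $\R\setminus I_\gamma$ therefore gives $\langle\Jop_\beta^*\varphi,V\rangle_\R=0$, whence $\langle\varphi,\Jop_\beta V\rangle_\R=0$; varying $\varphi$ yields $\supp(\Jop_\beta V)\subset\bar I_{\beta/\gamma}$.

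For assertion (ii), take $\varphi\in C_c^\infty(I_{\beta/\gamma})$. The same relation $-\beta/t\in\supp\varphi$ now forces $|t|>\gamma$, so $\Jop_\beta^*\varphi$ is a smooth bounded function whose support lies in $\{|t|>\gamma\}=\R\setminus\bar I_\gamma$. This support need not be compact (it may extend to infinity when $\varphi(0)\ne0$), which is the one mildly delicate point: one invokes the fact that a distribution $V\in\Lspaceo$ with $\supp V\subset\bar I_\gamma$ is compactly supported and therefore extends to pair with arbitrary smooth functions, and that this pairing depends only on the values of the test function in a neighborhood of $\supp V$. Since $\supp(\Jop_\beta^*\varphi)$ is disjoint from the compact set $\bar I_\gamma$, we get $\langle\Jop_\beta^*\varphi,V\rangle_\R=0$, hence $\langle\varphi,\Jop_\beta V\rangle_\R=0$, and varying $\varphi$ shows $\supp(\Jop_\beta V)\subset\R\setminus I_{\beta/\gamma}$. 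The main obstacle in the whole argument is precisely this extension-to-compactly-supported-distributions step in (ii); everything else is bookkeeping using already-established mapping and adjoint properties of $\Jop_\beta$.
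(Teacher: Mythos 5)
Your argument is correct and follows the route the paper itself intends: the paper's proof consists of the single remark that the claim is ``rather immediate from the mapping properties of $\Jop_\beta$ and the localization procedure,'' and your write-up is precisely the fleshed-out version of that — the quotient-norm bookkeeping, the adjoint identity, and the observation that $t\mapsto-\beta/t$ exchanges $I_\gamma$ and $\R\setminus\bar I_{\beta/\gamma}$ at the level of supports. The one point you rightly flag as delicate (pairing a compactly supported $V$ against the non-compactly supported $\Jop_\beta^*\varphi$ in case (ii)) is handled at the same level of rigor as the paper assumes, so there is nothing to add.
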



\begin{proof}
The assertion is rather immediate from the mapping properties of $\Jop_\beta$
(see Subsection \ref{subsec-7.1}) and the localization procedure.
\end{proof}

\subsection{Splitting of the periodization operator}
We split the periodization operator $\Perop_2$ in two parts:
$\Perop_2=\id+\Superop_2$, where $\id$ is the identity and $\Superop_2$ is
the operator defined by
\[
\Superop_2 u(x):=\sum_{j\in\Z^\times}u(x+2j),
\]
whenever the right-hand side is meaningful in the sense of distributions.
Here, we use the notation $\Z^\times:=\Z\setminus\{0\}$.
In view of Proposition \ref{prop-7.2.2},  the proof of the following 
proposition is immediate.

\begin{prop}
The operator $\Superop_2$ maps
$\Lspaceo\to\Lspaceo\oplus\Lspaceoper$ continuously.
\label{prop-Superop2}
\end{prop}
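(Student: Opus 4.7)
The plan is to exploit the trivial operator identity $\Superop_2=\Perop_2-\id$, which reduces the question to the content of Proposition \ref{prop-7.2.2}. Concretely, given $u\in\Lspaceo$, I would write
\[
\Superop_2 u=\Perop_2 u-u,
\]
view $\Perop_2 u$ as an element of $\Lspaceoper$ and $-u$ as an element of $\Lspaceo$, and conclude that $\Superop_2 u$ lies in $\Lspaceo+\Lspaceoper$, which by the disjointness relation \eqref{eq-prop-intersec1.001} is in fact the direct sum $\Lspaceo\oplus\Lspaceoper$.

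Before applying this, I would verify that the formal definition
\[
\Superop_2 u(x)=\sum_{j\in\Z^\times}u(x+2j)
\]
coincides with $\Perop_2 u-u$ at the level of distributions on $\R$. For a test function $\psi\in C^\infty_c(\R)$, the definition of $\Perop_2 u$ as a $2$-periodic distribution, unpacked via \eqref{eq-extper1.01} and \eqref{eq-Pi2id1.2}, yields $\langle\psi,\Perop_2 u\rangle_\R=\langle\Perop_2\psi,u\rangle_\R$, so that
\[
\langle\psi,\Perop_2 u-u\rangle_\R=\langle\Perop_2\psi-\psi,u\rangle_\R=\langle\Superop_2\psi,u\rangle_\R,
\]
which is precisely the action of $\sum_{j\in\Z^\times}u(\cdot+2j)$ on $\psi$ (the index set $\Z^\times$ is symmetric under $j\mapsto -j$, so it does not matter whether we translate by $+2j$ or $-2j$).

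With the identification secured, continuity is automatic. Proposition \ref{prop-7.2.2} produces a constant $C$ with $\|\Perop_2 u\|_{\Lspaceoper}\le C\|u\|_{\Lspaceo}$, while the identity operator trivially contributes $\|u\|_{\Lspaceo}$. Since the norm on the direct sum $\Lspaceo\oplus\Lspaceoper$ is by construction the sum of the component norms, I obtain
\[
\|\Superop_2 u\|_{\Lspaceo\oplus\Lspaceoper}\le\|{-u}\|_{\Lspaceo}+\|\Perop_2 u\|_{\Lspaceoper}\le(1+C)\|u\|_{\Lspaceo}.
\]

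There is no substantive obstacle here; the only subtle point is the compatibility of the two interpretations of $\Superop_2$, which the short distributional computation above settles. The conceptual content of the statement is simply that $\Perop_2 u$ already encodes the entire ``tail'' sum $\Superop_2 u$ together with the central term $u$ itself, so the difference splits as a periodic piece ($\Perop_2 u\in\Lspaceoper$) plus a piece that decays on $\R$ ($-u\in\Lspaceo$) — which is precisely the decomposition realised by $\Lspaceo\oplus\Lspaceoper$.
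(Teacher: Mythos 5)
Your proof is correct and is essentially the paper's argument: the paper defines $\Superop_2$ precisely via the splitting $\Perop_2=\id+\Superop_2$ and then declares the proposition ``immediate'' from Proposition \ref{prop-7.2.2}, which is exactly the decomposition $\Superop_2 u=\Perop_2 u-u$ with $\Perop_2 u\in\Lspaceoper$ and $-u\in\Lspaceo$ that you use. Your additional distributional check that the tail-sum formula agrees with $\Perop_2-\id$ is a sensible (if brief) elaboration of what the paper leaves implicit.
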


\begin{defn}
Let
$\Superoprem:\LspaceIonecompl\to\LspaceIone$
be defined as follows. Given a distribution $u\in\LspaceIonecompl$,
we find a $U\in \Lspaceo$ whose restriction is $\Rop_1 U=u$. Then we put
(use Proposition \ref{prop-rest1.1})
\[
\Superoprem u:=
\Rop_1\Superop_2 U\in\Rop_1(\Lspaceo\oplus\Lspaceoper)=\LspaceIone.
\]
\label{def-Sigma2}
\end{defn}

We will call $\Superoprem$ the \emph{compression} of $\Superop_2$.
However, we still need to verify that this definition is consistent, that is,
that the right-hand side $\Rop_1\Superop_2 U$ is independent of the choice of
the extension $U$.

\begin{prop}
The operator $\Superoprem:
\LspaceIonecompl\to\LspaceIone$
is well-defined and bounded. Moreover, we have that
$\Rop_1\Superop_2 U=\Superoprem\Rop_1^\dagger U$ holds for
$U\in\Lspaceo$.
\label{prop-Superoprem}
\end{prop}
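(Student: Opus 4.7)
The plan is to establish well-definedness first, since both the boundedness assertion and the consistency identity will follow essentially for free.

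For well-definedness, suppose $U_1,U_2\in\Lspaceo$ are two extensions of $u\in\LspaceIonecompl$, i.e.\ $\Rop_1^\dagger U_1=\Rop_1^\dagger U_2=u$. Set $V:=U_1-U_2\in\Lspaceo$; then $\Rop_1^\dagger V=0$, which says precisely that $V$, viewed as a Schwartz distribution on $\R$, is supported in the closed interval $\bar I_1=[-1,1]$. For each $j\in\Z^\times$ the translate $V(\cdot+2j)$ is then supported in $\bar I_1-2j$, and these translated intervals all lie in $\R\setminus I_1$ (they meet $\bar I_1$ only possibly at the endpoints $\pm1$, which are not in the open interval $I_1$). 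By Proposition \ref{prop-Superop2}, the series $\Superop_2 V=\sum_{j\in\Z^\times}V(\cdot+2j)$ makes sense as an element of $\Lspaceo\oplus\Lspaceoper$, and the preceding support computation shows that its restriction to $I_1$ vanishes, i.e.\ $\Rop_1\Superop_2 V=0$ in $\LspaceIone$. Hence $\Rop_1\Superop_2 U_1=\Rop_1\Superop_2 U_2$, proving that $\Superoprem u$ depends only on $u$ and not on the particular extension chosen.

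Boundedness is then immediate. Given $u\in\LspaceIonecompl$ and any extension $U\in\Lspaceo$ with $\Rop_1^\dagger U=u$, the restriction operator $\Rop_1$ is a contraction onto the quotient $\LspaceIone=\Rop_1(\Lspaceo\oplus\Lspaceoper)$ (by Proposition \ref{prop-rest1.1}), while $\Superop_2:\Lspaceo\to\Lspaceo\oplus\Lspaceoper$ is bounded by Proposition \ref{prop-Superop2}; combining,
\[
\|\Superoprem u\|_{\LspaceIone}=\|\Rop_1\Superop_2 U\|_{\LspaceIone}\le\|\Superop_2 U\|_{\Lspaceo\oplus\Lspaceoper}\le C\|U\|_{\Lspaceo}.
\]
Taking the infimum over all admissible extensions $U$ of $u$ gives $\|\Superoprem u\|_{\LspaceIone}\le C\|u\|_{\LspaceIonecompl}$, as required.

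The compatibility identity $\Rop_1\Superop_2 U=\Superoprem\Rop_1^\dagger U$ for $U\in\Lspaceo$ is now a tautology: by Definition \ref{def-Sigma2}, we are entitled to take $U$ itself as the chosen extension of $\Rop_1^\dagger U\in\LspaceIonecompl$, and the right-hand side then reads $\Rop_1\Superop_2 U$ by construction. The only delicate point in the whole argument is the support step in the proof of well-definedness, where one must be careful that the open/closed distinction between $I_1$ and $\bar I_1$ makes the boundary points $\pm1$ harmless; this is the only place where the geometry of the $2\Z$-translates of $\bar I_1$ is really used, and it fits together perfectly precisely because $2j\ge2$ for $j\in\Z^\times$.
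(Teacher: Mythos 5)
Your proposal is correct and follows essentially the same route as the paper: well-definedness via the support computation $\supp\Superop_2 V\subset\bar I_1+2\Z^\times=\R\setminus I_1$ for an extension difference $V$ supported in $\bar I_1$, boundedness from Propositions \ref{prop-Superop2} and \ref{prop-rest1.1} together with the infimum over extensions, and the identity as a tautology from Definition \ref{def-Sigma2}. The paper phrases well-definedness directly for a single $U$ with vanishing restriction rather than for a difference $U_1-U_2$, but by linearity this is the same argument.
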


\begin{proof}
To see that $\Superoprem$ is well-defined, we need to check
that if $U\in \Lspaceo$ and its restriction to $\R\setminus\bar I_1$
vanishes (this means that $\supp U\subset \bar I_1$), then
$\Rop_1\Superop_2 U=0$. From the definition of the operator $\Superop_2$,
we understand that
\[
\supp\Superop_2 U\subset \supp U+2\Z^\times\subset \bar I_1+2\Z^\times=\R
\setminus I_1.
\]
In particular, the restriction to $I_1$ of $\Superop_2 U$ vanishes,
as required. Similarly, we argue that $\Superoprem$ bounded, based on
Proposition \ref{prop-rest1.1} and Definition \ref{def-Sigma2}. 
Finally, the asserted identity
$\Rop_1\Superop_2 U=\Superoprem\Rop_1^\dagger U$
just expresses how the operator $\Superoprem$ is defined.
\end{proof}

\subsection{Further analysis of the uniqueness problem}
The complementary restriction operators have the following properties:
\begin{equation}
\Rop_{1}^\dagger\Jop_\beta u=\Jop_\beta\Rop_\beta u,\qquad
u\in\Lspaceo,
\label{eq-commut1.01}
\end{equation}
and, for $0<\beta\le\gamma<+\infty$,
\begin{equation}
\Rop_{1}^\dagger\Jop_\beta u=\Jop_\beta\Rop_\beta u,\qquad
u\in{\mathfrak L}(I_\gamma).
\label{eq-commut1.02}
\end{equation}
They will help us analyze further the tentative implication (a) of Lemma 
\ref{lem-equiv8.1.1.002}.

\begin{prop} Fix $0<\beta\le1$.
Suppose that for $u\in\Lspaceo$ we have $\Perop_2 u=0$ and
$\Perop_2\Jop_\beta u=0$.
Then the restrictions $u_0:=\Rop_1 u\in\LspaceIone$ and
$u_1:=\Rop_{1}^\dagger u\in\LspaceIonecompl$ each solve the equations
\[
u_0=\Superoprem\Jop_\beta\Rop_\beta
\Superoprem\Jop_\beta\Rop_\beta u_0,
\qquad u_1=\Rop_1^\dagger\Jop_\beta\Superoprem
\Rop_1^\dagger\Jop_\beta\Superoprem u_1,
\]
and are given in terms of each other by
\[
u_0=-\Superoprem u_1,\qquad u_1=-\Rop_1^\dagger\Jop_\beta
\Superoprem\Jop_\beta\Rop_\beta u_0.
\]
\label{prop-5.3.1}
\end{prop}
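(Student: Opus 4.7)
Plan of proof for Proposition \ref{prop-5.3.1}.

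The plan is to exploit the two hypotheses in the form $\Superop_2 u = -u$ and $\Superop_2\Jop_\beta u = -\Jop_\beta u$ (which are just rewritings using $\Perop_2 = \id + \Superop_2$), and then push everything through the restriction operators using the commutation relations \eqref{eq-commut1.01}-\eqref{eq-commut1.02} together with the compression identity $\Rop_1\Superop_2 U = \Superoprem\Rop_1^\dagger U$ from Proposition \ref{prop-Superoprem}. The identity $u_0 = -\Superoprem u_1$ will fall out immediately; the identity for $u_1$ in terms of $u_0$ is the nontrivial one and will come from applying $\Jop_\beta$ to the second hypothesis (using that $\Jop_\beta$ is an involution) and then restricting to $\R\setminus\bar I_1$. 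The two fixed-point equations then follow by substituting the two cross-relations into each other.

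Concretely, I would proceed as follows. First, from $\Superop_2 u = -u$, apply $\Rop_1$ and use Proposition \ref{prop-Superoprem} to get
\[
-u_0 = \Rop_1\Superop_2 u = \Superoprem\Rop_1^\dagger u = \Superoprem u_1,
\]
which gives the relation $u_0 = -\Superoprem u_1$. Next, from $\Superop_2\Jop_\beta u = -\Jop_\beta u$, applying $\Jop_\beta$ and using $\Jop_\beta^2 = \id$ yields $u = -\Jop_\beta\Superop_2\Jop_\beta u$. Applying $\Rop_1^\dagger$ and invoking \eqref{eq-commut1.01}:
\[
u_1 = -\Rop_1^\dagger\Jop_\beta\Superop_2\Jop_\beta u = -\Jop_\beta\Rop_\beta\Superop_2\Jop_\beta u.
\]
Factor $\Rop_\beta = \Rop_\beta\Rop_1$ (permissible since $I_\beta\subset I_1$), apply Proposition \ref{prop-Superoprem} and then \eqref{eq-commut1.01} again to rewrite
\[
\Rop_\beta\Superop_2\Jop_\beta u = \Rop_\beta\Superoprem\Rop_1^\dagger\Jop_\beta u = \Rop_\beta\Superoprem\Jop_\beta\Rop_\beta u = \Rop_\beta\Superoprem\Jop_\beta\Rop_\beta u_0,
\]
where the last equality uses $\Rop_\beta u = \Rop_\beta u_0$. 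Hence
\[
u_1 = -\Jop_\beta\Rop_\beta\Superoprem\Jop_\beta\Rop_\beta u_0,
\]
and one final application of \eqref{eq-commut1.02} with $\gamma = 1$ converts the outer $\Jop_\beta\Rop_\beta$ into $\Rop_1^\dagger\Jop_\beta$, giving the asserted formula $u_1 = -\Rop_1^\dagger\Jop_\beta\Superoprem\Jop_\beta\Rop_\beta u_0$.

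Finally, the two closed fixed-point equations follow by combining the cross-relations. Substituting the expression for $u_1$ into $u_0 = -\Superoprem u_1$ yields
\[
u_0 = \Superoprem\Rop_1^\dagger\Jop_\beta\Superoprem\Jop_\beta\Rop_\beta u_0 = \Superoprem\Jop_\beta\Rop_\beta\Superoprem\Jop_\beta\Rop_\beta u_0,
\]
where the second equality is once more \eqref{eq-commut1.02}. Conversely, substituting $u_0 = -\Superoprem u_1$ (hence $\Rop_\beta u_0 = -\Rop_\beta\Superoprem u_1$) into the formula for $u_1$ and again using \eqref{eq-commut1.02} to exchange $\Jop_\beta\Rop_\beta$ with $\Rop_1^\dagger\Jop_\beta$ on $\Superoprem u_1\in\LspaceIone$ gives
\[
u_1 = \Rop_1^\dagger\Jop_\beta\Superoprem\Rop_1^\dagger\Jop_\beta\Superoprem u_1.
\]
The only real obstacle is pedantic bookkeeping: one must check at each step that the element being fed to $\Jop_\beta$ or to the various restriction/compression operators lies in the correct local space so that the appropriate version of the commutation relation (\eqref{eq-commut1.01} on $\Lspaceo$, or \eqref{eq-commut1.02} on $\LspaceIg$ with $\beta\le\gamma=1$) is legitimately applicable; no analytical input beyond the mapping properties established earlier in Sections 4, 6 and 9 is needed.
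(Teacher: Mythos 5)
Your argument is correct and follows essentially the same route as the paper: rewrite the hypotheses as $u=-\Superop_2 u$ and $\Jop_\beta u=-\Superop_2\Jop_\beta u$, restrict, invoke the compression identity of Proposition \ref{prop-Superoprem}, and shuttle $\Jop_\beta$ past the restriction operators via \eqref{eq-commut1.01} and \eqref{eq-commut1.02}. The only difference is cosmetic (you apply $\Jop_\beta$ before restricting where the paper restricts first), and your bookkeeping of which local space each intermediate element lives in is accurate.
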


\begin{proof}
To begin with, we write the given conditions $\Perop_2 u=0$ and
$\Perop_2\Jop_\beta u=0$ in the form
\[
u=-\Superop_2u,\qquad \Jop_\beta u=-\Superop_2\Jop_\beta u;
\]
after that, we restrict to the interval $I_1$:
\begin{equation*}
\Rop_1u=-\Superoprem\Rop_{1}^\dagger u,
\qquad \Rop_1\Jop_\beta u=\Jop_\beta\Rop^\dagger_\beta u
=-\Superoprem\Rop_{1}^\dagger
\Jop_\beta u=-\Superoprem\,
\Jop_\beta\Rop_\beta u.
\end{equation*}
We then simplify the second condition a little by applying $\Jop_\beta$ to
both sides:
\begin{equation}
\Rop_1u=-\Superoprem\Rop_{1}^\dagger u,
\qquad\Rop^\dagger_\beta u=-\Jop_\beta\Superoprem\,
\Jop_\beta\Rop_\beta u.
\label{eq-loop-1.02}
\end{equation}
By combining these two identities in two separate ways, we find that
\begin{equation}
\Rop_1u=\Superoprem\Jop_\beta\Rop_\beta
\Superoprem\Jop_\beta\Rop_\beta u,
\qquad \Rop^\dagger_\beta u=\Jop_\beta\Superoprem
\Rop_1^\dagger \Jop_\beta\Superoprem\,\Rop_1^\dagger u.
\label{eq-loop-1.03}
\end{equation}
The assertions now follow, if we use \eqref{eq-commut1.01} and
\eqref{eq-commut1.02}.
\end{proof}

\subsection{Two subtransfer operators on spaces of 
distributions}
As usual, we assume that $0<\beta\le1$, and consider the operators
\begin{equation}
\Tope_\beta:=\Superoprem\Jop_\beta\Rop_\beta:\,\,
\LspaceIone\to\LspaceIone,
\label{eq-Sop1.001}
\end{equation}
and
\begin{equation}
\Vop_\beta:=\Rop_1^\dagger\Jop_\beta\Superoprem:\,\,
\LspaceIonecompl\to\LspaceIonecompl.
\label{eq-Tope1.001}
\end{equation}
These operators are \emph{extensions to the respective space of distributions
of standard subtransfer operators}. We met e.g. $\Tope_\beta$ back in Subsection
\ref{subsec-gausstype}.
Indeed, if $u\in L^1(I_1)$ and
$v\in L^1(\R\setminus\bar I_1)$, then
\begin{equation}
\Tope_\beta u(x)=\sum_{j\in\Z^\times}\frac{\beta}{(x+2j)^2}\,u\bigg(-
\frac{\beta}{x+2j}\bigg),\qquad x\in I_1,
\label{eq-Sop1.002}
\end{equation}
and
\begin{equation}
\Vop_\beta v(x)=\frac{\beta}{x^2}\sum_{j\in\Z^\times}v\bigg(-\frac{\beta}{x}+2j
\bigg),\qquad x\in\R\setminus\bar I_1.
\label{eq-Tope1.002}
\end{equation}
In terms of these two subtransfer operators, the formulation of Proposition
\ref{prop-5.3.1} simplifies pleasantly.

\begin{prop} Fix $0<\beta\le1$.
Suppose that for $u\in\Lspaceo$ we have $\Perop_2 u=0$ and
$\Perop_2\Jop_\beta u=0$.
Then the restrictions $u_0:=\Rop_1 u\in\LspaceIone$ and
$u_1:=\Rop_{1}^\dagger u\in\LspaceIonecompl$ satisfy
\[
u_0=\Tope_\beta^2 u_0,
\qquad u_1=\Vop_\beta^2 u_1,\qquad
u_0=-\Superoprem u_1,\qquad u_1=-\Rop_1^\dagger\Jop_\beta
\Tope_\beta u_0.
\]
\label{prop-5.4.1}
\end{prop}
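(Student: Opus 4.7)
The plan is to deduce Proposition \ref{prop-5.4.1} directly from Proposition \ref{prop-5.3.1} by simply rewriting its conclusion in terms of the operators introduced in \eqref{eq-Sop1.001} and \eqref{eq-Tope1.001}. With $\Tope_\beta = \Superoprem\Jop_\beta\Rop_\beta$ and $\Vop_\beta = \Rop_1^\dagger\Jop_\beta\Superoprem$, the squared operators are
\[
\Tope_\beta^2 = \Superoprem\Jop_\beta\Rop_\beta\Superoprem\Jop_\beta\Rop_\beta, \qquad \Vop_\beta^2 = \Rop_1^\dagger\Jop_\beta\Superoprem\Rop_1^\dagger\Jop_\beta\Superoprem,
\]
so the first two identities of Proposition \ref{prop-5.3.1} collapse verbatim into $u_0 = \Tope_\beta^2 u_0$ and $u_1 = \Vop_\beta^2 u_1$. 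The identity $u_0 = -\Superoprem u_1$ is carried over unchanged. For the last one I group
\[
-\Rop_1^\dagger\Jop_\beta\Superoprem\Jop_\beta\Rop_\beta u_0 = -\Rop_1^\dagger\Jop_\beta\bigl(\Superoprem\Jop_\beta\Rop_\beta u_0\bigr) = -\Rop_1^\dagger\Jop_\beta\Tope_\beta u_0,
\]
which is the fourth claimed equality.

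The only thing not already contained in Proposition \ref{prop-5.3.1} is the verification that each composition above is legitimate between the stated spaces. For $\Tope_\beta$ this amounts to $\Rop_\beta:\LspaceIone\to\LspaceIbg$ (valid since $I_\beta\subset I_1$ when $\beta\le1$), followed by $\Jop_\beta:\LspaceIbg\to\LspaceIonecompl$ (Proposition \ref{prop-mappinJop1.1} with $\gamma=\beta$, noting that $\beta/\gamma = 1$), and finally $\Superoprem:\LspaceIonecompl\to\LspaceIone$ (Proposition \ref{prop-Superoprem}). For $\Vop_\beta$ one chains $\Superoprem:\LspaceIonecompl\to\LspaceIone$ with $\Jop_\beta:\LspaceIone\to\LspaceIbgcompl$ (Proposition \ref{prop-mappinJop1.1} with $\gamma=1$, giving codomain $\R\setminus\bar I_\beta$), and then $\Rop_1^\dagger$, which is legitimate because $\R\setminus\bar I_1\subset\R\setminus\bar I_\beta$ for $\beta\le1$.

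There is no real obstacle to overcome: all the dynamical and analytic content has already been unpacked in Proposition \ref{prop-5.3.1}, and Proposition \ref{prop-5.4.1} is nothing more than the observation that the two length-two compositions appearing there are precisely $\Tope_\beta^2$ and $\Vop_\beta^2$, together with the recognition that the cross-relation between $u_0$ and $u_1$ factors through a single application of $\Tope_\beta$. I expect the write-up itself to span only a few lines.
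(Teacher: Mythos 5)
Your proposal is correct and matches the paper's own treatment: the paper's proof consists of the single sentence that the result is immediate from the definitions of $\Tope_\beta$ and $\Vop_\beta$, i.e., precisely the transcription of Proposition \ref{prop-5.3.1} that you carry out. The extra verification of the mapping properties of each factor is harmless (and already covered by Propositions \ref{prop-mappinJop1.1} and \ref{prop-Superoprem}), so nothing further is needed.
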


\begin{proof}
The proof is immediate from the  definitions of $\Tope_\beta$ and $\Vop_\beta$. 
\end{proof}

\begin{prop}
Suppose that for $u\in\Lspaceo$ we have that the two restrictions vanish,
i.e., $\Rop_1 u=0$ and $\Rop_{1}^\dagger u=0$ as elements of
$\LspaceIone$ and $\LspaceIonecompl$, respectively. Then $u=0$.
\label{prop-5.4.2}
\end{prop}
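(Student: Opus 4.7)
The plan is to translate the two hypotheses into support conditions on $u$ (regarded as a distribution on $\R$) and then invoke the injectivity of the valeur au point map. First I observe that, since $\Rop_1$ is restriction, the condition $\Rop_1 u = 0$ in $\LspaceIone$ is equivalent to $u$ annihilating every test function with compact support in $I_1$, that is, $\supp u\subset\R\setminus I_1$. Analogously, $\Rop_1^\dagger u = 0$ in $\LspaceIonecompl$ is equivalent to $\supp u\subset\bar I_1$. Combining these,
\[
\supp u\subset(\R\setminus I_1)\cap\bar I_1=\{-1,1\}.
\]

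Next I compute $\pev[u](x)$ for an arbitrary fixed $x\in\R\setminus\{-1,1\}$. Since the valeur au point is independent of the choice of cutoff (as noted after its definition, citing Lemma 7.4.2 of \cite{HMcpam}), I am free to choose $\chi_x\in C^\infty_c(\R)$ so that, in addition to being identically $1$ on a neighborhood of $x$, it vanishes on a fixed open neighborhood of $\{-1,1\}$. Then for every $\epsilon>0$ the test function $\chi_x P_{x+\imag\epsilon}$ has compact support disjoint from $\supp u$, so $\langle\chi_x P_{x+\imag\epsilon},u\rangle_\R=0$. Passing to the limit $\epsilon\to 0^+$ gives $\pev[u](x)=0$. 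Since this holds off the Lebesgue null set $\{-1,1\}$, we conclude that $\pev[u]=0$ in $L^{1,\infty}(\R)$. The injectivity part of Kolmogorov's theorem (Proposition \ref{prop-weakL1cont}) then forces $u=0$.

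There is no significant obstacle here. The real content of the proposition is the fact, built into the definition of $\Lspaceo$, that an element of this space is faithfully represented by a weak-$L^1$ function via $\pev$, and so cannot be a nontrivial distribution concentrated on a Lebesgue null set such as $\{-1,1\}$. Notice in particular that the proof does not require any structural characterization of distributions supported at two points (as finite sums of derivatives of Dirac masses): Kolmogorov's theorem directly rules out all such singular distributions from $\Lspaceo$.
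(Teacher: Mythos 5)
Your proposal is correct and follows essentially the same route as the paper: deduce that $\pev[u]$ vanishes off a finite exceptional set, hence almost everywhere, and then invoke the injectivity of $\pev$ from Kolmogorov's theorem (Proposition \ref{prop-weakL1cont}). You merely supply more detail (the support computation and the choice of cutoff), and you correctly identify the exceptional set as $\{-1,1\}$ where the paper's text says $\R^\times$.
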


\begin{proof}
The assumption implies that the ``valeur au point'' function $\pev[u]$
vanishes on $\R^\times=\R\setminus\{0\}$. But then $\pev[u]$ vanishes a.e.,
so that by Kolmogorov's Proposition \ref{prop-weakL1cont}, 
the claim $u=0$ follows.
\end{proof}

\begin{rem}
Fix $0<\beta\le1$.
Suppose that we are given a distribution $u_0\in\LspaceIone$ which is a fixed
point for the subtransfer operator: $\Tope_\beta^2 u_0=u_0$. Then the formula
\[
u_1:=-\Rop_1^\dagger\Jop_\beta\Tope_\beta u_0
\]
defines a distribution $u_1\in\LspaceIonecompl$. We quickly show that
$u_1=\Vop_\beta^2 u_1$ and $u_0=-\Superoprem u_1$, so that
all the conditions of Proposition \ref{prop-5.4.1} are indeed accounted for.
This means that all the solutions pairs $(u_0,u_1)$ may be parametrized by
the distribution $u_0$ alone.
\end{rem}

\subsection{The subtransfer operator $\Tope_\beta$ acting on
``valeur au point'' functions}
\label{subsec-10.6}
The subtransfer operators $\Tope_\beta$ and $\Vop_\beta$ are defined on
distributions, but the formulas \eqref{eq-Sop1.002} and \eqref{eq-Tope1.002}
often make sense pointwise in the almost everywhere sense for functions
which are not summable on the respective interval. In the sequel, we focus
on $\Tope_\beta$; the case of $\Vop_\beta$ is analogous. The question appears
whether for a given distribution $u\in\LspaceIone$, with ``valeur au point''
function $\pev[u]\in L^{1,\infty}(I_1)$, the action of $\Tope_\beta$ on
$\pev[u]$ by formula \eqref{eq-Sop1.002} when it converges a.e. has the same
result as taking $\pev[\Tope_\beta u]$. To analyze this, we need the finite sum
operators ($N=2,3,4,\ldots$)
\begin{equation}
\Tope_\beta^{[N]}u(x)=\sum_{j\in\Z^\times:|j|\le N}\frac{\beta}{(x+2j)^2}\,u\bigg(-
\frac{\beta}{x+2j}\bigg),\qquad x\in I_1.
\label{eq-Sop1.003}
\end{equation}
This finite sum operator naturally acts both on the distribution $u$ and
on its ``valeur au point'' function $\pev[u]$. As for the distributional
interpretation, it is more properly understood as
\begin{equation}
\Tope_\beta^{[N]}:=\SuperopremN\Jop_\beta\Rop_\beta,
\label{eq-Sop1.003.1}
\end{equation}
where
\[
\SuperopremN:\LspaceIonecompl\to\LspaceIone
\]
is defined in the same fashion as $\Superoprem$ based on
the operator
\[
\Superop_{2,N}V(x):=\sum_{j\in\Z^\times:|j|\le N}V(x+2j),
\]
which maps $\Lspaceo\to\Lspaceo$. Whether we apply the operator
``valeur au point'' before or after $\Tope_\beta^{[N]}$ does not influence the
result:

\begin{prop}
For $u\in \LspaceIone$, we have that
\[
\pev\big[\Tope_\beta^{[N]}u\big](x)=\Tope_\beta^{[N]}\pev[u](x)
\]
almost everywhere on the interval $I_1$.
\end{prop}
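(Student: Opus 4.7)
The plan is to reduce the claim to the commutativity of $\pev$ with a single branch of $\Tope_\beta^{[N]}$, strip off the translation and restriction parts, and reduce the remainder to a pointwise identity for the Hilbert transform that can be verified by a direct change of variable.

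Since $\Tope_\beta^{[N]}$ is a finite sum of branches indexed by $j\in\Z^\times$ with $|j|\le N$---both distributionally via \eqref{eq-Sop1.003.1} and pointwise via \eqref{eq-Sop1.003}---linearity lets me work one branch at a time. The $j$-th distributional branch is the composition $\Rop_1\circ T_{2j}\circ\Jop_\beta\circ\Rop_\beta$, where $T_{2j}V(x):=V(x+2j)$; restrictions to open subsets and translations clearly commute with $\pev$, since the defining limit \eqref{eq-pv1001} is a local, shift-covariant operation. The proposition then reduces to the identity
\[
\pev[\Jop_\beta v](x)=\frac{\beta}{x^2}\,\pev[v]\Big(-\frac{\beta}{x}\Big)\qquad\text{a.e. on }\R\setminus\bar I_1,
\]
to be verified for every $v\in\mathfrak{L}(I_\beta)$.

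To establish this, I lift $v$ to $V=f+\Hop g\in\Lspaceo$ with $f\in L^1(\R)$, $g\in L^1_0(\R)$, and $v=V|_{I_\beta}$. By Definition \ref{defn-7.1.3} and Kolmogorov's Proposition \ref{prop-weakL1cont}, both $\pev[V]=f+\Hop g$ and $\pev[\Jop_\beta V]=\Jop_\beta f+\Hop\Jop_\beta g$ hold almost everywhere; observe that $\Jop_\beta g\in L^1_0(\R)$, since the substitution $t=-\beta/s$ gives $\int_\R\Jop_\beta g(s)\,\ds=\int_\R g(t)\,\diff t=0$. Since $\Jop_\beta f(x)=\frac{\beta}{x^2}f(-\beta/x)$ by the very definition of $\Jop_\beta$, matching the two expressions reduces matters to the pointwise identity
\[
(\Hop\Jop_\beta g)(x)=\frac{\beta}{x^2}(\Hop g)(-\beta/x)\qquad\text{a.e. on }\R^\times.
\]

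To verify this final identity---the main computational step---I substitute $t=-\beta/s$ in the principal value integral $\frac{1}{\pi}\mathrm{pv}\int_\R g(t)(-\beta/x-t)^{-1}\diff t$. Using $g(-\beta/s)\,\frac{\beta}{s^2}=\Jop_\beta g(s)$ and $-\beta/x-(-\beta/s)=\beta(s-x)/(sx)$, one obtains
\[
\frac{\beta}{x^2}(\Hop g)(-\beta/x)=\frac{1}{\pi x}\,\mathrm{pv}\int_\R\frac{s\,\Jop_\beta g(s)}{x-s}\,\ds,
\]
and the algebraic split $s/(x-s)=-1+x/(x-s)$ yields $(\Hop\Jop_\beta g)(x)-\frac{1}{\pi x}\int_\R\Jop_\beta g(s)\,\ds$, whose second term vanishes by the $L^1_0$ condition on $\Jop_\beta g$. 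The main obstacle is the care required to justify this change of variable and algebraic split at the a.e.\ pointwise level rather than merely distributionally---one must check that the singular contributions at $s=x$ and at $s=0$ cancel symmetrically under the orientation-preserving diffeomorphism $s\mapsto-\beta/s$. At bottom, the identity expresses the conformal covariance (up to additive constants) of the Hilbert transform under the M\"obius involution $x\mapsto-\beta/x$.
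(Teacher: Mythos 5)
Your proof is correct and follows the same route as the paper: reduce by linearity to a single branch $u\mapsto\frac{\beta}{(x+2j)^2}u(-\frac{\beta}{x+2j})$ and verify the commutation with $\pev$ for that one term, which the paper simply declares ``elementary.'' Your extra work --- stripping off translation and restriction, lifting to $\Lspaceo$, and checking the M\"obius covariance $\Hop\Jop_\beta g=\Jop_\beta^{*-\text{type}}$ identity $\Hop\Jop_\beta g(x)=\frac{\beta}{x^2}\Hop g(-\beta/x)$ for $g\in L^1_0(\R)$ --- is exactly the content of that elementary step (compare \eqref{eq-Hilb:naive1.01}), and your computation checks out.
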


\begin{proof}
Since the sum defining $\Tope_\beta^{[N]}u$ is finite, it suffices to handle
a single term. This amounts to showing that
\[
\pev\bigg[\frac{\beta}{(x+2j)^2}\,u\bigg(-
\frac{\beta}{x+2j}\bigg)\bigg]=\frac{\beta}{(x+2j)^2}\,\pev[u]\bigg(-
\frac{\beta}{x+2j}\bigg)
\]
holds almost everywhere on $I_1$, which is elementary.
\end{proof}

We can now show that $\Tope_\beta^{[N]}u$ approximates $\Tope_\beta u$ as $N\to
+\infty$ in terms of the ``valeur au point''.

\begin{prop}
For $u\in\LspaceIone$, we have that $\Tope_\beta^{[N]}(\pev[u])\to
\pev[\Tope_\beta u]$ as $N\to+\infty$ in the quasinorm of
$L^{1,\infty}(I_1)$.
\label{prop-finite1.1}
\end{prop}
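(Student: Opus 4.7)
The plan is to reduce the convergence claim to a distributional convergence statement in $\LspaceIone$ and then to analyze the tail of the associated periodization. First, by the preceding proposition, $\pev[\Tope_\beta^{[N]}u]=\Tope_\beta^{[N]}\pev[u]$ almost everywhere on $I_1$, while the local form of Kolmogorov's theorem guarantees that $\pev\colon\LspaceIone\to L^{1,\infty}(I_1)$ is continuous. Hence it suffices to show the stronger statement that $\Tope_\beta^{[N]}u\to\Tope_\beta u$ in the norm of $\LspaceIone$, and this in turn, via the definitions $\Tope_\beta=\Superoprem\Jop_\beta\Rop_\beta$ and \eqref{eq-Sop1.003.1}, is equivalent to $\SuperopremN W\to\Superoprem W$ in $\LspaceIone$ for $W:=\Jop_\beta\Rop_\beta u\in\LspaceIonecompl$.

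Next I would lift $W$ to an extension $V\in\Lspaceo$ and decompose $V=f+\Hop g$ with $f\in L^1(\R)$ and $g\in L^1_0(\R)$. The relevant difference $\Rop_1(\Superop_2 V-\Superop_{2,N}V)$ equals $\Rop_1\sum_{|j|>N}V(\cdot+2j)$, which splits into an $L^1$-tail from $f$ and a Hilbert-transform tail from $\Hop g$. The $L^1$-tail has $L^1(I_1)$-norm at most $\int_{\R\setminus[-1-2N,1+2N]}|f(t)|\,\diff t$, and this vanishes in the limit by dominated convergence. For the Hilbert piece, translation-invariance of $\Hop$ combined with Proposition \ref{prop-7.2.2} allows one to rewrite the restriction to $I_1$ as $\Rop_1[\Hop_2\Perop_2 g-\Hop\Perop_{2,N}g]$, and the Eisenstein expansion
\[
\tfrac12\cot\tfrac{\pi u}{2}=\tfrac{1}{\pi}\lim_{M\to\infty}\sum_{|j|\le M}\tfrac{1}{u+2j}
\]
exhibits this difference as the convolution of $g$ with a kernel $B_N$ whose $j\leftrightarrow -j$ pair cancellation supplies a bound $|B_N(u)|\le C|u|/N$ on any compact set of $u$. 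For compactly supported $g\in L^1_0(\R)$ this yields uniform convergence to $0$ on $I_1$, hence convergence in $L^{1,\infty}(I_1)$.

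The main obstacle is the extension from compactly supported $g$ to an arbitrary $g\in L^1_0(\R)$: the naive estimate $\|\Rop_1\Hop\Perop_{2,N}g\|_{\LspaceIone}\le\|\Perop_{2,N}g\|_{L^1(\R)}=(2N+1)\|g\|_{L^1}$ blows up in $N$, so a density argument requires a genuinely uniform weak-type bound for the operator $g\mapsto\Rop_1(\Hop_2\Perop_2 g-\Hop\Perop_{2,N}g)$ on $L^1_0(\R)$. Such a bound is available from the convolution representation above, because the pair-cancelled kernel $B_N$ admits a uniform-in-$N$ Hilbert-type weak $(1,1)$ estimate rather than an unfavorable sum of $L^1$-norms. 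Once this uniform estimate is secured, density of compactly supported functions in $L^1_0(\R)$ closes the argument and yields the desired $L^{1,\infty}(I_1)$-convergence, hence convergence in $\LspaceIone$, completing the proof.
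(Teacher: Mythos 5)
Your opening reduction is exactly the paper's: factor $\Tope_\beta^{[N]}=\SuperopremN\Jop_\beta\Rop_\beta$, reduce everything to the strong convergence $\SuperopremN v\to\Superoprem v$ in $\LspaceIone$, and then push the conclusion through the continuity of $\pev$ (Kolmogorov, Proposition \ref{prop-weakL1cont}). The paper's proof stops there, because the convergence $\SuperopremN v\to\Superoprem v$ is cited from the proof of Proposition \ref{prop-rest1.1} (Proposition 9.2.1 of \cite{HMcpam}). You instead attempt to prove that convergence directly, and that is where the gap sits.

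Your computation of the tail is correct as far as it goes: the $L^1$-part is handled by dominated convergence, the identity $\Superop_2\Hop g-\Superop_{2,N}\Hop g=\Hop_2\Perop_2 g-\Hop\bigl(\sum_{|j|\le N}g(\cdot+2j)\bigr)$ on $I_1$ is right, and the bound $|B_N(u)|\le C_M|u|/N$ on compacta does give uniform convergence for compactly supported $g$. But the passage to general $g\in L^1_0(\R)$ rests entirely on the assertion that the operators $g\mapsto 1_{I_1}(B_N*g)$ satisfy a weak $(1,1)$ bound uniform in $N$, and you give no argument for it --- you explicitly flag it as ``the main obstacle'' and then declare it ``available.'' It is not a routine consequence of Kolmogorov's weak $(1,1)$ bound for $\Hop$: the kernel $B_N$ retains first-order poles at every even integer $2j$ with $|j|>N$, so $1_{I_1}(B_N*g)$ superposes infinitely many translated Hilbert singularities, and since the $L^{1,\infty}$ quasinorm is not countably subadditive one cannot just sum the individual weak-type bounds. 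Some genuine work (a Calder\'on--Zygmund type argument exploiting the disjointness of the singular sets, or better, an estimate at the level of the $\Lspaceo$-norm itself) is required here; this is precisely the content the paper outsources to Proposition \ref{prop-rest1.1}. A smaller point: your closing clause ``yields the desired $L^{1,\infty}(I_1)$-convergence, hence convergence in $\LspaceIone$'' has the implication backwards --- weak-$L^1$ convergence does not imply convergence in $\LspaceIone$ --- though only the $L^{1,\infty}$ statement is needed for the proposition, so this slip does not affect the conclusion.
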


\begin{proof}
We use the factorization \eqref{eq-Sop1.003.1}, which says that
$\Tope_\beta^{[N]}=\SuperopremN\Jop_\beta\Rop_\beta$.
For $v\in\LspaceIonecompl$,
we have the convergence
$\SuperopremN v\to\Superoprem v$ in
$\LspaceIone$ as $N\to+\infty$ (cf. the proof of Proposition
\ref{prop-rest1.1}),
which leads to $\Tope_\beta^{[N]}u\to\Tope_\beta u$ in $\LspaceIone$ as
$N\to+\infty$, for fixed $u\in\LspaceIone$.
The asserted convergence now follows from a combination of Proposition
\ref{prop-finite1.1} with the weak-type estimate
(Proposition \ref{prop-weakL1cont}).
\end{proof}

The Hilbert transform $\Hop$ maps
$L^1_0(\R)\to\Hop L^1_0(\R)\subset\Lspaceo$, and the restriction $\Rop_1$
maps $\Lspaceo\to\LspaceIone$, so that
$\Rop_1\Hop$ maps $L^1_0(\R)\to\LspaceIone$. By considering also the function
$P_{\imag}(t)=\pi^{-1}(1+t^2)^{-1}$, which is in $L^1(\R)$ but not in $L^1_0(\R)$,
we realize that $\Rop_1\Hop$ maps $L^1(\R)$ into $\LspaceIone$.
We formalize this as a lemma.

\begin{lem}
The operator $\Rop_1\Hop$ maps $L^1(\R)$ into $\LspaceIone$.
\label{lem-5.7.1}
\end{lem}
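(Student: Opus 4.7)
The plan is to reduce to the two pieces produced by splitting off the integral of $f$. Every $f\in L^1(\R)$ can be written uniquely as
\[
f = f_0 + c\,P_\imag, \qquad c := \int_\R f(t)\,\diff t,
\]
where $P_\imag(t) = \pi^{-1}(1+t^2)^{-1}$ satisfies $\int_\R P_\imag = 1$ and $f_0 := f - cP_\imag$ lies in $L^1_0(\R)$. By linearity of $\Hop$ on $L^1(\R)$ (interpreted as a map into distributions, as in Subsection \ref{subsec-HilbtransL1}), we have $\Hop f = \Hop f_0 + c\,\Hop P_\imag$, so it suffices to handle the two summands separately.

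For the first piece, since $f_0 \in L^1_0(\R)$, the distribution $\Hop f_0$ belongs to $\Hop L^1_0(\R) \subset \Lspaceo$ by definition, so $\Rop_1 \Hop f_0 \in \LspaceIone$ is immediate from Definition~\ref{defn-4.6.1}.

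For the second piece, the key observation is that $\Hop P_\imag$ equals the conjugate Poisson kernel $Q_\imag(t) = \pi^{-1} t/(1+t^2)$, a computation one can read off directly from the holomorphic representation $P_\imag(t) + \imag Q_\imag(t) = \pi^{-1}(t-\imag)^{-1}\cdot(-\imag)$, or alternatively from the extension formulae in \eqref{eq-hilbtrupper1.1} evaluated at $z = \imag$. This function is bounded on $\R$; in particular, it is bounded on $\bar I_1$, so that any function of the form $\chi \cdot Q_\imag$, with $\chi$ a bounded compactly supported function equal to $1$ on $\bar I_1$, lies in $L^1(\R) \subset \Lspaceo$. Since $\chi \cdot Q_\imag$ agrees with $Q_\imag = \Hop P_\imag$ on $I_1$, it is a legitimate extension witnessing $\Rop_1 \Hop P_\imag \in \LspaceIone$.

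Combining the two contributions, $\Rop_1 \Hop f = \Rop_1 \Hop f_0 + c\, \Rop_1 \Hop P_\imag \in \LspaceIone$, with norm controlled by $\|f\|_{L^1(\R)}$ (through the trivial bound $|c| \le \|f\|_{L^1(\R)}$ and a fixed bound on $\|\chi Q_\imag\|_{L^1(\R)}$). There is no genuine obstacle here; the only subtle point is the mild one that $\Hop f$ for general $f \in L^1(\R)$ is not itself a priori in $\Lspaceo$ globally on $\R$ (because $\Hop P_\imag$ fails the $L^1$-integrability at infinity), and the fix is precisely the localization via the cutoff $\chi$, which is legal because we only need a representative whose restriction to $I_1$ matches.
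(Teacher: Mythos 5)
Your argument is correct and is essentially the paper's own: the paragraph preceding the lemma performs exactly this reduction, splitting $f$ into an $L^1_0(\R)$ part (handled directly by the definition $\Lspaceo=L^1(\R)+\Hop L^1_0(\R)$ and restriction) plus a multiple of $P_{\imag}$, whose Hilbert transform is the bounded conjugate Poisson kernel, so that a cutoff gives an $L^1(\R)$ representative on $I_1$. (A minor aside: your holomorphic identity has a sign slip, since $\pi^{-1}(-\imag)(t-\imag)^{-1}=P_{\imag}(t)-\imag Q_{\imag}(t)$, but this is immaterial to the boundedness you actually use.)
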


\subsection{Norm expansion of the transfer operator on
$\LspaceIone$}
\label{subsec-normexp1}
We now supply the proof of Theorem \ref{thm-nocontract}.

\begin{proof}[Proof of Theorem \ref{thm-nocontract}] 
Since $\Tope_\beta=\Superoprem\Jop_\beta\Rop_\beta$, and $\Rop_\beta$ maps
$\LspaceIone$ into ${\mathfrak{L}}(I_\beta)$ boundedly (more or less as a 
matter of definition), the boundedness of $\Tope_\beta$ is a consequence of
Propositions \ref{prop-mappinJop1.1} and \ref{prop-Superoprem}.

We turn to the assertion that the norm of $\Tope_\beta$ exceeds $1$ as an 
operator on $\LspaceIone$. We recall that the norm on the space $\LspaceIone$
is induced as a quotient norm based on \eqref{eq-normLspaceo}. 
It is straightforward to identify the dual space of $\Lspaceo$ with 
$H^\infty_\stars(\R)$, where the norm on $H^\infty_\stars(\R)$ that is 
dual to \eqref{eq-normLspaceo} is given by
\[
\|g\|_{\stars}:=\max\big(\|g\|_{L^\infty(\R)},
\inf_{c\in\C}\|\tilde\Hop g+c\|_{L^\infty(\R)}\big).
\]
In the same fashion, the dual space of $\LspaceIone$ is identified with
\[
H^\infty_\stars(I_1)=\{g\in H^\infty_\stars(\R):\,\,\supp g\subset \bar I_1\},
\]
and the corresponding norm on $H^\infty_\stars(I_1)$ is $\|\cdot\|_{\stars}$.
Now, by general Functional Analysis, we know that $\|\Tope_1\|=\|\Tope_1^*\|$,
where 
$\Tope_1^*=\Rop_\beta^*\Jop_\beta^*(\Superoprem)^*:H^\infty_\stars(I_1)\to 
H^\infty_\stars(I_1)$ and the space $H^\infty_\stars(I_1)$ is endowed with the norm
$\|\cdot\|_{\stars}$. Moreover, in view of the standard properties of the 
involution $\Jop_\beta^*$, it maps $H^\infty_\stars(\R\setminus I_1)\to
H^\infty_\stars(I_\beta)$ isometrically. In addition, 
$\Rop_\beta^*$ is just the canonical injection 
$H^\infty_\stars(I_\beta)\to H^\infty_\stars(I_1)$, which is isometric as well.
In conclusion, we see that $\|\Tope_1^*\|=\|(\Superoprem)^*\|$, where 
$(\Superoprem)^*$ maps 
$H^\infty_\stars(I_1)\to H^\infty_\stars(\R\setminus\bar I_1)$
and both spaces are endowed with the norm $\|\cdot\|_{\stars}$. Here, 
$H^\infty_\stars(\R\setminus\bar I_1)$ denotes the subspace
\[
H^\infty_\stars(\R\setminus I_1)=\{g\in H^\infty_\stars(\R):\,\,
\supp g\subset \R\setminus I_1\}.
\]
We proceed to show that $\|(\Superoprem)^*\|>1$. It should be mentioned at 
some point that the space $H^\infty_\stars(I_1)$ may be identified with 
$H^\infty_0(\C\setminus\bar I_1)$,
the space of bounded holomorphic functions in the slit plane $\C\setminus I_1$
which also vanish at infinity. The identification is via the Cauchy transform,
it is an isomorphism but it is not isometric; actually, arguably, the 
supremum norm on $\C\setminus I_1$ might be more natural than the norm on 
$H^\infty_\stars(I_1)$ coming from the chosen norm \eqref{eq-normLspaceo} on 
$\LspaceIone$. 
For $0<\gamma\le1$, let us consider the function
\[
G_\gamma(z)=-z^2+z(z+\gamma)\sqrt{\frac{z-\gamma}{z+\gamma}}+\frac{\gamma^2}{2},
\]
where  the  square root is given by the principal branch of the argument in 
$\C\setminus\bar\R_{-}$. Then $G_\gamma\in H^\infty_0(\C\setminus\bar I_1)$, 
and the 
corresponding element of $H^\infty_\stars(I_1)$ is 
$g_\gamma(x):=x\sqrt{\gamma^2-x^2}1_{I_\gamma}(x)$, which is odd,
with Hilbert transform
\[
\Hop g_\gamma(x)=x^2-\frac{\gamma^2}{2}
-1_{\R\setminus I_\gamma}(x)|x|\sqrt{x^2-\gamma^2},
\]
which is even. 
Both $g_\gamma$ and $\Hop g_\gamma$ are H\"older continuous, with 
$\|g_\gamma\|_{L^\infty(\R)}=\frac{1}{2}\gamma^2$
and
\[
\inf_{c\in\C}\|\tilde\Hop g_\gamma+c\|_{L^\infty(\R)}=
\inf_{c_0\in\C}\|\Hop g_\gamma+c_0\|_{L^\infty(\R)}=\|\Hop g_\gamma\|_{L^\infty(\R)}=
\frac{\gamma^2}{2},
\]
which we see from a calculation of the range of the function $\Hop g_\gamma$,
which equals the interval $[-\frac12\gamma^2,\frac12\gamma^2]$. 
This gives that $\|g_\gamma\|_{\stars}=\frac12\gamma^2$. 
We proceed to estimate the norm $\|(\Superoprem)^*g_\gamma\|_{\stars}$ 
from below. From the definition of the operator $\Superoprem$, we see that
\[
(\Superoprem)^*g_\gamma(x)=\sum_{j\in\Z^\times}g_\gamma(x+2j),
\qquad x\in\R\setminus\bar I_1,
\]
and the corresponding Hilbert transform is
\[
\Hop(\Superoprem)^*g_\gamma(x)=\sum_{j\in\Z^\times}\Hop g_\gamma(x+2j)=
\sum_{j=1}^{+\infty}\big(\Hop g_\gamma(x+2j)+\Hop g_\gamma(x-2j)\big),
\qquad x\in\R.
\]
In the sum in the middle it is important to consider symmetric partial sums,
which is reflected in the rightmost expression.
As the sum defining $(\Superoprem)^*g_\gamma(x)$ has at most one nonzero term
for each given $x\in\R$, we see that 
$\|(\Superoprem)^*g_\gamma\|_{L^\infty(\R\setminus I_1)}=\frac12\gamma^2$. 
In order to obtain the norm $\|(\Superoprem)^*g_\gamma\|_\stars$, we proceed to 
evaluate
\[
\inf_{c_0\in\C}\big\|\Hop(\Superoprem)^*g_\gamma(x)-c_0\big\|_{L^\infty(\R)}.
\]
Since the functions involved are H\"older continuous and real-valued, 
we realize that if we may find two points $x_1,x_2\in\R$ with 
\begin{equation}
\Hop(\Superoprem)^*g_\gamma(x_1)-\Hop(\Superoprem)^*g_\gamma(x_2)>\gamma^2,
\label{eq-testcrit1.101}
\end{equation}
then it would follow that
\[
\inf_{c\in\C}\big\|\Hop(\Superoprem)^*g_\gamma(x)-c\big\|_{L^\infty(\R)}>
\frac{\gamma^2}{2},
\]
and as a consequence, $\|\Superoprem\|=\|(\Superoprem)^*\|>1$, as claimed.
We will restrict our attention to values of $\gamma$ that are close to $0$.
Taylor's formula applied to the square root function shows that 
\[
\Hop g_\gamma(x)=\frac{\gamma^4}{8x^2}+\Ordo\bigg(\frac{\gamma^6}{x^4}\bigg)
\]
uniformly for $|x|>1$. Since $\Hop g$ is even, the value at the point 
$x_2:=2$ of the function $\Hop(\Superoprem)^*g_\gamma$ then equals  
\[
\Hop(\Superoprem)^*g_\gamma(x_2)=\Hop(\Superoprem)^*g_\gamma(2)=\Hop g_\gamma(0)+
\Hop g(2)+2\sum_{j=2}^{+\infty}\Hop g_\gamma(2j)
=-\frac{\gamma^2}{2}+\frac{\pi^2-3}{96}
\gamma^4+\Ordo(\gamma^6),
\]
while the value at $x_1:=\gamma+2N$ tends to the following value as 
$N\to+\infty$ through the integers:
\begin{multline*}
\lim_{N\to+\infty}\Hop(\Superoprem)^*g_\gamma(\gamma+2N)=\Hop g_\gamma(\gamma)+
\sum_{j=1}^{+\infty}\big(\Hop g_\gamma(\gamma+2j)+\Hop g_\gamma(2j-\gamma)\big) 
\\
=\frac{\gamma^2}{2}+2\sum_{j=1}^{+\infty}\Hop g_\gamma(2j)+\Ordo(\gamma^6)
=\frac{\gamma^2}{2}+\frac{\pi^2\gamma^4}{96}+\Ordo(\gamma^6).
\end{multline*}
Finally, since 
\[
\lim_{N\to+\infty}\Hop(\Superoprem)^*g_\gamma(\gamma+2N)
-\Hop(\Superoprem)^*g_\gamma(2)=\gamma^2+\frac{\gamma^4}{32}+\Ordo(\gamma^6)
>\gamma^2
\]
for small values of $\gamma$, we obtain \eqref{eq-testcrit1.101} for 
$x_1=\gamma+2N$ and $x_2=2$, provided $\gamma$ is small and the positive integer
$N$ is large.
\end{proof}


\subsection{An operator identity of commutator type}

We recall that by Lemma \ref{lem-5.7.1}, the operator $\Rop_1\Hop$ maps
$L^1(\R)\to\LspaceIone$.

\begin{lem}
Fix $0<\beta\le1$.
For $f\in L^1(I_1)$, extended to vanish off $I_1$,
we have the identity
\[
\Tope_\beta\Rop_1\Hop f=\Rop_1\Hop\Topep_\beta f+\Tope_\beta\Rop_1
\Hop\Jop_\beta\Topep_\beta f
-\Rop_1\Hop\Jop_\beta f,
\]
as elements of the space $\LspaceIone$.
\label{lem-5.7.3}
\end{lem}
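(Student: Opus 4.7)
The plan is to rearrange the identity into a single commutator equation and reduce it to the tautological fact that $\Topep_\beta f$ and $\Jop_\beta f$ have the same $2$-periodization. Grouping terms, the claim is equivalent to
\[
\Tope_\beta\Rop_1\Hop h+\Rop_1\Hop\Jop_\beta h=0,\qquad h:=f-\Jop_\beta\Topep_\beta f,
\]
since $\Jop_\beta^2=\id$ on $L^1(\R)$ gives $\Jop_\beta h=\Jop_\beta f-\Topep_\beta f$, turning the right-hand side of the original identity into $-\Rop_1\Hop\Jop_\beta h$.

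First I would check that $h\in L^1_0(\R)$. Indeed, $\Jop_\beta$ is an isometric involution on $L^1(\R)$ preserving integrals, and the transfer operator $\Topep_\beta$ is $L^1(I_1)$-stochastic, so $\int_\R h=\int_\R f-\int_\R\Topep_\beta f=0$. In particular, both $\Hop h$ and $\Hop\Jop_\beta h$ lie in $\Hop L^1_0(\R)\subset\Lspaceo$, so Definition \ref{defn-7.1.3} legitimately yields $\Jop_\beta\Hop h=\Hop\Jop_\beta h$.

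Next I would apply two commutation identities. Expanding $\Tope_\beta=\Superoprem\Jop_\beta\Rop_\beta$, invoking \eqref{eq-commut1.01} ($\Jop_\beta\Rop_\beta u=\Rop_1^\dagger\Jop_\beta u$ on $\Lspaceo$) with $u=\Hop h$, and using the previous step, I get
\[
\Tope_\beta\Rop_1\Hop h=\Superoprem\Rop_1^\dagger\Hop\Jop_\beta h.
\]
Adding $\Rop_1\Hop\Jop_\beta h$ to both sides and using the trivial compressed decomposition $\Rop_1+\Superoprem\Rop_1^\dagger=\Rop_1\Perop_2$ on $\Lspaceo$ (a direct consequence of $\Perop_2=\id+\Superop_2$ together with Proposition \ref{prop-Superoprem}), the left-hand side of the reduced identity collapses to
\[
\Rop_1\Perop_2\Hop\Jop_\beta h=\Rop_1\Hop_2\Perop_2\Jop_\beta h,
\]
where the second equality is Proposition \ref{prop-7.2.2} applied to $\Jop_\beta h\in L^1_0(\R)$.

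Finally, I would note that $\Perop_2\Jop_\beta h=\Perop_2\Jop_\beta f-\Perop_2\Topep_\beta f$, and observe the tautology $\Perop_2\Topep_\beta f=\Perop_2\Jop_\beta f$: from the formula $\Topep_\beta f=1_{I_1}\Perop_2\Jop_\beta f$ one reads that $\Topep_\beta f$ is supported in $\bar I_1$ and agrees there with the $2$-periodic function $\Perop_2\Jop_\beta f$, so its own $2$-periodization recovers $\Perop_2\Jop_\beta f$ on $I_1$, and thus everywhere by $2$-periodicity. The only mildly delicate point is the bookkeeping of domains---verifying that $h\in L^1_0(\R)$ so that Definition \ref{defn-7.1.3} and Proposition \ref{prop-7.2.2} may be invoked. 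Once that is in place, the whole assertion reduces to the periodization tautology above.
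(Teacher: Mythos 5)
Your proof is correct, and it takes a genuinely different route from the paper's. The paper proves the identity first for Dirac point masses $\delta_\xi$, using the explicit formulas $\Jop_\beta\delta_\xi=\delta_{-\beta/\xi}$, $\Topep_\beta\delta_\xi=\delta_{\{-\beta/\xi\}_2}$ and a telescoping cancellation in the lattice sum $\sum_{j}\bigl(\tfrac{1}{x+2j+\beta/\eta}-\tfrac{1}{x+2j+\beta/\eta'}\bigr)$ when $\beta/\eta-\beta/\eta'\in2\Z$, and then recovers the general case by ``averaging'' over $\xi$ as in \eqref{eq-Topepdef1}. You instead stay entirely at the operator level: after the rearrangement $\Tope_\beta\Rop_1\Hop h=-\Rop_1\Hop\Jop_\beta h$ with $h=f-\Jop_\beta\Topep_\beta f$, everything reduces to the three structural facts $\Rop_1+\Superoprem\Rop_1^\dagger=\Rop_1\Perop_2$ (from $\Perop_2=\id+\Superop_2$ and Proposition \ref{prop-Superoprem}), $\Perop_2\Hop=\Hop_2\Perop_2$ on $L^1_0(\R)$ (Proposition \ref{prop-7.2.2}), and the periodization identity $\Perop_2\Topep_\beta f=\Perop_2\Jop_\beta f$, which is immediate from $\Topep_\beta f=1_{I_1}\Perop_2\Jop_\beta f$. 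All the hypotheses you need ($h\in L^1_0(\R)$, $\Jop_\beta h\in L^1_0(\R)$) are verified, so the invocations of Definition \ref{defn-7.1.3} and Proposition \ref{prop-7.2.2} are legitimate. What your argument buys is that it avoids the distributional manipulation of point masses, the principal-value bookkeeping, and the somewhat informal averaging step, replacing them with identities already established in the paper's framework; what the paper's computation buys is an explicit view of the telescoping mechanism at the kernel level, which is reused in spirit elsewhere (e.g.\ in Remark \ref{example}). Your observation that the lemma is ultimately equivalent to the ``two shadows agree'' tautology $\Perop_2\Topep_\beta=\Perop_2\Jop_\beta$ is a genuinely clarifying reformulation.
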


\begin{proof}
In line with the presentation in the introduction, in particular, 
\eqref{eq-Topepdef1}, we show that the claimed equality holds for 
$f=\delta_\xi$, i.e.,
\begin{equation}
\Tope_\beta\Rop_1\Hop (\delta_\xi-\Jop_\beta\Topep_\beta\delta_\xi)
=\Rop_1\Hop(\Topep_\beta \delta_\xi-\Jop_\beta \delta_\xi)
\label{eq-9.oper1.00}
\end{equation}
holds, for almost every $\xi\in I_1$. The equality then holds for all
$f\in L^1(I_1)$ by ``averaging'', as in \eqref{eq-Topepdef1}.
The canonical extension of the involution $\Jop_\beta$
and the transfer operator $\Topep_\beta$ to such point masses
$\delta_\xi$ reads:
\begin{equation}
\Jop_\beta\delta_\xi=\delta_{-\beta/\xi},\qquad\Topep_\beta\delta_\xi=
\delta_{\{-\beta/\xi\}_2},
\label{eq-oper1.00.05}
\end{equation}
where, as in Subsection \ref{subsec-notation-intervals}, the expression 
$\{t\}_2$ stands for the real number in the interval $]\!\!-\!1,1]$ with 
the property that $t-\{t\}_2\in2\Z$. It follows that
\begin{equation}
\Topep_\beta\delta_\xi-\Jop_\beta\delta_\xi=\delta_{\{-\beta/\xi\}_2}-\delta_{-\beta/\xi},
\qquad \Jop_\beta\Topep_\beta\delta_\xi=\delta_{-\beta/\{-\beta/\xi\}_2},
\label{eq-9.oper1.01}
\end{equation}
so that \emph{for $\xi\in I_1\setminus\bar I_\beta$},
\[
\delta_{\xi}-\Jop_\beta\Topep_\beta\delta_\xi=0\quad\text{and}\quad
\Topep_\beta\delta_\xi-\Jop_\beta\delta_\xi=0.
\]
It follows that for $\xi\in I_1\setminus\bar I_\beta$, both the left-hand
and the right-hand sides of the claimed equality \eqref{eq-9.oper1.00} vanish,
and the equality is trivially true.
It remains to consider $\xi\in\bar I_\beta$. For $\eta\in\R$, the canonical
extension of the Hilbert transform to a Dirac point mass at $\eta$ is
\[
\Hop \delta_\eta =\frac1{\pi}\pv\frac{1}{x-\eta},
\]
and we calculate that for two points $\eta,\eta'\in\R^\times$,
\[
\Tope_\beta\Rop_1\Hop(\delta_\eta-\delta_{\eta'})=
\frac1{\pi}\pv\sum_{j\in\Z^\times}\Bigg(\frac{1}{x+2j+\frac{\beta}{\eta}}
-\frac{1}{x+2j+\frac{\beta}{\eta'}}\Bigg)\quad\text{on}\,\,\,I_1;
\]
here, we may observe that the \emph{principal value} interpretation is only
needed with respect to at most two terms of the series. A particular instance
is when
\[
\frac{\beta}{\eta'}=\frac{\beta}{\eta}-2k,\quad\text{for some}\,\,\, k\in\Z,
\]
in which case we get telescopic cancellation:
\[
\Tope_\beta\Rop_1\Hop[\delta_\eta-\delta_{\eta'}]=
\frac1{\pi}\pv\sum_{j\in\Z^\times}\Bigg(\frac{1}{x+2j+\frac{\beta}{\eta}}
-\frac{1}{x+2(j-k)+\frac{\beta}{\eta}}\Bigg)=
\frac{1}{\pi}\pv\Bigg\{\frac{1}{x-2k+\frac{\beta}{\eta}}
-\frac{1}{x+\frac{\beta}{\eta}}\Bigg\}
\]
on the interval $I_1$. We apply this to the case $\eta:=\xi\in I_1$ and
$\eta':=-\beta/\{-\beta/\xi\}_2$, in which case $k\in\Z$ is given by
\[
2k=\frac{\beta}{\xi}+\{-\beta/\xi\}_2,
\]
and obtain that
\begin{equation}
\Tope_\beta\Rop_1\Hop(\delta_\xi-\delta_{-\beta/\{-\beta/\xi\}_2})=
\frac{1}{\pi}\pv\Bigg\{\frac{1}{x-\{-\beta/\xi\}_2}
-\frac{1}{x+\frac{\beta}{\xi}}\Bigg\}\quad\text{on}\,\,\, I_1.
\label{eq-9.00011}
\end{equation}
The natural requirements that $\xi\ne0$ and that $\{-\beta/\xi\}_2\ne0$
excludes a countable collection of $\xi\in I_1$, which has Lebesgue measure
$0$.
By \eqref{eq-9.oper1.01}, this is the left-hand side expression of
\eqref{eq-9.oper1.00}, and another application \eqref{eq-9.oper1.01} gives
that the right-hand side expression of \eqref{eq-9.oper1.00} equals
\begin{equation}
\Rop_1\Hop(\delta_{\{-\beta/\xi\}_2}-\delta_{-\beta/\xi})=
\frac{1}{\pi}\pv\Bigg\{\frac{1}{x-\{-\beta/\xi\}_2}
-\frac{1}{x+\frac{\beta}{\xi}}\Bigg\}\quad\text{on}\,\,\, I_1.
\label{eq-9.00012}
\end{equation}
From equations \eqref{eq-9.00011} and \eqref{eq-9.00012}, together
with \eqref{eq-9.oper1.01}, we find that the claimed identity 
\eqref{eq-9.oper1.00} is correct for almost every $\xi\in I_1$.
\end{proof}

\begin{prop}
Fix $0<\beta\le1$.
For $f\in L^1(I_1)$, extended to vanish off $I_1$,
we have the identity
\[
\Tope_\beta^n\Rop_1\Hop f=\Rop_1\Hop\Topep_\beta^n f
+\sum_{j=0}^{n-1}\Big\{\Tope_\beta^{n-j}\Rop_1\Hop\Jop_\beta
\Topep_\beta^{j+1}f-\Tope_\beta^{n-j-1}\Rop_1\Hop\Jop_\beta
\Topep_\beta^{j}f\Big\},
\]
as elements of the space $\LspaceIone$, for $n=2,3,4,\ldots$.
\label{prop-5.7.3}
\end{prop}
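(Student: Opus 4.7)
The plan is to prove this by induction on $n\ge1$, taking Lemma \ref{lem-5.7.3} as the base case $n=1$ (where the sum reduces to the single term $j=0$, namely $\Tope_\beta\Rop_1\Hop\Jop_\beta\Topep_\beta f-\Rop_1\Hop\Jop_\beta f$, which matches the formula in Lemma \ref{lem-5.7.3}). For the inductive step, I would assume the identity for some $n\ge1$ and apply the operator $\Tope_\beta$ to both sides. This yields
\[
\Tope_\beta^{n+1}\Rop_1\Hop f=\Tope_\beta\Rop_1\Hop\Topep_\beta^n f+\sum_{j=0}^{n-1}\Big\{\Tope_\beta^{n-j+1}\Rop_1\Hop\Jop_\beta\Topep_\beta^{j+1}f-\Tope_\beta^{n-j}\Rop_1\Hop\Jop_\beta\Topep_\beta^{j}f\Big\}.
\]

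The key move is to process the leading term $\Tope_\beta\Rop_1\Hop\Topep_\beta^n f$ using Lemma \ref{lem-5.7.3} with $f$ replaced by $\Topep_\beta^n f$ (which lies in $L^1(I_1)$ since $\Topep_\beta:L^1(I_1)\to L^1(I_1)$ is a contraction, by Proposition \ref{prop-ergodicity1.01}(i)). This produces
\[
\Tope_\beta\Rop_1\Hop\Topep_\beta^n f=\Rop_1\Hop\Topep_\beta^{n+1} f+\Tope_\beta\Rop_1\Hop\Jop_\beta\Topep_\beta^{n+1}f-\Rop_1\Hop\Jop_\beta\Topep_\beta^{n} f.
\]
Substituting back, the identity for $n+1$ will follow once we recognize that the extra contribution $\Tope_\beta\Rop_1\Hop\Jop_\beta\Topep_\beta^{n+1}f-\Rop_1\Hop\Jop_\beta\Topep_\beta^{n}f$ is exactly the $j=n$ summand of the target sum $\sum_{j=0}^{n}$ appearing in the statement at level $n+1$, while the inherited sum from the induction hypothesis furnishes the remaining $j=0,\ldots,n-1$ terms without any reindexing.

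Since each step is an identity in $\LspaceIone$ between distributions, the only issue to watch is whether all operator compositions are defined at each stage. Boundedness of $\Tope_\beta$ on $\LspaceIone$ (Theorem \ref{thm-nocontract}), the contractivity of $\Topep_\beta$ on $L^1(I_1)$, and the mapping $\Rop_1\Hop:L^1(\R)\to\LspaceIone$ (Lemma \ref{lem-5.7.1}) together cover every composition that appears. The argument is therefore purely formal once the base case is in hand, and I do not anticipate a serious obstacle: the main thing to verify carefully is the matching of indices in the telescoping structure, in particular the fact that the surplus terms from one application of Lemma \ref{lem-5.7.3} fill in exactly the boundary index $j=n$ of the new sum while the existing summands move up unchanged.
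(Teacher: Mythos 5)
Your proposal is correct and follows essentially the same route as the paper: induction on $n$ with Lemma \ref{lem-5.7.3} as the base case, applying $\Tope_\beta$ to the inductive hypothesis and then invoking Lemma \ref{lem-5.7.3} with $\Topep_\beta^n f$ in place of $f$ so that the surplus terms supply exactly the $j=n$ summand. The index bookkeeping you describe matches the paper's computation.
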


\begin{proof}
We argue by induction. First, the identity actually holds for $n=1$, by
Lemma \ref{lem-5.7.3}; here, the sum from $j=0$ to $j=-1$ should be understood
as $0$.

Next, we assume that the identity is valid for $n=k$, and would like to show
that it holds for $n=k+1$ as well.
From the induction hypothesis, we know that
\begin{multline}
\Tope_\beta^{k+1}\Rop_1\Hop f=\Tope_\beta\Tope_\beta^{k}\Rop_1\Hop f
=\Tope_\beta\Rop_1\Hop\Topep_\beta^k f
\\
+\sum_{j=0}^{k-1}\Big\{\Tope_\beta^{k-j+1}\Rop_1\Hop\Jop_\beta
\Topep_\beta^{j+1}f-\Tope_\beta^{k-j}\Rop_1\Hop\Jop_\beta
\Topep_\beta^{j}f\Big\}.
\label{eq-5.7.2}
\end{multline}
In view of Lemma \ref{lem-5.7.3},
\[
\Tope_\beta\Rop_1\Hop\Topep_\beta^k f=
\Rop_1\Hop\Topep_\beta^{k+1} f+\Tope_\beta\Rop_1
\Hop\Jop_\beta\Topep_\beta^{k+1} f
-\Rop_1\Hop\Jop_\beta\Topep_\beta^k f,
\]
and applied to \eqref{eq-5.7.2} we obtain that
\begin{multline*}
\Tope_\beta^{k+1}\Rop_1\Hop f
=\Rop_1\Hop\Topep_\beta^{k+1} f+\Tope_\beta\Rop_1
\Hop\Jop_\beta\Topep_\beta^{k+1} f
-\Rop_1\Hop\Jop_\beta\Topep_\beta^k f
\\
+\sum_{j=0}^{k-1}\Big\{\Tope_\beta^{k-j+1}\Rop_1\Hop\Jop_\beta
\Topep_\beta^{j+1}f-\Tope_\beta^{k-j}\Rop_1\Hop\Jop_\beta
\Topep_\beta^{j}f\Big\}
\\
=\Rop_1\Hop\Topep_\beta^{k+1} f
+\sum_{j=0}^{k}\Big\{\Tope_\beta^{k-j+1}\Rop_1\Hop\Jop_\beta
\Topep_\beta^{j+1}f-\Tope_\beta^{k-j}\Rop_1\Hop\Jop_\beta
\Topep_\beta^{j}f\Big\}
\end{multline*}
This is the desired identity for $n=k+1$, which completes the proof.
\end{proof}

\section{$\Tope_\beta$-iterates of Hilbert transforms}
\label{sec-iterates1}

\subsection{Smooth Hilbert transforms}
\label{subsec-smoothH}
We fix  $0<\beta\le1$.  Recall that for a  function
$g\in L^1(\R)$, its Hilbert transform is
\begin{equation}
\Hop g(x)=\frac{1}{\pi}\pv\int_{\R}\frac{g(t)}{x-t}\diff t,\qquad x\in\R.
\label{eq-Hilb:def1.01}
\end{equation}
In here, \emph{we are interested in the specific
case when the function $g$ vanishes on the interval $I_\beta$}. Then the
Hilbert transform $\Hop g$ is smooth on $I_\beta$, and there is no need to
considering principal values when we restrict our attention to $I_\beta$.
In terms of the involution
\begin{equation}
\Jop_\beta g(x)=\frac{\beta}{x^{2}}\,g\bigg(-\frac{\beta}{x}\bigg),
\label{eq-Jopbeta.def1}
\end{equation}
we see that $\Jop_\beta g\in L^1(I_1)$ and that
\begin{equation}
\Hop g(x)=\frac{1}{\pi}\int_{\R\setminus I_\beta}\frac{g(t)}{x-t}\diff t
=\frac{1}{\pi}\int_{I_1}\frac{t}{\beta+tx}\Jop_\beta g(t) \diff t,\qquad
x\in I_\beta;
\label{eq-Hilb:naive1.01}
\end{equation}
the advantage is that we now integrate over the symmetric unit interval $I_1$.
In terms of the kernel
\[
\Kfun_\beta(t,x):=\frac{t}{\beta+tx}
\]
and the associated integral operator
\begin{equation}
\Qop_\beta f(x):=\frac{1}{\pi}\int_{I_1}\Kfun_\beta(t,x)f(t)\diff t
=\frac{1}{\pi}\int_{I_1}\frac{t}{\beta+tx}\,f(t)\diff t,\qquad x\in I_\beta,
\label{eq-Kbeta.def1}
\end{equation}
\eqref{eq-Hilb:naive1.01} simply asserts that
\begin{equation}
\Qop_\beta f(x)=\Hop\Jop_\beta f(x),\qquad x\in I_\beta,
\label{eq-Hilb-Kb1.01}
\end{equation}
for $f\in L^1(I_1)$, extended to vanish off $I_1$.
It is elementary to estimate that
\begin{equation}
|\Kfun_\beta(t,x)|=\frac{|t|}{\beta+tx}\le\frac{2\beta}{\beta^2-x^2}
=2\kappa_\beta(x),\qquad x\in I_\beta,\,\,\,\,t\in\bar I_1,
\label{eq-5.000102}
\end{equation}
where $\kappa_\beta$ is as in \eqref{eq-kappadef1}, which yields that
\begin{equation}
|\Qop_\beta f(x)|\le\frac{1}{\pi}\int_{I_1}|\Kfun_\beta(t,x)f(t)|\diff t
\le\frac{2}{\pi}\,\|f\|_{L^1(I_1)}\kappa_\beta(x),\qquad x\in I_\beta.
\label{eq-Hilb-Kb1.02}
\end{equation}
In general,    $\Qop_\beta f$  is not in  $L^1(I_\beta)$.
But at least \eqref{eq-Hilb-Kb1.02} guarantees that $\Qop_\beta f$ is
well-defined pointwise with an effective bound.
We will want to consider the $\Tope_\beta$-iterates of the function
$\Qop_\beta f$. Since,  as a matter of fact,  the subtransfer operator
$\Tope_\beta$ only cares about the values of the function in question on the
interval $I_\beta$, we may use the above estimate \eqref{eq-Hilb-Kb1.02}
together with the observation made in Subsection \ref{subsec-3.5}
to see that the $\Tope_\beta$-iterates of $\Qop_\beta f$ are well-defined
pointwise. We are also able to supply an effective estimate of those
iterates, which we first do for $0<\beta<1$.

\begin{prop} Fix $0<\beta<1$.
Suppose $f\in L^1(I_1)$. Then we have the estimate
\[
|\Tope_\beta^{n}\Qop_\beta f(x)|\le\frac{4\beta^{n-1}}{\pi(1-\beta)}\|f\|_{L^1(I_1)},
\qquad x\in I_1,\quad n=2,3,4,\ldots,
\]
so that
$\Tope_\beta^{n}\Qop_\beta f\to0$ geometrically as $n\to+\infty$, uniformly on
the interval $I_1$.
\label{prop-5.6.5}
\end{prop}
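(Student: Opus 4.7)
The plan is to exploit the pointwise domination of $|\Qop_\beta f|$ by a scalar multiple of $\kappa_\beta$ combined with the positivity preservation of $\Tope_\beta$ from part (i) of Proposition \ref{prop-ergodicity1.01}, and then invoke Lemma \ref{prop-kappa1} to propagate the estimate through the iterates.

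First I would recall the pointwise bound $|\Qop_\beta f(x)|\le\frac{2}{\pi}\|f\|_{L^1(I_1)}\kappa_\beta(x)$ for $x\in I_\beta$ already established in \eqref{eq-Hilb-Kb1.02}. Since $\Tope_\beta$ only sees values inside $\bar I_\beta$ (by the factorization in part (iii) of Proposition \ref{prop-ergodicity1.01}, with $N=1$), this bound is all we need. By the triangle inequality applied to the sum defining $\Tope_\beta$ and the fact that $\Tope_\beta$ maps positive functions to positive functions, we may write
\[
|\Tope_\beta\Qop_\beta f(x)|\le\Tope_\beta|\Qop_\beta f|(x)\le\frac{2}{\pi}\|f\|_{L^1(I_1)}\,\Tope_\beta\kappa_\beta(x),\qquad x\in I_1.
\]
By the first assertion of Lemma \ref{prop-kappa1}, $\Tope_\beta\kappa_\beta(x)=\kappa_1(x)$ almost everywhere on $I_1$, so
\[
|\Tope_\beta\Qop_\beta f(x)|\le\frac{2}{\pi}\|f\|_{L^1(I_1)}\,\kappa_1(x),\qquad x\in I_1.
\]

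Next, I would iterate. Applying $\Tope_\beta^{n-1}$ to both sides and again using positivity preservation, I obtain
\[
|\Tope_\beta^n\Qop_\beta f(x)|\le\Tope_\beta^{n-1}|\Tope_\beta\Qop_\beta f|(x)\le\frac{2}{\pi}\|f\|_{L^1(I_1)}\,\Tope_\beta^{n-1}\kappa_1(x),\qquad x\in I_1.
\]
Invoking the uniform estimate in Lemma \ref{prop-kappa1}, which for $0<\beta<1$ gives $\Tope_\beta^{n-1}\kappa_1(x)\le \frac{2\beta^{n-1}}{1-\beta}$ for all $x\in I_1$, the claim
\[
|\Tope_\beta^n\Qop_\beta f(x)|\le\frac{4\beta^{n-1}}{\pi(1-\beta)}\|f\|_{L^1(I_1)},\qquad x\in I_1,\,\,n\ge2,
\]
follows at once. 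The uniform geometric decay $\Tope_\beta^n\Qop_\beta f\to0$ on $I_1$ is immediate.

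The approach is really bookkeeping: the serious work has already been done in establishing $\Tope_\beta\kappa_\beta=\kappa_1$ and the geometric-decay estimate for $\Tope_\beta^n\kappa_1$. The only slightly delicate point is that $\Qop_\beta f$ is defined pointwise rather than as an $L^1$ element, but the observation from Subsection \ref{subsec-3.5} and the fact that $\Tope_\beta$ depends only on values on $\bar I_\beta$ (where $\kappa_\beta$ dominates) justify iterating the subtransfer operator on this merely pointwise majorant. I do not anticipate a genuine obstacle here.
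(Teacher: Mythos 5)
Your argument is correct and is essentially identical to the paper's proof: both rest on the pointwise majorization $|\Qop_\beta f|\le\frac{2}{\pi}\|f\|_{L^1(I_1)}\kappa_\beta$ from \eqref{eq-Hilb-Kb1.02}, the positivity of $\Tope_\beta$, and the two assertions of Lemma \ref{prop-kappa1} ($\Tope_\beta\kappa_\beta=\kappa_1$ and the uniform geometric bound on $\Tope_\beta^{n-1}\kappa_1$). No issues.
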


\begin{proof}
As observed above, pretty much by definition, $\Tope_\beta g$ is only concerned
with the behavior of $g$ on the interval $I_\beta$. It follows from the
positivity of the operator $\Tope_\beta$ that
\begin{equation}
|\Tope_\beta\Qop_\beta f(x)|\le
\frac{2}{\pi}\,\|f\|_{L^1(I_1)}\Tope_\beta\kappa_\beta(x)
=\frac{2}{\pi}\,\|f\|_{L^1(I_1)}\kappa_1(x),\qquad x\in I_1,
\label{eq-UbKb1.1}
\end{equation}
where in the last step, we used Lemma \ref{prop-kappa1}.
Now, the same type of argument, based on Proposition \ref{prop-kappa1},
yields
\[
|\Tope_\beta^n\Qop_\beta f(x)|\le
\frac{2}{\pi}\,\|f\|_{L^1(I_1)}\Tope_\beta^{n-1}\kappa_1(x)
\le\frac{4\beta^{n-1}}{\pi(1-\beta)}\,\|f\|_{L^1(I_1)},\qquad x\in I_1,
\]
as claimed.
\end{proof}

For $\beta=1$, the situation is  slightly more delicate.

\begin{prop} Fix $\beta=1$.
Suppose $f\in L^1(I_1)$. We then have the estimate
\[
|\Tope_1^n\Qop_1 f(x)|\le\frac{2}{\pi}(1-x^2)^{-1}\|f\|_{L^1(I_1)},
\qquad x\in I_1,\quad n=1,2,3,\ldots,
\]
and in addition, $\Tope_1^n\Qop_1 f(x)\to0$ as $n\to+\infty$,
uniformly on compact subsets of $I_1$.
\label{prop-5.6.5'}
\end{prop}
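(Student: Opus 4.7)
My plan is to prove the pointwise bound directly from the invariance $\Tope_1\kappa_1=\kappa_1$ and then derive the uniform decay on compact subsets from a two-piece decomposition of $f$ together with Proposition~\ref{prop-ergodicity1.01}(vi), upgraded from $L^1$ to $L^\infty$ by an averaging trick.

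The pointwise bound is immediate. Estimate~\eqref{eq-Hilb-Kb1.02} gives $|\Qop_1 f(x)|\le\tfrac{2}{\pi}\|f\|_{L^1(I_1)}\kappa_1(x)$ on $I_1$; applying $\Tope_1^n$ and using positivity plus the invariance $\Tope_1\kappa_1=\kappa_1$ from Lemma~\ref{prop-kappa1} yields $|\Tope_1^n\Qop_1 f(x)|\le\tfrac{2}{\pi}\|f\|_{L^1(I_1)}(1-x^2)^{-1}$. For the uniform decay, fix $\eta\in(0,1)$ and decompose $f=f_\epsilon+r_\epsilon$ with $f_\epsilon:=f\cdot 1_{I_{1-\epsilon}}$, so $\|r_\epsilon\|_{L^1(I_1)}\to 0$ as $\epsilon\to 0^+$. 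The pointwise bound handles $r_\epsilon$ uniformly in $n$: $|\Tope_1^n\Qop_1 r_\epsilon(x)|\le\tfrac{2\|r_\epsilon\|_{L^1}}{\pi(1-\eta^2)}$ on $I_\eta$. The principal part $h_\epsilon:=\Qop_1 f_\epsilon$ lies in $C^\infty(\bar I_1)\cap L^1(I_1)$ because the kernel $t/(1+tx)$ is smooth on $\bar I_{1-\epsilon}\times\bar I_1$, so Proposition~\ref{prop-ergodicity1.01}(vi) gives $\|1_{I_\eta}\Tope_1^n h_\epsilon\|_{L^1(I_1)}\to 0$, while $\{\Tope_1^n h_\epsilon\}_n$ is uniformly bounded on $I_\eta$ by $\|h_\epsilon\|_\infty\cdot\kappa_1(\eta)$.

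To upgrade the $L^1$-decay on $I_\eta$ to uniform decay, I plan an averaging argument. For $x\in I_\eta$ and small $\delta>0$ with $B:=(x-\delta,x+\delta)\subset I_\eta$, the substitution $y=\phi_j(z):=-(z+2j)^{-1}$ in each term of the series for $\Tope_1^{n+1}h_\epsilon$ yields
\[
\int_B\Tope_1^{n+1}h_\epsilon(z)\,\mathrm{d}z=\sum_{j\ne 0}\int_{\phi_j(B)}\Tope_1^n h_\epsilon(y)\,\mathrm{d}y;
\]
since $\bigsqcup_{j\ne 0}\phi_j(B)\subset I_{\eta^*}$ with $\eta^*:=1/(2-\eta)<1$, the mean of $\Tope_1^{n+1}h_\epsilon$ over $B$ is dominated by $(2\delta)^{-1}\|1_{I_{\eta^*}}\Tope_1^n h_\epsilon\|_{L^1(I_1)}$, which tends to $0$ in $n$ for each fixed $\delta$. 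Comparing with $\Tope_1^{n+1}h_\epsilon(x)$ via a Lipschitz estimate $\mathrm{Lip}(\Tope_1^{n+1}h_\epsilon|_B)\le K$ and optimizing $\delta$ gives $|\Tope_1^{n+1}h_\epsilon(x)|\lesssim\sqrt{K\,\|1_{I_{\eta^*}}\Tope_1^n h_\epsilon\|_{L^1(I_1)}}$, which converges to $0$ uniformly in $x\in I_\eta$ provided $K$ is bounded in $n$.

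The main obstacle is providing such a uniform-in-$n$ Lipschitz (or modulus-of-continuity) bound for $\Tope_1^n h_\epsilon$ on $I_\eta$. A termwise differentiation of the series for $\Tope_1$ couples the Lipschitz norm on $I_\eta$ to those of the previous iterate on the strictly larger interval $I_{\eta^*}$, with a multiplier of the form $\sum_{j\ne 0}(x+2j)^{-4}$ that can exceed $1$ as $\eta$ approaches $1$, so naive iteration loses control. The refinement should exploit the analytic contraction of the inverse branches $\phi_j$ on compact subsets of $I_1$ and the $C^\infty$-smoothness of the initial datum $h_\epsilon$, so as to prevent exponential blow-up of the regularity norms across iterations and thereby close the argument.
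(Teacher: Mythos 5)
Your pointwise bound is correct and coincides with the paper's own argument (positivity of $\Tope_1$, the majorant $|\Qop_1 f|\le\tfrac{2}{\pi}\|f\|_{L^1(I_1)}\kappa_1$, and the invariance $\Tope_1\kappa_1=\kappa_1$). Your reduction of the uniform decay is also a legitimate variant of the paper's: you cut off $f$ to $I_{1-\epsilon}$ so that $h_\epsilon=\Qop_1 f_\epsilon$ is bounded (indeed holomorphic past $\bar I_1$), whereas the paper splits the $t$-integral at $\pm a$ and uses the monotonicity of $t\mapsto\Kfun_1(t,x)$ together with positivity of $\Tope_1$ to dominate all iterated kernels by the two fixed functions $\Tope_1^n\Kfun_1(\pm a,\cdot)$. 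Either way one is left with the same crux: upgrading $\|1_{I_\eta}\Tope_1^n h\|_{L^1(I_1)}\to0$ to uniform convergence on $\bar I_\eta$ for a fixed regular $h$. This is where your proposal has a genuine gap. The averaging-plus-Lipschitz scheme needs a modulus-of-continuity bound for $\Tope_1^n h_\epsilon$ on $I_\eta$ that is uniform in $n$, and, as you yourself note, termwise differentiation ties the Lipschitz constant on $I_\eta$ to that of the previous iterate on the strictly larger interval $I_{\eta^*}$ with a multiplier that is not under control, so the iteration does not close. Since such equicontinuity is essentially equivalent to the statement being proved (given the $L^1$ decay), ending with ``the refinement should exploit\ldots'' leaves the second half of the proposition unproved.

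The standard way to close this --- and in effect what the paper's appeal to ``regularity'' amounts to --- is complex-analytic rather than real-variable. The function $h_\epsilon$ extends holomorphically to $|z|<1/(1-\epsilon)$, and $\Tope_1^n h_\epsilon=\sum_\phi \phi'\cdot(h_\epsilon\circ\phi)$, where $\phi$ runs over the inverse branches of $\tau_1^n$ that land in $\bar I_1$; these are M\"obius maps with positive derivative on $\R$ which map a sufficiently thin complex neighborhood $U_\eta$ of $\bar I_\eta$ into a fixed thin neighborhood of $\bar I_1$, and bounded distortion together with $\sum_\phi\phi'(x)=\Tope_1^n 1(x)\le\Tope_1^n\kappa_1(x)=\kappa_1(x)\le(1-\eta^2)^{-1}$ gives $\sum_\phi|\phi'(z)|\le C_\eta$ on $U_\eta$. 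Hence $\{\Tope_1^n h_\epsilon\}_n$ is uniformly bounded and holomorphic on $U_\eta$, and Cauchy estimates deliver exactly the uniform-in-$n$ Lipschitz constant $K$ your optimization needs (equivalently: the family is normal, so $L^1$ decay on compacts forces locally uniform decay). Until this --- or some substitute --- is supplied, your proof of the uniform convergence is incomplete.
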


\begin{proof}
The derivation of \eqref{eq-UbKb1.1} applies also in the case $\beta=1$, so
that
\begin{equation}
|\Tope_1\Qop_1 f(x)|\le
\frac{2}{\pi}\,\|f\|_{L^1(I_1)}\Tope\kappa_1(x)
=\frac{2}{\pi}\,\|f\|_{L^1(I_1)}\kappa_1(x),\qquad x\in I_1,
\label{eq-UbKb1.2}
\end{equation}
which is the claimed estimate for $n=1$. For $n>1$, we use the positivity
of $\Tope_1$ again, to obtain from \eqref{eq-UbKb1.2} that
\begin{equation}
|\Tope_1^n\Qop_1 f(x)|\le
\frac{2}{\pi}\,\|f\|_{L^1(I_1)}\Tope^{n-1}\kappa_1(x)
=\frac{2}{\pi}\,\|f\|_{L^1(I_1)}\kappa_1(x),\qquad x\in I_1,
\label{eq-UbKb1.3}
\end{equation}
which establishes the claimed estimate.

We proceed to obtain the uniform convergence to $0$ locally on compact
subsets of $I_1$. To this end, we use the representation \eqref{eq-Kbeta.def1}
to see that
\begin{equation}
\Tope_1^n\Qop_1 f(x)=\frac{1}{\pi}\int_{I_1}\Tope_1^{n}\Kfun_1(t,\cdot)(x)
f(t)\diff t.
\label{eq-U1K1.iter1}
\end{equation}
We verify that for $0<a<1$,
\[
|\Kfun_1(t,x)|\le \Kfun_1(a,x)=\frac{a}{1+ax},\qquad t\in[0,a],\,\,\,x\in I_1,
\]
and that
\[
|\Kfun_1(t,x)|\le -\Kfun_1(-a,x)=\frac{a}{1-ax},\qquad
t\in[-a,0],\,\,\,x\in I_1.
\]
As a consequence, using the positivity of $\Tope_1$, we may derive that
\begin{equation*}
|\Tope_1^n \Kfun_1(t,\cdot)(x)|\le \Tope_1^n\Kfun_1(a,\cdot)(x)
\le2\kappa_1(x),\qquad t\in[0,a],\,\,\,x\in I_1,
\end{equation*}
and that
\begin{equation*}
|\Tope_1^n \Kfun_1(t,\cdot)(x)|\le \Tope_1^n
(-\Kfun_1(-a,\cdot))(x)\le2\kappa_1(x),
\qquad t\in[-a,0],\,\,\,x\in I_1,
\end{equation*}
Next, we apply the triangle inequality to the integral \eqref{eq-U1K1.iter1}:
\begin{multline}
|\Tope_1^n\Qop_1 f(x)|\le\frac{1}{\pi}\int_{I_a}|\Tope_1^n\Kfun_1(t,\cdot)(x)
f(t)|\diff t+\frac{1}{\pi}\int_{I_1\setminus I_a}
|\Tope_1^n\Kfun_1(t,\cdot)(x)f(t)|\diff t
\\
\le\frac{1}{\pi}\Tope_1^n\Kfun_1(a,\cdot)(x)\int_{[0,a]}
|f(t)|\diff t+\frac{1}{\pi}\Tope_1^n(-\Kfun_1(-a,\cdot))(x)\int_{[-a,0]}
|f(t)|\diff t
+\frac{2}{\pi}\kappa_1(x)\int_{I_1\setminus I_a}
|f(t)|\diff t.
\label{eq-iterates1.101}
\end{multline}
Note that in the last term, we used the estimate \eqref{eq-5.000102}
with $\beta=1$. By Proposition \ref{prop-ergodicity1.01}(vi),
$\Tope_1^n \Kfun_1(a,\cdot)\to0$ and $\Tope_1^n \Kfun_1(-a,\cdot)\to0$
as $n\to+\infty$ in the $L^1$ sense on compact subintervals
of $I_1$. It is a consequence of the regularity of the functions
$\Kfun_1(a,\cdot)$ and $\Kfun_1(-a,\cdot)$ that the convergence is actually
uniform on compact subintervals.
By fixing $a$ so close to $1$ that the rightmost integral of
\eqref{eq-iterates1.101} is as small as we like, we see that
$\Tope_1^n\Qop_1 f\to0$ as $n\to+\infty$, uniformly on compact subsets of
$I_1$.
This completes the proof.
\end{proof}

\section{Asymptotic decay of the $\Tope_\beta$-orbit of a
distribution in $\LspaceIone$ for $0<\beta<1$}
\label{sec-asymptotic.decay<1}

\subsection{An application of asymptotic decay for $0<\beta<1$}
\label{subsec-thm-2.0-beta<1}



We now supply the argument which
shows how, in the subcritical parameter regime $\alpha\beta<1$,
Theorem \ref{thm-2.0} follows from the asymptotic decay 
result Theorem \ref{thm-basic1.001}, which is of extended ergodicity type.

\begin{proof}[Proof of Theorem \ref{thm-2.0} for $\alpha\beta<1$]
As observed right after the formulation of Theorem \ref{thm-2.0},
a scaling argument allows us to reduce the redundancy and \emph{fix
$\alpha=1$, in which case the condition $0<\alpha\beta<1$ reads
$0<\beta<1$}.
In view of Subsections \ref{subsec-dualform} and \ref{subsec-reformulation2}, 
it will be sufficient to show that for $u\in\Lspaceo$,
\begin{equation}
\Perop_2 u=\Perop_2\Jop_\beta u=0
\quad\Longrightarrow\quad u=0.
\label{eq-impli1.1}
\end{equation}
So, we assume that $u\in\Lspaceo$ has $\Perop_2 u=\Perop_2\Jop_\beta u=0$.
Let $u_0:=\Rop_1 u\in\LspaceIone$ and $u_1:=\Rop_1^\dagger u\in\LspaceIonecompl$
denote the restrictions of the distribution $u$ to the symmetric interval
$I_1$ and to the complement $\R\setminus\bar I_1$, respectively.
\emph{We will be done once we are able to show that $u_0=0$}, because then
$u_1$ vanishes as well, as a result of Proposition \ref{prop-5.4.1}:
\[
u_1=-\Rop_1^\dagger\Jop_\beta\Tope_\beta u_0=0.
\]
Indeed, we have Proposition \ref{prop-5.4.2}, which tells us that
$u_0=\Rop_1 u=0$ and $u_1=\Rop_1^\dagger u=0$ together imply that $u=0$.

Finally, to obtain that $u_0=0$, we observe that in addition, Proposition
\ref{prop-5.4.1} says that $u_0$ has the important property
$u_0=\Tope_\beta^2 u_0$. By iteration, then, we have $u_0=\Tope_\beta^{2n} u_0$
for $n=1,2,3,\ldots$, and by letting $n\to+\infty$,
Theorem \ref{thm-basic1.001} tells us that $u_0=0$ is the only solution in
$\LspaceIone$, which completes the proof.
\end{proof}

\subsection{The proof of the asymptotic decay result for 
$0<\beta<1$}
\label{subsec-thm-basic1.001}
We now proceed with the proof of Theorem \ref{thm-basic1.001}. Note that we
have to be particularly careful because the operator 
$\Tope_\beta:\LspaceIone\to\LspaceIone$ has norm $>1$, by 
Theorem \ref{thm-nocontract}. 
However, it clear that it acts contractively on the subspace $L^1(I_1)$.

\begin{proof}[Proof of Theorem \ref{thm-basic1.001}]
We decompose $u_0=f+\Rop_1\Hop g$, where $f\in L^1(I_1)$ and $g\in L^1_0(\R)$,
and observe that by Proposition \ref{prop-ergodicity1.01}(iv),
\begin{equation}
\|\Tope_\beta^{N} f\|_{L^1(I_1)}\to0\quad\text{as}\,\,\,\,N\to+\infty.
\label{eq-est1.0001}
\end{equation}
So the iterates \emph{$\Tope_\beta^{N} f$ tend to $0$ in $L^1(I_1)$ and hence in
$L^{1,\infty}(I_1)$ as well}. We turn to the $\Tope_\beta^2$-iterates of
$\Rop_1\Hop g$. First, we split
\[
g=g_1+g_2, \quad\text{where}\quad g_1\in L^1(I_\beta),\,\,\,
g_2\in L^1(\R\setminus I_\beta);
\]
here, it is tacitly assumed that the functions $g_1,g_2$ are extended to
vanish on the rest of the real line $\R$. As the operator $\Jop_\beta$ maps
$L^1(\R\setminus I_\beta)\to L^1(I_1)$ isometrically, and
$\Hop g_2=\Qop_\beta\Jop_\beta g_2$ holds on $I_1$ by \eqref{eq-Hilb-Kb1.01},
Proposition \ref{prop-5.6.5} gives us the pointwise estimate (we write
``$\pev$'' although it is not absolutely needed)
\begin{equation}
\big|\pev[\Tope_\beta^{N}\Rop_1\Hop g_2](x)\big|\le
\frac{4\beta^{N-1}}{(1-\beta)\pi}\|g_2\|_{L^1(\R)},\qquad x\in I_1.
\label{eq-est1.0002}
\end{equation}
In particular, \emph{the $\Tope_\beta$-iterates of $\Rop_1\Hop g_2$ tend to $0$
geometrically in $L^{\infty}(I_1)$}.
We still need to analyze the
$\Tope_\beta^2$-iterates of $\Rop_1\Hop g_1$.
We apply $\Tope_\beta^{k}$ to the two sides of the
identity of Proposition \ref{prop-5.7.3}, with $g_1$ in place of $f$ and with
$k=2,3,4,\ldots$, to obtain that
\begin{equation}
\Tope_\beta^{n+k}\Rop_1\Hop g_1
=\Tope_\beta^{k}\Rop_1\Hop\Topep_\beta^n g_1
+\sum_{j=0}^{n-1}\Big\{\Tope_\beta^{n+k-j}\Rop_1\Hop\Jop_\beta
\Topep_\beta^{j+1}g_1-\Tope_\beta^{n+k-j-1}\Rop_1\Hop\Jop_\beta
\Topep_\beta^{j}g_1\Big\}.
\label{eq-est1.0002.1}
\end{equation}
For $l=0,1,2,\ldots$, the function $\Topep_\beta^{l} g_1$ is in
$L^1(I_1)$, so that again by Proposition
\ref{prop-5.6.5},  since $\Rop_1\Hop\Jop_\beta=\Qop_\beta$, we have
\begin{equation}
\big|\pev[\Tope_\beta^{r}\Rop_1 \Hop\Jop_\beta\Topep_\beta^l g_1](x)\big|
\le\frac{4\beta^{r-1}}{\pi(1-\beta)}\|g_1\|_{L^1(I_\beta)},\qquad
x\in I_1,\,\,\,r=2,3,4,\ldots,
\label{eq-est1.0003}
\end{equation}
where we use that the transfer operator $\Topep_\beta$ acts contractively on
$L^1(I_1)$, by Proposition \ref{prop-ergodicity1.01}(i).
An application of the ``valeur au point'' estimate \eqref{eq-est1.0003} to
each term of the sum on the right-hand side of the identity
\eqref{eq-est1.0002.1} gives that
\begin{equation}
\big|\pev[\Tope_\beta^{n+k}\Rop_1\Hop g_1
-\Tope_\beta^{k}\Rop_1\Hop\Topep_\beta^n g_1](x)\big|\le
\frac{8\beta^{k-1}}{\pi(1-\beta)^2}\,\|g_1\|_{L^1(I_\beta)},\qquad
\almostev\,\, x\in I_1.
\label{eq-est1.0007}
\end{equation}
Next, we split the function $g_1$ as follows:
\[
g_1=h_{0,n}+h_{1,n},\qquad h_{0,n}\in L^1(\calE_{\beta,n+1}),\,\,\,h_{1,n}\in
L^1(I_\beta\setminus \calE_{\beta,n+1}),
\]
where the set $\calE_{\beta,n+1}$ is as in \eqref{eq-EsetN}, and with the
understanding that $h_{0,n},h_{1,n}$ both vanish elsewhere on the real line.
Next, we observe that $\Topep_\beta^n h_{1,n}\in L^1(I_1\setminus\bar I_\beta)$.
This can be seen from the defining property of the set $\calE_{\beta,n+1}$ and 
the relation between the map $\tau_\beta$ and the corresponding transfer 
operator $\Topep_\beta$, see \eqref{eq-Topepdef1}.
We then apply Proposition \ref{prop-5.6.5} to arrive at
\begin{multline}
\big|\pev[\Tope_\beta^{k}\Rop_1\Hop\Topep_\beta^n h_{1,n}](x)\big|\le
\frac{4\beta^{k-1}}{\pi(1-\beta)}\|\Topep_\beta^n h_{1,n}\|_{L^1(I_1)}
\\
\le \frac{4\beta^{k-1}}{\pi(1-\beta) }\|h_{1,n}\|_{L^1(I_\beta)}\le
\frac{4\beta^{k-1}}{\pi(1-\beta)}\|g_1\|_{L^1(I_\beta)},\qquad
x\in I_1.
\label{eq-est1.0008}
\end{multline}
By combining \eqref{eq-est1.0007} with the estimate \eqref{eq-est1.0008},
we obtain that
\begin{equation}
\big|\pev[\Tope_\beta^{n+k}\Rop_1\Hop g_1
-\Tope_\beta^{k}\Rop_1\Hop\Topep_\beta^n h_{0,n}](x)\big|\le
\frac{12\beta^{k-1}}{\pi(1-\beta)^2}
\,\|g_1\|_{L^1(I_\beta)},\qquad \almostev\,\,x\in I_1.
\label{eq-est1.0009}
\end{equation}
The norm of $h_{0,n}\in L^1(\calE_{\beta,n+1})$ equals
\[
\int_{\calE_{\beta,n+1}}|h_{0,n}(t)|\diff t=\int_{\calE_{\beta,n+1}}|g_1(t)|\diff t=
\|1_{\calE_{\beta,n+1}}g_1\|_{L^1(I_1)},
\]
and it approaches $0$ as $n\to+\infty$, by Proposition 
\ref{prop-ergodicity1.01}(iv).
Since the transfer operator $\Topep_\beta$ is a norm contraction on
$L^1(I_1)$, we know that
\[
\|\Topep_\beta^n h_{0,n}\|_{L^1(I_1)}\le\|h_{0,n}\|_{L^1(I_1)}=
\|1_{\calE_{\beta,n+1}}g_1\|_{L^1(I_1)},
\]
and, consequently, for fixed $k$ we have that
\[
\Tope_\beta^{k}\Rop_1\Hop\Topep_\beta^n h_{0,n}\to0\quad \text{in}\quad
\LspaceIone,\quad\,\,\text{as}\quad n\to+\infty.
\]
As convergence in $\LspaceIone$ entails convergence in $L^{1,\infty}(I_1)$
for the corresponding ``valueur au point'' function, we obtain from
\eqref{eq-est1.0009},  by application of the $L^{1,\infty}(I_1)$ quasinorm
triangle inequality,  that
\begin{equation}
\limsup_{n\to+\infty}\big\|\pev[\Tope_\beta^{n+k}\Rop_1\Hop g_1]
\big\|_{L^{1,\infty}(I_1)}\le\frac{24\beta^{k-1}}{\pi(1-\beta)^2}
\,\|g_1\|_{L^1(I_\beta)},\qquad \almostev\,\,\,x\in I_1.
\label{eq-est1.0010}
\end{equation}
Note that the limit on the left-hand side
\emph{does not depend on the parameter $k$}.
This permits us to let $k\to+\infty$ in a second step, and we obtain that
\begin{equation}
\lim_{N\to+\infty}\big\|\pev[\Tope_\beta^{N}\Rop_1\Hop g_1]\big\|_{L^{1,\infty}(I_1)}=0.
\label{eq-est1.0011}
\end{equation}
Finally, gathering the terms, we obtain from \eqref{eq-est1.0001},
\eqref{eq-est1.0002}, and \eqref{eq-est1.0011}, that
\begin{multline}
\pev[\Tope_\beta^{N}u_0]=\pev[\Tope_\beta^{N}(f+\Rop_1\Hop g)]
\\
=\pev[\Tope_\beta^{N}f]+\pev[\Tope_\beta^{N}\Rop_1\Hop g_1]
+\pev[\Tope_\beta^{N}\Rop_1\Hop g_2]\to0\quad\text{as}\quad N\to+\infty,
\label{eq-est1.0013}
\end{multline}
in the quasinorm of $L^{1,\infty}(I_1)$, as claimed.
\end{proof}

\begin{rem}
 \label{example}
 One may wonder if Theorem  \ref{thm-basic1.001} (and hence Corollary 
\ref{eq-ergodic0.00001}) would remain 
true if the space $\Lspaceo$ were to be replaced by the larger space
$L^{1,\infty}(I_1)$.
To look into this issue, we keep $0<\beta<1$, and consider the function
\[
f(x):=\frac{1}{x-x_1}-\frac{1}{x-x_2},\quad\text{where}\quad
x_1:=1+\sqrt{1-\beta},\,\,\,x_2:=-1+\sqrt{1-\beta}.
\]
Then $x_1x_2=-\beta$, so that $\frac{\beta}{x_1}=-x_2$ and
$\frac{\beta}{x_2}=-x_1$, and, in addition,
$\frac{\beta}{x_1}-\frac{\beta}{x_2}=x_1-x_2=2$, which leads to
\begin{multline*}
\Tope_\beta f(x)=\sum_{j\in \Z^\times}\frac{\beta}{(2j+x)^2}
f\bigg(-\frac{\beta}{2j+x}\bigg)=\sum_{j\in \Z^\times}\frac{\beta}{(2j+x)^2}
\Bigg(\frac{1}{-\frac{\beta}{2j+x}-x_1}-\frac{1}{-\frac{\beta}{2j+x}-x_2}\Bigg)
\\
=\sum_{j\in \Z^\times}\Bigg(\frac{\beta}{(2j+x)(\beta+(2j+x)x_2)}-
\frac{\beta}{(2j+x)(\beta+(2j+x)x_1)}\Bigg)
\\
=\sum_{j\in \Z^\times}\Bigg(\frac{x_1}{\beta+(2j+x)x_1}-
\frac{x_2}{\beta+(2j+x)x_2}\Bigg)
=\sum_{j\in \Z^\times}\Bigg(\frac{1}{2j+x+\frac{\beta}{x_1}}-
\frac{1}{2j+x+\frac{\beta}{x_2}}\Bigg)
\\
=\frac{1}{x+\frac{\beta}{x_2}}-
\frac{1}{x+\frac{\beta}{x_1}}=\frac{1}{x-x_1}-\frac{1}{x-x_2}=f(x),
\end{multline*}
by telescoping sums. The function $f$ is a nontrivial element of
$L^{1,\infty}(I_1)$ and it is $\Tope_\beta$-invariant: $\Tope_\beta f=f$.
Many other choices of the points $x_1,x_2$ would work as well. For $\beta=1$,
the indicated points $x_1,x_2$ coincide, so that $f=0$, but it is enough 
to choose instead $x_1:=2+\sqrt{3}$ and $x_2=-2+\sqrt{3}$ to obtain a 
nontrivial function $f$ in $L^{1,\infty}(I_1)$ which is $\Tope_1$-invariant. 
This illustrates how Theorem \ref{thm-basic1.001} and Corollary 
\ref{eq-ergodic0.00001} would utterly fail to hold if the space $\Lspaceo$ 
were to be replaced by $L^{1,\infty}(I_1)$.
\label{rmk-16.2.1}
\end{rem}

\section{The Hilbert kernel and its dynamical decomposition}
\label{sec-Hilbkernel}

\subsection{Odd and even parts of the Hilbert kernel}
As in subsection \ref{subsec-smoothH}, we write
\[
\Kfun_1(t,x):=\frac{t}{1+tx},
\]
which is  a variant of  the \emph{Hilbert kernel}. Indeed, it arises in
connection with the Hilbert transform, see e.g. \eqref{eq-Hilb-Kb1.01}.
We split the function $\Kfun_1$ according to odd and even parts:
\[
\Kfun_1(t,x)=\Kfun_1^{I}(t,x)-\Kfun_1^{II}(t,x),\qquad
\quad \Kfun_1^{I}(t,x):=\frac{t}{1-x^2t^2},\quad
\Kfun_1^{II}(t,x):=\frac{t^2x}{1-t^2x^2}.
\]
For fixed $t\in I_1=]\!-\!1,1[$, we may calculate the action of the
transfer operator $\Tope_1$ on the function $\Kfun_1(t,\cdot)$ using standard
trigonometric identities:
\begin{multline}
\Tope_1\Kfun_1(t,\cdot)(x)=\sum_{j\in\Z^\times}\frac{1}{(2j+x)^2}\,
\frac{t}{1+t(-\frac{1}{2j+x})}=\sum_{j\in\Z^\times}\bigg\{\frac{1}{2j+x-t}-
\frac{1}{2j+x}\bigg\}
\\
=\frac{\pi}{2}\cot\bigg(\frac{\pi}{2}(x-t)\bigg)
-\frac{\pi}{2}\cot\bigg(\frac{\pi x}{2}\bigg)-\frac{t}{x(x-t)}.
\label{eq-U1k1.1}
\end{multline}

\subsection{The dynamically reduced Hilbert kernel}
Next, let $\kfun_1$ be the associated function
\[
\kfun_1(t,x):=(\id-\Tope_1)\Kfun_1(t,\cdot)(x),
\]
so that by \eqref{eq-U1k1.1},
\begin{equation}
\kfun_1(t,x)=\frac{t}{x(x-t)}+\frac{t}{1+tx}
+\frac{\pi}{2}\cot\bigg(\frac{\pi x}{2}\bigg)
-\frac{\pi}{2}\cot\bigg(\frac{\pi}{2}(x-t)\bigg).
\label{eq-Kt1.1}
\end{equation}
The function $x\mapsto\kfun_1(t,x)$ has removable singularities at $x=0$
and $x=t$, and poles at $x=-2+t$ and $x=2+t$. Therefore, the function
$x\mapsto\kfun_1(t,x)$ has Taylor series at the origin with radius of
convergence equal to $2-|t|$, for $t\in I_1$.
For fixed $t\in I_1$, we know that $\kfun_1(t,\cdot)\in L^1_0(I_1)$, where
\[
L^1_0(I_1):=\big\{f\in L^1(I_1):\,\langle 1,f\rangle_{I_1}=0\big\},
\]
the reason being that
\[
\langle 1,\kfun_1(t,\cdot)\rangle_{I_1}=\langle 1,\Kfun_1(t,\cdot)\rangle_{I_1}-
\langle 1,\Tope_1 \Kfun_1(t,\cdot)\rangle_{I_1}=\langle 1,
\Kfun_1(t,\cdot)\rangle_{I_1}-
\langle \Lop_11,\Kfun_1(t,\cdot)\rangle_{I_1}=0,
\]
since $\Lop_11=1$.
We will  refer to $\kfun_1(t,x)$ as the
\emph{dynamically reduced Hilbert kernel}.

For the endpoint parameter parameter value $t=1$, the expression for the
kernel $\kfun_1$ is
\begin{equation}
\kfun_1(1,x)=-\frac{1}{x}-\frac{2x}{1-x^2}
+\frac{\pi}{2}\cot\bigg(\frac{\pi x}{2}\bigg)
+\frac{\pi}{2}\tan\bigg(\frac{\pi}{2}x\bigg).
\label{eq-Kt1.2}
\end{equation}
The function $x\mapsto \kfun_1(1,x)$ has removable singularities at
$x=0$, $x=1$, and $x=-1$, and the radius of convergence for its Taylor series
at the origin equals $2$. So, in
particular, the function $x\mapsto \kfun_1(1,x)$ extends to a smooth function
on the closed interval $\bar I_1=[-1,1]$.

\subsection{Odd and even parts of the dynamically reduced
Hilbert kernel}

We split the dynamically reduced Hilbert kernel $\kfun_1(t,x)$ according
to odd and even parts with respect to $x$:
\[
\kfun_1(t,x)=\kfun_1^{I}(t,x)-\kfun_1^{II}(t,x),
\]
where
\begin{equation}
\kfun_1^{I}(t,x):=\frac{1}{2}\big(\kfun_1(t,x)+\kfun_1(t,-x)\big),\quad
\kfun_1^{II}(t,x):=\frac{1}{2}\big(\kfun_1(t,-x)-\kfun_1(t,x)\big).
\label{eq-Kt1.3}
\end{equation}
Obviously, $\Kfun_1^I(t,x)$ and $\kfun_1^I(t,x)$ are even functions of $x$,
while $\Kfun_1^{II}(t,x)$ and $\kfun_1^{II}(t,x)$ are odd.
By Proposition \ref{lem-symmetry1},
the operator $\Tope_1$ preserves even and odd symmetry, and it follows that
\begin{equation}
\kfun_1^{I}(t,x)=(\id-\Tope_1)\Kfun_1^{I}(t,\cdot)(x),\quad
\kfun_1^{II}(t,x)=(\id-\Tope_1)\Kfun_1^{II}(t,\cdot)(x).
\label{eq-Kt1.4}
\end{equation}
By inspection, the function $\kfun_1(1,\cdot)$ is odd, so that
$\kfun_1^I(1,\cdot)=0$ and $\kfun_1^{II}(1,\cdot)=-\kfun_1(1,\cdot)$. This
of course corresponds to the observation that $\Kfun^I_1(1,x)=(1-x^2)^{-1}$
is the density of the invariant measure.
Based on \eqref{eq-Kt1.4}, the standard Neumann series cancellation shows the
following:
for fixed $t\in I_1$, we have the decompositions
\begin{equation}
\begin{cases}
\displaystyle\sum_{j=0}^{n-1}\Tope_1^j\kfun_1^{I}(t,\cdot)(x)=\Kfun_1^{I}(t,x)-
\Tope_1^n\Kfun_1^{I}(t,\cdot)(x)
\\
\displaystyle\sum_{j=0}^{n-1}\Tope_1^j\kfun_1^{II}(t,\cdot)(x)=\Kfun_1^{II}(t,x)-
\Tope_1^n\Kfun_1^{II}(t,\cdot)(x).
\end{cases}
\label{eqlem-Kt1.5}
\end{equation}
Note that on the right-hand side of \eqref{eqlem-Kt1.5}, the
first term will tend to dominate as $n\to+\infty$, by Proposition
\ref{prop-ergodicity1.01}(vi). We proceed to analyze the odd part.

\begin{lem}
{\rm(Dynamic decomposition lemma)}
For fixed $t\in I_1$, we have the decomposition
\[
\Kfun_1^{II}(t,x)=\sum_{j=0}^{+\infty}\Tope_1^j\kfun_1^{II}(t,\cdot)(x),
\qquad x\in I_1,
\]
with uniform convergence on compact subsets of $I_1$, as well as norm
convergence in $L^1(I_1)$.
\label{lem-decomp12.1.1}
\end{lem}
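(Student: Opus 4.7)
The plan is to leverage the telescoping identity \eqref{eqlem-Kt1.5} for the odd part,
\[
\sum_{j=0}^{n-1}\Tope_1^j\kfun_1^{II}(t,\cdot)(x)=\Kfun_1^{II}(t,x)-\Tope_1^n\Kfun_1^{II}(t,\cdot)(x),
\]
which reduces the entire claim to establishing that $\Tope_1^n\Kfun_1^{II}(t,\cdot)\to 0$ as $n\to+\infty$, both in $L^1(I_1)$ and uniformly on compact subintervals of $I_1$. Two structural features of $\Kfun_1^{II}(t,\cdot)$ will drive the argument: first, it is odd in $x$, so $\langle 1,\Kfun_1^{II}(t,\cdot)\rangle_{I_1}=0$, placing it in $L^1_0(I_1)$; second, for $|t|<1$ its poles $x=\pm 1/t$ lie outside $\bar I_1$, so it is smooth on $\bar I_1$ and a direct calculation yields the elementary majorization $|\Kfun_1^{II}(t,x)|\le \kappa_1(x)$ for $x\in I_1$.

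The $L^1(I_1)$-decay follows immediately from Proposition \ref{prop-ergodicity1.01}(v) applied to the mean-zero function $\Kfun_1^{II}(t,\cdot)$, giving $\|\Tope_1^n\Kfun_1^{II}(t,\cdot)\|_{L^1(I_1)}\to 0$, and hence $L^1$-norm convergence of the series via telescoping. For uniform convergence on compact subsets, the pointwise majorization, the positivity of $\Tope_1$, and Lemma \ref{prop-kappa1} (specifically $\Tope_1\kappa_1=\kappa_1$ for $\beta=1$) combine to yield the uniform bound
\[
|\Tope_1^n\Kfun_1^{II}(t,\cdot)(x)|\le\kappa_1(x),\qquad x\in I_1,\ n\ge 0,
\]
which in particular is bounded on each $\bar I_\eta$ with $0<\eta<1$. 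Proposition \ref{prop-ergodicity1.01}(vi) supplies the complementary local $L^1$-decay $\|1_{I_\eta}\Tope_1^n\Kfun_1^{II}(t,\cdot)\|_{L^1(I_1)}\to 0$.

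The main obstacle is the final upgrade from $L^1$-decay on $\bar I_\eta$ to uniform decay on $\bar I_\eta$, which I would handle in the same spirit as the closing step in the proof of Proposition \ref{prop-5.6.5'}: namely, the smoothness of $\Kfun_1^{II}(t,\cdot)$ on $\bar I_1$, together with the contractive action of the M\"obius substitutions $x\mapsto -1/(2j+x)$ on Lipschitz-type seminorms on compact subsets of $I_1$, delivers equicontinuity of the family $\{\Tope_1^n\Kfun_1^{II}(t,\cdot)\}_{n\ge 0}$ on each $\bar I_\eta$. Arzel\`a--Ascoli, combined with the $L^1$-decay on $\bar I_\eta$, then forces $\Tope_1^n\Kfun_1^{II}(t,\cdot)\to 0$ uniformly on $\bar I_\eta$. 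A subtle point worth flagging is that, in contrast to the $L^1$-decay, the iterates do not tend to zero uniformly on the full interval $\bar I_1$: Proposition \ref{prop-3.8.2} yields $\Tope_1^n\Kfun_1^{II}(t,\cdot)(1)=\Kfun_1^{II}(t,1)=t^2/(1-t^2)$ for every $n$, so exclusion of the endpoints is essential.
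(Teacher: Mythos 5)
Your proposal is correct and follows essentially the same route as the paper: the telescoping identity \eqref{eqlem-Kt1.5} plus Proposition \ref{prop-ergodicity1.01}(v) applied to the mean-zero (odd) function $\Kfun_1^{II}(t,\cdot)$ for the $L^1(I_1)$ convergence, and then domination by the invariant density $\kappa_1$ together with a normal-families/equicontinuity argument to upgrade to uniform convergence on compact subsets (the paper leaves exactly these details to the reader). Your closing observation that the endpoint values $\Tope_1^n\Kfun_1^{II}(t,\cdot)(1)=t^2/(1-t^2)$ persist, so that convergence cannot be uniform up to $\pm1$, is a correct and worthwhile addition.
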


\begin{proof}
For fixed $t\in I_1$, we have that $\Kfun_1^{II}(1,\cdot)\in L^1_0(I_1)$, so an
application of Proposition \ref{prop-ergodicity1.01}(v) shows that
$\Tope_1^n\Kfun_1^{II}(t,\cdot)\to0$ in norm in $L^1(I_1)$ as $n\to+\infty$.
Combined with \eqref{eqlem-Kt1.5}, this shows that the Neumann series
converges in the $L^1(I_1)$ norm, as required.
Next, the uniform convergence on compact subset follows from the $L^1(I_1)$
convergence, combined with a comparison with the invariant measure
density and a normal families argument. We leave the necessary details to
the reader.
\end{proof}

\begin{figure}[h]
\includegraphics
[trim = 0cm 15.5cm 0cm 2cm, clip=true,
totalheight=0.32\textheight]{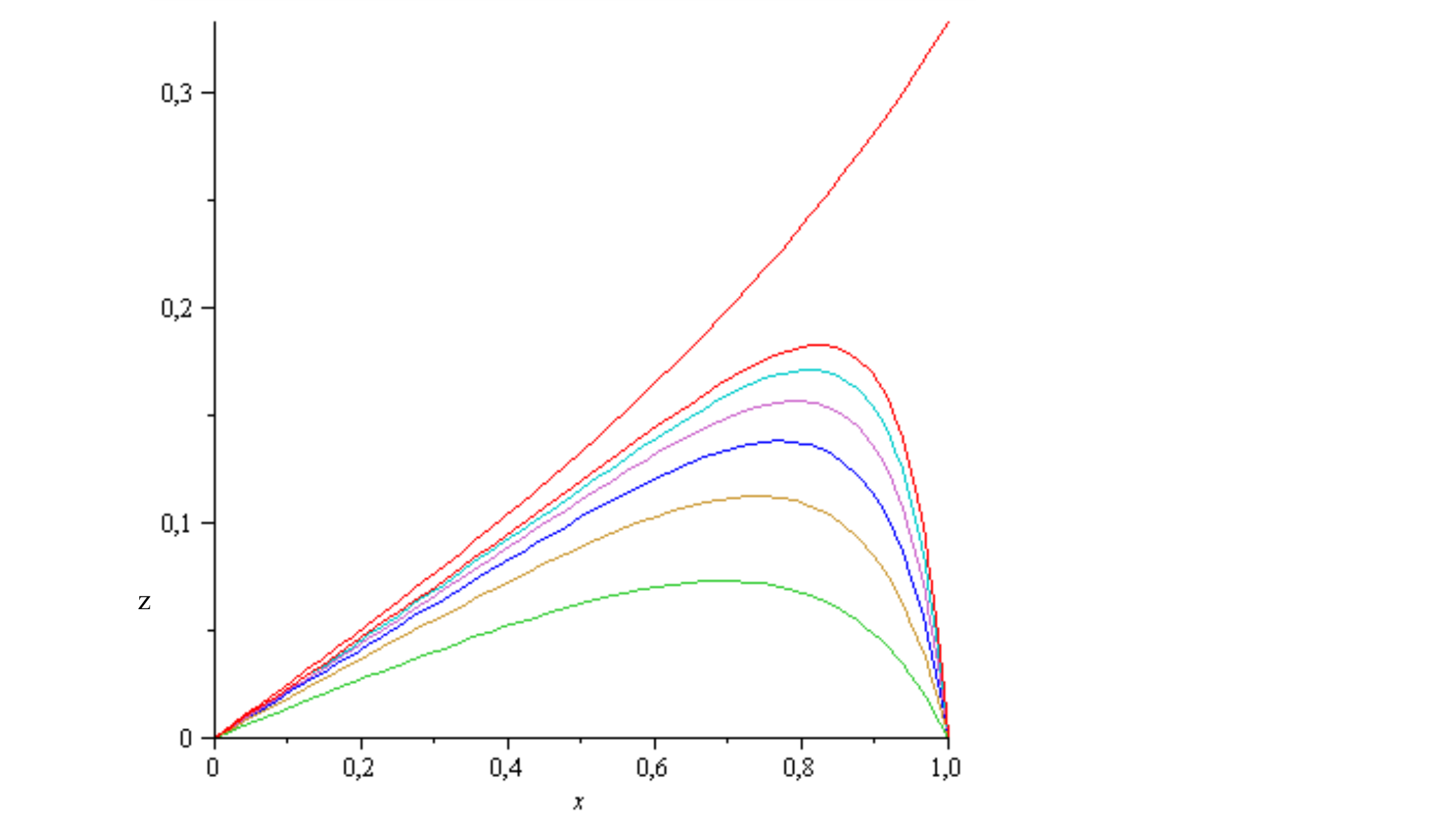}
\caption{Illustration of the dynamic decomposition lemma (Lemma
\ref{lem-decomp12.1.1}). The top curve is
$\Kfun_1^{II}(t,\cdot)$, while the curves below are the partial sums
$\sum_{j=0}^{N}\Tope_1^j\kfun_1^{II}(t,\cdot)$, with $N=0,1,2,3,4,5$. We use the
parameter value $t=0.5$. }
\end{figure}

\subsection{The fundamental estimate of the odd part of the
dynamically reduced Hilbert kernel}
We will \emph{focus our attention to the odd part}, which  involves
$\Kfun_1^{II}$ and $\kfun_1^{II}$. Note that in view of the previous subsection,
especially the formula \eqref{eq-U1k1.1}, the function $\kfun_1^{II}$ may be
expanded in the series
\begin{equation}
\kfun_1^{II}(t,x)=\frac{t^2x}{1-t^2x^2}+\sum_{j=1}^{+\infty}\bigg\{
\frac{2x}{4j^2-x^2}-\frac{x}{(2j-t)^2-x^2}-\frac{x}{(2j+t)^2-x^2}\bigg\}.
\label{eq-expkfun1.1}
\end{equation}

We need effective control from above and below of the summands in Lemma
\ref{lem-decomp12.1.1}.

Lemma \ref{lem-decomp12.1.1} is basic to our analysis. We need effective
control of the summands from above and below. Our result reads as follows.

\begin{thm}
{\rm (Uniform control of summands)}
For fixed $t\in I_1$ and $j=0,1,2,3,\ldots$, we have the following estimate:
\[
0<\Tope_1^j\kfun_1^{II}(t,\cdot)(x)<\Tope_1^j\kfun_1^{II}(1,\cdot)(x),
\qquad x\in I_1^+.
\]
\label{th-12.1.2}
\end{thm}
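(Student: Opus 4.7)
The plan is to prove Theorem~\ref{th-12.1.2} by induction on $j$ with a strengthened hypothesis that is closed under $\Tope_1$. Since the series \eqref{eq-expkfun1.1} depends on $t$ only through $t^2$, the function $\kfun_1^{II}$ is even in $t$, so I may restrict to $0<t<1$ (the endpoint $t=1$ enters only as the majorant on the right, and $t=0$ is excluded by the requirement of strict positivity on the left). My induction hypothesis at level $j$ is that \emph{both $\Tope_1^j\kfun_1^{II}(t,\cdot)$ and $\Tope_1^j[\kfun_1^{II}(1,\cdot)-\kfun_1^{II}(t,\cdot)]$ are odd and strictly increasing on $I_1$.} An odd, strictly increasing function on $I_1$ must vanish at $0$ and be strictly positive on $I_1^+$, so the hypothesis at level $j$ immediately yields $0<\Tope_1^j\kfun_1^{II}(t,\cdot)<\Tope_1^j\kfun_1^{II}(1,\cdot)$ on $I_1^+$. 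The inductive step is exactly Proposition~\ref{lem-symmetry2}(i) (odd-plus-strictly-increasing is preserved by $\Tope_1$) together with linearity of $\Tope_1$.

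Everything therefore reduces to the base case $j=0$. Oddness of the two functions is automatic, and the content is strict monotonicity in $x$ on $I_1$. For this I would pass through the integral representation
\[
\Kfun_1^{II}(t,x)=\frac{t^2 x}{1-t^2 x^2}=\int_0^{t^2}h_s(x)\,\ds,\qquad h_s(x):=\frac{x}{(1-sx^2)^2},
\]
which is simply the antiderivative identity $\partial_s\Kfun_1^{II}(\sqrt{s},x)=h_s(x)$. Commuting $\id-\Tope_1$ with the $s$-integral then gives
\[
\kfun_1^{II}(t,x)=\int_0^{t^2}(h_s-\Tope_1 h_s)(x)\,\ds,\qquad \kfun_1^{II}(1,x)-\kfun_1^{II}(t,x)=\int_{t^2}^{1}(h_s-\Tope_1 h_s)(x)\,\ds,
\]
so the base case is reduced to showing that for each fixed $s\in(0,1]$, the function $x\mapsto(h_s-\Tope_1 h_s)(x)$ is odd (trivial) and strictly increasing on $I_1$.

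The main obstacle is precisely this last claim. A direct calculation from \eqref{eq-Tope.1} yields
\[
\Tope_1 h_s(x)=-\sum_{j\in\Z^\times}\frac{2j+x}{((2j+x)^2-s)^2},
\]
and both $h_s$ and $\Tope_1 h_s$ are separately odd and strictly increasing on $I_1$, but the gap between their $x$-derivatives shrinks to near zero at $x=0$ as $s\uparrow 1$, so the strict positivity of $\partial_x(h_s-\Tope_1 h_s)$ must come from a delicate cancellation between two large strictly-increasing quantities. The plan is to decompose each summand of $\Tope_1 h_s$ by partial fractions in $(2j+x)^2-s$ and then use cotangent-sum identities (equivalently, Hurwitz zeta identities) to rewrite $\partial_x(h_s-\Tope_1 h_s)(x)$ as a sum of manifestly positive terms. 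This is where the totally-positive-matrix machinery foreshadowed in the abstract is expected to be the organising principle, providing the sign control needed to overpower the cancellation. Once $h_s-\Tope_1 h_s$ is established to be strictly increasing on $I_1$ uniformly in $s\in(0,1]$, the base case follows and the induction closes, delivering the theorem for all $j\ge0$ and all $x\in I_1^+$.
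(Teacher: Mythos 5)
The upper-bound half of your plan coincides with the paper's: the paper proves (Lemma \ref{lem-12.1.3}) that $x\mapsto\kfun_1^{II}(1,x)-\kfun_1^{II}(t,x)$ is odd and strictly increasing, and then Proposition \ref{lem-symmetry2}(i) propagates oddness-plus-monotonicity through $\Tope_1^j$, exactly as you propose. (The paper establishes the monotonicity of this difference by differentiating in $x$ and representing the $t$-dependence as an integral of $\partial_t\partial_x\Kfun_1^{II}$, rather than via your $s$-integral of $h_s-\Tope_1h_s$, but for this half that is a presentational difference.)

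The lower-bound half fails at the base case, and the failure is not repairable within your framework. Your induction hypothesis requires $\kfun_1^{II}(t,\cdot)$ itself to be strictly increasing on $I_1$, and your reduction asserts that $h_s-\Tope_1h_s$ is strictly increasing for every $s\in(0,1]$. Both statements are false. By Proposition \ref{prop-3.8.2} one has $\kfun_1^{II}(t,1)=0$ for each fixed $t\in I_1$, and $\kfun_1^{II}(t,0)=0$ by oddness; since the function is strictly positive on $]0,1[$ (this is what the theorem asserts at $j=0$), it must rise and then fall, so it cannot be monotone on $I_1^+$. The paper states this explicitly in the Remark following Lemma \ref{lem-12.1.3}: $\kfun_1^{II}(t,\cdot)$ is increasing only on some $I_{\eta(t)}$ with $\eta(t)<1$ and decreasing on the rest. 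Since $\kfun_1^{II}(t,\cdot)=\int_0^{t^2}(h_s-\Tope_1h_s)\,\diff s$ fails to be increasing for every $t\in\,]0,1[$, the integrand cannot be increasing for a.e.\ $s$, so no identity-manipulation or total-positivity bookkeeping can establish your key claim — the claim itself is wrong. (Compare also the Remark after Proposition \ref{prop-est.above.iterk1.1}: positivity of $(\id-\Tope_1)f$ and its $\Tope_1$-iterates cannot be read off from $f$ being odd, increasing and positive; $f(x)=x^3$ is a counterexample.)

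The paper's actual route to the lower bound is of a different nature: it works with the sign pattern of the Taylor coefficients in $x$. Via monotonicity of a normalized Hurwitz zeta expression (Lemma \ref{lem-Hurwitz}, Proposition \ref{lem-decreas.coeff1.1}) it shows that the odd Taylor coefficients of $\kfun_1^{II}(t,\cdot)$ change sign at most once, from nonnegative to nonpositive (membership in $\PowerseriesclassRdown$ with $\gamma=2-t$); it then shows this sign pattern is preserved by $\Tope_1$ using the Variation Diminishing Theorem for the strictly totally positive Hankel matrix $\mathbf{B}$ of poly-Gamma moments (Proposition \ref{prop-tot.pos.1.1}, Corollary \ref{cor-classSpres1.1}); finally, Lemma \ref{lem-zero.exist1.1} says such a power series has at most one zero in $]0,\gamma_j(t)[$, positive to its left and negative to its right. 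Iterating Proposition \ref{prop-3.8.2} gives $\Tope_1^j\kfun_1^{II}(t,\cdot)(1)=0$, so that forced zero sits at $x=1$ and strict positivity on $]0,1[$ follows. So total positivity is indeed the organising principle, but it acts on the coefficient sequence in the $x$-variable, not on an $s$-decomposition of the kernel, and it controls zeros rather than monotonicity.
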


We postpone the presentation of the proof until Subsections
\ref{subsec-fundestaboveDRH1.1} (see Proposition \ref{prop-est.above.iterk1.1})
and \ref{subsec-fundestbelowDRH1.1} (see Corollary \ref{cor-classSpres1.1};
later, the concluding steps of the proof are presented).

Of course, by the odd symmetry with respect to $x$, there is a corresponding 
estimate which holds on the left-side interval $I_1^-:=]\!-\!1,0[$ as well.

\subsection{Estimation from above of the odd part of the
dynamically reduced Hilbert kernel}
\label{subsec-fundestaboveDRH1.1}
The estimate from above in Theorem \ref{th-12.1.2} will be obtained as
a consequence of the following property.

\begin{lem}
For fixed $t\in I_1$, the function
$x\mapsto \kfun_1^{II}(1,x)-\kfun_1^{II}(t,x)$ is odd and strictly
increasing on $I_1$.
\label{lem-12.1.3}
\end{lem}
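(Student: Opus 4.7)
The plan is as follows. The oddness of $H_t(x) := \kfun_1^{II}(1,x)-\kfun_1^{II}(t,x)$ is immediate: by construction $\kfun_1^{II}(s,\cdot)$ is odd for every $s$, so $H_t$ is a difference of odd functions and hence odd.

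For the strict monotonicity, I would use the series representation \eqref{eq-expkfun1.1}. Setting $\phi(s,x):=x/(s^2-x^2)$, one obtains
\[
H_t(x) = \bigl[\phi(1,x)-\phi(1/t,x)\bigr] - \sum_{j\geq 1}\bigl[\phi(2j-1,x)+\phi(2j+1,x)-\phi(2j-t,x)-\phi(2j+t,x)\bigr].
\]
A partial-fraction calculation in $s^2$ gives, for $0\le x_1<x_2<1$,
\[
\Delta\phi(s) := \phi(s,x_2)-\phi(s,x_1) = \frac{(x_2-x_1)(s^2+x_1 x_2)}{(s^2-x_1^2)(s^2-x_2^2)},
\]
which I would then verify to be strictly positive, strictly decreasing, and strictly convex in $s$ on $(x_2,+\infty)$. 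Consequently, in
\[
H_t(x_2)-H_t(x_1) = [\Delta\phi(1)-\Delta\phi(1/t)] - \sum_{j\geq 1}\bigl[\Delta\phi(2j-1)+\Delta\phi(2j+1)-\Delta\phi(2j-t)-\Delta\phi(2j+t)\bigr],
\]
the leading bracket is positive (strict decrease applied to $1<1/t$) and every summand in the tail is also positive (strict convexity, since the pair $(2j-1,2j+1)$ has the same midpoint $2j$ as $(2j-t,2j+t)$ but larger spread).

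The main obstacle is that both contributions above are positive, so strict monotonicity is not a formal consequence of these qualitative properties alone; I must exhibit a quantitative dominance of the leading term over the tail. To achieve this I would invoke the Mittag--Leffler identity
\[
\sum_{j\in\Z}\frac{1}{(x+2j)^2-s^2} = \frac{\pi\sin(\pi s)}{2s[\cos(\pi s)-\cos(\pi x)]},
\]
differentiate it in $x$ and in $s$ as needed, and so collapse the tail into closed form, rewriting $H_t(x_2)-H_t(x_1)$ as a single rational-trigonometric expression whose positivity for $t\in I_1$ and $0\le x_1<x_2<1$ can be checked by elementary bounds. A parallel route, useful as a cross-check, is to write $H_t(x)=\int_t^1 \partial_s\kfun_1^{II}(s,x)\,ds$ and show that the integrand is strictly increasing in $x$ for each $s\in(t,1)$; this is the natural framework because $\partial_s\kfun_1^{II}(s,x)$ vanishes identically at $s=1$, as a consequence of the vanishing of $\sum_{j\in\Z}[(x+2j)^2-1]^{-1}$ (via $\sin(\pi\cdot 1)=0$ in the Mittag--Leffler formula). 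Either way the decisive input is the strict inequality $t<1$, and pinning down that quantitative margin is the heart of the proof.
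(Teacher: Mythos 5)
Your setup is sound: the oddness is indeed immediate, the series representation matches \eqref{eq-expkfun1.1}, and your claims that $\Delta\phi(s)$ is positive, strictly decreasing, and strictly convex on $]x_2,+\infty[$ are all correct. You have also correctly diagnosed the crux, namely that $H_t(x_2)-H_t(x_1)$ comes out as a positive leading term \emph{minus} a sum of positive tail terms, so that these qualitative facts cannot close the argument. But that is precisely where your proposal stops being a proof. The two exits you offer are both unexecuted: collapsing the tail via Mittag--Leffler just returns you to the cotangent expression \eqref{eq-Kt1.1}, and establishing the sign of the resulting difference of such expressions ``by elementary bounds'' is the entire difficulty, not a routine check; and the alternative of showing that $x\mapsto\partial_s\kfun_1^{II}(s,x)$ is increasing for each fixed $s\in\,]t,1[$ is a strictly stronger, pointwise-in-$s$ statement for which you give no argument and which has exactly the same positive-minus-positive structure one derivative down.

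The idea you are missing is a \emph{reindexed pairing} of the positive and negative contributions. The paper differentiates in $x$, writes every term as $F(\cdot,x)=\partial_x\Kfun_1^{II}(\cdot,x)$ evaluated at the points $r_j(\pm t)$, $r_j(\pm1)$ with $r_j(t)=1/(2j-t)$, and converts each difference of values of $F$ into an integral of $G(\tau,x)=\partial_\tau F(\tau,x)$, which is positive and \emph{increasing} in $\tau$ (see \eqref{eq-12.2.5.2}). The decisive step is then not to compare the $j$-th positive contribution with the $j$-th negative one (your convexity pairing, which produces the wrong sign), but with the $(j+1)$-st: since $r_{j+1}(1)=r_j(-1)$, the positive integral over $[r_j(-1),r_j(-t)]$ is matched against the negative integral over the adjacent interval $[r_{j+1}(t),r_j(-1)]$ lying immediately to its left; the positive interval sits where $G$ is larger \emph{and} is longer, by the explicit inequality
\[
r_j(-t)-2r_j(-1)+r_{j+1}(t)=\frac{2(1-t)^2}{(2j+t)(2j+2-t)(2j+1)}>0,
\]
so each such pair is net positive, and the leftover unmatched piece is $\int_t^{1/(2-t)}G(\tau,x)\,\diff\tau>0$. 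Some device of this kind --- a genuinely quantitative domination, not just positivity, monotonicity, and convexity of $\Delta\phi$ --- is indispensable, and your outline does not supply it.
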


\begin{rem}
In view of Proposition \ref{prop-3.8.2},
$\kfun_1^{II}(t,1)=\kfun_1^{II}(t,-1)=0$ holds for each fixed $t\in I_1$.
The function $x\mapsto \kfun_1^{II}(t,x)$ is odd, and it will be shown later
that it is increasing on some interval $I_\eta$ with $0<\eta<1$, and decreasing
on the remainder set $I_1\setminus I_\eta$ (where the parameter $\eta=\eta(t)$
depends on $t$).
However, things are a little different for $t=1$. In particular, the endpoint
value at $x=\pm1$ is different, as $\kfun_1^{II}(1,1)=\frac12$ and
$\kfun_1^{II}(1,-1)=-\frac12$.
\end{rem}

\begin{proof}[Proof of Lemma \ref{lem-12.1.3}]
It is obvious that the function $x\mapsto \kfun_1^{II}(1,x)-\kfun_1^{II}(t,x)$
is odd. In view of the identity \eqref{eq-expkfun1.1},
\begin{multline*}
\kfun_1(1,x)-\kfun_1(t,x)=(\id-\Tope_1)[\Kfun_1(1,\cdot)-\Kfun_1(t,\cdot)](x)=
\frac{1}{1+x}-\frac{t}{1+tx}
+\sum_{j\in\Z^\times}\bigg\{\frac{1}{2j+x-t}-\frac{1}{2j+x-1}\bigg\},
\end{multline*}
and by forming the odd part with respect to the variable $x$, we obtain that
\begin{multline*}
\kfun_1^{II}(1,x)-\kfun_1^{II}(t,x)=(\id-\Tope_1)
[\Kfun_1^{II}(1,\cdot)-\Kfun_1^{II}(t,\cdot)](x)=
\frac{x}{1-x^2}-\frac{t^2x}{1-t^2x^2}
\\
+\frac12\sum_{j\in\Z^\times}\bigg\{\frac{1}{2j-x-t}-\frac{1}{2j-x-1}-
\frac{1}{2j+x-t}+\frac{1}{2j+x-1}\bigg\}
\\
=\frac{x}{1-x^2}-\frac{t^2x}{1-t^2x^2}+
\sum_{j=1}^{+\infty}\bigg\{\frac{x}{(2j-t)^2-x^2}+\frac{x}{(2j+t)^2-x^2}-
\frac{x}{(2j-1)^2-x^2}-\frac{x}{(2j+1)^2-x^2}\bigg\}.
\end{multline*}
In terms of the function
\[
F(t,x):=\partial_x \Kfun_1^{II}(t,x)=t^2\frac{1+t^2x^2}{(1-t^2x^2)^2},
\]
we calculate that
\begin{multline}
\partial_x\big(\kfun_1^{II}(1,x)-\kfun_1^{II}(t,x)\big)
=F(1,x)-F(t,x)
\\
+\sum_{j=1}^{+\infty}\bigg(
F(r_j(t),x)+F(r_j(-t),x)-F(r_j(1),x)-F(r_j(-1),x)\bigg),
\label{eq-12.2.5.1}
\end{multline}
where use the notation $r_j(t):=1/(2j-t)$ (then $t\mapsto r_j(t)$ is a
positive and increasing function for $j=1,2,3,\ldots$). Since the right-hand
side of \eqref{eq-12.2.5.1}
expresses \emph{an even function of $t$, we may restict our attention to
$t\in I_1^+$}. The derivative with respect to $t$ of the function $F(t,x)$ is
\[
G(t,x):=\partial_t F(t,x)=\partial_t\partial_x \Kfun_1^{II}(t,x)
=2t\frac{1+3t^2x^2}{(1-t^2x^2)^4},
\]
and by representing differences as definite integrals of the derivative, we
obtain that
\begin{multline}
\partial_x\big(\kfun_1^{II}(1,x)-\kfun_1^{II}(t,x)\big)
=\int_t^1 G(\tau,x)\diff \tau
+\sum_{j=1}^{+\infty}\Bigg(\int_{r_j(-1)}^{r_j(-t)}G(\tau,x)\diff\tau
-\int_{r_j(t)}^{r_j(1)}G(\tau,x)\diff\tau\Bigg).
\label{eq-12.2.5.2}
\end{multline}
Here, we used the trivial observation that $r_{j+1}(1)=r_j(-1)$.
Moreover, as the function $t\mapsto G(t,x)$ is monotonically strictly
increasing, we have that
\[
\big(r_j(-t)-r_j(-1)\big)\,G(r_j(-1),x)
<\int_{r_j(-1)}^{r_j(-t)}G(\tau,x)\diff\tau,\quad
\int_{r_{j+1}(t)}^{r_j(-1)}G(\tau,x)\diff\tau<
\big(r_j(-1)-r_{j+1}(t)\big)\,G(r_j(-1),x),
\]
and since a trivial calculation shows that
\[
r_j(-t)-2r_j(-1)+r_{j+1}(t)=\frac{2(t-1)^2}{(2j+t)(2j+2-t)(2j+1)}>0
\]
for $j=1,2,3,\ldots$ and $t\in I_1^+$,
we obtain that
\[
\int_{r_j(-1)}^{r_j(-t)}G(\tau,x)\diff\tau
-\int_{r_{j+1}(t)}^{r_j(-1)}G(\tau,x)\diff\tau>0.
\]
Then, by \eqref{eq-12.2.5.2}, and the observation that
\[
\int_t^{1/(2-t)}G(\tau,x)\diff \tau>0,\qquad t\in I_1^+,\,\,x\in I_1,
\]
it follows that the function $x\mapsto \kfun_1^{II}(1,x)-\kfun_1^{II}(t,x)$ is
strictly increasing, as claimed.
\end{proof}

We may now derive the upper bound in Theorem \ref{th-12.1.2}.

\begin{prop}
For $j=0,1,2,\ldots$ and for fixed $t\in I_1$, the function
$\Tope_1^j[\kfun_1^{II}(1,\cdot)-\kfun_1^{II}(t,\cdot)]$ is odd and increasing. 
In particular, we have that for $j=0,1,2,\ldots$ and $t\in I_1$,
\[
\Tope_1^j\kfun_1^{II}(t,\cdot)(x)<\Tope_1^j\kfun_1^{II}(1,\cdot)(x),\qquad
x\in I_1^+.
\]
\label{prop-est.above.iterk1.1}
\end{prop}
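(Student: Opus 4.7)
The plan is to combine Lemma \ref{lem-12.1.3} with the symmetry- and monotonicity-preservation properties of the subtransfer operator, and then read off the positivity from oddness plus strict increase. Write
\[
h_t(x):=\kfun_1^{II}(1,x)-\kfun_1^{II}(t,x),\qquad t\in I_1,\,\,x\in I_1.
\]
By the earlier analysis of $\kfun_1(t,\cdot)$ and $\kfun_1(1,\cdot)$, both kernels extend smoothly to $\bar I_1$ (the poles at $x=\pm 2+t$ lie outside), so $h_t$ is bounded and smooth on $\bar I_1$; in particular every iterate $\Tope_1^j h_t$ is unambiguously defined by the pointwise formula \eqref{eq-Tope.1}, in the sense of the Observation of Subsection \ref{subsec-3.5}.

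For $j=0$, the statement that $h_t$ is odd and strictly increasing on $I_1$ is exactly Lemma \ref{lem-12.1.3}. For the inductive step, I would invoke Propositions \ref{lem-symmetry1} and \ref{lem-symmetry2}(i): the subtransfer operator $\Tope_1$ preserves oddness and maps odd strictly increasing functions on $I_1$ to odd strictly increasing functions on $I_1$. Hence, by a trivial induction on $j$, the function $\Tope_1^j h_t$ is odd and strictly increasing on $I_1$ for every $j=0,1,2,\ldots$. This proves the first assertion of the proposition.

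For the second assertion, observe that any odd function on the symmetric interval $I_1$ vanishes at the origin. Combining this with strict monotonicity,
\[
\Tope_1^j h_t(x)>\Tope_1^j h_t(0)=0,\qquad x\in I_1^+,
\]
which is precisely the inequality $\Tope_1^j\kfun_1^{II}(t,\cdot)(x)<\Tope_1^j\kfun_1^{II}(1,\cdot)(x)$ claimed in the proposition. There is no genuine obstacle here: the only conceivable worry is whether $\Tope_1^j$ interacts with the difference $h_t$ as expected (for instance, whether one needs to justify splitting $\Tope_1^j(\kfun_1^{II}(1,\cdot)-\kfun_1^{II}(t,\cdot))$ termwise), but this is immediate from linearity of $\Tope_1$ on bounded smooth functions on $\bar I_1$, so the whole argument reduces to applying Lemma \ref{lem-12.1.3} once and then iterating Proposition \ref{lem-symmetry2}(i).
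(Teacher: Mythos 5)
Your argument is correct and is essentially identical to the paper's proof, which likewise combines Lemma \ref{lem-12.1.3} (the case $j=0$) with the preservation of odd strictly increasing functions under $\Tope_1$ from Proposition \ref{lem-symmetry2}(i), and then reads off positivity on $I_1^+$ from oddness at the origin. The extra remarks on smoothness of the kernels on $\bar I_1$ and on termwise application of $\Tope_1^j$ are harmless and only make explicit what the paper leaves implicit.
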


\begin{proof}
This follows from a combination of Lemma \ref{lem-12.1.3} and Proposition
\ref{lem-symmetry2}(i).
\end{proof}

\begin{rem} In general, the positivity of all the powers 
$\kfun_1^{II}(t,\cdot)$ on $I_1^+=]0,1[$ cannot be deduced from the simple 
observation that $\Kfun_1^{II}(t,\cdot)$, $0<t\leq1$, is odd, increasing, and 
positive on $I_1^+$. For instance, the function $f(x)=x^3$ is odd, increasing, 
and positive on $I_1^+$. However, it can be seen that the function 
$(\id-\Tope_1)f=f-\Tope_1 f$ changes signs on $I_1^+$.
\end{rem}

\section{Power series, Hurwitz zeta function, and total positivity}
\label{sec-Hurwitz}

\subsection{A class of power series with at most one positive zero}

The lower bound in Theorem \ref{th-12.1.2} requires a more sophisticated
analysis. To this end, we introduce a class of Taylor series.

Let $\Powerseriesclass$ denote the class of convergent Taylor series
\[
f(x)=\sum_{j=0}^{+\infty}\hat f(j)x^j,\qquad x\in
I_\gamma=]\!-\!\gamma,\gamma[;
\]
in short, we write $f\in\Powerseriesclass$. Moreover, we write
$f\in\PowerseriesclassR$ to express that the Taylor coefficients are real,
that is, $\hat f(j)\in\R$ holds for all $j=0,1,2,\ldots$.

\begin{defn}
Fix $0<\gamma<+\infty$.
If   $f\in\PowerseriesclassR$, we write
$f\in\PowerseriesclassRdown$ to express that either

(a) $\hat f(j)\ge0$ for all $j=0,1,2,\ldots$, or

(b) $\hat f(j)\le0$ for all $j=0,1,2,\ldots$, or

(c) there exists an index $j_0=j_0(f)\in \Z_{+,0}$ such that $\hat f(j)\ge0$
for $j\le j_0$ while $\hat f(j)\le0$ for $j>j_0$.
\label{def-6.1.1}
\end{defn}

Under (c), assuming we have excluded the cases (a) and (b), we let $j_0(f)$ be
the \emph{maximal} index with the property (among all the possibilities).
Then $j_0(f)\in\Z_{+,0}$.

\begin{lem}
Suppose $f\in\PowerseriesclassRdown$, where $\gamma>0$.
Then, unless $f$ vanishes identically, there exists at most one zero of
$f$ on the interval $]0,\gamma[$. If such a point $x_0\in ]0,\gamma[$ with
$f(x_0)=0$ exists, then $f(x)>0$ holds for $0<x<x_0$, while $f(x)<0$ for
$x_0<x<\gamma$.
\label{lem-zero.exist1.1}
\end{lem}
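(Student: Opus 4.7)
The plan is to exploit the single sign change of the Taylor coefficients by pulling out the pivot power $x^{j_0}$. First I would dispose of the monotone-sign cases (a) and (b) of Definition \ref{def-6.1.1}: when all $\hat f(j)\ge 0$ (resp.\ $\le 0$) and $f\not\equiv 0$, at least one coefficient is strictly positive (resp.\ negative), and that single term alone gives $f(x)>0$ (resp.\ $f(x)<0$) throughout $]0,\gamma[$, so no zero can exist there.

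For case (c), with $\hat f(j)\ge 0$ for $j\le j_0$ and $\hat f(j)\le 0$ for $j>j_0$, the key step is to introduce the auxiliary function
\[
h(x):=x^{-j_0}f(x)=\sum_{j=0}^{+\infty}\hat f(j)\,x^{j-j_0},\qquad x\in {]0,\gamma[}.
\]
Each summand is non-increasing on $]0,\gamma[$: when $j<j_0$, the factor $x^{j-j_0}$ is strictly decreasing and $\hat f(j)\ge 0$; when $j=j_0$, the summand is constant; when $j>j_0$, the factor $x^{j-j_0}$ is strictly increasing and $\hat f(j)\le 0$. Summing, $h$ is non-increasing on $]0,\gamma[$. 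Since $f$ is real-analytic on $I_\gamma$, the function $h$ is real-analytic on $]0,\gamma[$, so by the identity principle applied to $h'$, either $h'\equiv 0$ on $]0,\gamma[$ (in which case $h$ is a constant $c$, so $f(x)=c\,x^{j_0}$, which has no zero on $]0,\gamma[$ unless $c=0$) or $h$ is in fact strictly decreasing on $]0,\gamma[$.

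In the strictly decreasing case, $h$ has at most one zero $x_0\in{}]0,\gamma[$, with $h(x)>0$ on $]0,x_0[$ and $h(x)<0$ on $]x_0,\gamma[$; since $x^{j_0}>0$ for $x>0$, the sign of $f(x)$ matches that of $h(x)$, and the conclusion follows. The one delicate point I anticipate is verifying that the monotonicity of $h$ is automatically \emph{strict} away from the trivial constant case: this is because $h'\le 0$ is itself real-analytic, so either vanishes identically or has a discrete zero set, and in the latter situation any equality $h(a)=h(b)$ with $a<b$ in $]0,\gamma[$ would force $\int_a^b h'(x)\,\mathrm{d}x=0$ with $h'\le 0$, hence $h'\equiv 0$ on $[a,b]$, and then on all of $]0,\gamma[$ by analyticity, a contradiction.
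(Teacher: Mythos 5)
Your proof is correct, but it follows a genuinely different route from the paper's. The paper argues by induction on the pivot index $j_0(f)$, using that the derivative $f'$ again lies in $\PowerseriesclassRdown$ with pivot index lowered by one, and then runs a case analysis on whether $f'$ has a zero in $]0,\gamma[$; in effect it climbs down through successive derivatives. You instead divide by the pivot power and observe that $h(x)=x^{-j_0}f(x)$ is a sum of termwise non-increasing functions on $]0,\gamma[$ (non-positive exponents carry non-negative coefficients, positive exponents carry non-positive coefficients), then upgrade non-increasing to strictly decreasing via real-analyticity of $h'$ --- which is exactly the classical proof of the one-sign-change case of Descartes' rule. Your version is shorter and avoids the induction entirely; the small points that need care --- that an infinite sum of non-increasing functions is non-increasing (a pointwise inequality on partial sums, no termwise differentiation needed), and that the degenerate case $h\equiv c$ forces $f=cx^{j_0}$, which either vanishes identically or has no zero in $]0,\gamma[$ --- are all handled. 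The paper's inductive scheme has the mild advantage of never leaving the class $\PowerseriesclassRdown$ itself (differentiation preserves it, division by $x^{j_0}$ does not), which matches how the class is used elsewhere in Section 13, but as a proof of this lemma both arguments are complete.
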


\begin{proof}
In accordance with the assumptions, we consider only $f(x)\not\equiv0$.
Note that in cases (a) or (b) of Definition \ref{def-6.1.1}, i.e., when
$\forall j:\hat f(j)\ge 0$ or $\forall j:\hat f(j)\le 0$, then,  on the
interval $[0,\gamma[$,  $f$ is either strictly increasing with
$f(0)=\hat f(0)\ge0$, or strictly decreasing with $f(0)\le0$, and in each
case $f$ has no zero in the interval $]0,\gamma[$.

It remains to deal with the case (c) of Definition \ref{def-6.1.1}, assuming
that neither (a) nor (b) is fulfilled.

We proceed by induction on the index
$j_0(f)$. First assume that $j_0(f)=0$. In this case, $f(x)$ is decreasing 
on $[0,\gamma[$,
and it is strictly decreasing unless it is constant. If $f(x)$
is constant, then the constant cannot be $0$, and then $f(x)$ would have no
zeros at all. If $f(x)$ is strictly decreasing instead, then it obviously
can have at most one zero in the given interval, and if such a zero exists,
then the values of $f(x)$ are positive to the left of $x_0$ and negative to
the right.

Assume now that $j_0(f)=r\geq 1$ and that the assertion
 of the lemma has been
established for all $f\in\mathfrak S^{\downarrow}_\R(\gamma)$ with $j_0(f)=r-1$.
Then the derivative $f'(x)$ is also in the class $\PowerseriesclassRdown$, 
and $j_0(f^\prime)=r-1$. By the induction hypothesis, there are two 
possibilities:

\noindent Case (i) There is no point $x_0$ on $]0,\gamma[$ such that 
$f^\prime(x_0)=0$, and

\noindent Case (ii) There is $x_1$, such that $f^\prime (x_1)=0$.

In case (i), $f^\prime$ must have constant sign on the interval $]0,\gamma[$. 
If the sign is positive, then $f(x)$ is increasing there, and since $f(0)\ge0$
the function $f(x)$ cannot have any zeros in $]0,\gamma[$.
If instead the sign of $f^\prime$ is negative, then $f(x)$ is decreasing. If
  $f(0)= 0$ the function $f(x)$ has no zeros at all on $]0,\gamma[$. If 
$f(0)>0$,  then then either $f(x)$ is positive on $]0,\gamma[$,
or it has precisely one zero $x_0\in]0,\gamma[$, in which case $f(x)$ is
positive to the left and negative to the right of $x_0$.

In case (ii), then  by the induction hypothesis, we also know that $f'(x)>0$ 
for $0<x<x_1$ and that $f'(x)<0$ for $x_1<x<\gamma$. This means that 
$f(x)$ is strictly increasing on $]0,x_1]$ and as $f(0)\ge0$,
it follows that $f(x)>0$ for $0<x\le x_1$. In the remaining interval
$x_1<x<\gamma$, $f(x)$ is decreasing, and it is either positive throughout
or has exactly one zero $x_0\in]x_1,\gamma[$, in which case $f(x)$ is positive
to the left on $x_0$ and negative to the right. The proof is complete.
\end{proof}

\subsection{The Taylor coefficients of the odd part of the
dynamically reduced Hilbert kernel}

We now analyze the symmetrized dynamically reduced Hilbert kernel
$\kfun_1^{II}(t,x)$.

\begin{prop}
For a fixed $0<t<1$, the function $x\mapsto \kfun_1^{II}(t,x)$ is odd and
belongs to the class $\mathfrak S^{\downarrow}_\R(\gamma)$ with $\gamma=2-t$. 
Indeed, $\kfun_1^{II}(t,\cdot)$ meets condition (c) of Definition 
\ref{def-6.1.1}.
\label{prop-classS1.1}
\end{prop}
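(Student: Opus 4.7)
The plan is to compute the Taylor coefficients of $x\mapsto\kfun_1^{II}(t,x)$ in closed form, represent them as Laplace-type moments of an auxiliary profile $\Phi_t$ on $(0,+\infty)$, and then show that $\Phi_t$ itself changes sign exactly once—this will then yield the required one-sign-change pattern for the coefficient sequence via a classical moment-comparison argument.

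Starting from the partial fraction expansion \eqref{eq-expkfun1.1} and the identity $\tfrac{x}{a^2-x^2}=\sum_{k\ge 0}x^{2k+1}/a^{2k+2}$ (valid for $|x|<|a|$), the coefficient of $x^{2k+1}$ in $\kfun_1^{II}(t,x)$ reads
\[
c_k(t)=t^{2k+2}+\sum_{j=1}^{+\infty}\bigg(\frac{2}{(2j)^{2k+2}}-\frac{1}{(2j-t)^{2k+2}}-\frac{1}{(2j+t)^{2k+2}}\bigg),
\]
a combination of Hurwitz zeta values $\zeta(2k+2,1\pm t/2)$ (this is the zeta connection alluded to in the abstract). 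Inserting $1/a^{2k+2}=\tfrac{1}{(2k+1)!}\int_0^{+\infty}x^{2k+1}\e^{-ax}\diff x$ and summing the geometric series in $j$, I would rewrite
\[
c_k(t)=\tfrac{1}{(2k+1)!}\int_0^{+\infty}x^{2k+1}\Phi_t(x)\diff x,\qquad
\Phi_t(x):=\e^{-x/t}-\frac{2\e^{-x}\sinh^2(tx/2)}{\sinh x}.
\]

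The hard step is to show that for each $t\in I_1^+$, the profile $\Phi_t$ has exactly one sign change on $(0,+\infty)$, from positive to negative. Factoring $\Phi_t(x)=\e^{-x/t}(1-h(x))$ with $h(x):=\tfrac{2\e^{(1/t-1)x}\sinh^2(tx/2)}{\sinh x}$, it suffices to prove $h$ is strictly increasing from $h(0^+)=0$ to $h(+\infty)=+\infty$. A direct computation gives
\[
(\log h)'(x)=t\coth(tx/2)+\big(\tfrac{1}{t}-1\big)-\coth x,
\]
with $(\log h)'(0^+)=+\infty$ and $(\log h)'(+\infty)=(1-t)^2/t>0$, together with $(\log h)''(x)=\sinh^{-2}x-\tfrac{t^2}{2}\sinh^{-2}(tx/2)$. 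The sign of $(\log h)''$ is governed by the hyperbolic inequality $t\sinh x>\sqrt{2}\sinh(tx/2)$ on $(0,+\infty)$ for $0<t<1$, which follows because $g(x)=t\sinh x-\sqrt{2}\sinh(tx/2)$ satisfies $g(0)=0$ and $g'(x)=t\bigl(\cosh x-\tfrac{1}{\sqrt{2}}\cosh(tx/2)\bigr)>0$ (the latter since $\cosh(tx/2)\le\cosh x<\sqrt{2}\cosh x$). Hence $(\log h)''<0$, so $(\log h)'$ is strictly decreasing from $+\infty$ to $(1-t)^2/t>0$; in particular $(\log h)'>0$ throughout and $h$ is strictly increasing, as required.

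Once $\Phi_t$ is known to have a single sign change at some $x_0\in(0,+\infty)$, the moment comparison closes things out: for any index $k_0$ with $c_{k_0}(t)\le 0$ and any $k>k_0$,
\[
(2k+1)!\,c_k(t)-x_0^{2(k-k_0)}(2k_0+1)!\,c_{k_0}(t)=\int_0^{+\infty}\bigl(x^{2(k-k_0)}-x_0^{2(k-k_0)}\bigr)x^{2k_0+1}\Phi_t(x)\diff x\le 0,
\]
because the factors $x^{2(k-k_0)}-x_0^{2(k-k_0)}$ and $\Phi_t(x)$ flip sign at $x_0$ in opposite directions, rendering the integrand nonpositive everywhere. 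Hence $c_k(t)\le 0$ for every $k>k_0$, which is precisely the monotone-sign statement of condition (c) in Definition~\ref{def-6.1.1}. To rule out the degenerate alternatives (a) and (b), one checks $c_0(t)>0$ (explicitly computable from the partial fractions via $\sum_{j\in\Z}(2j+t)^{-2}=\pi^2/(4\sin^2(\pi t/2))$), while the pole of $\kfun_1^{II}(t,\cdot)$ at $x=\pm(2-t)$ has negative residue and thus forces $\kfun_1^{II}(t,x)\to-\infty$ as $x\to(2-t)^-$, precluding $c_k(t)\ge 0$ for all large $k$. The sole genuine obstacle is the hyperbolic inequality $t\sinh x>\sqrt{2}\sinh(tx/2)$ underlying the monotonicity of $h$; everything else is standard manipulation.
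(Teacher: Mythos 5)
Your proof is correct, but it takes a genuinely different route from the paper's. The paper normalizes the coefficients as $(2-t)^{2j+2}\varkappa_j(t)=[t(2-t)]^{2j+2}-1+\Lambda_{t/2}(2j+2)$ and proves (Lemma \ref{lem-Hurwitz}, Proposition \ref{lem-decreas.coeff1.1}) that this sequence is \emph{strictly decreasing} to $-1$, which forces the sign pattern of condition (c); the proof of Lemma \ref{lem-Hurwitz} is a delicate chain of explicit estimates of the Hurwitz zeta function with hard numerical constants. You instead write $\varkappa_k(t)=\tfrac{1}{(2k+1)!}\int_0^{+\infty}x^{2k+1}\Phi_t(x)\,\diff x$ with $\Phi_t(x)=\e^{-x/t}-2\e^{-x}\sinh^2(tx/2)/\sinh x$ (I checked the Laplace computation: the $j$-sum collapses to $-4\sinh^2(tx/2)\sum_{j\ge1}\e^{-2jx}=-2\e^{-x}\sinh^2(tx/2)/\sinh x$, and the interchange of sum and integral is legitimate since each $j$-summand of the integrand has constant sign), prove that $\Phi_t$ changes sign exactly once via the log-concavity of $h$, and transfer the single sign change of the density to the moment sequence by the comparison integral. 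This is a variation-diminishing argument very much in the spirit of the total-positivity machinery the paper deploys in Subsection \ref{subsec-fundestbelowDRH1.1}, and it buys a shorter, estimate-free proof: the only inequality needed is $t\sinh x>\sqrt2\,\sinh(tx/2)$, whose verification via $g(0)=0$, $g'>0$ is correct, and the limits $h(0^+)=0$, $h(+\infty)=+\infty$, $(\log h)'(+\infty)=(1-t)^2/t>0$ all check out. What it gives up is the stronger monotonicity statement of Proposition \ref{lem-decreas.coeff1.1}, but that monotonicity is used nowhere else in the paper, so nothing downstream is lost. Your exclusion of cases (a) and (b) is also sound: the blow-up $\kfun_1^{II}(t,x)\to-\infty$ as $x\to(2-t)^-$ (coming from the $j=1$ term $-x/((2-t)^2-x^2)$, all other terms being bounded there) rules out an all-nonnegative tail, and $\varkappa_0(t)>0$ rules out (b); note that the positivity of $\varkappa_0(t)=\tfrac{\pi^2}{12}+t^{-2}-\tfrac{\pi^2}{4}\sin^{-2}(\tfrac{\pi t}{2})+t^2$ is asserted without proof in the paper as well, so you are owed the same small computation there. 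Two cosmetic points: state explicitly that oddness and the radius of convergence $2-t$ are immediate from \eqref{eq-expkfun1.1}, and that your conclusion ``$\varkappa_{k}\le0$ for all $k>k_0$ whenever $\varkappa_{k_0}\le0$'' says exactly that the set of indices with positive coefficient is an initial segment, which together with $\varkappa_0>0$ and the excluded case (a) is condition (c).
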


Before we supply the full proof of the proposition, we need to do some
preparatory work.
The kernel $\kfun_1^{II}(t,x)$ is given by \eqref{eq-expkfun1.1}.
It is an odd function of $x$, and enjoys the Taylor expansion
\begin{equation}
\kfun_1^{II}(t,x)=\sum_{j=0}^{+\infty}\varkappa_j(t)x^{2j+1},\qquad x\in I_{1},
\label{eq-oddkernel.Taylor1.1}
\end{equation}
with radius of convergence $2-t$, where the coefficients can be readily
calculated:
\[
\varkappa_j(t):=\frac{2^{-2j-2}}{(2j+1)!}
\big\{2\psi^{(2j+1)}(1)-\psi^{(2j+1)}(1-\tfrac{t}{2})-\psi^{(2j+1)}(1+\tfrac{t}{2})
\big\}+t^{2j+2}.
\]
Here,
\begin{equation}
\psi(x):=\frac{\Gamma'(x)}{\Gamma(x)},\quad \psi^{(m)}(x)=
\frac{\diff^m}{\diff x^m}\frac{\Gamma'(x)}{\Gamma(x)}
\label{eq-polygamma1}
\end{equation}
is the \emph{poly-Gamma function}, and $\Gamma(x)$ is the standard \emph{Gamma
function}. A more convenient expression is obtained by direct Taylor expansion
of the terms in \eqref{eq-expkfun1.1}:
\begin{equation*}
\varkappa_j(t)=t^{2j+2}+\sum_{k=1}^{+\infty}\bigg\{
\frac{2}{(2k)^{2j+2}}-\frac{1}{(2k-t)^{2j+2}}-\frac{1}{(2k+t)^{2j+2}}\bigg\}.
\end{equation*}
In terms of the \emph{Hurwitz zeta function}
\[
\zeta(s,x):=\sum_{k=0}^{+\infty}(x+k)^{-s},
\]
the expression for $\varkappa_j(x)$ equals
\begin{equation*}
\varkappa_j(t)=t^{2j+2}+2^{-2j-2}\big\{2\zeta(2j+2,1)
-\zeta(2j+2,1-\tfrac{t}{2})-\zeta(2j+2,1+\tfrac{t}{2})\big\}.
\end{equation*}
Moreover, since $\zeta(s,x)=x^{-s}+\zeta(s,1+x)$, we may rewrite
this as
\begin{equation}
\varkappa_j(t)=t^{2j+2}-(2-t)^{-2j-2}
+2^{-2j-2}
\big\{2\zeta(2j+2,1)
-\zeta(2j+2,2-\tfrac{t}{2})-\zeta(2j+2,1+\tfrac{t}{2})\big\}.
\label{eq-vkappa1.1}
\end{equation}
We need the following result.

\begin{lem}
For fixed $\tau$ with $0<\tau\le\frac12$, the function
\[
\Lambda_\tau(s):=
(1-\tau)^s\big\{2\zeta(s,1)-\zeta(s,2-\tau)-\zeta(s,1+\tau)\big\}
\]
is positive and strictly decreasing on the interval $[3,+\infty[$, with limit
$\lim_{s\to+\infty}\Lambda_\tau(s)=0$.
\label{lem-Hurwitz}
\end{lem}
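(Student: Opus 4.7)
\emph{Proof plan.} I would prove the three assertions---positivity, the limit at infinity, and strict monotonicity---separately, in that order.

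Positivity and the limit both follow from the series expansion obtained via $\zeta(s,x)=\sum_{k\ge 0}(k+x)^{-s}$:
\[
S_\tau(s):=2\zeta(s,1)-\zeta(s,2-\tau)-\zeta(s,1+\tau)=\sum_{k=0}^\infty\Bigl[\frac{2}{(k+1)^s}-\frac{1}{(k+2-\tau)^s}-\frac{1}{(k+1+\tau)^s}\Bigr].
\]
The hypothesis $0<\tau\le\tfrac12$ forces $k+1<k+1+\tau\le k+2-\tau$, so both subtracted terms are strictly smaller than $(k+1)^{-s}$ and every bracket is strictly positive; hence $\Lambda_\tau(s)=(1-\tau)^sS_\tau(s)>0$. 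Moreover, the $k=0$ bracket tends to $2$ as $s\to\infty$ while the tail is $O(2^{-s})$, so $S_\tau(s)\to 2$ and thus $\Lambda_\tau(s)\sim 2(1-\tau)^s\to 0$.

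Monotonicity is the heart of the matter. Taking the logarithmic derivative, it suffices to prove $S_\tau'(s)/S_\tau(s)<\log(1/(1-\tau))$ for $s\ge 3$. Applying the identity $(k+1)^{-s}-y^{-s}=s\int_{k+1}^{y}x^{-s-1}\,\diff x$ termwise recasts
\[
S_\tau(s)=s\int_1^\infty \chi_\tau(x)\,x^{-s-1}\,\diff x,
\]
where $\chi_\tau\ge 0$ is the explicit piecewise-constant function on $[1,\infty)$ taking the values $2,1,0$ on $[k,k+\tau]$, $[k+\tau,k+1-\tau]$, $[k+1-\tau,k+1]$ for each $k\ge 1$. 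Substituting $y=\log x$ gives $S_\tau(s)=sF(s)$ with $F(s)=\int_0^\infty\chi_\tau(e^y)e^{-sy}\,\diff y$ the Laplace transform of a bounded non-negative function, hence completely monotone. A short computation then produces
\[
\frac{S_\tau'(s)}{S_\tau(s)}=\frac{1}{s}-\mathbb{E}_s[Z],
\]
where $Z$ is a random variable with density $\chi_\tau(e^z)e^{-sz}/F(s)$ on $[0,\infty)$, and the goal reduces to the sharp inequality
\[
\mathbb{E}_s[Z]>\frac{1}{s}-\log\frac{1}{1-\tau}.
\]
When $\tau\ge 1-e^{-1/s}$ the right-hand side is non-positive and the bound is immediate since $Z\ge 0$; in the complementary range I would exploit the block structure of $\chi_\tau$, comparing $\mathbb{E}_s[Z]$ with the mean $1/s$ of the reference exponential density $se^{-sz}$ on $[0,\infty)$ and using that the gap intervals $(\log(k+1-\tau),\log(k+1))$ together with the short weight-$2$ intervals of length $\sim\tau$ jointly shift the mean of $Z$ upward by the required amount $\log(1/(1-\tau))-1/s$.

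The \emph{main obstacle} is this last quantitative blockwise estimate: for small $\tau$ at $s=3$ both sides of the target inequality are of order $\tau$, so coarse bounds such as $\chi_\tau\le 2$ are insufficient, and one must track the exact locations and weights of the individual blocks and gaps provided by the geometry of the Gauss-type map.
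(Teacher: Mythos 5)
Your treatment of positivity and of the limit $\Lambda_\tau(s)\to0$ is correct and is essentially the paper's own argument (termwise comparison in the Hurwitz series, dominance of the leading term). Your reduction of the monotonicity to the inequality $\mathbb{E}_s[Z]>\tfrac1s-\log\tfrac{1}{1-\tau}$ via the Laplace-transform representation $S_\tau(s)=s\int_0^\infty\chi_\tau(e^y)e^{-sy}\,\mathrm{d}y$ is also correct, and it is a genuinely different framework from the paper's, which instead splits $\Lambda_\tau=\lambda_\tau+R_\tau$ with explicit main term $\lambda_\tau(s)=2(1-\tau)^s-\bigl(\tfrac{1-\tau}{1+\tau}\bigr)^s$ and a Hurwitz-zeta remainder, and proves the two quantitative bounds $\lambda_\tau'(s)\le-\tfrac{233}{81}\tau^2(1-\tau)^s$ and $R_\tau'(s)\le\tfrac{27}{10}\tau^2(1-\tau)^s$.

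However, the decisive step is exactly the one you defer, and the heuristic you offer for it points the wrong way. Expanding to first order in $\tau$, the weight-$2$ interval of the $m$-th block contributes $\approx\tau m^{-s-1}$ to $F(s)$ and the weight-$0$ gap removes $\approx\tau(m+1)^{-s-1}$; these telescope to give $F(s)=\tfrac1s+\tau+\mathrm{O}(\tau^2)$, while in $-F'(s)=\int_0^\infty y\,\chi_\tau(e^y)e^{-sy}\,\mathrm{d}y$ the corresponding corrections $\tau\log(m)\,m^{-s-1}-\tau\log(m+1)\,(m+1)^{-s-1}$ telescope to $0$, so $-F'(s)=\tfrac1{s^2}+\mathrm{O}(\tau^2)$ and hence $\mathbb{E}_s[Z]=\tfrac1s-\tau+\mathrm{O}(\tau^2)$. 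The extra mass sits at the \emph{left} end of each block and the deleted mass at the right end, so the mean of $Z$ is shifted \emph{downward} relative to $\tfrac1s$, not upward as you claim. The inequality $\mathbb{E}_s[Z]>\tfrac1s-\log\tfrac{1}{1-\tau}$ therefore survives only because of the second-order margin $\log\tfrac{1}{1-\tau}-\tau=\tfrac{\tau^2}{2}+\cdots$, which must be shown to dominate the (unspecified) $\mathrm{O}(\tau^2)$ error in your expansion of $\mathbb{E}_s[Z]$ uniformly for $s\ge3$ and $0<\tau\le\tfrac12$. This is precisely the hard quantitative content of the lemma --- in the paper it appears as the final bound $\Lambda_\tau'(s)\le-\tfrac{143}{810}\tau^2(1-\tau)^s$ --- and your proposal does not supply it, so the proof is incomplete at its core.
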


\begin{proof}
We will keep the variables $\tau$ and $s$ confined to the indicated
intervals $0<\tau\le\frac12$ and $3\le s<+\infty$.

By comparing term by term in the Hurwitz zeta sum, we see that the function
$\Lambda_\tau(s)$ is positive. Moreover, as $s\to+\infty$, the first term
$x^{-s}$ becomes dominant in the Hurwitz zeta series $\zeta(s,x)$,  and we
obtain that $\Lambda_\tau(s)$ has the indicated limit.

Next, we split the function in the following
way:
\begin{equation}
\Lambda_\tau(s)=\lambda_\tau(s)+R_\tau(s),\qquad
\lambda_\tau(s):=2(1-\tau)^s-\bigg(\frac{1-\tau}{1+\tau}\bigg)^s
\label{eq-Lambdaexp1.1}
\end{equation}
where $R_\tau(s)$ is given by
\[
R_\tau(s):=(1-\tau)^s\big\{2\zeta(s,2)-\zeta(s,2-\tau)-\zeta(s,2+\tau)\big\}.
\]
In the identity \eqref{eq-Lambdaexp1.1}, we should think of $\lambda_\tau(s)$
as the main term and of $R_\tau(s)$ as the remainder.
Clearly, we see that $\lambda_\tau(s)>0$, and that $\lambda_\tau(s)$ is
decreasing in $s$:
\begin{multline}
\lambda_\tau'(s)=2(1-\tau)^s\log(1-\tau)-\bigg(\frac{1-\tau}{1+\tau}\bigg)^s
\log\frac{1-\tau}{1+\tau}
\\
=(1-\tau)^s\bigg\{\log(1-\tau^2)-[1-(1+\tau)^{-s}]
\log\frac{1+\tau}{1-\tau}\bigg\}<0.
\label{eq-estimate.tau.s1.1}
\end{multline}
Moreover, by direct calculation
\[
\partial_s\frac{\lambda_\tau'(s)}{(1-\tau)^s}=-(1+\tau)^{-s}
\{\log(1+\tau)\}\log\frac{1+\tau}{1-\tau}<0,
\]
so that by \eqref{eq-estimate.tau.s1.1}, we have that
\begin{multline}
\lambda_\tau'(s)\le (1-\tau)^s \frac{\lambda_\tau'(3)}{(1-\tau)^3}
=(1-\tau)^s\bigg\{\log(1-\tau^2)-[1-(1+\tau)^{-3}]
\log\frac{1+\tau}{1-\tau}\bigg\}
\\
=(1-\tau)^s\bigg\{\log(1-\tau^2)-\frac{\tau(3+3\tau+\tau^2)}{(1+\tau)^3}
\log\frac{1+\tau}{1-\tau}\bigg\}
\\
\le-\tau^2(1-\tau)^s\bigg\{1+\frac{6+6\tau+2\tau^2}{(1+\tau)^4}\bigg\}\le
-\frac{233}{81}\tau^2(1-\tau)^s,
\label{eq-estimate.tau.s1.2}
\end{multline}
by an elementary estimate of the logarithm function, and by using our
constraints on $s$ and $\tau$.

We proceed to estimate the remainder term.
Since for positive $\tau$ and a $C^2$-smooth function $f$, we have that
\[
f(\tau)+f(-\tau)-2f(0)=\int_{-\tau}^{\tau}(\tau-|\theta|)f''(\theta)\diff\theta,
\]
and, in particular,
\begin{multline*}
\zeta(s,2-\tau)+\zeta(s,2+\tau)
-2\zeta(s,2)=\int_{-\tau}^{\tau}(\tau-|\theta|)\partial_\theta^2\zeta(s,2+\theta)
\diff\theta=s(s+1)\int_{-\tau}^{\tau}(\tau-|\theta|)\zeta(s+2,2+\theta)
\diff\theta,
\end{multline*}
so that, as a consequence,
\begin{equation*}
R_\tau(s)=-s(s+1)(1-\tau)^s\int_{-\tau}^{\tau}(\tau-|\theta|)\zeta(s+2,2+\theta)
\diff\theta.
\end{equation*}
By direct inspection, then, we see that $R_\tau(s)<0$.
Moreover, by differentiating the above formula with respect to $s$, we obtain
\begin{multline}
R_\tau'(s)=-(2s+1)(1-\tau)^s\int_{-\tau}^{\tau}(\tau-|\theta|)\zeta(s+2,2+\theta)
\diff\theta
\\
-s(s+1)\int_{-\tau}^{\tau}(\tau-|\theta|)
\partial_s[(1-\tau)^s\zeta(s+2,2+\theta)]\diff\theta
\\
=\frac{2s+1}{s(s+1)}R_\tau(s)-s(s+1)\int_{-\tau}^{\tau}(\tau-|\theta|)
\partial_s[(1-\tau)^s\zeta(s+2,2+\theta)]\diff\theta
\\
\le-s(s+1)\int_{-\tau}^{\tau}(\tau-|\theta|)
\partial_s[(1-\tau)^s\zeta(s+2,2+\theta)]\diff\theta.
\label{eq-estimate.tau.s1.3}
\end{multline}
We proceed to calculate the derivative which appears in
\eqref{eq-estimate.tau.s1.3}:
\begin{equation}
-\partial_s[(1-\tau)^s\zeta(s+2,2+\theta)]=(1-\tau)^{-2}\sum_{k=0}^{+\infty}
\bigg(\frac{2+\theta+k}{1-\tau}\bigg)^{-s-2}\log\frac{2+\theta+k}{1-\tau}.
\label{eq-estimate.tau.s1.4}
\end{equation}
To analyze this derivative properly, we need the following calculation:
\begin{equation}
\partial_T\big\{T^{-s-2}\log T\big\}=-T^{-s-3}((s+2)\log T-1)\le
-T^{-s-3}(5\log T-1)<0,\qquad
\tfrac32\le T<+\infty.
\label{eq-estimate.tau.s1.5}
\end{equation}
In other words, $T\mapsto T^{-s-2}\log T$ is decreasing on the interval
$[\frac32,+\infty[$. As a first application of the property
\eqref{eq-estimate.tau.s1.5}, we apply it to the identity
\eqref{eq-estimate.tau.s1.4} and obtain that
\begin{equation*}
0\le-\partial_s[(1-\tau)^s\zeta(s+2,2+\theta)]\le
-\partial_s[(1-\tau)^s\zeta(s+2,2-\tau)],\qquad -\tau\le\theta\le\tau,
\end{equation*}
which we may implement into \eqref{eq-estimate.tau.s1.3}:
\begin{multline}
R_\tau'(s)
\le-s(s+1)\int_{-\tau}^{\tau}(\tau-|\theta|)
\partial_s[(1-\tau)^s\zeta(s+2,2+\theta)]\diff\theta
\\
\le -s(s+1)\int_{-\tau}^{\tau}(\tau-|\theta|)
\partial_s[(1-\tau)^s\zeta(s+2,2-\tau)]\diff\theta=-s(s+1)\tau^2
\partial_s[(1-\tau)^s\zeta(s+2,2-\tau)].
\label{eq-estimate.tau.s1.7}
\end{multline}
Next, we implement the property \eqref{eq-estimate.tau.s1.5} again, in the
context of \eqref{eq-estimate.tau.s1.4} with $\theta=-\tau$:
\begin{multline}
-\partial_s[(1-\tau)^s\zeta(s+2,2-\tau)]=(1-\tau)^{-2}\sum_{k=0}^{+\infty}
\bigg(\frac{2-\tau+k}{1-\tau}\bigg)^{-s-2}\log\frac{2-\tau+k}{1-\tau}
\\
\le (1-\tau)^{s}(2-\tau)^{-s-2}\log\frac{2-\tau}{1-\tau}
+(1-\tau)^{-2}\int_{0}^{+\infty}
\bigg(\frac{2-\tau+x}{1-\tau}\bigg)^{-s-2}\log\frac{2-\tau+x}{1-\tau}
\diff x.
\label{eq-estimate.tau.s1.8}
\end{multline}
Here, we kept the first term (with $k=0$), and replaced each
later term indexed by $k$ by a corresponding integral over the adjacent
interval $[k-1,k]$. The integral expression in \eqref{eq-estimate.tau.s1.8}
may be calculated explicitly:
\begin{multline}
\int_{0}^{+\infty}
\bigg(\frac{2-\tau+x}{1-\tau}\bigg)^{-s-2}\log\frac{2-\tau+x}{1-\tau}
\diff x
\\
=(s+1)^{-2}(1-\tau)^{s+2}(2-\tau)^{-s-1}\bigg[1+
(s+1)\log\frac{2-\tau}{1-\tau}\bigg].
\label{eq-estimate.tau.s1.9}
\end{multline}
Finally, we put \eqref{eq-estimate.tau.s1.7},
\eqref{eq-estimate.tau.s1.8}, and \eqref{eq-estimate.tau.s1.9}
together, and obtain that
\begin{multline}
R_\tau'(s)
\le-s(s+1)\tau^2\partial_s[(1-\tau)^s\zeta(s+2,2-\tau)]
\\
\le \tau^2(1-\tau)^s\Bigg\{s(s+1)(2-\tau)^{-s-2}\log\frac{2-\tau}{1-\tau}
+\frac{s}{s+1}(2-\tau)^{-s-1}\bigg[1+
(s+1)\log\frac{2-\tau}{1-\tau}\bigg]
\Bigg\}.
\label{eq-estimate.tau.s1.10}
\end{multline}
The expression within brackets is optimized at the right end-point
$\tau=\frac12$:
\begin{multline}
s(s+1)(2-\tau)^{-s-2}\log\frac{2-\tau}{1-\tau}
+\frac{s}{s+1}(2-\tau)^{-s-1}\bigg[1+
(s+1)\log\frac{2-\tau}{1-\tau}\bigg]
\\
\le s(s+1)\bigg(\frac32\bigg)^{-s-2}\log3
+\frac{s}{s+1}\bigg(\frac32\bigg)^{-s-1}\bigg[1+
(s+1)\log3\bigg]\le\frac{27}{10},
\label{eq-estimate.tau.s1.11}
\end{multline}
where the rightmost inequality is an exercise in (one-variable) Calculus.
It follows from \eqref{eq-estimate.tau.s1.10} and
\eqref{eq-estimate.tau.s1.11} that
\begin{equation}
R_\tau'(s) \le\frac{27}{10}\tau^2(1-\tau)^s.
\label{eq-estimate.tau.s1.12}
\end{equation}
Finally, a combination of \eqref{eq-estimate.tau.s1.2} and
\eqref{eq-estimate.tau.s1.12} gives the desired result:
\[
\Lambda_\tau'(s)=\lambda_\tau'(s)+R_\tau'(s)\le \bigg(\frac{27}{10}
-\frac{233}{81}\bigg)\tau^2(1-\tau)^s=-\frac{143}{810}\tau^2(1-\tau)^s<0.
\]
The proof is complete.
\end{proof}

\begin{prop}
For fixed $t$, $0<t<1$, the function $j\mapsto(2-t)^{2j+2}\varkappa_j(t)$ is
strictly decreasing on $\Z_+=\{1,2,\ldots\}$, with limit
\[
\lim_{j\to+\infty}(2-t)^{2j+2}\varkappa_j(t)=-1.
\]
\label{lem-decreas.coeff1.1}
\end{prop}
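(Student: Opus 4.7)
The plan is to reduce the proposition directly to Lemma \ref{lem-Hurwitz}. Starting from the closed-form expression \eqref{eq-vkappa1.1} for $\varkappa_j(t)$, I multiply both sides by $(2-t)^{2j+2}$ and introduce the change of variables $\tau:=t/2$ (so that $1-\tau=(2-t)/2$) and $s:=2j+2$. This rearranges the quantity of interest as
\begin{equation*}
(2-t)^{2j+2}\varkappa_j(t)=-1+\bigl(t(2-t)\bigr)^{s}+\Lambda_\tau(s),
\end{equation*}
where $\Lambda_\tau(s)$ is exactly the function defined in Lemma \ref{lem-Hurwitz}, once we recognize the factor $\bigl((2-t)/2\bigr)^{s}=(1-\tau)^s$ being absorbed.

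For $j\in\Z_+=\{1,2,\ldots\}$ we have $s=2j+2\ge 4\ge 3$, so Lemma \ref{lem-Hurwitz} applies and gives that $s\mapsto \Lambda_\tau(s)$ is positive, strictly decreasing, with $\Lambda_\tau(s)\to 0$ as $s\to+\infty$. As an independent easy observation, for $0<t<1$ we have $0<t(2-t)=1-(1-t)^2<1$, so $s\mapsto (t(2-t))^s$ is likewise positive, strictly decreasing, and tends to $0$. Adding the two, the bracketed quantity $(t(2-t))^s+\Lambda_\tau(s)$ is a sum of two strictly decreasing positive functions of $s$ that each tend to $0$; hence it is strictly decreasing and tends to $0$ as well.

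Since passing from $j$ to $j+1$ corresponds to shifting $s$ by $2$ (which is inside the region of strict monotonicity guaranteed by Lemma \ref{lem-Hurwitz}), we conclude that
\begin{equation*}
j\mapsto (2-t)^{2j+2}\varkappa_j(t)=-1+\bigl(t(2-t)\bigr)^{2j+2}+\Lambda_{t/2}(2j+2)
\end{equation*}
is strictly decreasing on $\Z_+$, with limit $-1$ as $j\to+\infty$, which is exactly the assertion. No serious obstacle arises here, since the delicate quantitative work has already been carried out in Lemma \ref{lem-Hurwitz}; the remaining task is purely the bookkeeping of the substitution and the observation that $t(2-t)<1$ contributes an additional decreasing term with the right limit.
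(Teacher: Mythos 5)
Your proposal is correct and follows essentially the same route as the paper: rewrite $(2-t)^{2j+2}\varkappa_j(t)$ via \eqref{eq-vkappa1.1} as $[t(2-t)]^{2j+2}-1+\Lambda_{t/2}(2j+2)$ and invoke Lemma \ref{lem-Hurwitz} together with the observation that $0<t(2-t)<1$. The only difference is that you spell out the bookkeeping of the substitution $\tau=t/2$, $s=2j+2$ a bit more explicitly than the paper does.
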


\begin{proof}
In view of \eqref{eq-vkappa1.1}, we know that
\begin{equation*}
(2-t)^{2j+2}\varkappa_j(t)=[t(2-t)]^{2j+2}-1
+\Lambda_{t/2}(2j+2).
\end{equation*}
Since $0<t(2-t)<1$ holds for $t\in I_1^+$, the function
$j\mapsto[t(2-t)]^{2j+2}$ is decreasing, and the lemma becomes an immediate
consequence of Lemma \ref{lem-Hurwitz}.
\end{proof}

\begin{proof}[Proof of Proposition \ref{prop-classS1.1}]
It is clear from the known radius of convergence for $\kfun_1^{II}(t,\cdot)$
that $\kfun_1^{II}(t,\cdot)\in \PowerseriesclassII$. Moreover,
$\kfun_1^{II}(t,\cdot)$ is odd, and all the Taylor coefficients
(see \eqref{eq-oddkernel.Taylor1.1}) are clearly real-valued, while the
coefficient of the linear term is explicit and positive:
\[
\varkappa_0(t):=\frac{\pi^2}{12}+\frac{1}{t^2}
-\frac{\pi^2/4}{\sin^2(\frac{\pi}{2}t)}+t^2>0.
\]
Now, the proof of the proposition is an immediate consequence of Proposition
\ref{lem-decreas.coeff1.1}.
\end{proof}

\subsection{Positivity of the odd part of the dynamically reduced
Hilbert kernel and totally positive matrices}
\label{subsec-fundestbelowDRH1.1}
The transfer operator $\Tope_1$ can be applied to polynomials, or, more
generally, convergent power series.
For $j=0,1,2,\ldots$, let $u_j$ denote the monomial $u_j(x):=x^{2j+1}$.
The action of $\Tope_1$ on odd   power series  can be
analyzed in terms of the infinite matrix
$\mathbf{B}=\{b_{j,k}\}_{j,k=0}^{+\infty}$ with entries $b_{j,k}$ given by
\begin{equation}
\Tope_1 u_j(x)=\sum_{k=0}^{+\infty}b_{j,k}u_k,\qquad x\in I_1,
\label{eq-bcoeff1.1}
\end{equation}
since the transfer operator $\Tope_1$ preserves oddness.

We recall the notion of a \emph{totally positive matrix} \cite{Pinkus}.
An infinite matrix $\mathbf{A}=\{a_{j,k}\}_{j,k=0}^{+\infty}$ is said to
be \emph{totally positive} if all its minors are $\ge0$, and
\emph{strictly totally positive} if all its minors are $>0$.
Here, a minor
is the determinant of a square submatrix $\{a_{j_s,k_t}\}_{s,t=1}^{r}$ where
$j_1<\cdots<j_r$ and $k_1<\cdots<k_r$. This is a much stronger property
than the usual positive definiteness of a matrix, which would correspond to
considering only symmetric squares.

\begin{prop}
The matrix $\mathbf{B}=\{b_{j,k}\}_{j,k=0}^{+\infty}$ with coefficients given by
\eqref{eq-bcoeff1.1} is strictly totally positive.
\label{prop-tot.pos.1.1}
\end{prop}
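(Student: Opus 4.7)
The plan is to realize the matrix $\mathbf B$ as a Gram-type composition of two classical totally positive kernels and then invoke an Andreief/Cauchy--Binet argument. First, I evaluate $\Tope_1$ on the odd monomials. Since $u_j(x)=x^{2j+1}$ and $2j+3$ is odd, the defining formula \eqref{eq-Tope.1} with $\beta=1$ gives
\[
\Tope_1 u_j(x)=-\sum_{k\in\Z^\times}\frac{1}{(x+2k)^{2j+3}}.
\]
Pairing $k$ with $-k$ and invoking the Gamma integral $a^{-(2j+3)}=\frac{1}{(2j+2)!}\int_0^\infty s^{2j+2}e^{-as}\,ds$ for $a>0$ (absolutely convergent enough to justify Fubini) yields the Laplace representation
\[
\Tope_1 u_j(x)=\frac{2}{(2j+2)!}\int_0^\infty\frac{t^{2j+2}\sinh(tx)}{e^{2t}-1}\,dt,\qquad x\in I_1.
\]
Expanding $\sinh(tx)=\sum_{m\ge0}(tx)^{2m+1}/(2m+1)!$ and matching coefficients against \eqref{eq-bcoeff1.1} reads off
\[
b_{j,m}=\frac{2}{(2j+2)!\,(2m+1)!}\int_0^\infty\frac{t^{2j+2m+3}}{e^{2t}-1}\,dt.
\]

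Second, I factor this as $b_{j,m}=\alpha_j\beta_m\int_0^\infty f_j(t)g_m(t)\,d\mu(t)$, with strictly positive weights $\alpha_j:=2/(2j+2)!$ and $\beta_m:=1/(2m+1)!$, the positive measure $d\mu(t):=dt/(e^{2t}-1)$ of full support on $(0,+\infty)$, and $f_j(t):=t^{2j+2}$, $g_m(t):=t^{2m+1}$. Multiplying rows by $\alpha_j$ and columns by $\beta_m$ rescales every minor by a positive factor, so it suffices to prove strict total positivity of the Gram matrix $G_{j,m}:=\int_0^\infty f_j(t)g_m(t)\,d\mu(t)$.

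Third, for any $r\ge 1$ and indices $j_1<\cdots<j_r$ and $m_1<\cdots<m_r$, the Andreief identity expresses the $r\times r$ minor as
\[
\det(G_{j_a,m_c})_{a,c=1}^r=\frac{1}{r!}\int_{(0,+\infty)^r}\det(t_c^{2j_a+2})\det(t_c^{2m_a+1})\,d\mu(t_1)\cdots d\mu(t_r).
\]
The integrand is invariant under simultaneous permutations of $t_1,\dots,t_r$, so the integral equals $\int_{0<t_1<\cdots<t_r}(\cdot)\,d\mu^r$. On that ordered domain, the substitution $y_c:=t_c^2$ turns both determinants into generalized Vandermonde determinants $\det(y_c^{p_a})$ with strictly increasing nonnegative integer exponents and $0<y_1<\cdots<y_r$, multiplied by the positive factors $\prod_c t_c^2$ and $\prod_c t_c$. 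Such generalized Vandermonde determinants are strictly positive (they equal a Schur polynomial in the $y_c$ times the classical Vandermonde), so the integrand is strictly positive on a set of positive $\mu^r$-measure and the minor is strictly positive. The sole technical hurdle is the interchange of sum and integral in deriving the Laplace representation, which is immediate from the exponential decay of $(e^{2t}-1)^{-1}$; everything afterwards is bookkeeping with the composition principle for totally positive kernels.
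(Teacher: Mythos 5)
Your argument is correct, and it reaches the same explicit integral representation of the entries that the paper uses, but it closes the argument by a genuinely different route. The paper computes $b_{j,k}=\psi^{(2j+2k+3)}(1)/\bigl(2^{2j+2k+3}(2j+2)!(2k+1)!\bigr)$, strips off the positive row and column factors to reduce to the Hankel matrix $\{c_{j+k}\}$ with $c_j=\psi^{(2j+3)}(1)$, and then cites two facts from Pinkus's book: the criterion that a Hankel matrix is strictly totally positive iff the two families of shifted finite sections are positive definite, and the equivalence of that with the $c_j$ being a Stieltjes moment sequence, which is verified from $c_j=\int_0^{+\infty}t^j\,t\e^{-\sqrt t}(1-\e^{-\sqrt t})^{-1}\diff t$. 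You instead keep the two exponents $2j+2$ and $2m+1$ separate, view $G_{j,m}=\int_0^\infty t^{2j+2}\,t^{2m+1}\diff\mu(t)$ as a composition of the two strictly totally positive kernels $(j,t)\mapsto t^{2j+2}$ and $(t,m)\mapsto t^{2m+1}$ against the full-support measure $\diff\mu=\diff t/(\e^{2t}-1)$, and prove strict positivity of every minor directly via the Andreief/basic composition formula together with the strict positivity of generalized Vandermonde determinants on the ordered simplex. (Your $b_{j,m}$ formula agrees with the paper's after the substitution $t\mapsto 2s$ in the polygamma integral.) What your route buys is self-containedness: you never need the Hankel-matrix criterion or the moment-problem digression, only the composition formula and the Vandermonde positivity; what the paper's route buys is brevity, since both cited facts are off-the-shelf. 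The analytic details you flag (Fubini for the Laplace representation, symmetrization in Andreief, and the preservation of strict total positivity under positive diagonal rescaling) are all routine and correctly handled.
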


\begin{proof}
We read off from the definition of $\Tope_1$ that
\[
\Tope_1 u_j(x)=-\sum_{n\in\Z^\times}(x+2n)^{-2j-3},
\]
and observe that the right-hand side may be written in the form
\[
\Tope_1u_j(x)=\frac{1}{2^{2j+2}(2j+2)!}\big\{\psi^{(2j+2)}(1+\tfrac{x}{2})
-\psi^{(2j+2)}(1-\tfrac{x}{2})\big\},
\]
where $\psi^{(m)}(x)$ is the poly-Gamma function (see \eqref{eq-polygamma1}).
From this, we immediately obtain
\[
b_{j,k}=\frac{\psi^{(2j+2k+3)}(1)}{2^{2j+2k+3}(2j+2)!(2k+1)!}.
\]
Since strict total positivity remains unchanged as we multiply a column or
a row by a positive number, the strict total positivity of the matrix
$\mathbf{B}$ is equivalent to the strict total positivity of the
infinite matrix with entries $\{c_{j+k}\}_{j,l=0}^{+\infty}$ where
$c_j:=\psi^{(2j+3)}(1)$. This is a Hankel matrix, and in view of Theorem 4.4
\cite{Pinkus}, its total positivity is equivalent to the strict positive
definiteness of all the finite square matrices $\{c_{j+l}\}_{j,l=0}^{N}$ and
$\{c_{j+l+1}\}_{j,l=0}^{N-1}$, for every $N=1,2,3,\ldots$. Following the digression
in Section 4.6 of
\cite{Pinkus}, we know that this is equivalent to having the $c_j$ be the
moments of a positive measure (the Stieltjes moment problem). However, it
is known that
\[
c_j=\psi^{(2j+3)}(1)=\int_0^{+\infty}t^{2j+3}\frac{\e^{-t}}{1-\e^{-t}}\diff t
=\int_0^{+\infty}t^{j}\frac{t\,\e^{-\sqrt{t}}}{1-\e^{-\sqrt{t}}}\diff t,
\]
which means that the $c_j$ are indeed the moments of a positive measure.
This completes the proof.
\end{proof}

We need to have a precise definition of the notion of counting
\emph{sign changes}, see \cite{Pinkus}.

\begin{defn}
Let $\mathbf{a}=\{a_j\}_{j}$, $j=0,\ldots,N$, be a finite sequence
of real numbers.

\noindent (a) The number $S^-(\mathbf{a})$ counts the number of sign changes
in the sequence with zero terms discarded. This is the number of
\emph{strong sign changes}.

\noindent (b) The number $S^+(\mathbf{a})$ counts the maximal number of sign
changes in the sequence, where zero terms are arbitrarily replaced by $+1$ or
$-1$. This is the number of \emph{weak sign changes}.
\end{defn}

Obviously, the number of weak sign changes exceeds the number of strong
sign changes, i.e., $S^-(\mathbf{a})\le S^+(\mathbf{a})$.


\begin{cor}
Fix $1\le\gamma<+\infty$.
If $f\in\PowerseriesclassRdown$ is odd, then
$\Tope_1 f$ is odd as well, and
$\Tope_1f\in\PowerseriesclassIIIRdown$.
\label{cor-classSpres1.1}
\end{cor}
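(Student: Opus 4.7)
The statement decomposes into three tasks: establishing that $\Tope_1 f$ is odd, identifying its radius of convergence, and establishing the sign pattern of its Taylor coefficients. The first task is immediate from Proposition \ref{lem-symmetry1}(i), so the plan concentrates on the latter two.

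For the radius of convergence, the plan is to verify analytic extension of $\Tope_1 f$ to the disk $\{|x|<2-\tfrac{1}{\gamma}\}$. For each $x$ in that disk and each $n\in\Z^\times$, one checks the elementary lower bound
\[
|x+2n|\ge 2|n|-|x|>2|n|-2+\tfrac{1}{\gamma}\ge\tfrac{1}{\gamma},
\]
so that $|{-1/(x+2n)}|<\gamma$, placing the argument safely inside the disk of convergence of $f$. Absolute convergence of the defining series \eqref{eq-Tope.1} on this set follows from the standard $|n|^{-3}$-decay of its $n$th term, giving analyticity and hence convergence of the Taylor series, so $\Tope_1 f\in\mathfrak{P}(2-\tfrac{1}{\gamma})$.

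The heart of the argument is the sign pattern. Writing $f(x)=\sum_{j\ge0}\hat f(2j+1)\,u_j(x)$ with $u_j(x):=x^{2j+1}$ and interchanging summation (justified by the absolute convergence on compacta inside $|x|<2-\tfrac{1}{\gamma}$), one obtains
\[
\Tope_1 f(x)=\sum_{k\ge0}\Bigg(\sum_{j\ge0}b_{j,k}\,\hat f(2j+1)\Bigg)\,u_k(x),
\]
where $\mathbf{B}=\{b_{j,k}\}$ is the matrix from \eqref{eq-bcoeff1.1}. Thus the Taylor coefficients of $\Tope_1 f$ are obtained from those of $f$ by acting by $\mathbf{B}^{T}$, which is strictly totally positive by Proposition \ref{prop-tot.pos.1.1} (since transposition preserves totally positivity of all minors). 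The plan is now to invoke the classical Schoenberg variation-diminishing theorem for strictly totally positive matrices \cite{Pinkus}: for any nonzero real sequence $\mathbf{c}$, one has $S^-(\mathbf{B}^T\mathbf{c})\le S^+(\mathbf{c})$, and in case of equality the first (respectively last) nonzero entries of the two sequences share the same sign. Since $f\in\PowerseriesclassRdown$ is exactly the condition $S^+(\{\hat f(2j+1)\}_j)\le 1$ together with the sign being positive before the change, the theorem forces $\{\sum_j b_{j,k}\hat f(2j+1)\}_k$ to exhibit at most one sign change, and in the same direction, placing $\Tope_1 f$ into case (a), (b), or (c) of Definition \ref{def-6.1.1}, i.e., into $\PowerseriesclassIIIRdown$.

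The main obstacle I anticipate is the rigorous passage from the finite-dimensional Schoenberg theorem to our infinite-matrix setting. The natural route is truncation: approximate $f$ by its partial sums $f_N(x):=\sum_{j\le N}\hat f(2j+1)u_j(x)$, which all lie in $\PowerseriesclassRdown$ with $S^+(\{\hat f_N(2j+1)\})\le 1$; apply the finite version to the leading block of $\mathbf{B}^T$; and then pass to the limit $N\to\infty$, using the estimates from the second paragraph to control tails and to show that the Taylor coefficients of $\Tope_1 f_N$ converge entrywise to those of $\Tope_1 f$. One must also verify that taking the entrywise limit does not introduce a spurious additional sign change, which is where the strictness of the total positivity in Proposition \ref{prop-tot.pos.1.1} is crucial, since it guarantees that the sign information is preserved under the limit.
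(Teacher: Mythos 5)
Your proposal is correct and follows essentially the same route as the paper: expand $f$ in odd monomials, act by the matrix $\mathbf{B}$ of Proposition \ref{prop-tot.pos.1.1}, apply the variation-diminishing theorem for strictly totally positive matrices to the truncated sequences $\{F_{k,N}\}_{k=0}^N$, and pass to the limit $N\to+\infty$, with the radius of convergence handled by the same elementary bound on $|x+2n|$. The only caveat is that you should invoke the sharp form $S^+(\mathbf{B}^T\mathbf{c})\le S^-(\mathbf{c})$ (weak sign changes of the output bounded by strong sign changes of the input, with the sign direction preserved), as the paper does, rather than the weaker inequality you quote; with that form the direction statement makes the limit passage routine.
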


\begin{proof}
Based on the explicit expression 
\eqref{eq-Tope.1} for $\Tope_1f$,
it is a straightforward exercise in the analysis of power series to check that
if $f\in\mathfrak{S}(\gamma)$, then
$\Tope_1 f\in\PowerseriesclassIII$. Moreover, it is clear that
the property of having real Taylor coefficients is preserved under $\Tope_1$.
To finish the proof, we pick an odd $f\in\PowerseriesclassRdown$,
and expand it in a Taylor series:
\[
f(x)=\sum_{j=0}^{+\infty}\hat f(2j+1)u_j(x),\qquad -\gamma<x<\gamma.
\]
where as before $u_j(x)=x^{2j+1}$.
Then, in view of \eqref{eq-bcoeff1.1},
\begin{equation}
\Tope_1 f(x)=\sum_{j=0}^{+\infty}\hat f(2j+1)\Tope_1 u_j (x)=
\sum_{k=0}^{+\infty}\Bigg\{\sum_{j}^{+\infty}b_{j,k}\hat f(2j+1)\Bigg\}
u_j(x),\qquad -1<x<1,
\label{eq-Topeform1}
\end{equation}
 and, as   noted before, the right-hand
side Taylor series converges in the  interval
$]-2+\frac1\gamma,2-\frac1{\gamma}[$. The assertion of the corollary is trivial
if $f(x)\equiv0$, so we may assume that $f$ does not vanish identically.
From the definition of the class $\PowerseriesclassRdown$, we read off that
for $N=1,2,3,\ldots$, the finite sequence $\{\hat f(2j+1)\}_{j=0}^{N}$ has
\emph{at most one strong sign change}.
Next, by Proposition \ref{prop-tot.pos.1.1},  we may apply the Variation 
Diminishing Theorem for strictly totally positive matrices (see Theorem 3.3 
in \cite{Pinkus}), which asserts that the sequence $\{F_{k,N}\}_{k=0}^N$,
where
\begin{equation}
F_{k,N}:=\sum_{j=0}^{N}b_{j,k}\hat f(2j+1),
\label{eq-Tseq1}
\end{equation}
has \emph{at most one weak sign change} in the index interval
$\{0,\ldots,N\}$.
Moreover, if there is a weak sign change in the sequence $\{F_{k,N}\}_{k=0}^N$,
then it is from $\ge0$ on the left to $\le0$ on the right. More precisely,
we have the following three possibilities:
\smallskip

\noindent(i) $F_{k;N}\ge0$ for all $k=0,\ldots,N$, or

\noindent(ii) $F_{k;N}\le0$ for all $k=0,\ldots,N$, or

\noindent(iii) there exists an index $k_0\in\{0,\ldots,N-1\}$ such that
$F_{k,N}\ge0$ for $k=0,\ldots,k_0$ while $F_{k,N}\le0$ for $k=k_0+1,\ldots,N$.
\smallskip

As we let $N\to+\infty$, the coefficients $F_{k,N}$ converge to
\[
F_{k}:=\sum_{j=0}^{+\infty}b_{j,k}\hat f(2j+1),
\]
where  the right-hand side is absolutely
convergent because all the coefficients (except possibly a finite number 
of them) has the same sign.
From the properties (i)--(iii), we see that the sequence
$\{F_k\}_k$ has one of the following three properties:
\smallskip

\noindent(i') $F_{k}\ge0$ for all $k=0,1,2,\ldots$, or

\noindent(ii') $F_{k}\le0$ for all $k=0,1,2,\ldots$, or

\noindent(iii') there exists an index $k_0\in\Z_{+,0}$ such that
$F_{k}\ge0$ for $k=0,\ldots,k_0$ while $F_{k,N}\le0$ for $k=k_0+1,k_0+2,\ldots$.
\smallskip

We remark that while it is clear that property (i) converges to (i'), and
that (ii) converges to (ii'), the case (iii) is less stable and might
degenerate into (i') or (ii'), as $N\to+\infty$. No matter which of these
cases (i')--(iii') we are in, the corresponding Taylor series
\begin{equation*}
\Tope_1 f(x)=\sum_{k=0}^{+\infty}F_k
x^{2k+1},\qquad -1<x<1,
\end{equation*}
is odd and belongs to $\PowerseriesclassIIIRdown$.
The proof is complete.
\end{proof}

We now turn to the proof of Theorem \ref{th-12.1.2}.

\begin{proof}[Proof of Theorem \ref{th-12.1.2}]
As the required estimate from above was obtained back in Proposition
\ref{prop-est.above.iterk1.1}, we may concentrate on the estimate from below.

The function $t\mapsto\kfun_1^{II}(t,x)$ is clearly even, and then the iterates
$\Tope_1\kfun_1^{II}(t,\cdot)$ are also even with respect to the parameter $t$.
So, by symmetry, it will be enough to treat the case $0<t<1$.
So, \emph{we assume} $0<t<1$, and observe that Proposition
\ref{prop-classS1.1} asserts that $\kfun_1^{II}(t,\cdot)$ is odd and belongs to
the class $\PowerseriesclassIIRdown$. Next, by applying Corollary
\ref{cor-classSpres1.1} once, we have that $\Tope_1\kfun_1^{II}(t,\cdot)$
is odd as well and belongs to $\PowerseriesclassIVRdown$. Here, we note
that $1<2-t<2$ and $1<2-\frac{1}{2-t}<\frac32$. By applying
Corollary \ref{cor-classSpres1.1} iteratively, we find more generally that
$\Tope_1^j\kfun_1^{II}(t,\cdot)\in\PowerseriesclassVRdownj$, for
some $\gamma_j(t)$ with $\gamma_j(t)>1$.
Now, since $\kfun_1^{II}(t,\cdot)$ is odd, we may apply repeatedly 
Proposition \ref{prop-3.8.2}, to see that
\begin{equation}
\Tope_1^j\kfun_1^{II}(t,\cdot)(1)=\Tope_1^{j-1}\kfun_1^{II}(t,\cdot)(1)=\cdots =
\Tope_1\kfun_1^{II}(t,\cdot)(1)=\kfun_1^{II}(t,\cdot)(1)=0.
\label{eq-zeroat1}
\end{equation}
Next, by \eqref{eq-zeroat1} and Lemma \ref{lem-zero.exist1.1},
we find that
\[
\Tope_1^j \kfun_1^{II}(t,\cdot)(x)>0,\qquad 0<x<1,\,\,\, j=1,2,3,\ldots.
\]
unless the function $\Tope_1^j\kfun_1^{II}(t,\cdot)\in\PowerseriesclassVRdownj$
vanishes identically. To rule out the latter possibility, we argue as follows.
If $\Tope_1^j\kfun_1^{II}(t,\cdot)=0$, then we would have that
\[
0=\Tope_1^j\kfun_1^{II}(t,\cdot)=\Tope_1^j(\id-\Tope_1)\Kfun_1^{II}(t,\cdot)
=(\id-\Tope_1)\Tope_1^j\Kfun_1^{II}(t,\cdot),
\]
that is, $\Tope_1^j\Kfun_1^{II}(t,\cdot)\in L^1(I_1)$ would be an
eigenfunction for the operator $\Tope_1$ corresponding to the eigenvalue 
$1$, which is only possible (in view of Proposition \ref{prop-5.6.5'}) when
$\Tope_1^j\Kfun_1^{II}(t,\cdot)=0$. This  is absurd, as $\Tope_1$ preserves
the class of odd strictly increasing functions, see Proposition
\ref{lem-symmetry2}(i). The proof is complete.
\end{proof}

\section{Asymptotic decay of the $\Tope_1$-orbit of an odd
distribution in $\LspaceIone$}
\label{sec-asymptotic.decay=1}

\subsection{An application of asymptotic decay for $\beta=1$}
\label{subsec-applasdecay=1}


We now 
explain how to obtain, in the critical parameter regime $\alpha\beta=1$,
Theorem \ref{thm-2.0} as a consequence of Theorem
\ref{thm-basic1.002}.

\begin{proof}[Proof of Theorem \ref{thm-2.0} for $\alpha\beta=1$]
As observed right after the formulation of Theorem \ref{thm-2.0},
a scaling argument allows us to reduce the redundancy and fix
$\alpha=1$, in which case the condition $0<\alpha\beta=1$ reads
$\beta=1$.
In view of Subsections \ref{subsec-dualform} and \ref{subsec-reformulation2}, 
it will be sufficient to show that for $u\in\Lspaces$,
\begin{equation}
\Perop_2 u=\Perop_2\Jop_1 u=0
\quad\Longrightarrow\quad u=0.
\label{eq-impli1.1'}
\end{equation}
Here, we recall the notation
\[
\Lspaces:=L^1_0(\R)+\Hop L^1_0(\R)\subset\Lspaceo.
\]
So, we assume that $u\in\Lspaces$ has $\Perop_2 u=\Perop_2\Jop_1 u=0$.
The distribution $u$ has a decomposition $u=f+\Hop g$, where $f,g\in L^1_0(\R)$.
We write
\begin{equation}
f^I(t)=\frac12(f(t)+f(-t)),\quad f^{II}(t)=\frac12(f(-t)-f(t)),
\label{eq-fIfII}
\end{equation}
and
\begin{equation}
g^I(t)=\frac12(g(t)+g(-t)),\quad g^{II}(t)=\frac12(g(-t)-g(t)),
\label{eq-gIgII}
\end{equation}
so that the functions $f^I,g^I\in L^1_0(\R)$ are even while
$f^{II},g^{II}\in L^1_0(\R)$ are odd. We then put
\[
u^I=f^I-\Hop g^{II},\quad u^{II}=f^{II}-\Hop g^{I},
\]
so that $u^I\in\Lspaces$ is an even distribution, while $u^{II}$ is odd.
This is so because the Hilbert transform is symmetry reversing, odd is mapped
to even, and even to odd.

\medskip

\noindent{\sc Step I}: \emph{We first prove that the implication
\eqref{eq-impli1.1'} holds for odd $u$, that is, when $u=-u^{II}$}:
\begin{equation}
\Perop_2 u^{II}=\Perop_2\Jop_1 u^{II}=0
\quad\Longleftrightarrow\quad u^{II}=0.
\label{eq-impli1.1.odd}
\end{equation}
The added arrow to the left is of course a trivial implication.
Let $u^{II}_0:=\Rop_1 u^{II}\in\LspaceIone$ and $u^{II}_1:=
\Rop_1^\dagger u^{II}\in\LspaceIonecompl$
denote the restrictions of the distribution $u^{II}$ to the symmetric interval
$I_1$ and to the complement $\R\setminus\bar I_1$, respectively. Clearly,
$u^{II}_0$ and $u^{II}_1$ are odd, because $u^{II}$ is.
\emph{We will be done with this step once we are able to show that
$u^{II}_0=0$}, because then $u_1^{II}$ vanishes as well, as a result of
Proposition \ref{prop-5.4.1}:
\[
u^{II}_1=-\Rop_1^\dagger\Jop_1\Tope_1 u^{II}_0=0.
\]
Indeed, we have Proposition \ref{prop-5.4.2}, which tells us that
$u^{II}_0=\Rop_1 u^{II}=0$ and $u^{II}_1=\Rop_1^\dagger u^{II}=0$ together imply
that $u^{II}=0$.
Finally, to obtain that $u^{II}_0=0$, we observe that in addition, Proposition
\ref{prop-5.4.1} says that the odd distribution $u^{II}_0\in\LspaceIone$
has the important property $u^{II}_0=\Tope_1^2 u^{II}_0$. By iteration, then,
we have $u^{II}_0=\Tope_1^{2n} u^{II}_0$ for $n=1,2,3,\ldots$, and by letting
$n\to+\infty$, we realize from Theorem \ref{thm-basic1.002} that $u_0=0$ is the
only possible solution in $\LspaceIone$.

\medskip

\noindent{\sc Step II}: \emph{We now prove, based on Step I, that the
implication \eqref{eq-impli1.1'} holds for an arbitrary distribution
$u\in\Lspaces$, regardless of symmetry}.
So, we take a distribution $u\in\Lspaces$ for which $\Perop_2 u=0$
and $\Perop_2\Jop_1 u=0$. We split $u=u^I-u^{II}$ as above, and note that since
the operators $\Perop_2$ and $\Jop_1$ both respect odd-even symmetry,
\[
0=\Perop_2 u=\Perop_2 u^I-\Perop_2 u^{II}\quad\text{and}\quad
0=\Perop_2\Jop_1 u=\Perop_2\Jop_1 u^I-\Perop_2\Jop_1 u^{II}
\]
correspond to the splitting of the $0$ distribution into odd-even parts inside
the space
\[
\Lspacesper:=L^1_0(\R/2\Z)+\Hop_2 L^1_0(\R/2\Z)\subset\Lspaceoper.
\]
This means that each part must vanish separately, that is,
\begin{equation}
\Perop_2 u^I=0,\quad \Perop_2\Jop_1 u^I=0,\quad \Perop_2 u^{II}=0,\quad
\Perop_2\Jop_1 u^{II}=0.
\label{eq-symm1.001.01}
\end{equation}
By Step I, we know that the implication \eqref{eq-symm1.001.01} holds for the
odd distribution $u^{II}$, so it is an immediate consequence of
\eqref{eq-symm1.001.01} that $u^{II}=0$. We need to understand the result
obtained in Step I better, and write the equivalence \eqref{eq-impli1.1.odd}
in terms of the functions $f^{II}$ and $g^{I}$:
\begin{equation}
f^{II}=\Hop g^{I}\,\,\Longleftrightarrow\,\,
\begin{cases}
\Perop_2 f^{II}=\Perop_2\Hop g^{I},
\\
\Perop_2\Jop_1 f^{II}=\Perop_2\Hop\Jop_1 g^{I}.
\end{cases}
\label{eq-symm1.001.02}
\end{equation}
Next, since we know that $\Perop_2\Hop=\Hop_2\Perop_2$ as operators on
$L^1_0(\R)$, we may rewrite \eqref{eq-symm1.001.02} as
\begin{equation}
f^{II}=\Hop g^{I}\Longleftrightarrow
\begin{cases}
\Perop_2 f^{II}=\Hop_2\Perop_2 g^{I},
\\
\Perop_2\Jop_1 f^{II}=\Hop_2\Perop_2\Jop_1 g^{I}.
\end{cases}
\label{eq-symm1.001.1}
\end{equation}
\emph{Since we already know that $u^{II}=0$, it remains to explain why
$u^{I}=0$ must hold as well}. The relation \eqref{eq-symm1.001.01} also
contains the conditions
$\Perop_2 u^{I}=\Perop_2\Jop_1 u^{I}=0$, which in terms of $f^I$ and $g^{II}$
amount to having
\[
\begin{cases}
\Perop_2 f^{I}=\Hop_2\Perop_2 g^{II},
\\
\Perop_2\Jop_1 f^{I}=\Hop_2\Perop_2\Jop_1 g^{II}.
\end{cases}
\]
Let us apply the periodic Hilbert transform $\Hop_2$ to the left-hand and
right-hand sides, which is an invertible transformation on $\Lspacesper$ with
$\Hop_2^2=-\id$. The result is
\[
\begin{cases}
\Hop_2\Perop_2 f^{I}=-\Perop_2 g^{II},
\\
\Hop_2\Perop_2\Jop_1 f^{I}=-\Perop_2\Jop_1 g^{II}.
\end{cases}
\]
But this places us in the setting of \eqref{eq-symm1.001.1}, only with $-g^{II}$
in place of $f^{II}$, and $f^{I}$ in place of $g^{I}$. So we get from
\eqref{eq-symm1.001.1}
that $-g^{II}=\Hop f^{I}$, which after application of $\Hop$ reads
$f^{I}=\Hop g^{II}$. This means that $u^{I}=f^{I}-\Hop g^{II}=0$, as desired.
Finally, since both $u^I$ and $u^{II}$ vanish, we obtain $u=u^{I}-u^{II}=0$.
This proves that the implication \eqref{eq-impli1.1'} holds for every
$u\in\Lspaces$, which completes the proof.
\end{proof}

\subsection{The proof of asymptotic decay for $\beta=1$}
\label{subsec-setup}

We now proceed with the proof of Theorem \ref{thm-basic1.002}. As in the proof
of Theorem \ref{thm-basic1.001}, we have to be particularly careful because the
operator $\Tope_1:\LspaceIone\to\LspaceIone$ has norm $>1$.
However, it clearly acts contractively on $L^1(I_1)$.

\begin{proof}[Proof of Theorem \ref{thm-basic1.002}]
Since $u_0\in\LspaceIone$, we know that there exist functions $f\in L^1(\R)$
and $g\in L^1_0(\R)$ such that $u_0=\Rop_1(f+\Hop g)$.

\medskip

\noindent{\sc Step I}: \emph{We find a suitable odd extension of $u_0$ to
all of $\R$.}
Let the functions
$f^I,f^{II},g^I,g^{II}$ be given by \eqref{eq-fIfII} and \eqref{eq-gIgII},
and put
\[
u^I=f^I-\Hop g^{II},\quad u^{II}=f^{II}-\Hop g^{I},
\]
so that $u^I\in\Lspaceo$ is an even distribution, while $u^{II}\in\Lspaceo$
is odd. The way things are set up, we have that $u_0=\Rop_1 u$, where
$u:=u^I-u^{II}$. Since it is given that $u_0$ is odd, we must have that
$\Rop_1u^I=0$, and that $u_0=-\Rop_1 u^{II}$. The distribution $-u^{II}$ is odd
on all of $\R$, and provides an extension of $u_0$ beyond the interval $I_1$.
\emph{We will focus our attention on the odd distribution
$u^{II}=f^{II}-\Hop g^{I}$, which has $\Rop_1 u^{II}=-u_0$}.

\medskip

\noindent{\sc Step II}: \emph{We now argue that without loss of generality,
we may require of the even function $g^I\in L^1_0(\R)$ that in addition }
\begin{equation}
\langle 1,g^I\rangle_{I_1}=\langle 1,g^I\rangle_{\R\setminus I_1}=0.
\label{eq-simplifyingass1}
\end{equation}
To this end, we consider the even function $h^I\in L^1_0(\R)$ given by
\[
h^I(x):=1_{I_1}(x)-x^{-2}1_{\R\setminus I_1}(x),\qquad x\in\R,
\]
with Hilbert transform
\[
\Hop h^I(x)=\frac{1}{\pi}\log\bigg|\frac{x+1}{x-1}\bigg|+\frac{1}{\pi x^2}
\log\bigg|\frac{x+1}{x-1}\bigg|-\frac{2}{\pi x},\qquad x\in\R,
\]
and note that $\Hop h^I\in L^1_0(\R)$ is odd. Now, if
\eqref{eq-simplifyingass1} is not fulfilled to begin with, then we
consider instead the functions
\[
F^{II}:=f^{II}+\frac12\langle g^I,1\rangle_{I_1}\Hop h^I,\quad
G^{I}:=g^{I}-\frac12\langle g^{I},1\rangle_{I_1}h^I.
\]
Indeed, we see that $F^{II},G^I\in L^1_0(\R)$ where $F^{II}$ is odd
and $G^I$ is even, that \eqref{eq-simplifyingass1} holds with
$G^I$ in place of $g^I$, and that $u^{II}=f^{II}-g^{I}=F^{II}-\Hop G^I$.
\emph{This allows us assume that $f^{II},g^{I}\in L^1_0(\R)$ are chosen
so that \eqref{eq-simplifyingass1} holds}, and completes the proof of
Step II.

\medskip

\noindent{\sc Step III}: \emph{Splitting of the functions $f^{II}$ and
$g^{I}$ according to intervals}.
We split $f=f_1+f_2$ and $g=g_1+g_2$, where
\[
f_1^{II}:=f^{II}1_{I_1}\in L^1_0(I_1),\quad f_2^{II}:=f^{II}1_{\R\setminus I_1}
\in L^1_0(\R\setminus I_1),
\]
and
\[
g_1:=g1_{I_1}\in L^1_0(I_1),\quad
g_2:=g1_{\R\setminus I_1}\in L^1_0(\R\setminus I_1).
\]
Here, we used in fact Step II. Note that the functions $f_1^{II},f_2^{II}$
are odd, while $g_1^I,g_2^I$ are even.
We write $u^{II}_0:=\Rop_1 u^{II}$, so that $u^{II}_0=-u_0$.
We note that
\begin{equation}
u_0^{II}=\Rop_1(f^{II}+\Hop g^{I})
=\Rop_1(f_1^{II}+f_2^{II}+\Hop g_1^I+\Hop g_2^I)=f_1^{II}+\Rop_1\Hop g_1^I
+\Rop_1\Hop g_2^I.
\label{eq-decomp.u0.001}
\end{equation}
By applying the operator $\Tope_1^N$ for $N=2,3,4,\ldots$ to the leftmost and
rightmost sides of \eqref{eq-decomp.u0.001}, we obtain
\begin{equation*}
\Tope_1^{N}u_0^{II}=\Tope_1^{N}f_1^{II}+\Tope_1^{N}\Rop_1\Hop g_1^I+
\Tope_1^{N}\Rop_1\Hop g_2^I.
\end{equation*}
and after application of the ``valeur au point'' operation, this identity
reads, for $N=2,3,4,\ldots$,
\begin{equation}
\pev[\Tope_1^N u_0^{II}](x)=\Tope_1^{N}f_1^{II}(x)+
\pev[\Tope_1^{N}\Rop_1\Hop g_1^I](x)+
\pev[\Tope_1^{N}\Rop_1\Hop g_2^I](x),\qquad \almostev\,\, x\in I_1.
\label{eq-decomp.u0.002}
\end{equation}
Next, by Propositions \ref{prop-ergodicity1.01}(v) and \ref{prop-5.6.5'},
we have for each fixed $\eta$, $0<\eta<1$, the convergence
\begin{equation}
\Tope_1^{N}f_1^{II}\to0\quad\text{and}\quad
1_{I_\eta}
\pev[\Tope_1^{N}\Rop_1\Hop g_2^I]\to0,
\label{eq-decomp.u0.003}
\end{equation}
the first one in the norm of $L^1(I_1)$
as $N\to+\infty$. That is, two terms on the right-hand side
of \eqref{eq-decomp.u0.002} vanish in the limit on compact subintervals,
and we are left to analyze the remaining middle term.

By rearranging the terms in the finite expansion of Proposition
\ref{prop-5.7.3} with $n:=N$, applied to the even function $g_1^I\in L^1_0(I_1)$
in place of $f$, we obtain that
\begin{multline}
\Tope_1^{N}\Rop_1\Hop g_1^I=\Rop_1\Hop\Topep_1^{N}g_1^I
-\Tope_1^{N-1}\Rop_1\Hop\Jop_1 g_1^I
+\Tope_1\Rop_1\Hop\Jop_1\Topep_1^{N}g_1^I
\\
+\sum_{j=1}^{N-1}(\Tope_1^2-\id)\Tope_1^{N-j-1}\Rop_1\Hop\Jop_1\Topep_1^jg_1^I,
\label{eq-decomp.u0.004}
\end{multline}
Here, of course, $\Topep_1=\Tope_1$ as operators, but we keep writing
$\Topep_1^{m}g_1^I$ to emphasize that the function is extended to vanish off
the interval $I_1$, this is important because the Hilbert transform is
nonlocal.
Since we know that $g^I\in L^1_0(I_1)$,  Proposition 
\ref{prop-ergodicity1.01}(v)
tells us that $\Topep_1^N g_1^I=\Tope_1^Ng_1^I\to0$ in norm in $L^1(I_1)$ as
$N\to+\infty$,
so that
\begin{equation}
\pev[\Rop_1\Hop\Topep_1^{N}g_1^I]\to0\quad \text{and}\quad
\pev[\Tope_1\Rop_1\Hop\Jop_1\Topep_1^{N}g_1^I]\to0,
\label{eq-est.term1.9901}
\end{equation}
in $L^{1,\infty}(I_1)$ as $N\to+\infty$.
Moreover, by Proposition \ref{prop-5.6.5'},
$\Tope_1^{N-1}\Rop_1\Jop_1 g_1^I\to0$ as $n\to+\infty$, uniformly on compact
subsets of $I_1$, so that in particular,
\begin{equation}
1_{I_\eta}\pev\Tope_1^{N-1}\Rop_1\Jop_1 g_1^I\to0
\label{eq-est.term1.9902}
\end{equation}
in $L^1(I_1)$, for each fixed $\eta$, $0<\eta<1$.
We realize from \eqref{eq-est.term1.9901} and \eqref{eq-est.term1.9902} that
as $N\to+\infty$, the first three terms on the right-hand side of
fade away and we are left to analyze the expression with the summation sign.

\medskip

\noindent{\sc Step IV}: \emph{Application of kernel techniques}.
As Subsection \ref{subsec-smoothH}, we may write
\begin{equation}
\pev\,\Hop\Jop_1\Topep_1^jg_1^I(x)=\frac{1}{\pi}\int_{-1}^{1}\Kfun_1(t,x)\,
\Topep_1^jg_1^I(t)\diff t,\qquad x\in I_1,
\label{eq-decomp.u0.005}
\end{equation}
where $\Kfun_1(t,x)=t/(1+tx)$.
We recall the odd-even decomposition of $\Kfun_1(t,x)$:
\[
\Kfun_1(t,x)=\Kfun_1^I(t,x)-\Kfun_1^{II}(t,x),
\quad\text{where}\quad \Kfun_1^{I}(t,x)=\frac{t}{1-x^2t^2},\quad
\Kfun_1^{II}(t,x)=\frac{t^2x}{1-t^2x^2}.
\]
As, by inspection, the kernel $t\mapsto \Kfun_1^{I}(t,x)$ is odd,
and since the function $\Topep_1^jg_1^I$ is even, we may rewrite
\eqref{eq-decomp.u0.005} in the form
\begin{equation}
\Hop\Jop_1\Topep_1^jg_1^I(x)=-\frac{2}{\pi}\int_{0}^{1}\Kfun_1^{II}(t,x)\,
\Topep_1^jg_1^I(t)\diff t,\qquad x\in I_1,
\label{eq-decomp.u0.005.1}
\end{equation}
Using \eqref{eq-decomp.u0.005.1}, we may rewrite the expression with the
summation sign in \eqref{eq-decomp.u0.004}:
\begin{multline}
\sum_{j=1}^{N-1}(\Tope_1^2-\id)\Tope_1^{N-j-1}\Rop_1\Hop\Jop_1\Topep_1^jg_1^I(x)
=-\frac{2}{\pi}\int_{0}^{1}
\sum_{j=1}^{N-1}\Tope_1^{N-j-1}(\Tope_1^2-\id)\Kfun_1^{II}(t,\cdot)(x)\,
\Topep_1^jg_1^I(t)\diff t
\\
=\frac{2}{\pi}\sum_{j=1}^{N-1}\int_{0}^{1}
(\Tope_1^{N-j-1}+\Tope_1^{N-j})\kfun_1^{II}(t,\cdot)(x)\,
\Topep_1^jg_1^I(t)\diff t.
\label{eq-decomp.u0.006}
\end{multline}
The expression \eqref{eq-decomp.u0.006} is an odd function of $x$, so we
need only estimate it in the righthand interval $I_1^+=[0,1[$.
By appealing to the fundamental estimate of Theorem \ref{th-12.1.2},
we may obtain a pointwise estimate in \eqref{eq-decomp.u0.006}, for
$x\in I_1^+$, as follows:
\begin{multline}
\bigg|\pev\sum_{j=1}^{N-1}(\Tope_1^2-\id)\Tope_1^{N-j-1}\Rop_1\Hop
\Jop_1\Topep_1^jg_1^I(x)\bigg|
\\
\le\frac{2}{\pi}\sum_{j=1}^{N-1}\int_{0}^{1}
\big|(\Tope_1^{N-j-1}+\Tope_1^{N-j})\kfun_1^{II}(t,\cdot)(x)\,
\Topep_1^jg_1^I(t)\big|\diff t
\\
\le
\frac{1}{\pi}\sum_{j=1}^{N-1}
(\Tope_1^{N-j-1}+\Tope_1^{N-j})\kfun_1^{II}(1,\cdot)(x)\,
\|\Topep_1^j g_1^I\|_{L^1(I_1)}.
\label{eq-decomp.u0.006.01}
\end{multline}
As observed previously, since $g_1^I\in L^1_0(I_1)$,
Proposition \ref{prop-ergodicity1.01}(v) tells us that
$\Topep_1^jg_1^I=\Tope_1^jg_1^I\to0$ in norm in $L^1(I_1)$ as $j\to+\infty$.
It follows that we may, for a given positive real $\epsilon$, find a
positive integer $n_0=n_0(\epsilon)$ such that
$\|\Topep_1^jg_1^I\|_{L^1(I_1)}\le\epsilon$ for
$j\ge n_0(\epsilon)$. We split the summation in \eqref{eq-decomp.u0.006.01}
accordingly, for $N>n_0(\epsilon)$, and use that the transfer operator
$\Topep_1=\Tope_1$ is a contraction on $L^1(I_1)$:
\begin{multline}
\bigg|\pev\sum_{j=1}^{N-1}(\Tope_1^2-\id)\Tope_1^{N-j-1}\Rop_1\Hop
\Jop_1\Topep_1^jg_1^I(x)\bigg|
\\
\le \|g_1^I\|_{L^1(I_1)}
\frac{1}{\pi}\sum_{j=1}^{n_0(\epsilon)-1}
(\Tope_1^{N-j-1}+\Tope_1^{N-j})\kfun_1^{II}(1,\cdot)(x)
+\frac{\epsilon}{\pi} \sum_{j=n_0(\epsilon)}^{N-1}
(\Tope_1^{N-j-1}+\Tope_1^{N-j})\kfun_1^{II}(1,\cdot)(x),
\label{eq-decomp.u0.006.02}
\end{multline}
where again $x\in I_1^+$ is assumed. The odd part of the dynamically reduced
Hilbert kernel $x\mapsto \kfun_1^{II}(1,x)$ is odd and smooth on $\bar I_1$, so
Proposition \ref{prop-ergodicity1.01}(v)
tells us that for fixed $\epsilon$,
\[
\sum_{j=1}^{n_0(\epsilon)-1}
\big\|(\Tope_1^{N-j-1}+\Tope_1^{N-j})\kfun_1^{II}(1,\cdot)\big\|_{L^1(I_1)}\to0
\quad \text{as}\quad N\to+\infty.
\]
As for the second sum on the right-hand side of \eqref{eq-decomp.u0.006.02},
we use finite Neumann series summation \eqref{eqlem-Kt1.5} together
with Lemma \ref{lem-symmetry2} to obtain that
\[
\sum_{j=n_0(\epsilon)}^{N-1}
(\Tope_1^{N-j-1}+\Tope_1^{N-j})\kfun_1^{II}(1,\cdot)(x)
\le (\id+\Tope_1)\Kfun_1^{II}(1,\cdot)(x)\le\frac{2}{1-x^2},\qquad x\in I_1^+.
\]
Note that in the last step, we compared $\Kfun_1^{II}(1,x)$ with the invariant
density $\kappa_1(x)=(1-x^2)^{-1}$.
It now follows from the estimate \eqref{eq-decomp.u0.006.02} and symmetry
that for fixed $\eta$ with $0<\eta<1$,
\begin{equation}
\limsup_{N\to+\infty}
\bigg\|1_{I_\eta}\sum_{j=1}^{N-1}(\Tope_1^2-\id)\Tope_1^{N-j-1}\Rop_1\Hop
\Jop_1\Topep_1^jg_1^I(x)\bigg\|_{L^1(I_1)}
\le \frac{4\epsilon}{\pi}\log\frac{1+\eta}{1-\eta}.
\label{eq-decomp.u0.006.03}
\end{equation}
As we are free to let $\epsilon$ be as close to $0$ as we desire, it follows
that for fixed $\eta$ with $0<\eta<1$,
\begin{equation}
\limsup_{N\to+\infty}
\bigg\|1_{I_\eta}\sum_{j=1}^{N-1}(\Tope_1^2-\id)\Tope_1^{N-j-1}\Rop_1\Hop
\Jop_1\Topep_1^jg_1^I(x)\bigg\|_{L^1(I_1)}
=0.
\label{eq-decomp.u0.006.04}
\end{equation}
This means that also the last term on the right-hand side of
\eqref{eq-decomp.u0.004} tends to $0$ in the mean on all compact subintervals.
Putting things together in the context of the decomposition
\eqref{eq-decomp.u0.002}, we see from the convergences
\eqref{eq-decomp.u0.003} and
the further decomposition \eqref{eq-decomp.u0.004}, together with
the associated convergences \eqref{eq-est.term1.9901},
\eqref{eq-est.term1.9902}, and \eqref{eq-decomp.u0.006.03}, that
$1_{I_\eta}\pev[\Tope_1^N u_0^{II}]\to0$ in $L^{1,\infty}(I_1)$, which is the
claimed assertion, because $u_0=-u_0^{II}$.
The proof is complete.
\end{proof}

\end{document}